\documentclass[11pt,a4paper]{article}
\usepackage{authblk}
\usepackage{bm}
\usepackage{geometry}
\usepackage{fancyhdr}
\usepackage{tikz}
\usetikzlibrary{shapes.geometric}
\usepackage{booktabs}
\usepackage{amsmath,amsthm,amssymb,amsfonts,mathrsfs,amscd}
\usepackage{enumerate}
\newtheorem{thm}{Theorem}[section]
\newtheorem{prop}[thm]{Proposition}
\newtheorem{lem}[thm]{Lemma}
\newtheorem{cor}[thm]{Corollary}
\newtheorem{conj}[thm]{Conjecture}

\newenvironment{abs}
{%
  \par
  \noindent\textbf{Abstract.}\enspace
  \normalfont
  \ignorespaces
}
{
  \par
}
\theoremstyle{definition}
\newtheorem{definition}[thm]{Definition}
\newtheorem{example}[thm]{Example}

\newtheorem{remark}[thm]{Remark}
\numberwithin{equation}{section}

\begin{document}
	
\title{Boundary Rigidity and Classification of Spectral Transformations Preserving Frame Generators of Normal Diagonal Operator Orbits}

\author[1]{Jian Wu\thanks{Corresponding author:
\texttt{xingxingwu2022@163.com}}}

\affil[1]{School of Mathematics,
Southwestern University of Finance and Economics,
Chengdu 611130, P.R. China}

	\renewcommand*{\Affilfont}{\small\it}
	\renewcommand\Authands{ and }
	\date{}

	\maketitle

\begin{abs}
Let \(\mathcal C\) denote the class of Carleson sequences in the unit disk \(\mathbb D\). We study arbitrary maps \(\Phi:\mathbb D\to\mathbb D\) such that, for every sequence \(\Lambda=\{\lambda_n\}_{n\ge1}\subset\mathbb D
\),
\(
\Lambda\in\mathcal C\Longleftrightarrow\Phi(\Lambda):=\{\Phi(\lambda_n)\}_{n\ge1}\in\mathcal C,
\)
and \(1-|\Phi(z)|^2\asymp 1-|z|^2,\ z\in\mathbb D.\)
No continuity, measurability, or analyticity is assumed. These two
geometric conditions arise exactly from the universal preservation of
frame generator sets for single orbits of normal diagonal operators.

We prove a boundary rigidity theorem for this class. Every such
mapping admits a canonical radial boundary trace
\(
h_\Phi(\zeta)=\lim_{r\to1^-}\Phi(r\zeta),
\ \zeta\in\mathbb T,
\)
where the convergence is uniform. Moreover,
\(
h_\Phi\in\operatorname{BiLip}(\mathbb T).
\)
Writing \(\mathcal P=\mathcal P_1\) for the resulting preserver class, we identify
its boundary-shadow kernel
\(
\mathcal K
=
\bigl\{
\Psi\in\mathcal P:
h_\Psi=\operatorname{id}_{\mathbb T}
\bigr\}
\)
and prove that every \(\Phi\in\mathcal P\) admits the unique
kernel--angular factorization
\(
\Phi=\Psi\circ E_{h_\Phi},
\ \Psi\in\mathcal K,
\)
where
\(
E_h(0)=0,
\
E_h(r\zeta)=rh(\zeta).
\)
Consequently, the preserver monoid is isomorphic to the split
semidirect product
\(
\mathcal P
\cong
\mathcal K\rtimes\operatorname{BiLip}(\mathbb T).
\)

Finally, for every fixed \(m\in\mathbb N^+\), we prove that the
universal preservation class for frames generated by \(m\) operator
orbits coincides with the single-orbit class:
\(
\mathcal P_m=\mathcal P.
\)
Consequently,
the holomorphic members of every \(\mathcal P_m\) are precisely the
automorphisms of the unit disk. For the countable preservation class
we prove that
\(
\operatorname{Aut}(\mathbb D)
\subseteq
\mathcal P_\omega
\subseteq
\mathcal P
\)
and that every element of \(\mathcal P_\omega\) is a uniform homeomorphism of \(\mathbb D\) with respect to the pseudohyperbolic metric. These results motivate the conjecture that
\(
\mathcal P_\omega=\operatorname{Aut}(\mathbb D)
\).

\noindent\textbf{Keywords.}
{\rm Carleson sequences; operator orbits; normal diagonal operators;
boundary rigidity; pseudohyperbolic geometry.}

\noindent\textbf{Mathematics Subject Classification (2020).}
{\rm 47A05; 30H05; 42C15; 30C62.}
\end{abs}
	
\section{Introduction}

Let \(H^\infty(\mathbb D)\) denote the algebra of bounded holomorphic
functions on the unit disk. A sequence
\(\Lambda=\{\lambda_j\}_{j\geq1}\subset\mathbb D\) is called a
Carleson sequence if every bounded scalar sequence can
be interpolated on \(\Lambda\) by a function in
\(H^\infty(\mathbb D)\). Sequences are understood as indexed families; in particular, image
sequences retain repetitions. Carleson's interpolation theorem gives several equivalent geometric
characterizations of such sequences, notably the uniform separation
condition; equivalently, one may require pairwise separation in the
pseudohyperbolic metric together with a Carleson measure condition;
see \cite{Carleson,CarlesonGarnett1975,Duren1970,Garnett,Seip,
ShapiroShields1961}. We denote the class of all
Carleson sequences by \(\mathcal C\).

The corresponding mapping problem is the following: characterize the
mappings
\(
\Phi:\mathbb D\longrightarrow\mathbb D
\)
such that
\(
\Lambda\in\mathcal C
\Longleftrightarrow
\Phi(\Lambda)\in\mathcal C
\)
for every sequence \(\Lambda\subset\mathbb D\). Without regularity, injectivity, or surjectivity assumptions on
\(\Phi\), this is a highly flexible geometric problem, and, to the
best of our knowledge, a general structural classification at this
level of generality has not yet been established.

Within the quasiconformal category, Astala and Zinsmeister
\cite{AstalaZinsmeister} proved that strong quasisymmetry of the
boundary map is sufficient for a quasiconformal self-homeomorphism to
map Carleson sequences to Carleson sequences, while Gonz\'alez and
Nicolau \cite{GonzalezNicolau} subsequently established the converse.
Thus a quasiconformal self-homeomorphism maps every Carleson sequence
to a Carleson sequence if and only if its boundary restriction is
strongly quasisymmetric. Since the inverse of a strongly
quasisymmetric homeomorphism is strongly quasisymmetric, the same
characterization applied to the inverse yields bidirectional
preservation. The corresponding formulation in the disk follows by
conformal equivalence.

The problem considered here has an operator-theoretic starting point.
The Carleson-preservation property arises from the universal
preservation of frame generators for normal diagonal operator orbits.
This operator-theoretic origin forces an additional radial condition,
namely the uniform comparability of boundary defects. A central point
of the paper is that the conjunction of these two geometric conditions
is rigid enough to admit a boundary and structural classification.

We briefly describe the operator-theoretic setting. Dynamical sampling
studies the recovery of an evolving state from measurements taken
along an operator orbit. If
\(\mathsf E\in\mathcal B(H)\) is a discrete-time evolution operator,
\(x\in H\) is an unknown initial state, and \(g\in H\) is a sampling
vector, then
\(
\langle \mathsf E^n x,g\rangle
=
\langle x,(\mathsf E^*)^n g\rangle,
\ n\geq0.
\)
Thus stable recovery of every initial state is equivalent to the frame
property of
\(
\{(\mathsf E^*)^ng\}_{n\geq0}.
\)
For background on dynamical sampling, see
\cite{AldroubiCabrelliMolterTang2017,DynamicalSamplingSurvey}; related sampling, localization, and
Banach-frame methods can be found in \cite{AdcockHansen2012,AldroubiBaskakovKrishtal2008,AldroubiGroechenig2001,BassGroechenig2013,Groechenig2004}. Further developments concerning operator representations of frames,
frame properties of iterated systems, and frames generated by operator
orbits can be found in
\cite{OperatorRepresentations,IterativeFrameProperties, OperatorOrbitFrames,
DynamicalFrameRepresentations}.
Recent structural results concerning Carleson frames appear in
\cite{ChristensenHasannasabPhilippStoeva2024,Krishtal}.
Bessel orbits of normal operators were studied in
\cite{PhilippBesselOrbits2017}. Geometric and model-space
characterizations of multi-orbital frames were developed in
\cite{CabrelliMolterSuarez2021,CabrelliMolterSuarez2023}, while
generalized operator orbits were considered in
\cite{ChristensenHasannasab2023}.

Recall that a sequence \(\{f_i\}_{i\in I}\subset H\) is a frame for
\(H\) if there exist constants \(0<a\leq b<\infty\) such that
\(
a\|x\|^2
\leq
\sum_{i\in I}|\langle x,f_i\rangle|^2
\leq
b\|x\|^2,
\ x\in H.
\)
We refer to \cite{ChristensenBook} for the standard theory of frames.

Fix \(m\in\mathbb N^+\), let \(H\) be a separable
infinite-dimensional Hilbert space, and consider a normal diagonal
operator
\(
A=\sum_{j\geq1}\lambda_jP_j\in\mathcal B(H),
\ \lambda_j\in\mathbb D,
\)
where the \(\lambda_j\)'s are pairwise distinct, the \(P_j\)'s are
pairwise orthogonal nonzero projections, and
\(
\sum_{j\geq1}P_j=I_H
\)
in the strong operator topology. We write
\(
E_j:=P_jH
\)
for the spectral subspace associated with \(\lambda_j\). For
\(
F=(f_1,\ldots,f_m)\in H^m,
\)
define
\(
\mathcal O_m(A,F)
:=
\{A^nf_i\}_{n\geq0,\ 1\leq i\leq m}
\)
and
\(
\mathcal G_m(A)
:=
\left\{
F\in H^m:
\mathcal O_m(A,F)\text{ is a frame for }H
\right\}.
\)
Given an arbitrary mapping
\(\Phi:\mathbb D\to\mathbb D\), by a harmless abuse of notation, we use the same symbol $\Phi$ for
the point mapping and the induced spectral transformation
\(
\Phi(A):=\sum_{j\ge1}\Phi(\lambda_j)P_j.
\)
If several values $\Phi(\lambda_j)$ coincide, the corresponding
spectral subspace of $\Phi(A)$ is understood as the orthogonal direct
sum of the associated spaces $P_jH$.

Let \(\mathcal P_m\) denote the class of mappings
\(\Phi:\mathbb D\to\mathbb D\) such that
\(
\mathcal G_m(A)
=
\mathcal G_m\bigl(\Phi(A)\bigr)
\)
for every normal diagonal operator \(A\) of the above form. We write
\(
\mathcal P:=\mathcal P_1.
\)
If \(\mathcal G_1(A)\neq\varnothing\), then every spectral subspace
\(E_j\) is one-dimensional \cite{AldroubiCabrelliMolterTang2017}. Hence the nontrivial part of the
single-orbit problem automatically reduces to the simple-spectrum
case.
Whenever an orthonormal basis
\(
\{e_j\}_{j\geq1}
\)
of \(H\) and a sequence
\(
\Lambda=\{\lambda_j\}_{j\geq1}\subset\mathbb D
\)
are fixed, we write \(A_\Lambda\) for the diagonal operator defined by
\(
A_\Lambda e_j=\lambda_j e_j,
\ j\geq1.
\) Here \(\Lambda=\{\lambda_j\}_{j\geq1}\) is an indexed diagonal
sequence and may contain repetitions; equal diagonal values are
grouped into a single spectral subspace. We write
\(
\mathcal G:=\mathcal G_1
\).
If
\(
f=\sum_{j\geq1}f_je_j
\),
then
\(
\{A_\Lambda^nf\}_{n\geq0}
\)
is a frame for \(H\) if and only if
\(
\Lambda\in\mathcal C\) and
\(|f_j|^2\asymp1-|\lambda_j|^2.
\)
If \(\Lambda\notin\mathcal C\), then
\(
\mathcal G(A_\Lambda)=\varnothing.
\)
These facts follow from the characterization of normal
operator-orbit frames in
\cite{AldroubiCabrelliMolterTang2017,IterativeNormalOperators,CabrelliMolterPaternostroPhilipp2020}. Thus the complete generator set
records two distinct types of spectral information: the Carleson
geometry of the eigenvalues and the rate at which they approach the
unit circle.

This characterization leads to the first main result of the paper. We
prove that
\(
\Phi\in\mathcal P
\)
if and only if the following two conditions hold:
\(
\Lambda\in\mathcal C
\Longleftrightarrow
\Phi(\Lambda)\in\mathcal C
\)
for every sequence \(\Lambda\subset\mathbb D\), and
\(
1-|\Phi(z)|^2
\asymp
1-|z|^2,
\ z\in\mathbb D,
\)
with constants independent of \(z\).

The second condition is not imposed as an auxiliary regularity
hypothesis on a Carleson-sequence preserver. It is forced by the equality of the complete single-orbit frame
generator sets,
because the admissible coordinates satisfy
\(
|f_j|^2\asymp1-|\lambda_j|^2.
\)
Thus the operator-orbit problem selects a natural subclass of
Carleson-sequence preservers rather than an artificially restricted
one.

Neither geometric condition alone captures universal orbit-frame
preservation. Boundary-defect comparability contains essentially
radial information and does not control the angular distribution of a
sequence; compare Naftalevi\v{c}'s rotation theorem
\cite{Naftalevic}. On the other hand, preservation of Carleson
sequences alone does not control the coordinate weights that determine
the frame generators. The main geometric problem is therefore to
understand the interaction between these two conditions.
We prove that their conjunction forces a strong boundary rigidity
phenomenon. For every \(\Phi\in\mathcal P\), the radial boundary trace
\(
h_\Phi(\zeta)
:=
\lim_{r\to1^-}\Phi(r\zeta),
\ \zeta\in\mathbb T,
\)
exists uniformly and satisfies a first-order error estimate. Moreover,
\(
h_\Phi\in\operatorname{BiLip}(\mathbb T).
\)
Thus an arbitrary interior mapping satisfying the exact orbit-frame
preservation criterion has a quantitatively rigid boundary action,
even though no interior continuity is assumed.

Unlike the quasiconformal characterization of Gonz\'alez and Nicolau,
the present result derives a bi-Lipschitz boundary trace from universal
orbit-frame preservation without assuming continuity,
quasiconformality, or surjectivity of \(\Phi\); discontinuous and
non-surjective behavior may nevertheless persist in the interior.

The main difficulty in the boundary theorem is to prove the
injectivity of \(h_\Phi\). We combine a regular-double-point argument
with rescaling in the half-plane model and the preservation of
asymptotically small pseudohyperbolic configurations. The resulting limit mappings are injective and proper self-mappings
of the half-plane. By invariance of domain and properness, their
images are both open and closed; hence the limit mappings are
surjective, from which the required contradiction follows.

After identifying the boundary action, we separate it from the
boundary-trivial part of the mapping. For an arbitrary mapping
\(
h:\mathbb T\to\mathbb T,
\)
define
\(
E_h(0):=0,
\
E_h(r\zeta):=rh(\zeta),
\
0<r<1.
\)
We prove that
\(
E_h\in\mathcal P
\Longleftrightarrow
h\in\operatorname{BiLip}(\mathbb T).
\)
We then introduce the boundary-shadow kernel
\(
\mathcal K
:=
\left\{
\Psi\in\mathcal P:
h_\Psi=\operatorname{id}_{\mathbb T}
\right\}.
\)
The class \(\mathcal K\) is characterized intrinsically by
injectivity, two-sided proper behavior at the boundary, a uniform
pseudohyperbolic displacement bound strictly below \(1\) near the
boundary, and bidirectional preservation of asymptotically small
pseudohyperbolic configurations near the boundary.

Every \(\Phi\in\mathcal P\) admits the unique factorization
\(
\Phi
=
\Psi\circ E_{h_\Phi},
\
\Psi
=
\Phi\circ E_{h_\Phi^{-1}}
\in\mathcal K.
\)
Consequently,
\(
\mathcal P
\cong
\mathcal K\rtimes
\operatorname{BiLip}(\mathbb T)
\)
as monoids. This gives both a structural classification and a direct
construction of all universal single-orbit preservers.

We next pass from single-orbits to finitely many orbits. Our main
finite-orbit theorem states that
\(
\mathcal P_m=\mathcal P_1,
\ m\in\mathbb N^+.
\)
This is not a formal consequence of the single-orbit result. In the
multi-orbit setting, spectral multiplicities up to \(m\) are allowed,
and the spectral set \(
\{\lambda_j:j\geq1\}
\) need only be a union of at most \(m\) Carleson
sequences \cite{CabrelliMolterPaternostroPhilipp2020}. We use a vector-valued Hardy-space model in which
the frame property is equivalent to upper and lower bounds for a normalized Szeg\H{o} kernel synthesis operator. Using Hartmann's decomposition method for finite unions of Carleson
sequences \cite{Hartmann}, we localize the lower-bound problem, and a
subsequence-transfer argument shows that degeneration of the
resulting local synthesis operators is preserved by \(\Phi\).

In the holomorphic category the universal preservation property is
completely rigid. For every finite \(m\), we prove that
\(
\mathcal P_m
\cap
\operatorname{Hol}(\mathbb D,\mathbb D)
=
\operatorname{Aut}(\mathbb D).
\)
Finally, we introduce the countable-orbit preservation class
\(\mathcal P_\omega\), defined analogously using countably many
generator orbits, and establish
\(
\operatorname{Aut}(\mathbb D)
\subseteq
\mathcal P_\omega
\subseteq
\mathcal P.
\)
We also prove that every member of \(\mathcal P_\omega\) is a uniform
homeomorphism of \(\mathbb D\) with respect to the pseudohyperbolic
metric. Since countably many generators can detect spectral
multiplicities and clusters of normalized Szeg\H{o} kernels of
arbitrarily large size, we conjecture that
\(
\mathcal P_\omega
=
\operatorname{Aut}(\mathbb D).
\)

The paper is organized as follows. Section~2 establishes the exact
geometric criterion for universal single-orbit preservation.
Section~3 constructs the boundary shadow of a universal preserver and
establishes its quantitative Lipschitz and non-collapse properties.
Section~4 proves the injectivity and bi-Lipschitz rigidity of the
boundary shadow and characterizes radial extensions belonging to the
universal preservation class. Section~5 analyzes the boundary-shadow
kernel, establishes the unique kernel--angular factorization and the
corresponding semidirect-product structure, and gives geometric
characterizations of the kernel. Section~6 proves
\(\mathcal P_m=\mathcal P_1\), establishes holomorphic rigidity, and
concludes with the countably generated multi-orbit problem.

 \section{Universal Preservation of Operator Orbit Frames}
This section establishes a geometric criterion for the universal
preservation of frame generator sets associated with normal diagonal
operator orbits. We show that this property is equivalent to the
bidirectional preservation of Carleson sequences together with
uniform comparability of boundary defects.

For \(z,w\in\mathbb D\), let
\(
\rho(z,w)
:=
\left|
\frac{z-w}{1-\overline{z}w}
\right|
\)
denote the pseudohyperbolic distance on \(\mathbb D\). We first recall
the definition of a Carleson sequence.

\begin{definition}\label{def:carleson-sequence}
Let
\(
\Lambda=\{\lambda_j\}_{j\geq1}\subset\mathbb D.
\)
The sequence \(\Lambda\) is called a \emph{Carleson sequence} if
\( \delta_{\Lambda}:=
\inf_{j\geq1}
\prod_{\substack{i\geq1\\ i\neq j}}
\rho(\lambda_i,\lambda_j)
>0.
\)
A Carleson sequence is also said to be \emph{uniformly separated}
with respect to the pseudohyperbolic metric. Let \(\mathcal C\)
denote the collection of all Carleson sequences.
\end{definition}
We shall use the following standard characterization of Carleson sequences; see, for example,
\cite{Carleson,ShapiroShields1961,Garnett,Seip}.

\begin{lem}
\label{thm:carleson-interpolation}
Let
\(
\Lambda=\{\lambda_j\}_{j\ge1}\subset\mathbb D
\). Then \(\Lambda\in\mathcal C\) if and only if the following two
conditions hold:
\begin{enumerate}
    \item[(i)] The sequence \(\Lambda\) is pairwise separated; that is, there exists a
    constant \(\delta>0\) such that for $j\neq k$
    \(
    \rho(\lambda_j,\lambda_k)\geq \delta.
    \)

    \item[(ii)] The discrete measure
    \(
    \mu_\Lambda
    :=
    \sum_{j\ge1}
    \bigl(1-|\lambda_j|^2\bigr)\delta_{\lambda_j}
    \)
    is a Carleson measure on \(\mathbb D\), where
    \(\delta_{\lambda_j}\) denotes the Dirac point mass at
    \(\lambda_j\).
\end{enumerate}

Equivalently, condition~\textup{(ii)} means that there exists a constant
\(C_\Lambda>0\) such that
\(
\mu_\Lambda\bigl(S(I)\bigr)
\leq C_\Lambda |I|
\)
for every arc \(I\subset\mathbb T\), where
\(
S(I)
:=
\left\{
re^{i\theta}\in\mathbb D:
e^{i\theta}\in I,\quad 1-r\leq |I|
\right\}
\)
is the Carleson square associated with \(I\), and \(|I|\) denotes the
normalized arc length of \(I\).
\end{lem}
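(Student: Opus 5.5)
The plan is to prove both directions directly from the product condition \(\delta_\Lambda>0\) of Definition~\ref{def:carleson-sequence}, through the standard quantitative link between \(\log(1/\rho)\) and the Carleson measure \(\mu_\Lambda\). The basic inequality is
\[
\log\frac{1}{\rho^2}\;\ge\; 1-\rho^2 \;=\; \frac{(1-|z|^2)(1-|w|^2)}{|1-\overline{z}w|^2}.
\]
For \(\rho\) bounded below it also admits a reverse inequality \(\log(1/\rho^2)\le C_\delta(1-\rho^2)\).

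\emph{Necessity.} Assume \(\delta_\Lambda>0\).
\begin{enumerate}
\item[(a)] Separation (i) is immediate, since each factor satisfies \(\rho(\lambda_i,\lambda_j)\ge\delta_\Lambda\).
\item[(b)] For the Carleson measure condition (ii), fix \(j\) and use the basic inequality to get
\[
\sum_{i\neq j}\frac{(1-|\lambda_i|^2)(1-|\lambda_j|^2)}{|1-\overline{\lambda_j}\lambda_i|^2}\;\le\; 2\log\frac{1}{\delta_\Lambda}.
\]
This bounds the Poisson-type balayage of \(\mu_\Lambda\) at the points \(\lambda_j\) themselves.
\item[(c)] To pass to an arbitrary Carleson square \(S(I)\), I would test at a point \(z_I\) near the top of \(S(I)\). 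If some \(\lambda_j\) lies in the top half of \(S(I)\), take \(z_I=\lambda_j\). For such a point, \(|1-\overline{z_I}\lambda_i|\lesssim |I|\) and \(1-|z_I|^2\asymp|I|\) for \(\lambda_i\in S(I)\), so each term is \(\gtrsim(1-|\lambda_i|^2)/|I|\). This gives \(\mu_\Lambda(S(I))\lesssim |I|\,(1+\log(1/\delta_\Lambda))\).
\item[(d)] If no point of \(\Lambda\) lies in the top half, I would decompose \(S(I)\) by the standard dyadic stopping-time argument. Apply the estimate to the maximal dyadic subsquares whose tops contain points, and sum. Alternatively, run the more standard route: Carleson's theorem that \(\delta_\Lambda>0\) gives interpolation, and interpolation gives the Carleson embedding via the Shapiro--Shields duality. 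Since the lemma is quoted as standard, I would cite \cite{Carleson,ShapiroShields1961,Garnett} for this step.
\end{enumerate}

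\emph{Sufficiency.} Assume (i) and (ii).
\begin{enumerate}
\item[(a)] Fix \(j\) and apply the Möbius map \(\varphi_{\lambda_j}\) sending \(\lambda_j\) to \(0\). The image measure \(\sum_i(1-|\varphi_{\lambda_j}(\lambda_i)|^2)\delta_{\varphi_{\lambda_j}(\lambda_i)}\) is a Carleson measure. Its constant is controlled uniformly in \(j\), because Carleson measures of this discrete type are Möbius invariant up to a constant.
\item[(b)] With the uniform Carleson constant, the total mass \(\sum_{i}(1-\rho(\lambda_i,\lambda_j)^2)\) is bounded by a constant \(M\) independent of \(j\). To see this, take \(I=\mathbb T\); the Carleson square is then the whole disk up to the normalization.
\item[(c)] By separation, every factor with \(i\neq j\) satisfies \(\rho(\lambda_i,\lambda_j)\ge\delta\). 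The reverse inequality \(\log(1/\rho^2)\le C_\delta(1-\rho^2)\) for \(\rho\ge\delta\) then gives \(\sum_{i\neq j}\log(1/\rho(\lambda_i,\lambda_j)^2)\le C_\delta M\).
\item[(d)] Hence \(\prod_{i\neq j}\rho(\lambda_i,\lambda_j)\ge e^{-C_\delta M/2}\), so \(\delta_\Lambda>0\).
\end{enumerate}

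The main obstacle is the Möbius invariance of the Carleson constant used in step (a) of sufficiency. I would handle it through the characterization
\[
\sup_{z}\int \frac{1-|z|^2}{|1-\overline{z}w|^2}\,d\mu(w)<\infty,
\]
which is visibly invariant under the change of variables because \(\frac{(1-|z|^2)(1-|w|^2)}{|1-\overline zw|^2}=1-\rho(z,w)^2\) is Möbius invariant. The equivalence of this characterization with the square condition I would take from \cite{Garnett}.
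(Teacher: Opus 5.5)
Your proposal is correct. The paper gives no proof of this lemma; it quotes it as the standard characterization and cites Carleson, Shapiro--Shields, Garnett and Seip. Your outline is essentially the textbook argument from those sources, namely Garnett's treatment of Carleson's interpolation theorem.

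\emph{Necessity.} You use the bound $\sum_{i\neq j}\bigl(1-\rho(\lambda_i,\lambda_j)^2\bigr)\le 2\log(1/\delta_\Lambda)$, tested at a point of $\Lambda$ in the top half of a Carleson square. You then pass to the maximal dyadic subsquares whose top halves meet $\Lambda$.

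\emph{Sufficiency.} You use the Poisson-balayage characterization of Carleson measures together with $\log(1/x)\le C_\delta(1-x)$ on $[\delta^2,1]$.

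Three small remarks.
\begin{enumerate}
\item[(1)] The M\"obius push-forward in your sufficiency steps (a)--(b) is unnecessary. The balayage integral of $\mu_\Lambda$ is exactly $\sum_i\bigl(1-\rho(z,\lambda_i)^2\bigr)$, so once you know $\sup_{z\in\mathbb D}\sum_i\bigl(1-\rho(z,\lambda_i)^2\bigr)\lesssim C_\Lambda$, setting $z=\lambda_j$ gives the uniform bound $M$ directly.
\item[(2)] In necessity step (d), prefer the dyadic stopping-time route over the alternative through the full interpolation theorem and Shapiro--Shields, since it keeps the argument self-contained.
\item[(3)] The one routine detail still to supply is reducing an arbitrary arc $I$ to at most three dyadic arcs of length in $[|I|,2|I|)$. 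Their Carleson squares cover $S(I)$, so it suffices to prove the estimate for dyadic arcs.
\end{enumerate}
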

The following lemma characterizes single-orbit frames generated by
normal diagonal operators; see \cite[Theorem~3.16]{AldroubiCabrelliMolterTang2017}.
\begin{lem}
\label{thm:normal-operator-orbit-frame-characterization}
 Let \(A\in\mathcal B(H)\) be a normal diagonal operator and let \(f\in H\). Then
\(
\{A^n f\}_{n\ge0}
\)
is a frame for  \(H\) if and only if there exist an orthonormal basis
\(\{e_j\}_{j\geq 1}\) of \(H\), a Carleson sequence
\(
\Lambda=\{\lambda_j\}_{j\geq 1}\subset\mathbb D
\),
and constants \(0<\alpha\leq\beta<\infty\) satisfying
\[ Ae_j=\lambda_j e_j \ \text{and}\
\alpha\bigl(1-|\lambda_j|^2\bigr)
\leq
\bigl|\langle f,e_j\rangle\bigr|^2
\leq
\beta\bigl(1-|\lambda_j|^2\bigr),
\qquad j\geq 1.
\]
\end{lem}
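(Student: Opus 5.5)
The statement is the characterization of single-orbit frames for a normal diagonal operator (the paper cites it from \cite[Theorem~3.16]{AldroubiCabrelliMolterTang2017}). The plan is to prove it through the model in which the orbit frame becomes a normalized Szeg\H{o} kernel system in \(H^2(\mathbb D)\).

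\textbf{Reduction to simple spectrum.} First reduce to the case of one-dimensional spectral subspaces. Write \(f=\sum_j P_jf\). If some \(E_j=P_jH\) has dimension at least \(2\), choose a unit vector \(x\in E_j\) orthogonal to \(P_jf\). Then \(\langle x,A^nf\rangle=\lambda_j^n\langle x,P_jf\rangle=0\) for all \(n\), so the orbit is not complete and cannot be a frame. Hence if the orbit is a frame, every \(E_j\) is one-dimensional. Choosing unit vectors \(e_j\in E_j\) gives an orthonormal basis with \(Ae_j=\lambda_je_j\) and pairwise distinct \(\lambda_j\). We may also assume every \(f_j:=\langle f,e_j\rangle\neq0\), again by completeness. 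Conversely, in the sufficiency direction the basis is given, and equal eigenvalues are ruled out because a Carleson sequence is separated (Lemma~\ref{thm:carleson-interpolation}).

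\textbf{Translation to \(H^2\).} For \(x=\sum x_je_j\) we have
\[
\sum_{n\ge0}|\langle x,A^nf\rangle|^2=\sum_{n\ge0}\Bigl|\sum_j x_j\overline{f_j}\,\overline{\lambda_j}^{\,n}\Bigr|^2=\Bigl\|\sum_j x_j\overline{f_j}\,k_{\lambda_j}\Bigr\|_{H^2}^2,
\]
where \(k_\lambda(z)=(1-\overline{\lambda}z)^{-1}\) is the Szeg\H{o} kernel and \(\|k_\lambda\|^2=(1-|\lambda|^2)^{-1}\). Put \(c_j=x_j\overline{f_j}(1-|\lambda_j|^2)^{-1/2}\) and let \(\kappa_j\) be the normalized kernels. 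Then the frame inequality reads
\[
a\sum_j|x_j|^2\le\Bigl\|\sum_j c_j\kappa_j\Bigr\|^2\le b\sum_j|x_j|^2 .
\]
Equivalently, with weights \(w_j=|f_j|^2/(1-|\lambda_j|^2)\), the synthesis operator \(T:c\mapsto\sum c_j\kappa_j\) satisfies
\[
a\sum_j|c_j|^2/w_j\le\|Tc\|^2\le b\sum_j|c_j|^2/w_j .
\]

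\textbf{Sufficiency.} Assume \(\Lambda\in\mathcal C\) and \(w_j\asymp1\). By the Shapiro--Shields theorem, \(\{\kappa_j\}\) is then a Riesz sequence, so \(\|Tc\|^2\asymp\sum|c_j|^2\asymp\sum|c_j|^2/w_j\). This gives both frame bounds.

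\textbf{Necessity.} Assume the orbit is a frame.
\begin{itemize}
\item Testing \(x=e_j\) gives \(\|Tc\|^2=w_j\) and \(\sum|c_j|^2/w_j=1\), so \(a\le w_j\le b\). This is the two-sided bound on \(|f_j|^2\).
\item With \(w_j\asymp1\), the frame inequality says \(\{\kappa_j\}\) is a Riesz sequence in \(H^2\).
\item The interpolation-theorem characterization (Shapiro--Shields together with Carleson's theorem, via Lemma~\ref{thm:carleson-interpolation}) shows that a Riesz sequence of normalized kernels has uniformly separated \(\Lambda\).
\end{itemize}

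The main obstacle is this last implication, from Riesz sequence to \(\Lambda\in\mathcal C\). The lower Riesz bound yields a bounded biorthogonal system, hence a bounded solution of the \(H^2\)-interpolation problem \(F(\lambda_j)(1-|\lambda_j|^2)^{1/2}=c_j\). By Shapiro--Shields this forces uniform separation. I would quote that theorem rather than reprove it, since the duality argument passing from \(H^2\) interpolation to the Blaschke product lower bound is the deep part.
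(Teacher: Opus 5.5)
Your proof is correct. The paper gives no argument of its own here: it imports the lemma from \cite[Theorem~3.16]{AldroubiCabrelliMolterTang2017}. So the comparison is between a citation and your reconstruction of the standard proof, which reduces to simple spectrum, passes to the $H^2$ kernel model, and invokes the Shapiro--Shields/Carleson theorem.

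What your route buys is internal coherence. Your operator $T$ is exactly the $m=1$ case of $S_{\Lambda,V}$ in Proposition~\ref{prop:orbit-frame-kernel-synthesis-characterization}, with $V_jx=x_j\overline{f_j}(1-|\lambda_j|^2)^{-1/2}$. Your test $x=e_j$ is the $m=1$ case of \eqref{eq:uniform-local-coefficient-bounds}. The single-orbit lemma therefore becomes a special case of the kernel-synthesis framework the paper develops in Section~6, rather than an external input; the citation buys only brevity.

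On the step you flag as the main obstacle, you need not quote the converse of Shapiro--Shields separately. First, test the lower Riesz bound on $\kappa_j-e^{i\theta}\kappa_k$ with the optimal phase and use $|\langle\kappa_j,\kappa_k\rangle|^2=1-\rho(\lambda_j,\lambda_k)^2$; this gives pairwise separation. Second, test the Bessel bound $\sum_j(1-|\lambda_j|^2)|F(\lambda_j)|^2\le b'\|F\|^2$ on $F=\widehat{k}_w$, where $w=(1-|I|)\xi_I$ and $\xi_I$ is the midpoint of $I$; this gives $\mu_\Lambda(S(I))\lesssim b'|I|$. Lemma~\ref{thm:carleson-interpolation}, which the paper already takes as standard, then yields $\Lambda\in\mathcal C$.

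Two cosmetic points. The kernel identity $\sum_n|\langle x,A^nf\rangle|^2=\|\sum_j x_j\overline{f_j}k_{\lambda_j}\|^2$ should be stated for finitely supported $x$ and extended by density, establishing the Bessel bound first. In the reduction step, $\lambda_j^n$ should read $\overline{\lambda_j}^{\,n}$.
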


As an immediate consequence, if \(\Lambda=\{\lambda_j\}_{j\geq 1}\in\mathcal C\) and \(f_j=\langle f,e_j\rangle\) then
\[
\mathcal G(A_\Lambda)
=
\left\{
f=\sum_{j\geq 1}f_j e_j\in H:
|f_j|^2\asymp 1-|\lambda_j|^2
\right\}.
\]
Here,
\(
|f_j|^2\asymp 1-|\lambda_j|^2
\)
means that there exist constants \(0<c\leq C<\infty\), independent of
\(j\), such that
\(
c\bigl(1-|\lambda_j|^2\bigr)
\leq
|f_j|^2
\leq
C\bigl(1-|\lambda_j|^2\bigr)\) for \(j\geq 1.\)
If \(\Lambda\notin\mathcal C\), then
\(
\mathcal G(A_{\Lambda})=\varnothing.
\)

The following proposition gives a necessary and
sufficient condition for
\(
\mathcal G(A_\Lambda)
=
\mathcal G\bigl(\Phi(A_\Lambda)\bigr)
\)
when the Carleson sequence \(\Lambda\) is fixed.
\begin{prop}
\label{prop:fixed-spectrum-preservation}
Let
\(
\Lambda=\{\lambda_j\}_{j\geq 1}\in\mathcal C
\), and let
\(\Phi:\mathbb D\rightarrow\mathbb D\) be an arbitrary mapping. Then
\(
\mathcal G(A_\Lambda)
=
\mathcal G\bigl(\Phi(A_\Lambda))
\)
if and only if the following two conditions hold:
\begin{enumerate}
    \item[\textup{(i)}]
    The image sequence
    \(
    \Phi(\Lambda)=\{\Phi(\lambda_j)\}_{j\geq 1}\in\mathcal C
    \).

    \item[\textup{(ii)}]
      There exist constants \(c_\Lambda,C_\Lambda>0\) such that
    \[
    c_\Lambda\bigl(1-|\lambda_j|^2\bigr)
    \leq
    1-|\Phi(\lambda_j)|^2
    \leq
    C_\Lambda\bigl(1-|\lambda_j|^2\bigr),
    \qquad j\geq 1.
    \]
\end{enumerate}
\end{prop}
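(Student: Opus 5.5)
The plan is to reduce both directions to Lemma~\ref{thm:normal-operator-orbit-frame-characterization}, via the explicit description of \(\mathcal G(A_\Lambda)\) for \(\Lambda\in\mathcal C\) stated right after that lemma. The one point needing care is the spectral structure of \(\Phi(A_\Lambda)\), because the values \(\Phi(\lambda_j)\) might a priori coincide. I record two preliminary observations.

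First, a Carleson sequence has pairwise distinct terms. If \(\lambda_i=\lambda_k\) with \(i\neq k\), then \(\rho(\lambda_i,\lambda_k)=0\), so the product in Definition~\ref{def:carleson-sequence} vanishes for the index \(k\), and \(\delta_\Lambda=0\).

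Second, if a normal diagonal operator \(B\) has simple spectrum with eigenvalue \(\mu_j\) on \(\operatorname{span}\{e_j\}\), then any orthonormal eigenbasis of \(B\) equals \(\{\theta_je_{\sigma(j)}\}\) for some unimodular \(\theta_j\) and some bijection \(\sigma\). Consequently the condition in Lemma~\ref{thm:normal-operator-orbit-frame-characterization} does not depend on which eigenbasis is chosen. It reads \(\{\mu_j\}\in\mathcal C\) (a condition invariant under permutation) together with \(|\langle f,e_j\rangle|^2\asymp1-|\mu_j|^2\).

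For necessity, assume \(\mathcal G(A_\Lambda)=\mathcal G(\Phi(A_\Lambda))\). Since \(\Lambda\in\mathcal C\), the vector \(f_0:=\sum_j(1-|\lambda_j|^2)^{1/2}e_j\) lies in \(H\): indeed \(\sum_j(1-|\lambda_j|^2)<\infty\), because \(\mu_\Lambda\) is a Carleson measure by Lemma~\ref{thm:carleson-interpolation}, hence finite. Moreover \(f_0\in\mathcal G(A_\Lambda)\), so \(\mathcal G(\Phi(A_\Lambda))\neq\varnothing\).

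By the fact quoted in the introduction (a nonempty single-orbit generator set forces one-dimensional spectral subspaces), every spectral subspace of \(\Phi(A_\Lambda)\) is one-dimensional. Under the grouping convention, this means the values \(\Phi(\lambda_j)\) are pairwise distinct. Hence \(\Phi(A_\Lambda)=A_{\Phi(\Lambda)}\) has simple spectrum with eigenvectors \(e_j\).

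Applying Lemma~\ref{thm:normal-operator-orbit-frame-characterization} to \(f_0\) and \(\Phi(A_\Lambda)\), and using the second observation, gives (i): \(\Phi(\Lambda)\in\mathcal C\). It also gives
\[
1-|\lambda_j|^2=|\langle f_0,e_j\rangle|^2\asymp 1-|\Phi(\lambda_j)|^2 ,
\]
and this is exactly (ii).

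For sufficiency, assume (i) and (ii). By the first observation, (i) forces the \(\Phi(\lambda_j)\) to be distinct, so \(\Phi(A_\Lambda)=A_{\Phi(\Lambda)}\) with \(\Phi(\Lambda)\in\mathcal C\). The description following Lemma~\ref{thm:normal-operator-orbit-frame-characterization}, applied to both \(\Lambda\) and \(\Phi(\Lambda)\), gives
\[
\mathcal G(A_\Lambda)=\{f:|f_j|^2\asymp1-|\lambda_j|^2\},\qquad
\mathcal G(A_{\Phi(\Lambda)})=\{f:|f_j|^2\asymp1-|\Phi(\lambda_j)|^2\}.
\]
By (ii) the two weight sequences are comparable with uniform constants. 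Since \(\asymp\) is transitive, the two sets coincide.

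The main obstacle is the bookkeeping in the necessity direction. Lemma~\ref{thm:normal-operator-orbit-frame-characterization} only asserts the existence of some eigenbasis, so one must rule out repeated values of \(\Phi\) on \(\Lambda\) and check that the weight condition transfers to the fixed basis \(\{e_j\}\). Both points are handled by the one-dimensionality fact and by the uniqueness of eigenvectors up to phase and permutation in the simple-spectrum case.
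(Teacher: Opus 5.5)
Your proposal is correct and follows the paper's proof essentially step for step. For necessity you use the test vector \(f_0=\sum_j(1-|\lambda_j|^2)^{1/2}e_j\) and apply Lemma~\ref{thm:normal-operator-orbit-frame-characterization} to \(\Phi(A_\Lambda)\); for sufficiency you use the transitivity of \(\asymp\) between the two explicit generator-set descriptions. Your extra bookkeeping (distinctness of the \(\Phi(\lambda_j)\), and independence of the eigenbasis up to phases and a permutation) makes explicit what the paper leaves implicit.
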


\begin{proof}
Assume first that
\(
\mathcal G(A_\Lambda)
=
\mathcal G\bigl(\Phi(A_\Lambda)\bigr)
\). Since \(\Lambda\in\mathcal C\), we have
\(
\sum_{j\geq 1}\bigl(1-|\lambda_j|^2\bigr)<\infty.
\)
Define
\(
f
:=
\sum_{j\geq1}
\sqrt{1-|\lambda_j|^2}\,e_j.
\)
Then \(f\in H\), and for $j\geq1$,
\(
|\langle f,e_j\rangle|^2
=
1-|\lambda_j|^2.
\)
By the diagonal characterization following
Lemma~\ref{thm:normal-operator-orbit-frame-characterization},
\(
f\in\mathcal G(A_\Lambda).
\)
Then
\(
f\in\mathcal G\bigl(\Phi(A_\Lambda)\bigr).
\)
Applying
Lemma~\ref{thm:normal-operator-orbit-frame-characterization}
to \(\Phi(A_\Lambda)\), we conclude that all spectral subspaces of
\(\Phi(A_\Lambda)\) are one-dimensional and that its eigenvalue
sequence is a Carleson sequence. Hence
\(\Phi(\Lambda)\in\mathcal C\). This proves
condition~\textup{(i)}.

The same characterization also yields constants \(c,C>0\) such that
\[
c\bigl(1-|\Phi(\lambda_j)|^2\bigr)
\leq|\langle f,e_j\rangle|^2
=
1-|\lambda_j|^2
\leq
C\bigl(1-|\Phi(\lambda_j)|^2\bigr),
\qquad j\geq 1.
\]
Equivalently,
\[
\frac{1}{C}\bigl(1-|\lambda_j|^2\bigr)
\leq
1-|\Phi(\lambda_j)|^2
\leq
\frac{1}{c}\bigl(1-|\lambda_j|^2\bigr).
\]
Thus condition~\textup{(ii)} holds with
\(
c_\Lambda=C^{-1}
\)
and
\(
C_\Lambda=c^{-1}
\).

We next prove the sufficiency. Assume that
conditions~\textup{(i)} and~\textup{(ii)} hold. For an arbitrary vector
\(
f=\sum_{j\geq 1}f_j e_j
\in H
\),
Lemma~\ref{thm:normal-operator-orbit-frame-characterization} gives
\( f\in\mathcal G(A_\Lambda) \Longleftrightarrow |f_j|^2\asymp 1-|\lambda_j|^2. \)
By condition~\textup{(ii)}, \( 1-|\lambda_j|^2 \asymp 1-|\Phi(\lambda_j)|^2.\) Hence \( |f_j|^2\asymp 1-|\lambda_j|^2 \Longleftrightarrow |f_j|^2\asymp 1-|\Phi(\lambda_j)|^2. \)
Since \(\Phi(\Lambda)\in\mathcal C\), another application of Lemma~\ref{thm:normal-operator-orbit-frame-characterization} yields \( |f_j|^2\asymp 1-|\Phi(\lambda_j)|^2  \Longleftrightarrow f\in\mathcal G\bigl(\Phi(A_\Lambda)\bigr). \) Therefore, \( \mathcal G(A_\Lambda) = \mathcal G\bigl(\Phi(A_\Lambda)\bigr). \)
\end{proof}

We shall use the following standard properties of Carleson sequences.
For \(0<r<1\), set
\(
\overline{D(0,r)}
:=
\{z\in\mathbb C:|z|\leq r\},
\)
the closed disk of radius \(r\) centered at the origin.

\begin{lem}
\label{lem:basic-properties-carleson-sequences}
Given a sequence
\(
\Lambda=\{\lambda_j\}_{j\geq1}\subset\mathbb D,
\)
the following assertions hold:
\begin{enumerate}
\item[\textup{(i)}]
If there exists a constant \(c\in(0,1)\) such that
\[
\frac{1-|\lambda_{j+1}|}{1-|\lambda_j|}
\leq c,
\qquad
j\geq1,
\]
then \(\Lambda\in\mathcal C\); see \cite{Duren1970}.

\item[\textup{(ii)}]
If \(\Lambda\in\mathcal C\), then, for every \(0<r<1\), the closed
disk \(\overline{D(0,r)}\) contains only finitely many points of
\(\Lambda\). Consequently,
\(
\lim_{j\to\infty}|\lambda_j|=1;
\)
see \cite{ShapiroShields1961}.

\item[\textup{(iii)}]
Let \(\Lambda\in\mathcal C\). Then every finite perturbation of
\(\Lambda\) is again a Carleson sequence. More precisely, if
\(F\subset\Lambda\) is finite and
\(E\subset\mathbb D\setminus\Lambda\) is a finite set of pairwise
distinct points, then both
\(
\Lambda\setminus F
\)
and
\(
\Lambda\cup E
\)
belong to \(\mathcal C\); see \cite{CabrelliMolterPaternostroPhilipp2020}.
\end{enumerate}
\end{lem}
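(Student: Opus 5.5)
The three assertions are standard, and each is cited in the statement. I would nonetheless give short self-contained arguments based on Definition~\ref{def:carleson-sequence} and Lemma~\ref{thm:carleson-interpolation}.

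For (i), I would verify the two conditions of Lemma~\ref{thm:carleson-interpolation}. Set \(\epsilon_j:=1-|\lambda_j|\). Then \(\epsilon_k\le c^{k-j}\epsilon_j\) for \(k>j\).

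Separation comes from the lower bound
\[
\rho(\lambda_j,\lambda_k)\ge\frac{|\lambda_j|-|\lambda_k|}{1-|\lambda_j||\lambda_k|}.
\]
For \(k>j\) the numerator is \(\epsilon_k-\epsilon_j\) in absolute value, which is at least \((1-c)\epsilon_j\). The denominator is at most \(\epsilon_j+\epsilon_k\le 2\epsilon_j\). Hence \(\rho\ge(1-c)/2\).

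For the Carleson measure condition, fix an arc \(I\). The points lying in \(S(I)\) satisfy \(\epsilon_j\le|I|\), and their weights \(1-|\lambda_j|^2\le2\epsilon_j\) form a geometric series dominated by its largest term. So \(\mu_\Lambda(S(I))\le 2|I|/(1-c)\).

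I regard (i) as the main step. Lemma~\ref{thm:carleson-interpolation} makes it routine, but one could instead estimate the Blaschke product \(\prod_{k\ne j}\rho(\lambda_j,\lambda_k)\) directly. That route would use the bound \(\rho\ge 1-O(c^{|k-j|})\) together with \(\prod(1-a c^n)>0\).

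For (ii), note that the definition of \(\mathcal C\) requires \(\prod_{i\ne j}\rho(\lambda_i,\lambda_j)>0\). This forces the Blaschke condition \(\sum_i(1-|\lambda_i|)<\infty\), using that \(1-\rho(\lambda_i,\lambda_j)\) is comparable to \(1-|\lambda_i|\) when \(\lambda_j\) is fixed. An alternative is to combine separation with the Carleson measure bound applied to the whole circle. Either way, only finitely many \(\lambda_i\) can lie in \(\overline{D(0,r)}\). If infinitely many did, they would have an accumulation point there, contradicting pairwise separation. It follows that \(|\lambda_j|\to1\).

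For (iii), removing points only deletes factors \(\le1\) from the products, so the infimum over the remaining indices does not decrease. Adding a finite set \(E\) of distinct points not in \(\Lambda\) preserves pairwise separation, because finitely many new points lie at positive distance from the closed set \(\Lambda\), which accumulates only at \(\mathbb T\) by (ii). Finitely many point masses also keep \(\mu\) a Carleson measure. Lemma~\ref{thm:carleson-interpolation} then gives \(\Lambda\cup E\in\mathcal C\).
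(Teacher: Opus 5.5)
Your arguments are correct. The paper itself proves nothing here: each item is deferred to the literature (Duren for (i), Shapiro--Shields for (ii), Cabrelli--Molter--Paternostro--Philipp for (iii)), so your proposal is a genuinely self-contained alternative.

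You obtain (ii) and the deletion half of (iii) directly from Definition~\ref{def:carleson-sequence}, which is the natural level for them. Every factor of a product of numbers in \([0,1]\) dominates the product, so \(\rho(\lambda_i,\lambda_j)\ge\delta_\Lambda\) for \(i\ne j\). For fixed \(j\), the bound \(1-\rho(\lambda_i,\lambda_j)\ge\tfrac18(1-|\lambda_j|^2)(1-|\lambda_i|)\) then converts \(\prod_{i\ne j}\rho(\lambda_i,\lambda_j)>0\) into the Blaschke condition.

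For (i) and for adjoining \(E\), you instead pass through Lemma~\ref{thm:carleson-interpolation}. Your separation and Carleson-box estimates are right, but this uses the deep direction of Carleson's theorem for statements that are elementary at the level of the product.

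The classical proof of (i) (Duren) is your alternative route, but write it with the exact bound \(\rho(\lambda_j,\lambda_k)\ge(1-c^{|k-j|})/(1+c^{|k-j|})\). This gives \(\prod_{k\ne j}\rho(\lambda_j,\lambda_k)\ge\prod_{n\ge1}\bigl((1-c^n)/(1+c^n)\bigr)^2>0\). The form \(\prod(1-ac^n)\) is not literally positive: its first factors are \(\le0\) when \(ac\ge1\) (e.g.\ \(a=2\), \(c\ge\tfrac12\)).

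Similarly, \(\Lambda\cup E\in\mathcal C\) follows from the definition alone. Each old product gains finitely many factors \(\rho(e,\lambda_j)\ge\inf_i\rho(e,\lambda_i)>0\). For a new point \(e\), one has \(\prod_j\rho(\lambda_j,e)=|B(e)|>0\), where \(B\) is the Blaschke product with zero sequence \(\Lambda\) (convergent by the Blaschke condition from (ii)) and \(e\notin\Lambda\). The remaining finitely many factors \(\rho(e',e)\), \(e'\in E\setminus\{e\}\), are positive because the points of \(E\) are distinct.
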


 The following theorem characterizes \(\mathcal P\) in terms of the bidirectional preservation of Carleson sequences and the global comparability of boundary defects. It is the
main result of this section.

\begin{thm}
\label{thm:universal-orbit-frame-preservation}
Let \(
\Phi:\mathbb D\rightarrow\mathbb D
\)
be an arbitrary mapping. Then \(\Phi\in\mathcal P\) if and only if the
following conditions hold:
\begin{enumerate}
\item[\textup{(i)}]
For every sequence
\(
\Lambda=\{\lambda_j\}_{j\geq 1}\subset\mathbb D
\), one has
\(
\Lambda\in\mathcal C
 \Longleftrightarrow
\Phi(\Lambda)\in\mathcal C.
\)

\item[\textup{(ii)}]
There exist constants \(0<c_\Phi\leq C_\Phi<\infty\) such that
\[
c_\Phi\bigl(1-|z|^2\bigr)
\leq
1-|\Phi(z)|^2
\leq
C_\Phi\bigl(1-|z|^2\bigr),
\qquad z\in\mathbb D.
\]
\end{enumerate}
\end{thm}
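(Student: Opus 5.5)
The plan is to prove the two implications separately. Proposition~\ref{prop:fixed-spectrum-preservation} does the work on Carleson spectra. Lemma~\ref{thm:normal-operator-orbit-frame-characterization} handles all degenerate spectra, via the fact that \(\mathcal G(A)\neq\varnothing\) forces one-dimensional spectral subspaces and a Carleson eigenvalue sequence.

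\emph{Necessity, condition (i).} Let \(\Phi\in\mathcal P\) and let \(\Lambda\subset\mathbb D\) be an arbitrary indexed sequence, possibly with repetitions. Consider \(A_\Lambda\), with equal diagonal values grouped into one spectral subspace, so that \(\mathcal G(A_\Lambda)=\mathcal G(\Phi(A_\Lambda))\).
\begin{itemize}
\item If \(\Lambda\in\mathcal C\), then \(\Phi(\Lambda)\in\mathcal C\) by Proposition~\ref{prop:fixed-spectrum-preservation}.
\item Conversely, suppose \(\Phi(\Lambda)\in\mathcal C\). Its points are then pairwise distinct, so \(\Phi(A_\Lambda)\) has simple spectrum. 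The vector \(f=\sum_j\sqrt{1-|\Phi(\lambda_j)|^2}\,e_j\) lies in \(H\), since \(\sum_j(1-|\Phi(\lambda_j)|^2)<\infty\) for a Carleson sequence. It belongs to \(\mathcal G(\Phi(A_\Lambda))\), hence to \(\mathcal G(A_\Lambda)\).
\item Since this generator set is nonempty, Lemma~\ref{thm:normal-operator-orbit-frame-characterization} says every spectral subspace of \(A_\Lambda\) is one-dimensional, so \(\Lambda\) has no repetitions, and the eigenvalue sequence \(\Lambda\) is Carleson.
\end{itemize}

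\emph{Necessity, condition (ii).} This is the step I expect to be the main obstacle, because Proposition~\ref{prop:fixed-spectrum-preservation} only gives constants depending on \(\Lambda\). I would argue by contradiction. Suppose there are points \(z_k\) with \(q_k:=(1-|\Phi(z_k)|^2)/(1-|z_k|^2)\) tending to \(0\) or to \(\infty\). Passing to a subsequence, there are two cases.
\begin{itemize}
\item \textbf{Case \(|z_k|\to1\).} Thin the sequence further so that \((1-|z_{k+1}|)/(1-|z_k|)\le 1/2\). By Lemma~\ref{lem:basic-properties-carleson-sequences}(i), \(\Lambda=\{z_k\}\in\mathcal C\). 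Proposition~\ref{prop:fixed-spectrum-preservation} then gives uniform constants \(c_\Lambda\le q_k\le C_\Lambda\), a contradiction.
\item \textbf{Case \(\{z_k\}\) stays in some \(\overline{D(0,r)}\).} Here \(1-|z_k|^2\) is bounded below and \(1-|\Phi(z_k)|^2\le 1\), so \(q_k\to\infty\) is impossible. Hence \(q_k\to0\), which forces \(|\Phi(z_k)|\to1\). Thin again so that \(\{\Phi(z_k)\}\) decays geometrically toward \(\mathbb T\). By Lemma~\ref{lem:basic-properties-carleson-sequences}(i) it is Carleson; its points are distinct, so the \(z_k\) are distinct too. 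By (i), \(\{z_k\}\in\mathcal C\), which contradicts Lemma~\ref{lem:basic-properties-carleson-sequences}(ii) because infinitely many points lie in \(\overline{D(0,r)}\).
\end{itemize}

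\emph{Sufficiency.} Assume (i) and (ii), and let \(A=\sum_j\lambda_jP_j\) be an arbitrary normal diagonal operator with distinct \(\lambda_j\).
\begin{itemize}
\item \textbf{Nondegenerate case:} every \(P_j\) has rank one and \(\Lambda\in\mathcal C\). Choose an eigenbasis, so \(A=A_\Lambda\). Then (i) gives \(\Phi(\Lambda)\in\mathcal C\), and (ii) restricted to \(\Lambda\) gives the comparability, so Proposition~\ref{prop:fixed-spectrum-preservation} yields \(\mathcal G(A)=\mathcal G(\Phi(A))\).
\item \textbf{Degenerate case:} some \(P_j\) has rank \(>1\), or \(\Lambda\notin\mathcal C\). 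Then \(\mathcal G(A)=\varnothing\), and I must show \(\mathcal G(\Phi(A))=\varnothing\).
\begin{itemize}
\item If \(\operatorname{rank}P_j>1\), the corresponding spectral subspace of \(\Phi(A)\) contains \(P_jH\), so it is not one-dimensional and the generator set is empty.
\item If all ranks are one but \(\Lambda\notin\mathcal C\), then \(\Phi(\Lambda)\notin\mathcal C\) by (i). Either \(\Phi(\Lambda)\) has a repetition, giving a multidimensional spectral subspace of \(\Phi(A)\), or \(\Phi(A)\) has simple spectrum with non-Carleson eigenvalues. In both situations Lemma~\ref{thm:normal-operator-orbit-frame-characterization} gives \(\mathcal G(\Phi(A))=\varnothing\).
\end{itemize}
\end{itemize}
Hence \(\Phi\in\mathcal P\).
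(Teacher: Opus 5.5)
Your proposal is correct and follows essentially the paper's proof. Condition (i) comes from Proposition~\ref{prop:fixed-spectrum-preservation} together with the test vector built from $\Phi(\Lambda)$, condition (ii) from geometric thinning via Lemma~\ref{lem:basic-properties-carleson-sequences}(i),(ii), and sufficiency from the Carleson/non-Carleson split. The differences are only organizational: you prove (ii) with one case split on whether $|z_k|\to1$, where the paper first proves a local lower bound and then the global upper and lower bounds, and you handle higher-rank spectral projections explicitly, where the paper absorbs them into indexed sequences $\Lambda$ with repetitions.
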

\begin{proof}
Let \(\Lambda\in\mathcal C\). Since \(\Phi\in\mathcal P\),
\(
\mathcal G(A_\Lambda)
=
\mathcal G\bigl(\Phi(A_\Lambda)\bigr).
\)
Proposition~\ref{prop:fixed-spectrum-preservation} therefore gives
\(\Phi(\Lambda)\in\mathcal C\).

Conversely, let
\(
\Lambda=\{\lambda_j\}_{j\geq1}\subset\mathbb D
\)
and suppose that
\(
\Phi(\Lambda)\in\mathcal C.
\)
Then we have
\(
\sum_{j\geq 1}\bigl(1-|\Phi(\lambda_j)|^2\bigr)<\infty.
\)
Define
\(
g:=
\sum_{j\geq 1}
\sqrt{1-|\Phi(\lambda_j)|^2}\,e_j.
\)
Then \(g\in H\) and
\(
g\in\mathcal G\bigl(\Phi(A_\Lambda)\bigr)
\)
by Lemma~\ref{thm:normal-operator-orbit-frame-characterization}.
Since \(\Phi\in\mathcal P\),
\(g\in\mathcal G(A_\Lambda).
\)
Another application of
Lemma~\ref{thm:normal-operator-orbit-frame-characterization} yields
\(
\Lambda\in\mathcal C.
\)
This proves condition \textup{(i)}.

It remains to prove condition~\textup{(ii)}. Set
\[
q_\Phi(z):=
\frac{1-|\Phi(z)|^2}{1-|z|^2},
\qquad z\in\mathbb D.
\]

Fix \(0<R<1\). We first show that \(q_\Phi\) is bounded away from zero
on \(\overline{D(0,R)}\). Suppose, to the contrary, that
\(
\inf_{|z|\leq R}q_\Phi(z)=0.
\)
Then there exists a sequence
\(
\{z_n\}_{n\geq 1}\subset\overline{D(0,R)}
\)
such that
\(
q_\Phi(z_n)\rightarrow 0.
\)
Since
\(
1-|\Phi(z_n)|^2
=
q_\Phi(z_n)\bigl(1-|z_n|^2\bigr)
\leq
q_\Phi(z_n),
\)
we have
\(
|\Phi(z_n)|\rightarrow 1.
\)
After passing to a subsequence, we may assume that
\[
1-|\Phi(z_{n+1})|
\leq
\frac14\bigl(1-|\Phi(z_n)|\bigr),
\qquad n\geq 1.
\]

Set
\(
\Lambda:=\{z_n\}_{n\geq1}\). By Lemma~\ref{lem:basic-properties-carleson-sequences}\textup{(i)},
\(
\Phi(\Lambda)\in\mathcal C.
\)
Condition \textup{(i)} gives \(\Lambda\in\mathcal C\). This contradicts
Lemma~\ref{lem:basic-properties-carleson-sequences}\textup{(ii)}, since
\(\Lambda\subset\overline{D(0,R)}\). Therefore,
\(
\inf_{|z|\leq R}q_\Phi(z)>0.
\)

We next show that \(q_\Phi\) is globally bounded from above. Suppose
that
\(
\sup_{z\in\mathbb D}q_\Phi(z)=\infty.
\)
Choose a sequence \(\{z_n\}_{n\geq 1}\subset\mathbb D\)
such that
\(
q_\Phi(z_n)\rightarrow\infty.
\)
Since
\[
q_\Phi(z)
=
\frac{1-|\Phi(z)|^2}{1-|z|^2}
\leq
\frac{1}{1-|z|^2},
\]
we have
\(
|z_n|\rightarrow 1.
\)
After passing to a subsequence, we may assume that
\[
1-|z_{n+1}|
\leq
\frac14\bigl(1-|z_n|\bigr),
\qquad n\geq 1.
\]
Set \(\Lambda=\{z_n\}_{n\ge 1}\). Lemma~\ref{lem:basic-properties-carleson-sequences}\textup{(i)} implies
that \(\Lambda\in\mathcal C\). Since \(\Phi\in\mathcal P\),
Proposition~\ref{prop:fixed-spectrum-preservation} yields a constant \(C_\Lambda<\infty\) such
that for $n\ge1$,\
\(
q_\Phi(z_n)\leq C_\Lambda,
\)
which contradicts \(q_\Phi(z_n)\to\infty\). Therefore, there exists
\(C_\Phi<\infty\) such that
\(
q_\Phi(z)\leq C_\Phi
\)
for all $z\in\mathbb D$.

Finally, we show that \(q_\Phi\) has a global positive lower bound.
Suppose that
\(
\inf_{z\in\mathbb D}q_\Phi(z)=0.
\)
Choose a sequence \(\{z_n\}_{n\geq 1}\subset\mathbb D\)
such that
\(
q_\Phi(z_n)\rightarrow 0.
\)
The local lower bound already proved implies that
\(
|z_n|\rightarrow 1.
\)
After passing to a subsequence, we may assume that
\[
1-|z_{n+1}|
\leq
\frac14\bigl(1-|z_n|\bigr),
\qquad n\geq 1.
\]
Lemma~\ref{lem:basic-properties-carleson-sequences}\textup{(i)} gives
\(
\Lambda:=\{z_n\}_{n\geq 1}\in\mathcal C.
\)
Since \(\Phi\in\mathcal P\),
Proposition~\ref{prop:fixed-spectrum-preservation} yields a constant
\(c_\Lambda>0\) such that, for \(n\geq1\),
\(
q_\Phi(z_n)\geq c_\Lambda,
\)
which contradicts \(q_\Phi(z_n)\to 0\). Thus, there exists
\(c_\Phi>0\) satisfying
\(
q_\Phi(z)\geq c_\Phi
\) for all $z\in\mathbb D$.
Consequently,
\[
c_\Phi\bigl(1-|z|^2\bigr)
\leq
1-|\Phi(z)|^2
\leq
C_\Phi\bigl(1-|z|^2\bigr),
\qquad z\in\mathbb D.
\]
Thus condition~\textup{(ii)} holds.

Conversely, assume that \textup{(i)} and \textup{(ii)} hold. Let
\(
\Lambda=\{\lambda_j\}_{j\geq 1}\subset\mathbb D
\).
If \(\Lambda\notin\mathcal C\), then condition \textup{(i)} gives
\(
\Phi(\Lambda)\notin\mathcal C.
\)
Lemma~\ref{thm:normal-operator-orbit-frame-characterization} then
gives
\(
\mathcal G(A_\Lambda)
=
\varnothing
=
\mathcal G\bigl(\Phi(A_\Lambda)\bigr).
\)

Suppose now that \(\Lambda\in\mathcal C\). By
condition~\textup{(i)},
\(
\Phi(\Lambda)\in\mathcal C.
\)
Moreover, condition~\textup{(ii)} gives
\[
c_\Phi\bigl(1-|\lambda_j|^2\bigr)
\leq
1-|\Phi(\lambda_j)|^2
\leq
C_\Phi\bigl(1-|\lambda_j|^2\bigr),
\qquad j\geq1.
\]
Proposition~\ref{prop:fixed-spectrum-preservation} therefore yields
\(
\mathcal G(A_\Lambda)
=
\mathcal G\bigl(\Phi(A_\Lambda)\bigr).
\)
Combining the two cases, this equality holds for every sequence \( \Lambda\subset\mathbb D\). Hence \(\Phi\in\mathcal P\).
\end{proof}
\section{Construction and Quantitative Control of Boundary Shadows}

For each \(\Phi\in\mathcal P\), we construct its boundary shadow and
establish its Lipschitz regularity and quantitative non-collapse
properties. These results prepare the injectivity and bi-Lipschitz
arguments in the next section.

For later use, we introduce the following definitions and formulas. For \(e^{2\pi i\theta},e^{2\pi i\varphi}\in\mathbb T\), define the normalized
arc-length distance on \(\mathbb T\) by
\[
d_{\mathbb T}\bigl(e^{2\pi i\theta},e^{2\pi i\varphi}\bigr)
:=\min_{k\in\mathbb Z}
\left|\theta-\varphi+k\right|
\in[0,\frac12].
\]
Let
\(
z=r\zeta
\)
and
\(
w=s\eta,
\)
where
\(
0<r,s<1
\)
and
\(
\zeta,\eta\in\mathbb T.
\)
If
\(
\alpha:=d_{\mathbb T}(\zeta,\eta),
\)
then
\begin{equation}
\label{eq:pseudohyperbolic-distance-polar-form}
\rho(z,w)^2
=
\frac{
(r-s)^2+4rs\sin^2(\pi\alpha)
}{
(1-rs)^2+4rs\sin^2(\pi\alpha)
}.
\end{equation}
\begin{definition}
\label{def:bi-lipschitz-circle-mappings}
A bijection \(h:\mathbb T\rightarrow\mathbb T\) is called
\emph{bi-Lipschitz} if there exists a
constant \(L\geq 1\) such that
\[
L^{-1}
d_{\mathbb T}(\zeta,\eta)
\leq
d_{\mathbb T}\bigl(h(\zeta),h(\eta)\bigr)
\leq
L
d_{\mathbb T}(\zeta,\eta),
\qquad
\zeta,\eta\in\mathbb T.
\]
The set of all such mappings is denoted by
\(
\operatorname{BiLip}(\mathbb T).
\)
\end{definition}
For \(z\in\mathbb D\), set
\(
\delta(z):=1-|z|
\). We shall use the following lemma.

\begin{lem}
\label{lem:pseudohyperbolic-polar-estimates}
Let
\(
\{z_n\}_{n\geq1}
\)
and
\(
\{w_n\}_{n\geq1}
\)
be sequences in \(\mathbb D\).
Then the following assertions hold.

\begin{enumerate}
\item[\textup{(i)}]
If
\(
\rho(z_n,w_n)\rightarrow0,
\
|z_n|\rightarrow1,
\) and \(
|w_n|\rightarrow1,
\)
then
\begin{equation}
\label{eq:asymptotic-boundary-defect-comparability}
\frac{\delta(z_n)}{\delta(w_n)}
\rightarrow1
\end{equation}
and
\begin{equation}
\label{eq:asymptotic-angular-control}
\frac{
d_{\mathbb T}
\left(
\dfrac{z_n}{|z_n|},
\dfrac{w_n}{|w_n|}
\right)
}{
\delta(z_n)
}
\rightarrow0.
\end{equation}

\item[\textup{(ii)}]
For every \(M\in(0,1)\), there exists a constant
\(A_M<\infty\), depending only on \(M\), such that whenever
\(
\rho(z,w)\leq M,
\
|z|,|w|\geq\frac12,
\)
one has
\begin{equation}
\label{eq:bounded-pseudohyperbolic-radial-control}
A_M^{-1}\delta(z)
\leq
\delta(w)
\leq
A_M\delta(z)
\end{equation}
and
\begin{equation}
\label{eq:bounded-pseudohyperbolic-angular-control}
d_{\mathbb T}
\left(
\frac{z}{|z|},
\frac{w}{|w|}
\right)
\leq
A_M\delta(z).
\end{equation}
\end{enumerate}
\end{lem}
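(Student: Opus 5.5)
The whole argument runs off the polar formula \eqref{eq:pseudohyperbolic-distance-polar-form}. Write \(z=r\zeta\), \(w=s\eta\) and \(\alpha=d_{\mathbb T}(\zeta,\eta)\), and set
\[
N:=(r-s)^2+4rs\sin^2(\pi\alpha),\qquad D:=(1-rs)^2+4rs\sin^2(\pi\alpha),
\]
so that \(\rho^2=N/D\). Subtracting gives
\[
D-N=(1-r^2)(1-s^2),
\]
and therefore
\[
1-\rho^2=\frac{(1-r^2)(1-s^2)}{D}.
\]
We will use two consequences of this formula.

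\emph{Radial control.} If \(\rho\le M<1\), then \(D\le (1-r^2)(1-s^2)/(1-M^2)\). Since \(D\ge (1-rs)^2\), and since \(1-rs\ge \max(1-r,1-s)\) for \(r,s\in[0,1)\), we get
\[
\max(\delta(z),\delta(w))^2\le \frac{4\,\delta(z)\delta(w)}{1-M^2},
\]
using \(1-r^2\le 2\delta(z)\). This yields \(\delta(w)\le 4(1-M^2)^{-1}\delta(z)\) and the symmetric bound. That proves \eqref{eq:bounded-pseudohyperbolic-radial-control}.

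\emph{Angular control.} From the same bound, \(4rs\sin^2(\pi\alpha)\le D\le C_M\,\delta(z)\delta(w)\le C_M A_M\,\delta(z)^2\). When \(r,s\ge\frac12\) we have \(rs\ge\frac14\), and \(\sin(\pi\alpha)\ge 2\alpha\) on \([0,\frac12]\). Together these give \(\alpha\lesssim \delta(z)\), which is \eqref{eq:bounded-pseudohyperbolic-angular-control} after enlarging \(A_M\). Only elementary constant bookkeeping is involved, and no earlier result of the paper is needed.

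\emph{Part (i).} Now assume \(\rho_n:=\rho(z_n,w_n)\to0\) and \(r_n,s_n\to1\). The identity \(N=\rho^2D\) gives
\[
(r_n-s_n)^2+4r_ns_n\sin^2(\pi\alpha_n)=\rho_n^2 D_n .
\]
Part (ii) with, say, \(M=\frac12\) applies for large \(n\) and gives \(D_n\le C\,\delta(z_n)^2\). Hence
\[
(r_n-s_n)^2\le C\rho_n^2\,\delta(z_n)^2,
\]
so \(|\delta(z_n)-\delta(w_n)|/\delta(z_n)\le C^{1/2}\rho_n\to0\). This gives \eqref{eq:asymptotic-boundary-defect-comparability}. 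Similarly
\[
\sin^2(\pi\alpha_n)\le C\rho_n^2\,\delta(z_n)^2/(4r_ns_n),
\]
and with \(\sin(\pi\alpha)\ge2\alpha\) this yields \(\alpha_n/\delta(z_n)\lesssim\rho_n\to0\), which is \eqref{eq:asymptotic-angular-control}.

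The only potential obstacle is the radial bound in (ii). One must lower-bound \(D\) by \((\max\delta)^2\) rather than use something cruder, and the inequality \(1-rs\ge\max(1-r,1-s)\) handles this directly. Note that the hypothesis \(|z|,|w|\ge\frac12\) is needed only for the angular estimate, to keep \(rs\) bounded below. Since \(z,w\neq0\) there, the polar form applies throughout.
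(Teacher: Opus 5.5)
Your proof is correct, but it is organized around a different inequality than the paper's. You bound the denominator. From $1-\rho^2=(1-r^2)(1-s^2)/D$ and $\rho\le M$ you get the single master estimate $(\max\{\delta(z),\delta(w)\})^2\le D\le \frac{4}{1-M^2}\,\delta(z)\delta(w)$. This yields both halves of (ii), and you then obtain (i) from it through $N=\rho^2D\lesssim\rho^2\delta(z)^2$. Strictly, (i) follows from this intermediate bound on $D$ in your proof, not from the statement of (ii).

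The paper instead bounds the numerator. It rewrites $\rho\le M$ as $a+(1-M^2)b\le M^2c$, with $a=(r-s)^2$, $b=4rs\sin^2(\pi\alpha)$, $c=(1-rs)^2$. From this it reads off $|r-s|\le M(1-rs)$ and $\sin(\pi\alpha)\le \frac{M}{\sqrt{1-M^2}}(1-rs)$. It proves (i) separately: it shows $a_n/c_n\to0$ and $b_n/c_n\to0$, then does the algebra with $x_n=\delta(z_n)$, $y_n=\delta(w_n)$ and $(1-r_ns_n)/x_n\to2$.

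Your route is more economical and makes (i) quantitative. For $\rho\le\frac12$ and $|z|,|w|\ge\frac12$ it gives $|\delta(w)/\delta(z)-1|\le C\rho$ and $d_{\mathbb T}(z/|z|,w/|w|)\le C\rho\,\delta(z)$. The paper's route gives sharper constants in (ii). Its radial ratio $\frac{1+M}{1-M}$ is asymptotically sharp, approached by points on a common radius near $\mathbb T$, and it tends to $1$ as $M\to0$. Your ratio $\frac{4}{1-M^2}$ stays near $4$, which is exactly why you need the extra step through $N=\rho^2D$ to reach (i).
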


\begin{proof}
First we prove \textup{(i)}. Write
\(
z_n=r_n\zeta_n,
\
w_n=s_n\eta_n,
\
\alpha_n:=d_{\mathbb T}(\zeta_n,\eta_n),
\)
and set
\(
q_n:=\rho(z_n,w_n)^2.
\)
For convenience, put
\(
a_n:=(r_n-s_n)^2,
\
b_n:=4r_ns_n\sin^2(\pi\alpha_n),
\
c_n:=(1-r_ns_n)^2.
\)
By \eqref{eq:pseudohyperbolic-distance-polar-form},
\(
q_n=\frac{a_n+b_n}{c_n+b_n}.
\)
Since \(q_n\to0\), we may assume that \(q_n<1\). From
\(
a_n+b_n=q_n(c_n+b_n)
\)
we obtain
\(
(1-q_n)b_n\leq q_nc_n,
\)
and therefore
\begin{equation}
\label{eq:angular-term-relative-to-radial-term}
\frac{b_n}{c_n}
\leq
\frac{q_n}{1-q_n}
\rightarrow0.
\end{equation}
Moreover,
\(
a_n
\leq
q_n(c_n+b_n),
\)
so that
\(
\frac{a_n}{c_n}
\leq
q_n
\left(
1+\frac{b_n}{c_n}
\right)
\rightarrow0.
\)
Consequently,
\(
\frac{|r_n-s_n|}{1-r_ns_n}
\rightarrow0.
\)
Set
\(
x_n:=1-r_n=\delta(z_n),
\
y_n:=1-s_n=\delta(w_n).
\)
Then
\(
|r_n-s_n|=|x_n-y_n|
\)
and
\(
1-r_ns_n
=
x_n+y_n-x_ny_n.
\)
Thus we have
\(
\varepsilon_n
:=
\frac{|x_n-y_n|}
{x_n+y_n-x_ny_n}
\rightarrow0.
\)
Since
\(
x_n+y_n-x_ny_n
\leq
x_n+y_n
\leq
2y_n+|x_n-y_n|,
\)
we have
\(
|x_n-y_n|
\leq
\varepsilon_n
\left(
2y_n+|x_n-y_n|
\right).
\)
For all sufficiently large \(n\), \(\varepsilon_n<1\), and hence
\(
\frac{|x_n-y_n|}{y_n}
\leq
\frac{2\varepsilon_n}{1-\varepsilon_n}
\rightarrow0.
\)
It follows that
\(
\frac{x_n}{y_n}\rightarrow1,
\)
which proves
\eqref{eq:asymptotic-boundary-defect-comparability}.

It remains to prove
\eqref{eq:asymptotic-angular-control}. By
\eqref{eq:angular-term-relative-to-radial-term},
\(
\frac{
2\sqrt{r_ns_n}\sin(\pi\alpha_n)
}{
1-r_ns_n
}
\rightarrow0.
\)
Since \(r_n,s_n\to1\), it follows that
\(
\frac{\sin(\pi\alpha_n)}{1-r_ns_n}
\rightarrow0.
\)
Because \(0\leq\alpha_n\leq\frac12\),
\(\sin(\pi\alpha_n)\geq 2\alpha_n,
\)
thus
\(
\frac{\alpha_n}{1-r_ns_n}
\leq
\frac{\sin(\pi\alpha_n)}{2(1-r_ns_n)}
\rightarrow0.
\)
Furthermore,
\(
\frac{1-r_ns_n}{x_n}
=
\frac{x_n+y_n-x_ny_n}{x_n}
=
1+\frac{y_n}{x_n}-y_n
\rightarrow2,
\)
because \(x_n/y_n\to1\) and \(y_n\to0\). Therefore,
\(
\frac{\alpha_n}{x_n}
=
\frac{\alpha_n}{1-r_ns_n}
\frac{1-r_ns_n}{x_n}
\rightarrow0.
\)
Since
\(
\alpha_n
=
d_{\mathbb T}
\left(
\frac{z_n}{|z_n|},
\frac{w_n}{|w_n|}
\right),
\
x_n=\delta(z_n),
\)
this proves \eqref{eq:asymptotic-angular-control}.

We finally prove \textup{(ii)}. Let
\(
r:=|z|,
\
s:=|w|,
\
\alpha
:=
d_{\mathbb T}
\left(
\frac{z}{|z|},
\frac{w}{|w|}
\right).
\)
Set
\(
a:=(r-s)^2,
\
b:=4rs\sin^2(\pi\alpha),
\
c:=(1-rs)^2.
\)
Since \(\rho(z,w)\leq M\), formula
\eqref{eq:pseudohyperbolic-distance-polar-form} yields
\(
a+b
\leq
M^2(c+b).
\)
Equivalently,
\begin{equation}
\label{eq:basic-bounded-pseudohyperbolic-estimate}
a+(1-M^2)b
\leq
M^2c.
\end{equation}

In particular,
\(
|r-s|
\leq
M(1-rs).
\)
Let
\(
x:=1-r=\delta(z),
\
y:=1-s=\delta(w).
\)
Then
\(
|x-y|
\leq
M(x+y-xy)
\leq
M(x+y).
\)
The inequalities
\(
x-y\leq M(x+y)
\)
and
\(
y-x\leq M(x+y)
\)
give
\(
(1-M)x\leq(1+M)y
\)
and
\(
(1-M)y\leq(1+M)x.
\)
Consequently,
\begin{equation}
\label{eq:explicit-radial-comparability}
\frac{1-M}{1+M}\,x
\leq
y
\leq
\frac{1+M}{1-M}\,x.
\end{equation}
We next estimate the angular distance. From
\eqref{eq:basic-bounded-pseudohyperbolic-estimate},
\(
4rs(1-M^2)\sin^2(\pi\alpha)
\leq
M^2(1-rs)^2.
\)
Hence
\(
2\sqrt{rs}\sin(\pi\alpha)
\leq
\frac{M}{\sqrt{1-M^2}}(1-rs).
\)
Since \(r,s\geq\frac12\), we have \(2\sqrt{rs}\geq1\), and therefore
\(
\sin(\pi\alpha)
\leq
\frac{M}{\sqrt{1-M^2}}(1-rs).
\)
Using again
\(
\sin(\pi\alpha)\geq2\alpha,
\)
we obtain
\(
\alpha
\leq
\frac{M}{2\sqrt{1-M^2}}(1-rs).
\)
By \eqref{eq:explicit-radial-comparability},
\(
y
\leq
\frac{1+M}{1-M}x.
\)
Therefore,
\(
1-rs
=
x+ry
\leq
x+y
\leq
\left(
1+\frac{1+M}{1-M}
\right)x
=
\frac{2}{1-M}x.
\)
It follows that
\begin{equation}
\label{eq:explicit-angular-bound}
\alpha
\leq
\frac{M}
{(1-M)\sqrt{1-M^2}}
\,x.
\end{equation}

Thus we may take
\(
A_M
:=
\max
\left\{
\frac{1+M}{1-M},
\frac{M}
{(1-M)\sqrt{1-M^2}}
\right\}.
\)
Then \eqref{eq:explicit-radial-comparability} gives
\eqref{eq:bounded-pseudohyperbolic-radial-control}, while
\eqref{eq:explicit-angular-bound} gives
\eqref{eq:bounded-pseudohyperbolic-angular-control}.
\end{proof}
We shall use the following sparse two-point packet criterion for constructing Carleson sequences.

\begin{lem}
\label{lem:general-sparse-two-point-packets}
Let
\(
u_n,v_n\in\mathbb D
\)
for \(n\geq1\). Suppose that there exist a sequence
\(
s_n\downarrow0
\)
and constants
\(
0<a\leq b<\infty,
\
\varepsilon>0,
\
q\in(0,1),
\)
such that
\(
a s_n
\leq
\delta(u_n),\delta(v_n)
\leq
b s_n,
\)
\(
s_{n+1}\leq q s_n,
\)
and
\(
\rho(u_n,v_n)\geq\varepsilon.
\)
If \(q\leq\frac{a}{2b}\),
\(
\Gamma
:=
\{u_n,v_n:n\geq1\}
\in\mathcal C.
\)
\end{lem}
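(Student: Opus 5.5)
We verify the two conditions of Lemma~\ref{thm:carleson-interpolation} for \(\Gamma\): pairwise separation, and the Carleson measure condition for \(\mu_\Gamma\). The key input is that the points of different packets live at very different scales. For \(m>n\), the hypotheses give
\[
\delta(u_m),\delta(v_m)\le b s_m\le b q^{m-n}s_n\le \tfrac{a}{2}q^{m-n-1}s_n ,
\]
while \(\delta(u_n),\delta(v_n)\ge a s_n\).

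\emph{Separation.} Let \(z\) belong to packet \(n\) and \(w\) to packet \(m>n\). The above gives \(\delta(w)\le \tfrac12\delta(z)\). Writing \(x=\delta(z)\) and \(y=\delta(w)\), formula \eqref{eq:pseudohyperbolic-distance-polar-form} yields
\[
\rho(z,w)\ge \frac{|r-s|}{1-rs}=\frac{x-y}{x+y-xy}\ge \frac{x-y}{x+y}\ge\frac13 .
\]
Within a packet, the only distinct pair is \(u_n,v_n\), and \(\rho(u_n,v_n)\ge\varepsilon\) by hypothesis. Hence \(\Gamma\) is separated with constant \(\min\{\varepsilon,1/3\}\).

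Two remarks on indexing. Points of different packets are automatically distinct, since their defects differ by a factor of at least \(2\). If \(u_n=v_n\) were allowed, separation would fail; this case is excluded because \(\varepsilon>0\).

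\emph{Carleson measure.} Fix an arc \(I\). A point \(\lambda\in S(I)\) satisfies \(\delta(\lambda)\le |I|\). Since \(1-|\lambda|^2\le 2\delta(\lambda)\), we have
\[
\mu_\Gamma(S(I))\le 2\sum_{n:\ a s_n\le |I|}\bigl(\delta(u_n)+\delta(v_n)\bigr)\le 4b\sum_{n\ge n_0} s_n ,
\]
where \(n_0\) is the first index with \(a s_{n_0}\le|I|\). The sequence \(s_n\) decays geometrically, so \(\sum_{n\ge n_0}s_n\le s_{n_0}/(1-q)\le |I|/(a(1-q))\). Thus
\[
\mu_\Gamma(S(I))\le \frac{4b}{a(1-q)}\,|I| .
\]
If no such \(n_0\) exists, then \(S(I)\cap\Gamma=\varnothing\) and there is nothing to prove.

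Both conditions of Lemma~\ref{thm:carleson-interpolation} hold, so \(\Gamma\in\mathcal C\). No angular information is used anywhere. The only point needing care is the choice \(q\le a/(2b)\): it is exactly what makes consecutive packets radially separated by a factor of \(2\), which gives the cross-packet lower bound \(1/3\). All remaining steps are routine.
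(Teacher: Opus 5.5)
Your proof is correct and follows the paper's argument. Cross-packet separation comes from the radial bound $\rho(z,w)\ge (x-y)/(x+y-xy)\ge (x-y)/(x+y)$, where your observation $\delta(w)\le\frac12\delta(z)$ gives the cleaner constant $\frac13$ in place of the paper's $\frac{a}{4b}$; the Carleson condition uses the same geometric-series estimate $\mu_\Gamma(S(I))\le\frac{4b}{a(1-q)}|I|$. The polar formula presumes $z,w\neq0$, a case the paper checks separately and you skip, but when $z=0$ the radial bound holds trivially, with equality.
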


\begin{proof}
We verify the pairwise separation and the Carleson measure condition
for \(\Gamma\).
We first prove pairwise separation. Since \(\rho(u_n,v_n)\geq\varepsilon,\)
it remains to compare points belonging to different levels. Let
\(m>n\), and choose
\(
z\in\{u_n,v_n\},
w\in\{u_m,v_m\}.
\)
Write
\(
x:=\delta(z)=1-|z|,
y:=\delta(w)=1-|w|.
\)
By \(s_{n+1}\leq q s_n\), we have
\(
s_m\leq q^{m-n}s_n\leq q s_n.
\)
Consequently,
\(
x\geq a s_n,
y\leq b s_m\leq bq s_n.
\)
Under assumption
\(q\leq\frac{a}{2b}\), we have
\(
y\leq bq s_n\leq\frac{a}{2}s_n<x,
\)
and hence \(|z|<|w|\).

For arbitrary points \(z,w\in\mathbb D\), one has
\begin{equation}
\rho(z,w)
\geq
\frac{\bigl||z|-|w|\bigr|}
{1-|z||w|}.
\label{eq:pseudohyperbolic-distance-dominates-radial-distance}
\end{equation}
Indeed, if \(zw=0\), the inequality is immediate. Otherwise, let
\(r=|z|\), \(t=|w|\), and let \(B\geq0\) denote the angular term in
\eqref{eq:pseudohyperbolic-distance-polar-form}. Then
\(
\rho(z,w)^2
=
\frac{(r-t)^2+B}{(1-rt)^2+B}
\geq
\frac{(r-t)^2}{(1-rt)^2},
\)
because
\(
(1-rt)^2-(r-t)^2
=
(1-r^2)(1-t^2)
\geq0.
\)

Since
\(
\bigl||z|-|w|\bigr|=x-y
\)
and
\(
1-|z||w|
=
1-(1-x)(1-y)
=
x+y-xy,
\)
we obtain from
\eqref{eq:pseudohyperbolic-distance-dominates-radial-distance} that
\(
\rho(z,w)
\geq
\frac{x-y}{x+y-xy}
\geq
\frac{x-y}{x+y}.
\)
Using the bounds for \(x\) and \(y\), we find
\(
\rho(z,w)
\geq
\frac{a s_n-bq s_n}
{b s_n+bq s_n}
=
\frac{a-bq}{b(1+q)}.
\)
Since \(q\leq a/(2b)\) and \(a\leq b\), one has
\(
a-bq\geq\frac{a}{2},
1+q\leq2.
\)
Therefore,
\(
\rho(z,w)\geq\frac{a}{4b}.
\)
Thus every two distinct points \(z,w\in\Gamma\) satisfy
\(
\rho(z,w)
\geq
\min\left\{
\varepsilon,\frac{a}{4b}
\right\}
>0.
\)
Hence \(\Gamma\) is pairwise separated.

We next verify the Carleson measure condition. Let
\(I\subset\mathbb T\) be an arbitrary arc. If one of the points
\(u_n\) or \(v_n\) belongs to \(S(I)\), then
\(
\delta(u_n)\leq |I|
\ \text{or}\
\delta(v_n)\leq |I|.
\)
In either case,
\(
a s_n\leq |I|,
\)
and therefore
\(
s_n\leq\frac{|I|}{a}.
\)

At the \(n\)-th level, the total contribution to the discrete measure
associated with \(\Gamma\) satisfies
\(
\bigl(1-|u_n|^2\bigr)
+
\bigl(1-|v_n|^2\bigr)
\leq
2\delta(u_n)+2\delta(v_n)
\leq
4b s_n.
\)
If no \(n\) satisfies
\(s_n\leq\frac{|I|}{a}\), then
\(\mu_\Gamma(S(I))=0\). Otherwise, let
\(
N
:=
\min
\left\{
n\geq1:
s_n\leq\frac{|I|}{a}
\right\}.
\)
Every level contributing to \(S(I)\) has index \(n\geq N\). Moreover, for \(k\ge0\) we have
\(
s_{N+k}\leq q^k s_N.
\)
It follows that
\[
\mu_\Gamma\bigl(S(I)\bigr)
\leq
4b\sum_{n=N}^{\infty}s_n
\leq
4b s_N\sum_{k=0}^{\infty}q^k
=
\frac{4b}{1-q}s_N
\leq
\frac{4b}{a(1-q)}|I|.
\]
Thus \(\mu_\Gamma\) is a Carleson measure.

Since \(\Gamma\) is pairwise separated and its associated discrete
measure is a Carleson measure, Lemma~\ref{thm:carleson-interpolation}
yields
\(
\Gamma\in\mathcal C.
\)
\end{proof}

\begin{prop}
\label{prop:basic-properties-of-universal-preservers}
Let \(\Phi\in\mathcal P\). Then the following assertions hold:
\begin{enumerate}
\item[\textup{(i)}]
The mapping \(\Phi\) is injective.

\item[\textup{(ii)}]
For any sequences
\(
\{z_n\}_{n\geq1},
\{w_n\}_{n\geq1}
\subset\mathbb D
\)
satisfying
\(
|z_n|\to1
\)
and
\(
|w_n|\to1,
\)
one has
\[
\rho(z_n,w_n)\to0
\quad\Longleftrightarrow\quad
\rho\bigl(\Phi(z_n),\Phi(w_n)\bigr)\to0.
\]
\end{enumerate}
\end{prop}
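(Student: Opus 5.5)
For \textup{(i)} the plan is to argue by contradiction using finite perturbations. Suppose \(\Phi(z)=\Phi(w)\) for some \(z\neq w\). Take a radial sequence \(\Lambda_0=\{t_n\}\) with \(1-t_{n+1}\le\frac14(1-t_n)\) and \(t_n>\max\{|z|,|w|\}\); by Lemma~\ref{lem:basic-properties-carleson-sequences}\textup{(i)}, \(\Lambda_0\in\mathcal C\). Then \(\Lambda_0\cup\{z,w\}\) is a finite perturbation of \(\Lambda_0\) by distinct points, so it lies in \(\mathcal C\) by Lemma~\ref{lem:basic-properties-carleson-sequences}\textup{(iii)}. Its image sequence, however, repeats the value \(\Phi(z)=\Phi(w)\) at two indices. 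In Definition~\ref{def:carleson-sequence} the factor \(\rho(\Phi(z),\Phi(w))=0\) appears in the product, so \(\delta=0\) and the image sequence is not Carleson. This contradicts Theorem~\ref{thm:universal-orbit-frame-preservation}\textup{(i)}.

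For \textup{(ii)}, by symmetry I will prove that \(\rho(z_n,w_n)\to0\) implies \(\rho(\Phi(z_n),\Phi(w_n))\to0\) using only that \(\Phi\) maps \(\mathcal C\) into \(\mathcal C\) and satisfies condition~\textup{(ii)} of Theorem~\ref{thm:universal-orbit-frame-preservation}. The converse direction will then follow by running the same argument, but using that \(\Phi(\Lambda)\in\mathcal C\) forces \(\Lambda\in\mathcal C\).

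Forward direction. Suppose that \(\rho(z_n,w_n)\to0\) but, along a subsequence, \(\rho(\Phi(z_n),\Phi(w_n))\ge\varepsilon>0\). Set \(u_n=\Phi(z_n)\), \(v_n=\Phi(w_n)\) and \(s_n=\delta(z_n)\).
\begin{enumerate}
\item[1.] Lemma~\ref{lem:pseudohyperbolic-polar-estimates}\textup{(i)} gives \(\delta(w_n)/\delta(z_n)\to1\). Since \(1-|\zeta|^2\asymp\delta(\zeta)\), condition~\textup{(ii)} then yields constants \(a\le b\) with \(as_n\le\delta(u_n),\delta(v_n)\le bs_n\).
\item[2.] Pass to a further subsequence with \(s_{n+1}\le qs_n\) and \(q\le a/(2b)\).
\item[3.] Lemma~\ref{lem:general-sparse-two-point-packets} shows \(\{u_n,v_n\}\in\mathcal C\). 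By condition~\textup{(i)}, the interleaved sequence \(\{z_n,w_n\}\) then lies in \(\mathcal C\).
\item[4.] This sequence is not pairwise separated, since \(\rho(z_n,w_n)\to0\). This contradicts Lemma~\ref{thm:carleson-interpolation}\textup{(i)}.
\end{enumerate}

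Converse direction. Suppose \(\rho(\Phi(z_n),\Phi(w_n))\to0\) but \(\rho(z_n,w_n)\ge\varepsilon\) along a subsequence. Here the hypotheses \(|z_n|,|w_n|\to1\) are used: condition~\textup{(ii)} turns them into \(|\Phi(z_n)|,|\Phi(w_n)|\to1\).
\begin{enumerate}
\item[1.] Applying Lemma~\ref{lem:pseudohyperbolic-polar-estimates}\textup{(i)} to the image points gives \(\delta(\Phi(w_n))/\delta(\Phi(z_n))\to1\). Condition~\textup{(ii)} transfers this to \(\delta(z_n)\asymp\delta(w_n)\asymp\delta(\Phi(z_n))\) with uniform constants.
\item[2.] After a sparse subsequence, Lemma~\ref{lem:general-sparse-two-point-packets} gives \(\{z_n,w_n\}\in\mathcal C\).
\item[3.] Then \(\{\Phi(z_n),\Phi(w_n)\}\in\mathcal C\), which contradicts pairwise separation of the images.
\end{enumerate}

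The main point of care is the constants in Lemma~\ref{lem:general-sparse-two-point-packets}. The packet points must satisfy two-sided bounds against a single scale \(s_n\), and this comes precisely from combining the asymptotic comparability in Lemma~\ref{lem:pseudohyperbolic-polar-estimates}\textup{(i)} with the global defect comparability of condition~\textup{(ii)}. One must also check that the subsequence can be chosen with ratio \(q\le a/(2b)\); this is possible because \(s_n\to0\), which follows from \(|z_n|\to1\).
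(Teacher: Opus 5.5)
Your proof is correct and follows essentially the same route as the paper: injectivity comes from a finite perturbation of a Carleson sequence, and (ii) comes from passing to a sparse subsequence, applying Lemma~\ref{lem:general-sparse-two-point-packets} (with the scale supplied by Lemma~\ref{lem:pseudohyperbolic-polar-estimates}\textup{(i)} and the defect comparability), and contradicting pairwise separation on the other side. One small slip is in your overview sentences, which have the two halves of Theorem~\ref{thm:universal-orbit-frame-preservation}\textup{(i)} reversed: the forward direction uses \(\Phi(\Lambda)\in\mathcal C\Rightarrow\Lambda\in\mathcal C\) and the converse uses \(\Lambda\in\mathcal C\Rightarrow\Phi(\Lambda)\in\mathcal C\), as your numbered steps correctly do, and this is harmless since both implications are available.
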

\begin{proof}
We first prove \textup{(i)}. Suppose that
\(
\Phi(z)=\Phi(w)
\)
for some distinct \(z,w\in\mathbb D\). Choose a Carleson sequence
\(
\Gamma\subset\mathbb D\setminus\{z,w\}.
\)
By Lemma~\ref{lem:basic-properties-carleson-sequences}(iii),
\(
\Lambda:=\Gamma\cup\{z,w\}\in\mathcal C.
\)
However, \(\Phi(\Lambda)\) contains the repeated point
\(\Phi(z)=\Phi(w)\), and hence
\(
\Phi(\Lambda)\notin\mathcal C.
\)
This contradicts
Theorem~\ref{thm:universal-orbit-frame-preservation}\textup{(i)}.
Therefore, \(\Phi\) is injective.

We next prove \textup{(ii)}. By
Theorem~\ref{thm:universal-orbit-frame-preservation}\textup{(ii)},
there exist constants \(A,B>0\) such that
\begin{equation}
A\delta(z)
\leq
\delta\bigl(\Phi(z)\bigr)
\leq
B\delta(z),
\qquad z\in\mathbb D.
\label{eq:boundary-defect-comparability-for-P}
\end{equation}
In particular,
\(
|z_n|\to1
\Longleftrightarrow
|\Phi(z_n)|\to1.
\)
Assume first that
\(
\rho(z_n,w_n)\to0.
\)
Suppose, to the contrary, that
\(
\rho(\Phi(z_n),\Phi(w_n))\not\to0.
\)
After passing to a subsequence, there exists \(\varepsilon>0\) such that
\(
\rho\bigl(\Phi(z_n),\Phi(w_n)\bigr)\geq\varepsilon
\) for \(n\geq1\).
By Lemma~\ref{lem:pseudohyperbolic-polar-estimates}\textup{(i)},
\(
\frac{\delta(w_n)}{\delta(z_n)}\to1.
\)
Hence, after deleting finitely many terms,
\(
\frac12\delta(z_n)
\leq
\delta(w_n)
\leq
2\delta(z_n).
\)
Set
\(
s_n:=\delta(z_n).
\)
Using \eqref{eq:boundary-defect-comparability-for-P}, we obtain
\(
\frac{A}{2}s_n
\leq
\delta\bigl(\Phi(z_n)\bigr),
\delta\bigl(\Phi(w_n)\bigr)
\leq
2Bs_n.
\)
Passing to a further subsequence, we may assume that
\(
s_{n+1}\leq q s_n,
\)
where \(q>0\) is sufficiently small for
Lemma~\ref{lem:general-sparse-two-point-packets}. That lemma gives
\(
\{\Phi(z_n),\Phi(w_n):n\geq1\}\in\mathcal C.
\)
On the other hand,
\(
\rho(z_n,w_n)\to0.
\)
Since
\(
\rho\bigl(\Phi(z_n),\Phi(w_n)\bigr)\geq\varepsilon
\)
and \(\Phi\) is injective, one has \(z_n\neq w_n\) for every \(n\).
Hence the sequence
\(
\{z_n,w_n:n\geq1\}
\)
is not pairwise separated and therefore does not belong to
\(\mathcal C\). This contradicts
\(
\{\Phi(z_n),\Phi(w_n):n\geq1\}\in\mathcal C
\)
and the bidirectional preservation of Carleson sequences.
Therefore,
\(
\rho(z_n,w_n)\to0
\Longrightarrow
\rho\bigl(\Phi(z_n),\Phi(w_n)\bigr)\to0.
\)

Conversely, assume that
\(
\rho\bigl(\Phi(z_n),\Phi(w_n)\bigr)\to0.
\)
Suppose that
\(
\rho(z_n,w_n)\not\to0.
\)
After passing to a subsequence, there exists \(\varepsilon>0\) such that
\(
\rho(z_n,w_n)\geq\varepsilon
\) for \(n\geq1\).
Since
\(
|\Phi(z_n)|,|\Phi(w_n)|\to1,
\)
Lemma~\ref{lem:pseudohyperbolic-polar-estimates}\textup{(i)} yields
\(
\frac{
\delta(\Phi(w_n))
}{
\delta(\Phi(z_n))
}
\to1.
\)
Set
\(
s_n:=\delta(\Phi(z_n)).
\)
After deleting finitely many terms and using
\eqref{eq:boundary-defect-comparability-for-P}, we obtain constants
\(a,b>0\), independent of \(n\), such that
\(
a s_n
\leq
\delta(z_n),\delta(w_n)
\leq
b s_n.
\)
Passing to a further subsequence, we may again assume that
\(
s_{n+1}\leq q s_n
\)
with \(q>0\) sufficiently small. Hence
Lemma~\ref{lem:general-sparse-two-point-packets} gives
\(
\{z_n,w_n:n\geq1\}\in\mathcal C.
\)
But
\(
\rho(\Phi(z_n),\Phi(w_n))\to0,
\)
so its image is not pairwise separated and therefore does not belong to
\(\mathcal C\), again a contradiction.

Thus,
\(
\rho(z_n,w_n)\to0
\Longleftrightarrow
\rho\bigl(\Phi(z_n),\Phi(w_n)\bigr)\to0.
\)
\end{proof}
From Proposition \ref{prop:basic-properties-of-universal-preservers}(ii), we readily obtain the following corollary.
\begin{cor}
\label{cor:uniform-small-scale-control}
Let \(\Phi\in\mathcal P\). For every \(\varepsilon>0\), there exist
\(r_\varepsilon\in(0,1)\) and \(\eta_\varepsilon>0\) such that, whenever
\(z,w\in\mathbb D\) satisfy
\(
|z|,|w|>r_\varepsilon,
\) the following implications hold:
\begin{enumerate}
\item[\textup{(i)}]
\(
\rho(z,w)<\eta_\varepsilon
\quad\Longrightarrow\quad
\rho\bigl(\Phi(z),\Phi(w)\bigr)<\varepsilon.
\)
\item[\textup{(ii)}]
\(
\rho\bigl(\Phi(z),\Phi(w)\bigr)<\eta_\varepsilon
\quad\Longrightarrow\quad
\rho(z,w)<\varepsilon.
\)
\end{enumerate}
\end{cor}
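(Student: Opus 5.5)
The plan is a standard compactness (contradiction) argument that upgrades the sequential statement of Proposition~\ref{prop:basic-properties-of-universal-preservers}\textup{(ii)} to uniform thresholds. I would first prove a uniform version of each implication separately, with its own radius and threshold, and then take the larger radius and the smaller threshold.

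For \textup{(i)}, fix \(\varepsilon>0\) and suppose no pair \(r_\varepsilon,\eta_\varepsilon\) works. Then for each \(n\) the choice \(r=1-1/n\), \(\eta=1/n\) fails. So there are \(z_n,w_n\in\mathbb D\) with
\[
|z_n|,|w_n|>1-\tfrac1n,\qquad \rho(z_n,w_n)<\tfrac1n,\qquad \rho\bigl(\Phi(z_n),\Phi(w_n)\bigr)\geq\varepsilon .
\]
Thus \(|z_n|\to1\), \(|w_n|\to1\) and \(\rho(z_n,w_n)\to0\). The forward implication of Proposition~\ref{prop:basic-properties-of-universal-preservers}\textup{(ii)} then gives \(\rho(\Phi(z_n),\Phi(w_n))\to0\), contradicting the lower bound \(\varepsilon\). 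This produces \(r^{(1)}_\varepsilon\) and \(\eta^{(1)}_\varepsilon\).

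For \textup{(ii)} the argument is symmetric. If it fails, we get \(z_n,w_n\) with
\[
|z_n|,|w_n|>1-\tfrac1n,\qquad \rho\bigl(\Phi(z_n),\Phi(w_n)\bigr)<\tfrac1n,\qquad \rho(z_n,w_n)\geq\varepsilon .
\]
The moduli \(|z_n|,|w_n|\) again tend to \(1\), which is exactly the hypothesis the proposition needs. Its reverse implication gives \(\rho(z_n,w_n)\to0\), a contradiction. This produces \(r^{(2)}_\varepsilon\) and \(\eta^{(2)}_\varepsilon\).

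Finally set \(r_\varepsilon:=\max\{r^{(1)}_\varepsilon,r^{(2)}_\varepsilon\}\) and \(\eta_\varepsilon:=\min\{\eta^{(1)}_\varepsilon,\eta^{(2)}_\varepsilon\}\). Both implications are monotone in these parameters: enlarging the radius restricts the admissible pairs, and shrinking the threshold strengthens the hypothesis. So both still hold with the common constants.

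There is no genuine obstacle here. The one point to watch is that the counterexample sequences must satisfy \(|z_n|,|w_n|\to1\), since the proposition is stated only under that hypothesis. Letting the radius threshold \(1-1/n\) tend to \(1\) along with \(\eta=1/n\) guarantees this.
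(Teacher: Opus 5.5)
Your proof is correct. It is the routine contradiction argument that upgrades the sequential statement to uniform thresholds, which is what the paper means when it says the corollary follows readily from Proposition~\ref{prop:basic-properties-of-universal-preservers}\textup{(ii)}. The only cosmetic point is to start at \(n\geq2\), so that \(r=1-1/n\) lies in \((0,1)\).
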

For \(z\in\mathbb D\) sufficiently close to \(\mathbb T\),
\eqref{eq:boundary-defect-comparability-for-P} implies that
\(
\Phi(z)\neq 0.
\)
Hence we may define
\(
\widehat{\Phi}(z)
:=
\frac{\Phi(z)}{|\Phi(z)|}
\in\mathbb T.
\)
The following proposition associates with each \(\Phi\in\mathcal P\) a boundary shadow and gives a uniform first-order estimate for its angular behavior near \(\mathbb T\).

\begin{prop}
\label{prop:existence-of-boundary-shadow}
Let
\(
\Phi\in\mathcal P.
\)
Then there exists a unique mapping
\(
h_\Phi:\mathbb T\rightarrow\mathbb T
\)
and there exist constants \(A<\infty\) and \(t_0>0\) such that
\begin{equation}
\label{eq:boundary-shadow-first-order-error}
d_{\mathbb T}
\left(
\widehat{\Phi}\bigl((1-t)\zeta\bigr),
h_\Phi(\zeta)
\right)
\leq At,
\qquad
\zeta\in\mathbb T,\quad 0<t<t_0.
\end{equation}
In particular,
\(
h_\Phi(\zeta)
=
\lim_{t\to0^+}
\widehat{\Phi}\bigl((1-t)\zeta\bigr)
\) for \(\zeta\in\mathbb T\)
and the convergence is uniform with respect to
\(\zeta\in\mathbb T\).
\end{prop}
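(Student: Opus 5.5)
The plan is to control the angular drift of \(\widehat\Phi\) along each radius scale by scale, and then sum a geometric series. Fix once and for all \(\varepsilon:=\frac12\) in Corollary~\ref{cor:uniform-small-scale-control}, obtaining \(r_{1/2}\) and \(\eta:=\eta_{1/2}\). By \eqref{eq:boundary-defect-comparability-for-P} there are constants \(A_0,B_0>0\) with \(A_0\delta(z)\le\delta(\Phi(z))\le B_0\delta(z)\). Choose \(t_0>0\) so small that the following hold for \(0<t<t_0\):
\begin{enumerate}
\item[(a)] \(1-t>r_{1/2}\);
\item[(b)] \(\Phi((1-t)\zeta)\neq0\);
\item[(c)] \(|\Phi((1-t)\zeta)|\ge\frac12\), which is possible since \(\delta(\Phi(z))\le B_0\delta(z)\).
\end{enumerate}
Uniqueness of \(h_\Phi\) is immediate once the uniform limit exists, because \eqref{eq:boundary-shadow-first-order-error} forces \(h_\Phi(\zeta)\) to be the limit of \(\widehat\Phi((1-t)\zeta)\).

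\emph{Step 1 (one small radial step).} For \(0<t'\le t<t_0\) on the same radius, formula \eqref{eq:pseudohyperbolic-distance-polar-form} with \(\alpha=0\) gives
\[
\rho\bigl((1-t)\zeta,(1-t')\zeta\bigr)=\frac{t-t'}{t+t'-tt'}\le\frac{t-t'}{t}.
\]
Hence if \(t'\ge(1-\eta/2)t\), this distance is less than \(\eta\). Corollary~\ref{cor:uniform-small-scale-control}(i) then gives \(\rho(\Phi((1-t)\zeta),\Phi((1-t')\zeta))<\frac12\). Lemma~\ref{lem:pseudohyperbolic-polar-estimates}(ii) with \(M=\frac12\) applies, since both image points have modulus at least \(\frac12\) by (c), and it yields
\[
d_{\mathbb T}\bigl(\widehat\Phi((1-t)\zeta),\widehat\Phi((1-t')\zeta)\bigr)\le A_{1/2}\,\delta\bigl(\Phi((1-t)\zeta)\bigr)\le A_{1/2}B_0\,t .
\]

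\emph{Step 2 (chaining across one dyadic scale).} Let \(N\) be the least integer with \((1-\eta/2)^N\le\frac12\); it depends only on \(\eta\). Any \(t'\in[t/2,t]\) is reached from \(t\) by at most \(N\) steps of the type in Step 1, each starting at a parameter \(\le t\). The triangle inequality on \(\mathbb T\) therefore gives
\[
d_{\mathbb T}\bigl(\widehat\Phi((1-t)\zeta),\widehat\Phi((1-t')\zeta)\bigr)\le N A_{1/2}B_0\,t,\qquad t'\in[t/2,t].
\]

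\emph{Step 3 (summation).} For \(0<t''<t<t_0\), pass through the intermediate scales \(t,t/2,t/4,\dots\) down to \(t''\). The angular drift is then bounded by
\[
NA_{1/2}B_0\,t\sum_{k\ge0}2^{-k}=2NA_{1/2}B_0\,t .
\]
Thus \(t\mapsto\widehat\Phi((1-t)\zeta)\) is uniformly Cauchy as \(t\to0^+\), with a modulus independent of \(\zeta\). Completeness of \(\mathbb T\) gives a limit \(h_\Phi(\zeta)\), and letting \(t''\to0\) yields \eqref{eq:boundary-shadow-first-order-error} with \(A:=2NA_{1/2}B_0\). Uniform convergence follows from the same bound.

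The main obstacle is that Proposition~\ref{prop:basic-properties-of-universal-preservers} and Corollary~\ref{cor:uniform-small-scale-control} only give \emph{qualitative}, small-scale control, whereas a first-order rate is needed. I would handle this exactly by the chaining in Step 2. The key point is that a fixed hyperbolic step size \(\eta\) yields a number \(N\) of steps per dyadic scale that is independent of the scale. Combined with the linear angular bound of Lemma~\ref{lem:pseudohyperbolic-polar-estimates}(ii) and the radial comparability \eqref{eq:boundary-defect-comparability-for-P}, this turns the qualitative continuity into the linear estimate \(O(t)\).
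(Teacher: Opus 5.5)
Your proposal is correct and follows essentially the paper's argument. Both derive a one-step angular estimate of order \(t\) for radial points whose defects have ratio close to \(1\), using Corollary~\ref{cor:uniform-small-scale-control}(i) with \(M=\frac12\), Lemma~\ref{lem:pseudohyperbolic-polar-estimates}(ii), and the defect comparability, and then sum a geometric series. The only difference is bookkeeping: the paper chains along the single sequence \(t_n=\lambda^n t_0\) and interpolates, while you group \(N\) such steps into dyadic blocks.
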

\begin{proof}
By \eqref{eq:boundary-defect-comparability-for-P}, there exists
\(B>0\) such that
\begin{equation}
\label{eq:boundary-defect-upper-bound-for-shadow}
\delta\bigl(\Phi(z)\bigr)
\leq
B\delta(z),
\qquad z\in\mathbb D.
\end{equation}

Fix \(M=\frac12\). By
Corollary~\ref{cor:uniform-small-scale-control}\textup{(i)}, there exist
\(r_*\in(0,1)\) and \(\eta_*>0\) such that
\[
|z|,|w|>r_*,
\qquad
\rho(z,w)<\eta_*
\quad\Longrightarrow\quad
\rho\bigl(\Phi(z),\Phi(w)\bigr)<M.
\]
Choose \(\lambda\in(0,1)\) so close to \(1\) that
\(
\frac{1-\lambda}{\lambda}<\eta_*,
\)
and choose
\(
0<t_0<
\min\left\{1-r_*,\frac{1}{2B}\right\}.
\)

For \(0<t\leq t_0\) and \(\zeta\in\mathbb T\), define
\(
F_t(\zeta)
:=
\widehat{\Phi}\bigl((1-t)\zeta\bigr).
\)
This is well defined, since
\(
\delta\bigl(\Phi((1-t)\zeta)\bigr)
\leq Bt<\frac12.
\)

Suppose that
\(
0<a\leq b\leq t_0,
a\geq\lambda b.
\)
Since \((1-a)\zeta\) and \((1-b)\zeta\) lie on the same radius,
\[
\rho\bigl((1-a)\zeta,(1-b)\zeta\bigr)
=
\frac{b-a}{a+b-ab}
\leq
\frac{b-a}{a}
\leq
\frac{1-\lambda}{\lambda}
<
\eta_*.
\]
Hence
\(
\rho
\left(
\Phi((1-a)\zeta),
\Phi((1-b)\zeta)
\right)
<M.
\)
Lemma~\ref{lem:pseudohyperbolic-polar-estimates}\textup{(ii)} and
\eqref{eq:boundary-defect-upper-bound-for-shadow} therefore give
\begin{equation}
\label{eq:boundary-shadow-increment}
d_{\mathbb T}\bigl(F_a(\zeta),F_b(\zeta)\bigr)
\leq
C\,a,
\end{equation}
where \(C>0\) is independent of \(a,b\) and \(\zeta\).

For \(n\geq0\), set
\(
t_n:=\lambda^n t_0.
\)
By \eqref{eq:boundary-shadow-increment},
\(
d_{\mathbb T}
\bigl(F_{t_{n+1}}(\zeta),F_{t_n}(\zeta)\bigr)
\leq
Ct_{n+1}.
\)
Thus, for \(m>n\),
\(
d_{\mathbb T}
\bigl(F_{t_m}(\zeta),F_{t_n}(\zeta)\bigr)
\leq
C\sum_{k=n}^{m-1}t_{k+1}
\leq
\frac{C\lambda}{1-\lambda}t_n.
\)
Hence \(\{F_{t_n}\}\) is uniformly Cauchy on \(\mathbb T\). Since
\(\mathbb T\) is complete, there exists a mapping
\(h_{\Phi}:\mathbb T\to\mathbb T\) such that
\(
h_{\Phi}(\zeta)
=
\lim_{n\to\infty}F_{t_n}(\zeta),
\)
uniformly in \(\zeta\). Letting \(m\to\infty\), we obtain
\begin{equation}
\label{eq:boundary-shadow-geometric-error}
d_{\mathbb T}\bigl(F_{t_n}(\zeta),h_{\Phi}(\zeta)\bigr)
\leq
\frac{C\lambda}{1-\lambda}t_n.
\end{equation}

Now let \(0<t<t_0\), and choose \(n\geq0\) such that
\(
t_{n+1}\leq t\leq t_n.
\)
Since \(t\geq\lambda t_n\), \eqref{eq:boundary-shadow-increment} gives
\(
d_{\mathbb T}\bigl(F_t(\zeta),F_{t_n}(\zeta)\bigr)
\leq Ct.
\)
Moreover, by \eqref{eq:boundary-shadow-geometric-error} and
\(\lambda t_n\leq t\),
\(
d_{\mathbb T}\bigl(F_{t_n}(\zeta),h_{\Phi}(\zeta)\bigr)
\leq
\frac{C}{1-\lambda}t.
\)
Therefore,
\(
d_{\mathbb T}
\left(
\widehat{\Phi}\bigl((1-t)\zeta\bigr),
h_{\Phi}(\zeta)
\right)
\leq
C\left(1+\frac{1}{1-\lambda}\right)t.
\)
This proves the assertion with
\(
A
:=
C\left(1+\frac{1}{1-\lambda}\right).
\)
The same estimate also shows that the convergence
\(
\widehat{\Phi}\bigl((1-t)\zeta\bigr)
\rightarrow
h_{\Phi}(\zeta)
\)
is uniform in \(\zeta\in\mathbb T\) as \(t\to0^+\). Since any mapping satisfying
\eqref{eq:boundary-shadow-first-order-error}
must agree with this radial limit, the mapping \(h_\Phi\) is unique.
\end{proof}
Let
\(
\Phi:\mathbb D\rightarrow\mathbb D.
\)
Suppose that, for every \(\zeta\in\mathbb T\),
\(
\Phi((1-t)\zeta)\neq0
\)
for all sufficiently small \(t>0\), and that the limit
\(
\lim_{t\to0^+}
\widehat{\Phi}\bigl((1-t)\zeta\bigr)
\)
exists. We then define
\(
h_\Phi(\zeta)
:=
\lim_{t\to0^+}
\widehat{\Phi}\bigl((1-t)\zeta\bigr)
\)
and call \(h_\Phi\) the boundary shadow of \(\Phi\).
For mappings in \(\mathcal P\), these conditions follow from
Proposition~\ref{prop:existence-of-boundary-shadow}. This yields the following corollary.

\begin{cor}
\label{cor:radial-boundary-trace}
Let \(\Phi\in\mathcal P\). Then there exist constants \(C>0\) and \(t_0>0\) such that
\[
\left|
\Phi((1-t)\zeta)-h_\Phi(\zeta)
\right|
\leq Ct,
\qquad
\zeta\in\mathbb T,\quad 0<t<t_0.
\]
In particular,
\(
h_\Phi(\zeta)
=
\lim_{t\to0^+}\Phi((1-t)\zeta)
\)
uniformly for \(\zeta\in\mathbb T\). Thus \(h_\Phi\) is also the
canonical radial boundary trace of \(\Phi\).
\end{cor}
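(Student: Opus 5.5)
The plan is to split \(\Phi((1-t)\zeta)-h_\Phi(\zeta)\) into a radial part and an angular part, using the polar decomposition \(\Phi(z)=|\Phi(z)|\,\widehat{\Phi}(z)\), which is valid for \(z\) close to \(\mathbb T\). Fix \(\zeta\in\mathbb T\), let \(0<t<t_0\), and write \(z=(1-t)\zeta\). Here \(t_0\) is at most the constant from Proposition~\ref{prop:existence-of-boundary-shadow}, shrunk if necessary so that \(Bt_0<\tfrac12\); this guarantees \(\Phi(z)\neq0\). The triangle inequality then gives
\[
\bigl|\Phi(z)-h_\Phi(\zeta)\bigr|
\leq
\bigl|\,|\Phi(z)|\widehat{\Phi}(z)-\widehat{\Phi}(z)\bigr|
+
\bigl|\widehat{\Phi}(z)-h_\Phi(\zeta)\bigr|
=
\delta\bigl(\Phi(z)\bigr)
+
\bigl|\widehat{\Phi}(z)-h_\Phi(\zeta)\bigr|.
\]

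The first term is controlled by \eqref{eq:boundary-defect-upper-bound-for-shadow}, which is a consequence of \eqref{eq:boundary-defect-comparability-for-P} and hence of Theorem~\ref{thm:universal-orbit-frame-preservation}(ii). It gives \(\delta(\Phi(z))\leq B\delta(z)=Bt\).

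For the second term, I convert the normalized arc-length distance into Euclidean distance on \(\mathbb T\). For \(e^{2\pi i\theta},e^{2\pi i\varphi}\) one has
\[
\bigl|e^{2\pi i\theta}-e^{2\pi i\varphi}\bigr|
=
2\sin\bigl(\pi d_{\mathbb T}\bigr)
\leq
2\pi\, d_{\mathbb T}.
\]
Combined with \eqref{eq:boundary-shadow-first-order-error}, this yields \(|\widehat{\Phi}(z)-h_\Phi(\zeta)|\leq 2\pi A t\).

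Adding the two estimates gives the claim with \(C:=B+2\pi A\). The uniform radial limit follows immediately, since \(C\) and \(t_0\) do not depend on \(\zeta\). There is no genuine obstacle here. The only points needing care are that \(t_0\) must be small enough for \(\widehat{\Phi}\) to be defined, and that the constant in the comparison between chordal and arc-length distance must be correct.
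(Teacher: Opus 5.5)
Your proposal is correct and follows essentially the same route as the paper's proof: the triangle inequality through \(\widehat{\Phi}((1-t)\zeta)\), the radial term bounded by the boundary-defect comparability, and the angular term bounded via \(|\xi-\eta|=2\sin(\pi d_{\mathbb T}(\xi,\eta))\leq 2\pi d_{\mathbb T}(\xi,\eta)\) together with the first-order estimate of Proposition~\ref{prop:existence-of-boundary-shadow}. The only cosmetic difference is that you use \(\delta(\Phi(z))\leq B\delta(z)\) directly, whereas the paper passes through \(1-|\Phi(z)|\leq 1-|\Phi(z)|^2\leq C_0(1-|z|^2)\leq 2C_0t\).
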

\begin{proof}
Let \(A<\infty\) and \(t_1>0\) be as in
Proposition~\ref{prop:existence-of-boundary-shadow}.
Since \(\Phi\in\mathcal P\),
Theorem~\ref{thm:universal-orbit-frame-preservation} gives a
constant \(C_0>0\) such that
\(
1-|\Phi(z)|^2
\leq
C_0\bigl(1-|z|^2\bigr),
\ z\in\mathbb D.
\)
Set
\(
t_0:=\min\left\{t_1,\frac{1}{4C_0}\right\}.
\)
Fix \(\zeta\in\mathbb T\) and \(0<t<t_0\), and set
\(
z_t:=(1-t)\zeta.
\)
Then
\[
1-|\Phi(z_t)|
\leq
1-|\Phi(z_t)|^2
\leq
C_0\bigl(1-|z_t|^2\bigr)
=
C_0(2t-t^2)
\leq
2C_0t
<
\frac12.
\]
In particular, \(\Phi(z_t)\neq0\). Hence
\(
\widehat{\Phi}(z_t)
=
\frac{\Phi(z_t)}{|\Phi(z_t)|}
\in\mathbb T,
\)
and therefore
\[
\left|
\Phi(z_t)-\widehat{\Phi}(z_t)
\right|
=
\left|
|\Phi(z_t)|\widehat{\Phi}(z_t)
-
\widehat{\Phi}(z_t)
\right|
=
1-|\Phi(z_t)|
\leq
2C_0t.
\]

For \(\xi,\eta\in\mathbb T\), one has
\(
|\xi-\eta|
=
2\sin\!\left(
\pi d_{\mathbb T}(\xi,\eta)
\right)
\leq
2\pi d_{\mathbb T}(\xi,\eta).
\)
Thus, by
Proposition~\ref{prop:existence-of-boundary-shadow},
\[
\left|
\widehat{\Phi}(z_t)-h_\Phi(\zeta)
\right|
\leq
2\pi
d_{\mathbb T}
\left(
\widehat{\Phi}(z_t),
h_\Phi(\zeta)
\right)
\leq
2\pi At.
\]
Combining the two estimates, we obtain
\[
\begin{aligned}
\left|
\Phi\bigl((1-t)\zeta\bigr)-h_\Phi(\zeta)
\right|
&\leq
\left|
\Phi(z_t)-\widehat{\Phi}(z_t)
\right|
+
\left|
\widehat{\Phi}(z_t)-h_\Phi(\zeta)
\right|\\
&\leq
2C_0t+2\pi At
=
2(C_0+\pi A)t.
\end{aligned}
\]
Hence the conclusion holds with
\(
C:=2(C_0+\pi A).
\)
Since \(C\) is independent of \(\zeta\), the convergence
\(
\Phi\bigl((1-t)\zeta\bigr)
\rightarrow
h_\Phi(\zeta)
\)
is uniform for \(\zeta\in\mathbb T\) as \(t\to0^+\).
\end{proof}

\begin{prop}
\label{prop:boundary-shadow-is-lipschitz}
Let \(\Phi\in\mathcal P\). Then \(h_{\Phi}\) is Lipschitz.
\end{prop}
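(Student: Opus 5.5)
To prove that \(h_\Phi\) is Lipschitz, we compare the boundary shadow at two nearby points \(\zeta,\eta\in\mathbb T\) by going into the disk at the depth matched to their separation. Set \(\alpha:=d_{\mathbb T}(\zeta,\eta)\). For small \(\alpha\) we put \(t:=\alpha\) (or a fixed multiple \(\kappa\alpha\) to be tuned), and consider \(z:=(1-t)\zeta\) and \(w:=(1-t)\eta\). By Proposition~\ref{prop:existence-of-boundary-shadow}, with \(0<t<t_0\),
\[
d_{\mathbb T}\bigl(h_\Phi(\zeta),h_\Phi(\eta)\bigr)
\leq 2At+d_{\mathbb T}\bigl(\widehat\Phi(z),\widehat\Phi(w)\bigr).
\]
It therefore suffices to bound \(d_{\mathbb T}(\widehat\Phi(z),\widehat\Phi(w))\) by a constant times \(t\).

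Formula \eqref{eq:pseudohyperbolic-distance-polar-form} shows that \(\rho(z,w)\) is bounded by a constant strictly less than \(1\), depending only on the ratio \(\alpha/t\). Since \(r=s=1-t\), we have \(\rho(z,w)^2=b/(c+b)\) with \(c=(2t-t^2)^2\asymp t^2\) and \(b\asymp\alpha^2\). However, Corollary~\ref{cor:uniform-small-scale-control}(i) controls \(\rho(\Phi(z),\Phi(w))\) only when \(\rho(z,w)<\eta_*\). To get into that regime, we subdivide the short arc from \(\zeta\) to \(\eta\) into \(N\) equal pieces, with \(N\) a fixed integer independent of \(\alpha\), giving points \(\zeta=\xi_0,\xi_1,\dots,\xi_N=\eta\) with \(d_{\mathbb T}(\xi_{k},\xi_{k+1})=\alpha/N\). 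Choosing \(N\) large enough, consecutive points \((1-t)\xi_k\) and \((1-t)\xi_{k+1}\) have pseudohyperbolic distance \(<\eta_*\), and they lie in \(|z|>r_*\) once \(t<1-r_*\). Corollary~\ref{cor:uniform-small-scale-control}(i) then gives \(\rho(\Phi((1-t)\xi_k),\Phi((1-t)\xi_{k+1}))<\tfrac12\). Next, Lemma~\ref{lem:pseudohyperbolic-polar-estimates}(ii) with \(M=\tfrac12\) (applicable because \(|\Phi|\ge\tfrac12\) near the boundary, by \eqref{eq:boundary-defect-upper-bound-for-shadow}) together with \(\delta(\Phi((1-t)\xi_k))\le Bt\) yields
\[
d_{\mathbb T}\bigl(\widehat\Phi((1-t)\xi_k),\widehat\Phi((1-t)\xi_{k+1})\bigr)\le A_{1/2}Bt .
\]
Summing over \(k\) by the triangle inequality gives \(d_{\mathbb T}(\widehat\Phi(z),\widehat\Phi(w))\le NA_{1/2}Bt\), and therefore
\[
d_{\mathbb T}\bigl(h_\Phi(\zeta),h_\Phi(\eta)\bigr)\le (2A+NA_{1/2}B)\,\alpha .
\]

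For the remaining case \(\alpha\ge \alpha_0\), where \(\alpha_0\) is the threshold ensuring \(t=\alpha<t_0\) and \(1-t>r_*\), the trivial bound \(d_{\mathbb T}\le\tfrac12\le \alpha/(2\alpha_0)\) suffices. Combining the two cases gives a global Lipschitz constant.

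The only delicate point is making \(N\) uniform. With \(t=\alpha\), consecutive subdivision points satisfy \(\rho^2\le 4\sin^2(\pi\alpha/N)/((2t-t^2)^2)\lesssim (\pi/N)^2\), using \(2t-t^2\ge t\). Hence \(\rho\lesssim \pi/N\), which is independent of \(\alpha\), so a fixed \(N\) with \(\pi/N<\eta_*\) (up to the absolute constant) works. Lemma~\ref{lem:pseudohyperbolic-polar-estimates}(ii) is applied to image points, which requires both \(|\Phi(\cdot)|\ge\tfrac12\) and the uniform upper comparability \(\delta(\Phi(z))\le B\delta(z)\); both hold for \(t<\min\{t_0,1/(2B)\}\). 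I do not expect a genuine obstacle here: the construction of the boundary shadow already did the hard work. The main care is bookkeeping the thresholds \(r_*\), \(t_0\) and \(1/(2B)\) so that they hold simultaneously.
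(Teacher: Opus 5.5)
Your argument is correct and follows the paper's proof. You descend to depth comparable to $\alpha=d_{\mathbb T}(\zeta,\eta)$, apply the first-order estimate of Proposition~\ref{prop:existence-of-boundary-shadow} at both endpoints, bound $d_{\mathbb T}(\widehat\Phi(z),\widehat\Phi(w))$ via Corollary~\ref{cor:uniform-small-scale-control}(i) and Lemma~\ref{lem:pseudohyperbolic-polar-estimates}(ii), and use the trivial bound for $\alpha\ge\alpha_0$. The only difference is cosmetic: the paper goes to depth $t=K\alpha$ with $2\pi/K<\eta_*$, so a single pair of points is already $\eta_*$-close, whereas you stay at depth $t=\alpha$ and chain $N\approx 2\pi/\eta_*$ intermediate points, which gives the same kind of constant.
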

\begin{proof}
Let \(A>0\) and \(t_0>0\) be as in
Proposition~\ref{prop:existence-of-boundary-shadow}, so that
\begin{equation}
\label{eq:shadow-error-for-lipschitz}
d_{\mathbb T}
\left(
\widehat{\Phi}\bigl((1-t)\zeta\bigr),
h_{\Phi}(\zeta)
\right)
\leq At
\end{equation}
for \(\zeta\in\mathbb T\) and \(0<t<t_0\).
Moreover, by
\eqref{eq:boundary-defect-comparability-for-P}, there exists \(B>0\)
such that for $z\in\mathbb D$
\(
\delta\bigl(\Phi(z)\bigr)\leq B\delta(z).
\)

Fix \(M=\frac12\). By
Corollary~\ref{cor:uniform-small-scale-control}\textup{(i)}, there exist
\(r_*\in(0,1)\) and \(\eta_*>0\) such that
\[
|z|,|w|>r_*,
\qquad
\rho(z,w)<\eta_*
\quad\Longrightarrow\quad
\rho\bigl(\Phi(z),\Phi(w)\bigr)<M.
\]
Choose \(K>0\) with
\(
\frac{2\pi}{K}<\eta_*,
\)
and choose
\(
0<\alpha_0<
\frac{1}{K}
\min\left\{
t_0,\,
1-r_*,\,
\frac{1}{2B}
\right\}.
\)

Let
\(
\alpha=d_{\mathbb T}(\zeta,\eta)
\)
and suppose first that
\(
0<\alpha<\alpha_0.
\)
Set
\(
t:=K\alpha,
z:=(1-t)\zeta,
w:=(1-t)\eta.
\)
Then \(|z|,|w|>r_*\), and
\[
\rho(z,w)
\leq
\frac{|z-w|}{1-(1-t)^2}
\leq
\frac{2\pi\alpha}{t}
=
\frac{2\pi}{K}
<
\eta_*.
\]
Hence
\(
\rho(\Phi(z),\Phi(w))<M.
\)

Since
\(
\delta\bigl(\Phi(z)\bigr),
\delta\bigl(\Phi(w)\bigr)
\leq Bt<\frac12,
\)
Lemma~\ref{lem:pseudohyperbolic-polar-estimates}\textup{(ii)} gives
\(
d_{\mathbb T}
\left(
\widehat{\Phi}(z),
\widehat{\Phi}(w)
\right)
\leq
A_MB t.
\)
Using \eqref{eq:shadow-error-for-lipschitz}, we obtain
\[
d_{\mathbb T}\bigl(h_{\Phi}(\zeta),h_{\Phi}(\eta)\bigr)
\leq
(2A+A_MB)t
=
K(2A+A_MB)\alpha.
\]

If \(\alpha\geq\alpha_0\), then
\(
d_{\mathbb T}\bigl(h_{\Phi}(\zeta),h_\Phi(\eta)\bigr)
\leq
\frac12
\leq
\frac{\alpha}{2\alpha_0}.
\)
Therefore,
\[
d_{\mathbb T}\bigl(h_{\Phi}(\zeta),h_{\Phi}(\eta)\bigr)
\leq
L\,d_{\mathbb T}(\zeta,\eta),
\qquad
\zeta,\eta\in\mathbb T,
\]
where
\(
L
:=
\max\left\{
K(2A+A_MB),
\frac{1}{2\alpha_0}
\right\}.
\)
Thus \(h_{\Phi}\) is Lipschitz.
\end{proof}

\begin{prop}
\label{prop:non-collapse-of-arcs}
Let \(\Phi\in\mathcal P\).
There exist constants
\(
\kappa>0
\)
and
\(
\ell_0>0
\)
such that, for every arc
\(
I\subset\mathbb T
\)
with
\(
|I|\leq\ell_0,
\)
one has
\(
\operatorname{diam}_{\mathbb T}\bigl(h_{\Phi}(I)\bigr)
\geq
\kappa |I|.
\)
Here, for \(E\subset\mathbb T\), define
\(
\operatorname{diam}_{\mathbb T}(E)
:=
\sup_{\xi,\eta\in E}
d_{\mathbb T}(\xi,\eta).
\)
\end{prop}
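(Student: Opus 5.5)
We argue by contradiction, using Corollary~\ref{cor:uniform-small-scale-control}\textup{(ii)} together with the first-order estimate of Proposition~\ref{prop:existence-of-boundary-shadow}. Suppose the statement fails. Then there are arcs \(I_n\) with \(|I_n|\to0\) and
\[
\operatorname{diam}_{\mathbb T}\bigl(h_\Phi(I_n)\bigr)\leq \kappa_n|I_n|,\qquad \kappa_n\to0.
\]
Let \(\zeta_n,\eta_n\) be the endpoints of \(I_n\), set \(t_n:=|I_n|\), and consider the two points
\[
z_n:=(1-t_n)\zeta_n,\qquad w_n:=(1-t_n)\eta_n.
\]
They lie on the same circle, at angular distance \(t_n\) comparable to their common defect. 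By formula \eqref{eq:pseudohyperbolic-distance-polar-form}, \(\rho(z_n,w_n)\) is then bounded below by a universal constant \(\varepsilon_0>0\). The plan is to show instead that \(\rho(\Phi(z_n),\Phi(w_n))\to0\); Proposition~\ref{prop:basic-properties-of-universal-preservers}\textup{(ii)} then forces \(\rho(z_n,w_n)\to0\), which is the desired contradiction.

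Using only the endpoints is too crude. The estimate \eqref{eq:boundary-shadow-first-order-error} places \(\widehat\Phi(z_n)\) within \(At_n\) of \(h_\Phi(\zeta_n)\), and this error is of the same order as the defect \(\delta(\Phi(z_n))\asymp t_n\), so no smallness comes out. I would therefore go to a finer scale. Fix a large integer \(N\) and subdivide \(I_n\) into \(N\) consecutive subarcs, with points \(\xi_0=\zeta_n,\dots,\xi_N=\eta_n\) spaced \(t_n/N\) apart. Consider the points \((1-t_n/N)\xi_k\) and \((1-t_n)\xi_k\) along these radii.

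The key quantitative claim replaces endpoint smallness by a comparison of scales. Let \(\Gamma\) be the chain of points \(p_k:=(1-t_n)\xi_k\), \(k=0,\dots,N\). Consecutive points \(p_k,p_{k+1}\) are at pseudohyperbolic distance about \(1/N\) from each other. The whole chain spans a configuration that is not small: the endpoint distance is at least \(\varepsilon_0\). Now look at the images. Each \(\Phi(p_k)\) has defect \(\asymp t_n\), by \eqref{eq:boundary-defect-comparability-for-P}. Its angle lies within \(At_n\) of \(h_\Phi(\xi_k)\), and all the \(h_\Phi(\xi_k)\) lie in a set of diameter at most \(\kappa_n t_n\). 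Hence all images lie in a fixed box of defect \(\asymp t_n\) and angular width \(\le (2A+\kappa_n)t_n\), which is a region of bounded pseudohyperbolic diameter.

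To produce an actual contradiction, I would use a packing argument. Take \(N\) large but fixed, and the chain points at the finer level \(1-t_n/N\) rather than at level \(1-t_n\). At that level consecutive points are \(\gtrsim\varepsilon_0\) apart, so the \(N+1\) points are \(\varepsilon_0\)-separated. Their images have defect \(\asymp t_n/N\) and angles within \(At_n/N\) of \(h_\Phi(\xi_k)\). All of them therefore lie in a region of angular width \(\lesssim (A/N+\kappa_n)t_n\) and defect \(\asymp t_n/N\). For \(n\) large, \(\kappa_n\ll 1/N\), so this region has pseudohyperbolic diameter bounded independently of \(N\). It can therefore contain only \(O(1)\) points that are \(\eta\)-separated, with the count independent of \(N\). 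On the other hand, the preimages are \(\varepsilon_0\)-separated, so Corollary~\ref{cor:uniform-small-scale-control}\textup{(ii)}, applied with \(\varepsilon=\varepsilon_0\), shows the images are \(\eta_{\varepsilon_0}\)-separated. Choosing \(N\) larger than that packing bound gives the contradiction.

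The main obstacle is this packing step. One must check that the bound on the number of \(\eta\)-separated points in a box of defect \(\asymp s\) and angular width \(\lesssim s\) really is uniform, which follows from hyperbolic area comparison. One must also make sure the constants \(A,B\) and \(t_0\) from Proposition~\ref{prop:existence-of-boundary-shadow} apply at level \(t_n/N\), which holds for \(n\) large. The constants \(\ell_0\) and \(\kappa\) are then whatever is needed to exclude the contradicting sequence.
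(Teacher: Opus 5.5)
Your proof is correct, and its final step takes a different route from the paper's. The argument in your last paragraph (the packing step at depth \(t_n/N\)) is the one that closes the proof; the earlier paragraphs are exploratory and can be dropped.

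\textbf{The paper's route.} It also argues by contradiction, but it builds a single infinite sequence.
\begin{enumerate}
\item[(1)] It selects arcs \(I_n\) with \(\operatorname{diam}_{\mathbb T}h_\Phi(I_n)\le\varepsilon_n^2|I_n|\) and rapidly decreasing lengths.
\item[(2)] Over each \(I_n\) it places \(N_n\asymp 1/\varepsilon_n\) equally spaced points at depth \(t_n=\varepsilon_n|I_n|\).
\item[(3)] It verifies by hand that the union \(\Lambda\) is a Carleson sequence: separation within and across levels, plus a geometric-series Carleson-measure bound.
\item[(4)] All \(N_n\) images of a level fall into one square \(S(J_n)\) with \(|J_n|\asymp t_n\). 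Hence \(\mu_{\Phi(\Lambda)}(S(J_n))/|J_n|\gtrsim N_n\to\infty\), contradicting \(\Phi(\Lambda)\in\mathcal C\).
\end{enumerate}

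\textbf{Your route.} You use the same choice of scales: point spacing equal to depth, and depth much larger than the collapsed diameter of \(h_\Phi(I_n)\). But you work at a single scale with a fixed number \(N+1\) of points. Your contradiction comes from Corollary~\ref{cor:uniform-small-scale-control}\textup{(ii)} (separated pairs near \(\mathbb T\) have separated images) combined with a hyperbolic packing bound, not from the Carleson measure condition.

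\textbf{What each route buys.} Yours avoids constructing and verifying a multi-level Carleson sequence and keeps the mechanism purely local. The cost is the standard packing lemma (Möbius invariance of pseudohyperbolic disks plus a hyperbolic-area count). You also rely on Corollary~\ref{cor:uniform-small-scale-control}, which already encodes Carleson preservation through the two-point packets of Lemma~\ref{lem:general-sparse-two-point-packets}. The paper instead uses the Carleson-measure half of Lemma~\ref{thm:carleson-interpolation} directly.

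\textbf{Quantifiers.} Your order is right. The following are all independent of \(N\):
\begin{enumerate}
\item[(a)] the separation constant \(\varepsilon_0\);
\item[(b)] \(\eta_{\varepsilon_0}\);
\item[(c)] the bound \(M_0<1\) on the pseudohyperbolic diameter of the image region, which depends only on the shadow constant and the defect-comparability constants;
\item[(d)] hence the packing number.
\end{enumerate}
So you may fix \(N\) first, then take \(n\) large enough that \(\kappa_n\le 1/N\), \(t_n/N<t_0\) and \(1-t_n/N>r_{\varepsilon_0}\). Pairwise, not merely consecutive, \(\varepsilon_0\)-separation of the preimages follows because, on a fixed circle, \(\rho\) increases with angular distance up to \(1/2\).
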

\begin{proof}
Suppose, to the contrary, that the conclusion fails. Choose a decreasing sequence
\(
\{\varepsilon_n\}_{n\geq1}
\)
such that
\(
0<\varepsilon_n\leq\frac14\) and
\(\varepsilon_n\rightarrow0.
\) Recursively select arcs
\(I_n\subset\mathbb T\) such that, writing \(\ell_n:=|I_n|\),
\(
\operatorname{diam}_{\mathbb T}h_{\Phi}(I_n)
\leq
\varepsilon_n^2\ell_n,
\ell_{n+1}\leq\frac18\varepsilon_n\ell_n.
\)
Set
\(
t_n:=\varepsilon_n\ell_n.
\)
Then \(t_{n+1}\leq \ell_{n+1}\leq t_n/8\).

Let \(\xi_n\) be the midpoint of \(I_n\). Choose points
\(
\zeta_{n,1},\ldots,\zeta_{n,N_n}\in I_n
\)
that are equally spaced with respect to normalized arc length, with
successive spacing comparable to \(t_n\). Then
\(
N_n\asymp\frac{\ell_n}{t_n}
=
\frac1{\varepsilon_n},
d_{\mathbb T}(\zeta_{n,j},\zeta_{n,k})
\gtrsim |j-k|t_n.
\)
Put
\(
z_{n,k}:=(1-t_n)\zeta_{n,k},
\Lambda:=\{z_{n,k}:n\geq1,\ 1\leq k\leq N_n\}.
\)

If \(j\neq k\), then
\(
d_{\mathbb T}(\zeta_{n,j},\zeta_{n,k})
\gtrsim t_n.
\)
Hence, using the polar formula for the pseudohyperbolic distance and
\(1-(1-t_n)^2\asymp t_n\), we obtain
\(
\rho(z_{n,j},z_{n,k})
\gtrsim 1.
\)
Thus points on the same level are uniformly pseudohyperbolically
separated.
If \(m>n\), then \(t_m\le t_n/8\), and therefore
\(
\rho(z_{n,j},z_{m,k})
\ge
\frac{t_n-t_m}{t_n+t_m-t_nt_m}
\ge
\frac79.
\)
Hence points on different levels are also pairwise separated.
Consequently, the family $\Lambda$
is pairwise separated.

Moreover, let \(J\subset\mathbb T\) be an arc. If the \(n\)-th level
meets \(S(J)\), then necessarily \(t_n\leq |J|\). Since the points
\(\zeta_{n,k}\) are separated by a constant multiple of \(t_n\), the
number of points of the \(n\)-th level whose angular parts lie in \(J\)
is at most
\(
C\left(1+\frac{\min\{|J|,\ell_n\}}{t_n}\right).
\)
As
\(
1-|z_{n,k}|^2
=
1-(1-t_n)^2
\leq 2t_n,
\)
and \(t_n\leq\min\{|J|,\ell_n\}\) whenever the \(n\)-th level
contributes, we obtain
\(
\mu_{\Lambda_n}(S(J))
\leq
C\min\{|J|,\ell_n\},
\)
where
\(
\Lambda_n:=\{z_{n,k}:1\leq k\leq N_n\}.
\)

If no \(n\) satisfies \(t_n\leq |J|\), then
\(\mu_\Lambda(S(J))=0\). Otherwise, let
\(
N:=\min\{n:t_n\leq |J|\}.
\)
Then
\(
\mu_\Lambda(S(J))
\leq
C\sum_{n\geq N}\min\{|J|,\ell_n\}.
\)
Since
\(
\ell_{N+1}\leq \frac{t_N}{8}\leq\frac{|J|}{8}
\), after assuming \(\varepsilon_n\leq1\),
\(
\ell_{n+1}\leq\frac{\ell_n}{8},
\)
we have
\(
\sum_{n\geq N}\min\{|J|,\ell_n\}
\leq
|J|+\sum_{n\geq N+1}\ell_n
\leq
|J|+\frac{\ell_{N+1}}{1-\frac18}
\leq
\frac87|J|.
\)
Hence
\(
\mu_\Lambda(S(J))\leq C|J|.
\)
Thus \(\mu_\Lambda\) is a Carleson measure. Since \(\Lambda\) is
pairwise separated, it follows that
\(
\Lambda\in\mathcal C.
\)

Let \(\xi_n\) be the midpoint of \(I_n\). Since
\(\zeta_{n,k}\in I_n\), Proposition~\ref{prop:existence-of-boundary-shadow}
and the estimate for \(\operatorname{diam}_{\mathbb T}h_{\Phi}(I_n)\) give
\[
\begin{aligned}
d_{\mathbb T}
\left(
\widehat{\Phi}(z_{n,k}),h_{\Phi}(\xi_n)
\right)
&\leq
d_{\mathbb T}
\left(
\widehat{\Phi}(z_{n,k}),h_{\Phi}(\zeta_{n,k})
\right)
+
d_{\mathbb T}
\left(
h_{\Phi}(\zeta_{n,k}),h_{\Phi}(\xi_n)
\right)\\
&\leq
At_n+\varepsilon_n t_n
\leq
(A+1)t_n,
\end{aligned}
\]
after assuming \(\varepsilon_n\leq1\). Moreover, by
\eqref{eq:boundary-defect-comparability-for-P}, there exists \(B>0\)
such that
\(
1-|\Phi(z_{n,k})|
\leq Bt_n.
\)
Choose
\(
C_0\geq\max\{2(A+1),B\},
\)
and, for all sufficiently large \(n\), let \(J_n\) be the arc centered
at \(h_{\Phi}(\xi_n)\) with
\(
|J_n|=C_0t_n.
\)
Then
\(
\widehat{\Phi}(z_{n,k})\in J_n\) and \(1-|\Phi(z_{n,k})|\leq |J_n|,
\)
so that
\(
\Phi(z_{n,k})\in S(J_n)
\).
Clearly, \(|J_n|\asymp t_n\).

Since
\(
\Phi(z_{n,k})\in S(J_n)
\)
for \(1\leq k\leq N_n\), we have
\(
\mu_{\Phi(\Lambda)}(S(J_n))
\geq
\sum_{k=1}^{N_n}
\bigl(1-|\Phi(z_{n,k})|^2\bigr).
\)
By \eqref{eq:boundary-defect-comparability-for-P},
\(
1-|\Phi(z_{n,k})|^2
\gtrsim
1-|z_{n,k}|^2
=
1-(1-t_n)^2
\asymp t_n.
\)
Hence
\(
\mu_{\Phi(\Lambda)}(S(J_n))
\gtrsim
N_nt_n.
\)
Since \(|J_n|\asymp t_n\) and
\(
N_n\asymp\frac{\ell_n}{t_n}
=
\frac{1}{\varepsilon_n},
\)
it follows that
\(
\frac{\mu_{\Phi(\Lambda)}(S(J_n))}{|J_n|}
\gtrsim
N_n
\asymp
\frac{1}{\varepsilon_n}
\rightarrow\infty.
\)
Hence
\(
\Phi(\Lambda)\notin\mathcal C.
\)
This contradicts \(\Lambda\in\mathcal C\) and the bidirectional
preservation of Carleson sequences by \(\Phi\).
Consequently, there exist constants \(\kappa>0\) and \(\ell_0>0\)
such that
\(
\operatorname{diam}_{\mathbb T}h_{\Phi}(I)
\geq
\kappa|I|
\)
for every arc \(I\subset\mathbb T\) with \(|I|\leq\ell_0\).

\end{proof}
Let \(h:\mathbb T\to\mathbb T\) be continuous. A continuous mapping
\(H:\mathbb R\to\mathbb R\) is called a \emph{lift} of \(h\) if for \(x\in\mathbb R\)
\(
h\bigl(e^{2\pi i x}\bigr)=e^{2\pi i H(x)}.
\)
Then there exists an integer \(d\), called the degree of \(h\), such that for \(x\in\mathbb R\)
\(
H(x+1)=H(x)+d.
\)
More generally, if \(I\subset\mathbb R\) is an interval, a continuous
mapping \(H_I:I\to\mathbb R\) satisfying
\(
h\bigl(e^{2\pi i x}\bigr)=e^{2\pi i H_I(x)}\) for \(x\in I\)
is called a \emph{local lift} of \(h\) on \(I\).

\begin{cor}
\label{cor:nonvanishing-derivative-of-local-lifts}
Let \(\Phi\in\mathcal P\). If \(H\) is a local lift of
\(h_{\Phi}\) and \(H'(\theta_0)\) exists, then
\(
|H'(\theta_0)|\geq \kappa,
\)
where \(\kappa\) is the constant in
Proposition~\ref{prop:non-collapse-of-arcs}.
\end{cor}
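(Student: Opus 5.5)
The corollary should follow directly from Proposition~\ref{prop:non-collapse-of-arcs}, by applying it to small arcs centered at $e^{2\pi i\theta_0}$ and comparing the image diameter with the lift increment.

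Fix $\theta_0$ in the interior of the interval $I$ on which the local lift $H$ is defined, and suppose $H'(\theta_0)$ exists. For small $s>0$, consider the arc
\[
I_s:=\{e^{2\pi i x}:|x-\theta_0|\le s\},
\]
so that $|I_s|=2s\le\ell_0$. Proposition~\ref{prop:non-collapse-of-arcs} then supplies points $x_s,y_s\in[\theta_0-s,\theta_0+s]$ with
\[
d_{\mathbb T}\bigl(h_\Phi(e^{2\pi i x_s}),h_\Phi(e^{2\pi i y_s})\bigr)\ge \tfrac{1}{2}\kappa\cdot 2s=\kappa s,
\]
up to an arbitrarily small loss, since the diameter is a supremum. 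Taking the supremum carefully would recover the full constant. Because $d_{\mathbb T}(e^{2\pi ia},e^{2\pi ib})\le|a-b|$, this gives
\[
|H(x_s)-H(y_s)|\ge(1-o(1))\,\kappa\cdot 2s\cdot\tfrac12 .
\]
More cleanly, the diameter bound gives $\sup_{x,y}|H(x)-H(y)|\ge\kappa|I_s|=2\kappa s$.

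Next, differentiability at $\theta_0$ gives $H(x)=H(\theta_0)+H'(\theta_0)(x-\theta_0)+o(s)$ uniformly for $|x-\theta_0|\le s$. Hence
\[
\sup_{x,y}|H(x)-H(y)|\le |H'(\theta_0)|\cdot 2s+o(s).
\]
Combining this with the lower bound $2\kappa s$, dividing by $2s$, and letting $s\to0$ yields $|H'(\theta_0)|\ge\kappa$.

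The only delicate point is that the normalized arc length $|I_s|$ is exactly $2s$ in the lift coordinate, which holds with the paper's normalization $e^{2\pi i x}$. The supremum defining $\operatorname{diam}_{\mathbb T}$ is handled by an $\epsilon$-approximation that disappears in the limit. If $\theta_0$ is an endpoint of $I$, the same argument works with one-sided arcs $[\theta_0,\theta_0+s]$ of length $s$, giving a bound $\kappa s$ against $|H'(\theta_0)|s+o(s)$.
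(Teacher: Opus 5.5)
Your argument is correct and is essentially the paper's proof: apply Proposition~\ref{prop:non-collapse-of-arcs} to the symmetric arcs $I_s$ of length $2s$. Then bound the image diameter above by $\sup|H(x)-H(y)|\le 2s|H'(\theta_0)|+o(s)$ using $d_{\mathbb T}(e^{2\pi ia},e^{2\pi ib})\le|a-b|$, and let $s\to0$. The stray factor $\tfrac12$ in your intermediate paragraph is unnecessary, since the diameter bound gives $2\kappa s$ directly, as your ``more cleanly'' line says; the one-sided treatment of endpoints is a harmless extra.
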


\begin{proof}
Let \(H\) be a local lift of \(h_{\Phi}\) near \(\theta_0\), and put
\(
a:=H'(\theta_0).
\)
For sufficiently small \(r>0\), let
\(
I_r
:=
\left\{
e^{2\pi i\theta}:
|\theta-\theta_0|\leq r
\right\}.
\)
Then \(|I_r|=2r\leq\ell_0\), so
Proposition~\ref{prop:non-collapse-of-arcs} gives
\(
\operatorname{diam}_{\mathbb T}h_{\Phi}(I_r)
\geq 2\kappa r.
\)

On the other hand, differentiability of \(H\) at \(\theta_0\) implies
that, for every \(\varepsilon>0\) and all sufficiently small \(r\),
\[
|H(\theta)-H(\theta_0)|
\leq
(|a|+\varepsilon)|\theta-\theta_0|,
\qquad |\theta-\theta_0|\leq r.
\]
Hence, for \(\theta,\varphi\in[\theta_0-r,\theta_0+r]\),
\[
d_{\mathbb T}
\bigl(h_{\Phi}(e^{2\pi i\theta}),h_{\Phi}(e^{2\pi i\varphi})\bigr)
\leq
|H(\theta)-H(\varphi)|
\leq
2(|a|+\varepsilon)r.
\]
Therefore,
\(
2\kappa r
\leq
\operatorname{diam}_{\mathbb T}h_{\Phi}(I_r)
\leq
2(|a|+\varepsilon)r.
\)
Thus \(|a|\geq\kappa-\varepsilon\). Letting
\(\varepsilon\downarrow0\), we obtain
\(
|H'(\theta_0)|\geq\kappa>0.
\)
\end{proof}

\section{Bi-Lipschitz Boundary Rigidity and Radial Extensions}

In this section, we prove that the boundary shadow of every mapping in
\(\mathcal P\) is a bi-Lipschitz self-mapping of \(\mathbb T\). To
establish injectivity, we combine a regular-double-point argument with
an asymptotic Arzel\`a--Ascoli principle and rescaling in the
half-plane model. We then prove the composition law for boundary
shadows and characterize the radial extensions that belong to
\(\mathcal P\).

\begin{lem}
\label{lem:regular-double-point}
Let
\(
h:\mathbb T\rightarrow\mathbb T
\)
be a Lipschitz mapping. Assume that whenever a local lift of \(h\) is
differentiable at a point, its derivative there is nonzero. If \(h\)
is not injective, then there exist distinct points
\(
\zeta_1,\zeta_2\in\mathbb T
\)
such that
\(
h(\zeta_1)=h(\zeta_2),
\)
and \(h\) is differentiable at both points in local angular
coordinates, with nonzero derivatives.
\end{lem}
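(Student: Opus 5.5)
The plan is to show that the set of values of \(h\) having at least two preimages has positive Lebesgue measure. We then discard the null set of values coming from points of non-differentiability. Let \(H:\mathbb R\to\mathbb R\) be a lift of \(h\); it is Lipschitz, so by Rademacher's theorem the set \(E\subset\mathbb R\) where \(H\) is not differentiable is null. Since a Lipschitz map sends null sets to null sets, \(H(E)\) is null, and therefore so is its projection \(N:=\{e^{2\pi i y}:y\in H(E)\}\subset\mathbb T\). Any value \(\omega\in\mathbb T\setminus N\) has all of its preimages at points where a local lift is differentiable, and by hypothesis the derivative there is nonzero. It therefore suffices to find a set \(Y\subset\mathbb T\) of positive measure such that every \(\omega\in Y\) has at least two distinct preimages under \(h\): any \(\omega\in Y\setminus N\) then gives the required \(\zeta_1,\zeta_2\).

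To produce \(Y\) I split into two cases according to whether \(H\) is strictly monotone on \(\mathbb R\).

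\emph{Case 1: \(H\) is not strictly monotone.} Then there are \(a<b\), which I may take with \(b-a<1\), such that \(H(a)=H(b)\). I get this by restricting to a short interval on which \(H\) fails to be injective; non-injectivity of a continuous function on an interval is local, since strict monotonicity on all short intervals propagates to \(\mathbb R\). Next, \(H\) cannot be constant on \([a,b]\): otherwise \(H'=0\) at every interior point, contradicting the hypothesis. So \(H\) attains, say, a maximum \(M>H(a)\) at some interior point \(c\). By the intermediate value theorem, every \(y\in(H(a),M)\) is attained once in \((a,c)\) and once in \((c,b)\). Because \(b-a<1\), these two points are distinct modulo \(1\), so they give distinct points of \(\mathbb T\) with the same image. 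Shrinking the \(y\)-interval to length less than \(1\) so that its projection to \(\mathbb T\) is injective, its projection \(Y\) has positive measure.

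\emph{Case 2: \(H\) is strictly monotone.} Then \(H(x+1)=H(x)+d\) with \(d\neq0\). If \(|d|=1\), then \(H\) induces a bijection of \(\mathbb R/\mathbb Z\), so \(h\) is injective, which is excluded. Hence \(|d|\ge2\). Every \(y\in\mathbb R\) then has exactly \(|d|\) preimages in a fundamental interval of length \(1\), so every \(\omega\in\mathbb T\) has at least two distinct preimages, and I may take \(Y=\mathbb T\).

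The main point requiring care is the reduction in Case 1 to a pair \(a<b\) with \(b-a<1\) and \(H(a)=H(b)\) exactly, rather than \(H(a)=H(b)+k\) with \(k\neq 0\). My plan is to base the dichotomy on strict monotonicity of the real lift \(H\) instead of starting from a given double point of \(h\). If \(H\) is strictly monotone on every interval of length less than \(1\), then it is strictly monotone on \(\mathbb R\), which is Case 2. Otherwise there is a genuine equality \(H(a)=H(b)\) with \(b-a<1\), which is Case 1. Everything else is routine: Rademacher's theorem, the Lipschitz image of a null set being null, and the intermediate value theorem.
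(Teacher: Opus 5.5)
Your proposal is correct and follows essentially the same route as the paper. Both proofs lift \(h\) to \(H\), split according to whether \(H\) is monotone, use the intermediate value theorem (or \(|d|\geq 2\) in the monotone case) to produce a set of values of positive measure having two preimages distinct modulo \(1\), and then discard the null image of the non-differentiability set from Rademacher's theorem.

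The differences are cosmetic. You base the dichotomy on strict rather than plain monotonicity, which moves the non-constancy argument into the non-monotone case. You also take \(Y=\mathbb T\) when \(|d|\geq 2\), where the paper builds an explicit open arc. One small wording fix: in that case it is each value modulo \(1\), not each \(y\in\mathbb R\), that has exactly \(|d|\) preimages in a fundamental interval.
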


\begin{proof}
The derivative assumption implies that \(h\) is not constant on any
nondegenerate arc. Indeed, a local lift on a constant subarc would be
constant and would therefore have derivative zero.

Let \(H:\mathbb R\to\mathbb R\) be a Lipschitz lift of \(h\), so that
\[
h(e^{2\pi ix})=e^{2\pi iH(x)},\qquad
H(x+1)=H(x)+d,
\]
where \(d\in\mathbb Z\) is the degree of \(h\). We first show that
there exists a nonempty open arc \(U\subset\mathbb T\) every point of
which has at least two distinct preimages under \(h\).

Suppose first that \(H\) is monotone. If \(d=0\), then periodicity
forces \(H\) to be constant, a contradiction. If \(|d|=1\), then the
absence of constant intervals makes \(H\) strictly monotone. For
\(0\leq x<y<1\), strict monotonicity and
\(H(x+1)=H(x)+d\) imply
\(
H(y)-H(x)\notin\mathbb Z,
\)
so \(h(e^{2\pi ix})\neq h(e^{2\pi iy})\). Thus \(h\) would be
injective, again a contradiction. Hence \(|d|\geq2\).

Assume first that \(d\geq2\); the case \(d\leq-2\) is analogous.
Fix \(a\in\mathbb R\) and choose a nonempty open interval
\[
J\subset\bigl(H(a)+1,H(a)+d\bigr),
\qquad |J|<1.
\]
For every \(t\in J\), continuity gives \(u_t,v_t\in(a,a+1)\) such that
\(
H(u_t)=t,
H(v_t)=t-1.
\)
Thus
\(
h(e^{2\pi iu_t})=h(e^{2\pi iv_t})=e^{2\pi it}.
\)
Moreover, \(u_t\neq v_t\), and since both lie in an interval of length
\(1\), they are distinct modulo \(1\). Therefore
\(
U:=\{e^{2\pi it}:t\in J\}
\)
is a nonempty open arc every point of which has at least two distinct
preimages under \(h\).

Suppose next that \(H\) is not monotone. Then \(H\) is not monotone on
some interval of length less than \(1\). Indeed, otherwise \(H\) would
be monotone on every such interval. The monotonicity directions on overlapping intervals would have to
agree, since opposite directions would force \(H\) to be constant on
their nondegenerate intersection, contradicting the observation above that
\(h\) is not constant on any nondegenerate arc.
It would follow that \(H\) is monotone on \(\mathbb R\), a
contradiction.

Consequently, there exist \(x_1<x_2<x_3\) with
\(
x_3-x_1<1
\)
such that either
\(
H(x_2)>\max\{H(x_1),H(x_3)\}
\)
or
\(
H(x_2)<\min\{H(x_1),H(x_3)\}.
\)
The two cases are analogous, so assume the first. Choose a nonempty
open interval
\[
J\subset
\bigl(\max\{H(x_1),H(x_3)\},H(x_2)\bigr),
\qquad |J|<1.
\]
For every \(t\in J\), the intermediate value theorem gives
\(
u_t\in(x_1,x_2),
v_t\in(x_2,x_3),
H(u_t)=H(v_t)=t.
\)
Since
\(
0<v_t-u_t<x_3-x_1<1,
\)
the points \(u_t\) and \(v_t\) are distinct modulo \(1\). Hence
\(
h(e^{2\pi iu_t})
=
h(e^{2\pi iv_t})
=
e^{2\pi it}.
\)
Thus
\(
U:=\{e^{2\pi it}:t\in J\}
\)
is again a nonempty open arc every point of which has at least two
distinct preimages under \(h\).

Let \(N\subset\mathbb T\) be the set of points at which the local
lifts of \(h\) are not differentiable. By Rademacher's theorem,
\(N\) has measure zero. Since \(h\) is Lipschitz, \(h(N)\) also has
measure zero. Choose
\(
\omega\in U\setminus h(N).
\)
Then there exist distinct \(\zeta_1,\zeta_2\in\mathbb T\) such that
\(
h(\zeta_1)=h(\zeta_2)=\omega.
\)
Since \(\omega\notin h(N)\), neither \(\zeta_1\) nor \(\zeta_2\)
belongs to \(N\). Hence the local lifts of \(h\) are differentiable at
both points, and their derivatives are nonzero by hypothesis.
\end{proof}
The following lemma is an exhaustion-based variant of the classical
Arzel\`a--Ascoli theorem; see, for example, \cite{Kelley,Weston}.
Unlike the classical version, the domains \(K_n\) increase with \(n\),
the mappings need not be continuous, and equicontinuity is required only
eventually on each fixed \(K_m\). We include the proof for completeness.

\begin{lem}[Asymptotic Arzel\`a--Ascoli principle]
\label{lem:asymptotic-arzela-ascoli}
Let \((X,d_X)\) and \((Y,d_Y)\) be metric spaces, and let
\(
K_1\subset K_2\subset\cdots
\)
be compact subsets of \(X\) such that every compact subset of \(X\)
is contained in some \(K_m\). For each \(n\geq1\), let
\(
F_n:K_n\rightarrow Y.
\)
Assume that:
\begin{enumerate}
\item[\textup{(i)}]
For every \(m\geq1\), the set
\(
\bigcup_{n\geq m}F_n(K_m)
\)
is relatively compact in \(Y\).

\item[\textup{(ii)}]
For every \(m\geq1\) and every \(\varepsilon>0\), there exist
\(\delta>0\) and \(N\geq m\) such that, whenever
\(
n\geq N,p,q\in K_m,d_X(p,q)<\delta,
\)
one has
\(
d_Y\bigl(F_n(p),F_n(q)\bigr)<\varepsilon.
\)
\end{enumerate}
Then there exist a subsequence \(\{F_{n_j}\}_{j\geq1}\) and a
continuous mapping
\(
F:X\rightarrow Y
\)
such that \(F_{n_j}\to F\) uniformly on every compact subset of \(X\).
\end{lem}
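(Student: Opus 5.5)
The plan is to follow the classical Arzel\`a--Ascoli proof by a diagonal argument. The only changes are that the maps \(F_n\) live on growing domains and need not be continuous, so equicontinuity must be replaced by the eventual modulus in (ii). First, observe that each \(K_m\) is a compact metric space and hence separable. Thus \(X_0:=\bigcup_m D_m\) is countable, where \(D_m\subset K_m\) is a countable dense subset. Enumerate \(X_0=\{p_1,p_2,\dots\}\) and note that \(p_k\in K_{m(k)}\). By (i), for each \(k\) the points \(F_n(p_k)\), \(n\ge m(k)\), lie in a relatively compact set. A Cantor diagonal extraction then yields a subsequence \(n_j\) such that \(F_{n_j}(p_k)\) converges in \(Y\) for every \(k\). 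Here we use that the tail \(n\geq m(k)\) is defined, and that the closures of the relatively compact sets are compact, so limits exist in \(Y\) itself.

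Second, fix \(m\) and show that \(\{F_{n_j}\}\) is uniformly Cauchy on \(K_m\). Given \(\varepsilon>0\), take \(\delta,N\) from (ii). Cover \(K_m\) by finitely many \(\delta/2\)-balls centred at points \(q_1,\dots,q_r\in D_m\); these exist by compactness and density. Choose \(J\) with \(n_j\ge N\) for \(j\ge J\) such that \(d_Y(F_{n_i}(q_s),F_{n_j}(q_s))<\varepsilon\) for all \(s\) and all \(i,j\ge J\). For \(p\in K_m\), pick \(q_s\) with \(d_X(p,q_s)<\delta\). The triangle inequality then gives \(d_Y(F_{n_i}(p),F_{n_j}(p))<3\varepsilon\). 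Consequently the sequence \(F_{n_j}(p)\) is Cauchy for every \(p\in X\): every point \(p\) lies in some \(K_m\), since \(\{p\}\) is compact. Moreover, the sequence is eventually contained in the compact closure of \(\bigcup_{n\ge m}F_n(K_m)\). Hence it converges, and we define \(F(p)\) as its limit. The definition is independent of \(m\), since the limit is pointwise. The uniform Cauchy estimate passes to the limit and gives \(F_{n_j}\to F\) uniformly on each \(K_m\).

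Third, continuity of \(F\). On \(K_m\), letting \(j\to\infty\) in the inequality of (ii) gives \(d_Y(F(p),F(q))\le\varepsilon\) whenever \(p,q\in K_m\) and \(d_X(p,q)<\delta\). So \(F|_{K_m}\) is uniformly continuous for every \(m\). This alone is not enough to conclude continuity of \(F\) on \(X\), and it is the one delicate point. Given \(p\) and a sequence \(p_i\to p\), the set \(\{p\}\cup\{p_i\}\) is compact, hence contained in some \(K_m\) by the exhaustion hypothesis. Therefore \(F(p_i)\to F(p)\) by continuity on \(K_m\). Since \(X\) is metric, sequential continuity suffices. Finally, any compact subset of \(X\) lies in some \(K_m\), which gives uniform convergence on compacta.

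The main obstacle I expect is bookkeeping, namely ensuring that every evaluation \(F_{n_j}(p)\) is defined, that is, \(n_j\ge m\) whenever \(p\in K_m\). This is handled by taking \(j\) large throughout. The continuity step via the compact set \(\{p\}\cup\{p_i\}\) is the only place where the exhaustion hypothesis is genuinely needed.
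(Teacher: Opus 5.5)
Your proposal is correct and follows essentially the same route as the paper: a diagonal extraction on a countable dense subset of $\bigcup_m K_m$ (the paper uses finite $2^{-k}$-nets in each $K_m$), a uniform Cauchy estimate on each $K_m$ from (ii) and the triangle inequality, completeness of the compact closure from (i), passing to the limit in (ii) to get continuity on each $K_m$, and then continuity on $X$ via the compact set $\{p\}\cup\{p_i\}$. The differences are cosmetic: you define $F$ pointwise and then upgrade to uniform convergence, whereas the paper takes uniform limits $F_m$ on each $K_m$ and glues them; also, your closing remark slightly understates the role of the exhaustion hypothesis, which you in fact use throughout, to cover every point of $X$ and every compact set, and not only in the continuity step.
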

\begin{proof}
For each \(m,k\geq1\), choose a finite \(2^{-k}\)-net
\(
D_{m,k}\subset K_m
\),
and set
\(
D:=\bigcup_{m,k\geq1}D_{m,k}.
\)
Then \(D\) is countable, and \(D\cap K_m\) is dense in \(K_m\) for
every \(m\).

For each \(p\in D\), choose \(m\) such that \(p\in K_m\). By
condition~\textup{(i)}, the sequence
\(
\{F_n(p)\}_{n\geq m}
\)
is contained in a relatively compact subset of \(Y\). A diagonal
argument therefore yields a subsequence
\(
\{F_{n_j}\}_{j\geq1}
\)
such that
\(
F_{n_j}(p)
\)
converges for every \(p\in D\).

We claim that \(\{F_{n_j}\}\) is uniformly Cauchy on each \(K_m\).
Fix \(m\geq1\) and \(\varepsilon>0\). By condition~\textup{(ii)},
there exist \(\delta>0\) and \(N\geq m\) such that, whenever
\(n\geq N\),
\(
d_X(p,q)<\delta,\quad p,q\in K_m
\)
imply
\(
d_Y\bigl(F_n(p),F_n(q)\bigr)<\frac{\varepsilon}{3}.
\)
Choose \(k\) so large that \(2^{-k}<\delta\), and write
\(
D_{m,k}=\{p_1,\ldots,p_r\}.
\)
Since \(F_{n_j}(p_\ell)\) converges for every \(\ell\), there exists
\(J\) such that, whenever \(j,s\geq J\),
\(
n_j,n_s\geq N
\)
and
\[
d_Y\bigl(F_{n_j}(p_\ell),F_{n_s}(p_\ell)\bigr)
<
\frac{\varepsilon}{3},
\qquad
1\leq\ell\leq r.
\]
For any \(p\in K_m\), choose \(p_\ell\in D_{m,k}\) satisfying
\(
d_X(p,p_\ell)<\delta
\).
Then, for \(j,s\geq J\),
\[
\begin{aligned}
d_Y\bigl(F_{n_j}(p),F_{n_s}(p)\bigr)
&\leq
d_Y\bigl(F_{n_j}(p),F_{n_j}(p_\ell)\bigr)\\
&\quad+
d_Y\bigl(F_{n_j}(p_\ell),F_{n_s}(p_\ell)\bigr)\\
&\quad+
d_Y\bigl(F_{n_s}(p_\ell),F_{n_s}(p)\bigr)
<\varepsilon.
\end{aligned}
\]
Thus \(\{F_{n_j}\}\) is uniformly Cauchy on \(K_m\).

By condition~\textup{(i)}, the set
\(
C_m:=
\overline{\bigcup_{n\geq m}F_n(K_m)}
\)
is compact, and hence complete. Therefore there exists a mapping
\(
F_m:K_m\rightarrow C_m\subset Y
\)
such that
\(
F_{n_j}\rightarrow F_m
\)
uniformly on \(K_m\). Since \(K_m\subset K_{m+1}\), uniqueness of
limits gives
\(
F_{m+1}|_{K_m}=F_m.
\)
Hence the mappings \(F_m\) define a mapping
\(
F:X\rightarrow Y.
\)

We next prove that \(F_m\) is continuous. Fix \(\varepsilon>0\). By
condition~\textup{(ii)}, there exist \(\delta>0\) and \(N\geq m\) such
that, whenever
\(
n\geq N
\)
and
\(
p,q\in K_m
\)
satisfy
\(
d_X(p,q)<\delta,
\)
one has
\(
d_Y\bigl(F_n(p),F_n(q)\bigr)<\frac{\varepsilon}{2}.
\)
Letting \(j\to\infty\) along indices satisfying \(n_j\geq N\), we obtain
\(
d_Y\bigl(F_m(p),F_m(q)\bigr)
\leq
\frac{\varepsilon}{2}
<
\varepsilon.
\)
Thus \(F_m\) is continuous on \(K_m\).

To see that \(F\) is continuous on \(X\), let \(x_j\to x\) in \(X\).
The set
\(
\{x\}\cup\{x_j:j\geq1\}
\)
is compact, and hence is contained in some \(K_m\). Since
\(F|_{K_m}=F_m\) is continuous, it follows that
\(
F(x_j)\to F(x)
\).

Finally, if \(K\subset X\) is compact, then \(K\subset K_m\) for some
\(m\). Since \(F_{n_j}\to F_m=F|_{K_m}\) uniformly on \(K_m\), the
convergence is uniform on \(K\). Therefore,
\(
F_{n_j}\rightarrow F
\)
uniformly on every compact subset of \(X\).
\end{proof}

We now prove that the boundary shadow is injective.
\begin{thm}
\label{thm:boundary-shadow-is-injective}
Let \(\Phi\in\mathcal P\). Then \(h_{\Phi}\) is injective.
\end{thm}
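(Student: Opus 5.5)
We argue by contradiction and follow the route sketched in the introduction. By Proposition~\ref{prop:boundary-shadow-is-lipschitz} and Corollary~\ref{cor:nonvanishing-derivative-of-local-lifts}, \(h_\Phi\) satisfies the hypotheses of Lemma~\ref{lem:regular-double-point}. If \(h_\Phi\) were not injective, we would obtain distinct \(\zeta_1,\zeta_2\in\mathbb T\) with \(h_\Phi(\zeta_1)=h_\Phi(\zeta_2)=\omega\). Moreover, local lifts \(H_1,H_2\) at \(\zeta_1=e^{2\pi i\theta_1}\) and \(\zeta_2=e^{2\pi i\theta_2}\) would be differentiable there, with derivatives \(a_1,a_2\) satisfying \(|a_i|\geq\kappa\).

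The next step is to rescale near each \(\zeta_i\) in the half-plane model. For a scale \(t_n\downarrow0\), define maps on compact subsets of the upper half-plane \(\mathbb H=\{x+iy:y>0\}\) by
\[
F^{(i)}_n(x+iy):=\frac{1}{t_n}\Bigl(\arg\Phi\bigl((1-t_ny)e^{2\pi i(\theta_i+t_nx)}\bigr)-\arg\omega\Bigr)+\frac{i}{t_n}\,\delta\Bigl(\Phi\bigl((1-t_ny)e^{2\pi i(\theta_i+t_nx)}\bigr)\Bigr),
\]
where the argument is measured in normalized units via a local lift. Relative compactness on each compact set \(K_m\subset\mathbb H\) comes from three estimates:
\begin{enumerate}
\item[(a)] the boundary-defect comparability \eqref{eq:boundary-defect-comparability-for-P}, which keeps the imaginary part in \([Ay,By]\);
\item[(b)] the first-order estimate \eqref{eq:boundary-shadow-first-order-error};
\item[(c)] the differentiability of \(H_i\), which gives \(\arg\Phi=\arg\omega+a_it_nx+O(t_ny)+o(t_n)\) and hence keeps the real part bounded.
\end{enumerate}
Asymptotic equicontinuity (condition (ii) of Lemma~\ref{lem:asymptotic-arzela-ascoli}) follows from Corollary~\ref{cor:uniform-small-scale-control}(i). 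The point is that pseudohyperbolic distance in \(\mathbb D\) near the boundary is asymptotically the hyperbolic distance of \(\mathbb H\) between the rescaled points, by the polar formula \eqref{eq:pseudohyperbolic-distance-polar-form}. Lemma~\ref{lem:asymptotic-arzela-ascoli} then yields continuous limits \(F^{(1)},F^{(2)}:\mathbb H\to\overline{\mathbb H}\), with imaginary part in \([Ay,By]\), so in fact \(F^{(i)}:\mathbb H\to\mathbb H\).

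We then record three properties of these limits:
\begin{enumerate}
\item[(1)] \emph{Injectivity.} By Corollary~\ref{cor:uniform-small-scale-control}(ii), or Proposition~\ref{prop:basic-properties-of-universal-preservers}(ii) applied along sequences, \(F^{(i)}(p)=F^{(i)}(q)\) forces the preimages to be asymptotically pseudohyperbolically close, hence \(p=q\).
\item[(2)] \emph{Properness.} Since \(\operatorname{Im}F^{(i)}\asymp y\) and the real part is \(a_ix+O(y)\) with \(a_i\neq0\), points escaping to \(\partial\mathbb H\cup\{\infty\}\) have images escaping as well.
\item[(3)] \emph{Surjectivity.} By invariance of domain \(F^{(i)}(\mathbb H)\) is open, and by properness it is closed in \(\mathbb H\). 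Hence \(F^{(i)}\) is a homeomorphism of \(\mathbb H\).
\end{enumerate}

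To reach the contradiction, pick a point \(w\in\mathbb H\). Surjectivity gives \(p_1,p_2\in\mathbb H\) with \(F^{(1)}(p_1)=F^{(2)}(p_2)=w\). Let \(z_n^{(i)}\) be the points of \(\mathbb D\) corresponding to \(p_i\) at scale \(t_n\). Then \(z^{(1)}_n\to\zeta_1\) and \(z^{(2)}_n\to\zeta_2\), so \(\rho(z^{(1)}_n,z^{(2)}_n)\to1\). On the other hand, their images correspond to rescaled coordinates converging to the same \(w\), so \(\rho(\Phi(z^{(1)}_n),\Phi(z^{(2)}_n))\to0\). This contradicts Proposition~\ref{prop:basic-properties-of-universal-preservers}(ii).

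The main obstacle is the careful justification that pseudohyperbolic smallness in \(\mathbb D\) corresponds exactly to Euclidean closeness in the rescaled half-plane coordinates, uniformly on compacta. This correspondence is needed for equicontinuity, for injectivity of the limits, and for the final contradiction. It rests on Lemma~\ref{lem:pseudohyperbolic-polar-estimates} together with the expansion of \eqref{eq:pseudohyperbolic-distance-polar-form} at scale \(t_n\). A secondary issue is the orientation (the sign of \(a_i\)), which may require composing with \(x\mapsto -x\). This does not affect the argument.
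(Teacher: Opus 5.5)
Your proposal is correct and follows essentially the same route as the paper: a regular double point, rescaling at both preimages into a common target chart centred at $\omega$, the asymptotic Arzel\`a--Ascoli principle, injectivity and properness of the limits, surjectivity via invariance of domain, and the final contradiction with Proposition~\ref{prop:basic-properties-of-universal-preservers}(ii). The only difference is the choice of chart: the paper rescales through the Cayley maps $C_{\zeta_i}$ and $C_\omega$ onto the right half-plane, where dilations are exact $\rho_{\mathbb H}$-isometries, so the distance correspondences you identify as the main obstacle become identities; your rescaled polar coordinates instead need the uniform asymptotic expansion of \eqref{eq:pseudohyperbolic-distance-polar-form} on compacta, which does hold.
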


\begin{proof}
Suppose, to the contrary, that \(h_{\Phi}\) is not injective. By
Proposition~\ref{prop:boundary-shadow-is-lipschitz},
Proposition~\ref{prop:non-collapse-of-arcs},
Corollary~\ref{cor:nonvanishing-derivative-of-local-lifts}, and
Lemma~\ref{lem:regular-double-point}, there exist distinct points
\(
\zeta_i=e^{2\pi i\theta_i},\) where \(i=1,2\)
and a point
\(
\omega=e^{2\pi i\alpha}\in\mathbb T
\)
such that
\(
h_{\Phi}(\zeta_1)=h_{\Phi}(\zeta_2)=\omega.
\)
Moreover, for \(i=1,2\), there is a local lift \(H_i\) of \(h_{\Phi}\),
defined near \(\theta_i\), such that
\(
H_i(\theta_i)=\alpha,
a_i:=H_i'(\theta_i)\neq0.
\)
Let
\(
\mathbb H_R
:=
\left\{
u\in\mathbb C:
\operatorname{Re}u>0
\right\}.
\)
For \(\xi\in\mathbb T,z\in\mathbb D\), define
\(
C_\xi(z)
:=
\frac{\xi-z}{\xi+z}
=
\frac{1-\overline{\xi}z}
     {1+\overline{\xi}z}.
\)
Then \(C_\xi\) maps \(\mathbb D\) conformally onto
\(\mathbb H_R\), sends \(\xi\) to \(0\), and for \(u\in\mathbb H_R\),
\(
C_\xi^{-1}(u)
=
\xi\frac{1-u}{1+u}.
\)
For \(u,v\in\mathbb H_R\), put
\(
\rho_{\mathbb H}(u,v)
:=
\left|
\frac{u-v}{u+\overline v}
\right|.
\)
Then
\begin{equation}
\label{eq:cayley-pseudohyperbolic-isometry}
\rho\bigl(C_\xi^{-1}(u),C_\xi^{-1}(v)\bigr)
=
\rho_{\mathbb H}(u,v),
\end{equation}
and, for every \(t>0\),
\begin{equation}
\label{eq:half-plane-dilation-isometry}
\rho_{\mathbb H}(tu,tv)
=
\rho_{\mathbb H}(u,v).
\end{equation}

Choose a sequence \(t_n\downarrow0\). For \(i=1,2\), define
\begin{equation}
\label{eq:cayley-rescaled-mappings}
F_{i,n}(u)
:=
\frac{1}{t_n}
C_\omega
\left(
\Phi\left(
C_{\zeta_i}^{-1}(t_nu)
\right)
\right),
\qquad u\in\mathbb H_R.
\end{equation}
Set
\(
z_{i,n}(u)
:=
C_{\zeta_i}^{-1}(t_nu)
=
\zeta_i\frac{1-t_nu}{1+t_nu}.
\)
For every compact set \(K\subset\mathbb H_R\),
\(
z_{i,n}(u)\rightarrow\zeta_i
\)
uniformly for \(u\in K\).

A direct calculation gives
\begin{equation}
\label{eq:cayley-source-boundary-defect}
1-\left|z_{i,n}(u)\right|^2
=
\frac{4t_n\operatorname{Re}u}
     {|1+t_nu|^2}.
\end{equation}
Moreover, for \(w\in\mathbb D\),
\begin{equation}
\label{eq:cayley-real-part-formula}
\operatorname{Re}C_\omega(w)
=
\frac{1-|w|^2}{|\omega+w|^2}.
\end{equation}

By \eqref{eq:boundary-defect-comparability-for-P}, there exist
constants \(0<A\leq B<\infty\) such that
\(
A\bigl(1-|z|^2\bigr)
\leq
1-|\Phi(z)|^2
\leq
B\bigl(1-|z|^2\bigr)\) for \(z\in\mathbb D\).
Consequently,
\begin{equation}
\label{eq:exact-cayley-real-part-estimate}
A\,Q_{i,n}(u)\operatorname{Re}u
\leq
\operatorname{Re}F_{i,n}(u)
\leq
B\,Q_{i,n}(u)\operatorname{Re}u,
\end{equation}
where
\(
Q_{i,n}(u)
:=
\frac{4}{
|1+t_nu|^2
\left|
\omega+\Phi\bigl(z_{i,n}(u)\bigr)
\right|^2
}.
\)

We claim that
\begin{equation}
\label{eq:cayley-factor-convergence}
Q_{i,n}(u)\rightarrow1
\end{equation}
locally uniformly on \(\mathbb H_R\). Indeed,
\(
\frac{z_{i,n}(u)}{|z_{i,n}(u)|}
\rightarrow
\zeta_i
\)
locally uniformly. Since \(h_{\Phi}\) is Lipschitz,
\(
h_{\Phi}\left(
\frac{z_{i,n}(u)}{|z_{i,n}(u)|}
\right)
\rightarrow
h_{\Phi}(\zeta_i)
=
\omega
\)
locally uniformly. Proposition~\ref{prop:existence-of-boundary-shadow}
then gives
\(
\widehat{\Phi}\bigl(z_{i,n}(u)\bigr)
\rightarrow
\omega
\)
locally uniformly. Moreover,
\(
1-\left|\Phi\bigl(z_{i,n}(u)\bigr)\right|^2
\leq
B\left(1-\left|z_{i,n}(u)\right|^2\right)
\rightarrow0
\)
locally uniformly. Hence
\(
\Phi\bigl(z_{i,n}(u)\bigr)\rightarrow\omega
\)
locally uniformly, which proves
\eqref{eq:cayley-factor-convergence}.

We next control the imaginary parts. Write
\(
z_{i,n}(u)
=
\left|z_{i,n}(u)\right|
e^{2\pi i\vartheta_{i,n}(u)},
\)
where \(\vartheta_{i,n}(u)\) is chosen near \(\theta_i\). Since
\(
\frac{1-t_nu}{1+t_nu}
=
1-2t_nu+O(t_n^2)
\)
locally uniformly in \(u\), we have
\begin{equation}
\label{eq:cayley-source-angular-expansion}
\frac{\vartheta_{i,n}(u)-\theta_i}{t_n}
\rightarrow
-\frac{\operatorname{Im}u}{\pi}
\end{equation}
locally uniformly on \(\mathbb H_R\).

For \(n\) sufficiently large, write
\(
\widehat{\Phi}\bigl(z_{i,n}(u)\bigr)
=
\omega e^{2\pi i\beta_{i,n}(u)},
\)
where \(\beta_{i,n}(u)\) is chosen near \(0\). Since \(H_i\) is a local
lift of \(h_{\Phi}\), Proposition~\ref{prop:existence-of-boundary-shadow}
gives
\begin{equation}
\label{eq:lifted-shadow-error-in-cayley-model}
\left|
\beta_{i,n}(u)
-
\bigl(
H_i(\vartheta_{i,n}(u))-\alpha
\bigr)
\right|
\leq
A_0\bigl(1-|z_{i,n}(u)|\bigr)
\end{equation}
locally uniformly.

Since
\(
H_i(\vartheta)
=
\alpha
+
a_i(\vartheta-\theta_i)
+
o\bigl(|\vartheta-\theta_i|\bigr)
\)
as \(\vartheta\to\theta_i\), equations
\eqref{eq:cayley-source-boundary-defect},
\eqref{eq:cayley-source-angular-expansion}, and
\eqref{eq:lifted-shadow-error-in-cayley-model} imply that
\begin{equation}
\label{eq:beta-cayley-estimate}
\frac{\beta_{i,n}(u)}{t_n}
=
-\frac{a_i}{\pi}\operatorname{Im}u
+
O\bigl(\operatorname{Re}u\bigr)
+
o_K(1)
\end{equation}
uniformly for \(u\) in each compact set
\(K\subset\mathbb H_R\).

Put
\(
s_{i,n}(u)
:=
\left|
\Phi\bigl(z_{i,n}(u)\bigr)
\right|.
\)
Then
\(
\Phi\bigl(z_{i,n}(u)\bigr)
=
s_{i,n}(u)\,
\omega e^{2\pi i\beta_{i,n}(u)},
\)
and hence
\(
F_{i,n}(u)
=
\frac{
1-s_{i,n}(u)e^{2\pi i\beta_{i,n}(u)}
}{
t_n\left(
1+s_{i,n}(u)e^{2\pi i\beta_{i,n}(u)}
\right)
}.
\)
Therefore,
\begin{equation}
\label{eq:exact-cayley-imaginary-part}
\operatorname{Im}F_{i,n}(u)
=
-\frac{
2s_{i,n}(u)\sin\bigl(2\pi\beta_{i,n}(u)\bigr)
}{
t_n
\left|
1+s_{i,n}(u)e^{2\pi i\beta_{i,n}(u)}
\right|^2
}.
\end{equation}
Since
\(
s_{i,n}(u)\rightarrow1,
\beta_{i,n}(u)\rightarrow0
\)
locally uniformly, it follows from
\eqref{eq:exact-cayley-imaginary-part} that
\(
\operatorname{Im}F_{i,n}(u)
=
-\pi\frac{\beta_{i,n}(u)}{t_n}
+
o_K(1)
\)
uniformly on each compact set \(K\subset\mathbb H_R\). Combining this
with \eqref{eq:beta-cayley-estimate}, we obtain
\begin{equation}
\label{eq:cayley-rescaled-imaginary-part-bound}
\left|
\operatorname{Im}F_{i,n}(u)
-
a_i\operatorname{Im}u
\right|
\leq
C\operatorname{Re}u+o_K(1)
\end{equation}
locally uniformly on \(\mathbb H_R\), where \(C\) is independent of
\(n\).

We now verify the hypotheses of
Lemma~\ref{lem:asymptotic-arzela-ascoli}. Let
\[
K_m
:=
\left\{
u\in\mathbb H_R:
\frac1m\leq\operatorname{Re}u\leq m,\quad
|\operatorname{Im}u|\leq m
\right\},
\qquad m\geq1.
\]
Then
\(
K_1\subset K_2\subset\cdots,
\)
and every compact subset of \(\mathbb H_R\) is contained in some
\(K_m\).

By \eqref{eq:exact-cayley-real-part-estimate},
\eqref{eq:cayley-factor-convergence}, and
\eqref{eq:cayley-rescaled-imaginary-part-bound}, for each fixed \(m\)
there exist constants
\(
0<c_m\leq C_m<\infty
\)
and \(N_m\) such that
\(
c_m
\leq
\operatorname{Re}F_{i,n}(u)
\leq
C_m,
\left|\operatorname{Im}F_{i,n}(u)\right|
\leq
C_m,
\)
whenever \(u\in K_m\) and \(n\geq N_m\). Hence
\(
\bigcup_{n\geq N_m}F_{i,n}(K_m)
\)
is relatively compact in \(\mathbb H_R\).

It remains to treat the finitely many indices \(m\leq n<N_m\).
For each such \(n\), continuity of the mapping
\(
u\mapsto z_{i,n}(u)
\)
and compactness of \(K_m\) give
\(
d_{m,n}
:=
\min_{u\in K_m}
\left(
1-\left|z_{i,n}(u)\right|^2
\right)
>0.
\)
Thus for \(u\in K_m\), we have
\(
1-
\left|
\Phi\bigl(z_{i,n}(u)\bigr)
\right|^2
\geq
A d_{m,n}.
\)
Consequently, there exists \(R_{m,n}<1\) such that, for every
\(u\in K_m\),
\(
\left|
\Phi\bigl(z_{i,n}(u)\bigr)
\right|
\leq
R_{m,n}.
\)
It follows that \(F_{i,n}(K_m)\) is bounded and that its real parts are
bounded away from zero. Hence each \(F_{i,n}(K_m)\) is relatively
compact in \(\mathbb H_R\). Therefore,
\(
\bigcup_{n\geq m}F_{i,n}(K_m)
\)
is relatively compact.

Finally, by
\eqref{eq:cayley-pseudohyperbolic-isometry} and
\eqref{eq:half-plane-dilation-isometry},
\begin{align}
\rho_{\mathbb H}
\bigl(F_{i,n}(u),F_{i,n}(v)\bigr)
&=
\rho
\left(
\Phi\bigl(z_{i,n}(u)\bigr),
\Phi\bigl(z_{i,n}(v)\bigr)
\right),
\label{eq:cayley-image-distance}\\
\rho
\bigl(z_{i,n}(u),z_{i,n}(v)\bigr)
&=
\rho_{\mathbb H}(u,v).
\label{eq:cayley-source-distance}
\end{align}
Since \(z_{i,n}(u)\to\zeta_i\) uniformly on \(K_m\),
Corollary~\ref{cor:uniform-small-scale-control}\textup{(i)} implies
that \(\{F_{i,n}\}\) is eventually equicontinuous on \(K_m\) with
respect to \(\rho_{\mathbb H}\).

We apply Lemma~\ref{lem:asymptotic-arzela-ascoli} with
\(
X=Y=\mathbb H_R,
\
d_X=d_Y=\rho_{\mathbb H},
\)
and with the exhaustion \((K_m)_{m\geq1}\) defined above. Since
\(\rho_{\mathbb H}\) induces the usual Euclidean topology on
\(\mathbb H_R\), each \(K_m\) is compact with respect to
\(\rho_{\mathbb H}\).
Applying Lemma~\ref{lem:asymptotic-arzela-ascoli} first to
\(\{F_{1,n}\}\), and then to \(\{F_{2,n}\}\) along the resulting
subsequence, we obtain a common subsequence, still denoted by
\(\{t_n\}\), and for \(i=1,2\) continuous mappings
\(
F_i:\mathbb H_R\rightarrow\mathbb H_R
\)
such that
\begin{equation}
\label{eq:cayley-local-uniform-convergence}
F_{i,n}\rightarrow F_i
\end{equation}
locally uniformly on \(\mathbb H_R\).

Fix \(u\in\mathbb H_R\). By
\eqref{eq:cayley-factor-convergence},
\(
Q_{i,n}(u)\rightarrow1.
\)
Passing to the limit in
\eqref{eq:exact-cayley-real-part-estimate}, we obtain the global
estimate
\begin{equation}
\label{eq:cayley-limit-real-part-bounds}
A\operatorname{Re}u
\leq
\operatorname{Re}F_i(u)
\leq
B\operatorname{Re}u,
\qquad
u\in\mathbb H_R.
\end{equation}

We next sharpen the imaginary-part estimate. For each fixed
\(u\in\mathbb H_R\), equations
\eqref{eq:cayley-source-boundary-defect},
\eqref{eq:cayley-source-angular-expansion}, and
\eqref{eq:lifted-shadow-error-in-cayley-model} give
\(
\limsup_{n\to\infty}
\left|
\frac{\beta_{i,n}(u)}{t_n}
+
\frac{a_i}{\pi}\operatorname{Im}u
\right|
\leq
2A_0\operatorname{Re}u.
\)
Using
\(
\operatorname{Im}F_{i,n}(u)
=
-\pi\frac{\beta_{i,n}(u)}{t_n}+o(1)
\)
and passing to the limit, we obtain
\begin{equation}
\label{eq:cayley-limit-imaginary-part-bound}
\left|
\operatorname{Im}F_i(u)
-
a_i\operatorname{Im}u
\right|
\leq
2\pi A_0\operatorname{Re}u,
\qquad
u\in\mathbb H_R.
\end{equation}

We claim that \(F_i\) is injective. Suppose that
\(
F_i(u)=F_i(v)
\)
for some \(u,v\in\mathbb H_R\). By
\eqref{eq:cayley-local-uniform-convergence},
\(
\rho_{\mathbb H}
\bigl(F_{i,n}(u),F_{i,n}(v)\bigr)
\rightarrow0.
\)
Equation~\eqref{eq:cayley-image-distance} gives
\(
\rho
\left(
\Phi\bigl(z_{i,n}(u)\bigr),
\Phi\bigl(z_{i,n}(v)\bigr)
\right)
\rightarrow0.
\)
Since both source sequences tend to \(\mathbb T\),
Proposition~\ref{prop:basic-properties-of-universal-preservers}
\textup{(ii)} implies that
\(
\rho
\bigl(z_{i,n}(u),z_{i,n}(v)\bigr)
\rightarrow0.
\)
By \eqref{eq:cayley-source-distance},
\(
\rho_{\mathbb H}(u,v)=0,
\)
so \(u=v\). Thus \(F_i\) is injective.

We next show that \(F_i\) is proper. Let
\(
Q\subset\mathbb H_R
\)
be compact. Then there exist constants
\(
0<\mu_Q\leq M_Q<\infty
\) for \(w\in Q\)
such that
\(
\mu_Q
\leq
\operatorname{Re}w
\leq
M_Q,
|\operatorname{Im}w|
\leq
M_Q.
\)
Let
\(
u\in F_i^{-1}(Q).
\)
By \eqref{eq:cayley-limit-real-part-bounds},
\(
\frac{\mu_Q}{B}
\leq
\operatorname{Re}u
\leq
\frac{M_Q}{A}.
\)
Moreover, by
\eqref{eq:cayley-limit-imaginary-part-bound},
\[
|a_i|\,|\operatorname{Im}u|
\leq
\left|\operatorname{Im}F_i(u)\right|
+
2\pi A_0\operatorname{Re}u
\leq
M_Q
+
\frac{2\pi A_0M_Q}{A}.
\]
Since \(a_i\neq0\), it follows that
\(
|\operatorname{Im}u|
\leq
\frac{1}{|a_i|}
\left(
M_Q+\frac{2\pi A_0M_Q}{A}
\right).
\)
Thus \(F_i^{-1}(Q)\) is contained in a compact subset of
\(\mathbb H_R\). Since \(F_i\) is continuous, \(F_i^{-1}(Q)\) is
closed, and therefore compact. Hence \(F_i\) is proper.

By invariance of domain, the injective continuous mapping
\(
F_i:\mathbb H_R\rightarrow\mathbb H_R
\)
has open image. We claim that its image is also closed. Let
\(
F_i(u_n)\rightarrow q\in\mathbb H_R.
\)
The set
\(
Q
:=
\{q\}\cup\{F_i(u_n):n\geq1\}
\)
is compact. Since \(F_i\) is proper, \(F_i^{-1}(Q)\) is compact.
Passing to a subsequence, we may assume that
\(
u_n\rightarrow u\in\mathbb H_R.
\)
Continuity gives
\(
F_i(u)=q,
\)
and hence \(q\in F_i(\mathbb H_R)\). Thus
\(F_i(\mathbb H_R)\) is closed.

Since \(\mathbb H_R\) is connected and
\(F_i(\mathbb H_R)\) is nonempty, open, and closed, we conclude that
\begin{equation}
\label{eq:cayley-limit-surjectivity}
F_i(\mathbb H_R)=\mathbb H_R,
\qquad
i=1,2.
\end{equation}

Choose \(q\in\mathbb H_R\). By
\eqref{eq:cayley-limit-surjectivity}, there exist
\(u_1,u_2\in\mathbb H_R\) such that
\(
F_1(u_1)=q=F_2(u_2).
\)
It follows from the locally uniform convergence that
\[
\rho_{\mathbb H}
\bigl(F_{1,n}(u_1),F_{2,n}(u_2)\bigr)
\rightarrow0.
\]
Since the same target Cayley mapping \(C_\omega\) is used in both
rescalings,
\[
\rho
\left(
\Phi\bigl(z_{1,n}(u_1)\bigr),
\Phi\bigl(z_{2,n}(u_2)\bigr)
\right)
\rightarrow0.
\]
Proposition~\ref{prop:basic-properties-of-universal-preservers}
\textup{(ii)} therefore yields
\begin{equation}
\label{eq:cayley-source-pairs-converge}
\rho
\bigl(z_{1,n}(u_1),z_{2,n}(u_2)\bigr)
\rightarrow0.
\end{equation}

On the other hand,
\(
z_{1,n}(u_1)\rightarrow\zeta_1,
z_{2,n}(u_2)\rightarrow\zeta_2.
\)
Since \(\zeta_1\neq\zeta_2\), the identity
\(
1-\rho(z,w)^2
=
\frac{
(1-|z|^2)(1-|w|^2)
}{
|1-\overline zw|^2
}
\)
implies that
\(
\rho
\bigl(z_{1,n}(u_1),z_{2,n}(u_2)\bigr)
\rightarrow1,
\)
contradicting
\eqref{eq:cayley-source-pairs-converge}. Therefore \(h_{\Phi}\) is injective.
\end{proof}

\begin{thm}
\label{thm:boundary-shadow-is-bilipschitz}
Let
\(
\Phi\in\mathcal P.
\) Then
\(
h_{\Phi}\in\operatorname{BiLip}(\mathbb T).
\)
\end{thm}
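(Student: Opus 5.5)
We already know from Proposition~\ref{prop:boundary-shadow-is-lipschitz} that \(h_\Phi\) is Lipschitz and from Theorem~\ref{thm:boundary-shadow-is-injective} that it is injective. The plan is to show that \(h_\Phi\) is surjective and then to upgrade the local non-collapse of Proposition~\ref{prop:non-collapse-of-arcs} to a global lower Lipschitz bound.

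\textbf{Step 1: \(h_\Phi\) is a homeomorphism.} The map \(h_\Phi\) is a continuous injection of \(\mathbb T\) into \(\mathbb T\). A continuous injection of a circle cannot omit a point, since \(\mathbb T\) minus a point is homeomorphic to an interval, and an interval admits no continuous injection of a circle. Hence \(h_\Phi\) is bijective. Because \(\mathbb T\) is compact Hausdorff, \(h_\Phi\) is a homeomorphism. Consequently it has degree \(\pm1\), and any lift \(H\) is strictly monotone with \(H(x+1)=H(x)\pm1\).

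\textbf{Step 2: local lower bound.} Let \(\zeta,\eta\) satisfy \(d_{\mathbb T}(\zeta,\eta)\le\ell_0\), and let \(I\) be the shorter closed arc joining them, so \(|I|=d_{\mathbb T}(\zeta,\eta)\). Since \(H\) is monotone, \(h_\Phi(I)\) is the arc between \(h_\Phi(\zeta)\) and \(h_\Phi(\eta)\) traced by \(H\). We therefore have
\[
\operatorname{diam}_{\mathbb T}h_\Phi(I)\le |H(x)-H(y)|,
\]
where \(x,y\) are lifts of \(\zeta,\eta\) with \(|x-y|=|I|\).

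The main obstacle is that \(d_{\mathbb T}(h_\Phi(\zeta),h_\Phi(\eta))=\min\{|H(x)-H(y)|,\,1-|H(x)-H(y)|\}\), and this may be the complementary length. To handle it, use uniform continuity of \(H\): choose \(\ell_1\le\ell_0\) such that \(|x-y|\le\ell_1\) implies \(|H(x)-H(y)|\le\tfrac12\). Then the minimum equals \(|H(x)-H(y)|\). Combining this with Proposition~\ref{prop:non-collapse-of-arcs} gives
\[
d_{\mathbb T}\bigl(h_\Phi(\zeta),h_\Phi(\eta)\bigr)\ge\kappa\, d_{\mathbb T}(\zeta,\eta)\qquad\text{whenever } d_{\mathbb T}(\zeta,\eta)\le\ell_1 .
\]
Alternatively, Corollary~\ref{cor:nonvanishing-derivative-of-local-lifts} gives \(|H'|\ge\kappa\) almost everywhere, and \(H\) is Lipschitz, hence absolutely continuous. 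Integrating yields \(|H(x)-H(y)|\ge\kappa|x-y|\) for all \(x,y\), which makes the same point directly.

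\textbf{Step 3: far pairs.} Consider the compact set \(\{(\zeta,\eta): d_{\mathbb T}(\zeta,\eta)\ge\ell_1\}\). On it the continuous function \(d_{\mathbb T}(h_\Phi(\zeta),h_\Phi(\eta))\) is positive by injectivity, so it has a minimum \(m>0\). Since \(d_{\mathbb T}(\zeta,\eta)\le\tfrac12\), this gives
\[
d_{\mathbb T}\bigl(h_\Phi(\zeta),h_\Phi(\eta)\bigr)\ge m\ge 2m\,d_{\mathbb T}(\zeta,\eta)\qquad\text{whenever } d_{\mathbb T}(\zeta,\eta)\ge\ell_1 .
\]

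\textbf{Conclusion.} Take \(L\) to be the maximum of the Lipschitz constant from Proposition~\ref{prop:boundary-shadow-is-lipschitz}, \(\kappa^{-1}\), and \((2m)^{-1}\). Then \(h_\Phi\) satisfies Definition~\ref{def:bi-lipschitz-circle-mappings} with constant \(L\), so \(h_\Phi\in\operatorname{BiLip}(\mathbb T)\).
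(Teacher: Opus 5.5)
Your proof is correct and follows the paper's argument step for step. The paper likewise gets surjectivity from the circle-versus-interval topology. It obtains the local lower bound from Proposition~\ref{prop:non-collapse-of-arcs} after restricting to a scale \(\ell_1\) at which the image arc is the shorter one; it takes \(\ell_1=\min\{\ell_0,1/(4L)\}\) from the Lipschitz constant, which is exactly your uniform-continuity choice. It then handles pairs at distance at least \(\ell_1\) by compactness.

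Your alternative via Corollary~\ref{cor:nonvanishing-derivative-of-local-lifts} and absolute continuity of the monotone lift is also valid. Applied to both \(H(y)-H(x)\) and \(H(x+1)-H(y)\), it gives the global lower constant \(\kappa\) directly, with no compactness step.
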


\begin{proof}
By Proposition~\ref{prop:boundary-shadow-is-lipschitz}, there exists
\(L\geq 1\) such that
\[
d_{\mathbb T}\bigl(h_{\Phi}(\zeta),h_{\Phi}(\eta)\bigr)
\leq
L\,d_{\mathbb T}(\zeta,\eta),
\qquad
\zeta,\eta\in\mathbb T.
\]
In particular, \(h_{\Phi}\) is continuous. By
Theorem~\ref{thm:boundary-shadow-is-injective}, \(h_{\Phi}\) is
injective.

Since \(\mathbb T\) is compact and \(\mathbb T\) is Hausdorff,
\(h_{\Phi}\) is a homeomorphism from \(\mathbb T\) onto \(h_{\Phi}(\mathbb T)\).
We claim that
\(
h_{\Phi}(\mathbb T)=\mathbb T.
\)
Indeed, if \(h_{\Phi}(\mathbb T)\neq\mathbb T\), choose
\(\omega\in\mathbb T\setminus h_{\Phi}(\mathbb T)\). Then
\(\mathbb T\setminus\{\omega\}\) is homeomorphic to an open interval
of \(\mathbb R\). Hence \(h_{\Phi}(\mathbb T)\) would be homeomorphic to a
compact connected subset of \(\mathbb R\), and therefore to a closed
interval. This is impossible because \(h_{\Phi}(\mathbb T)\) is homeomorphic
to \(\mathbb T\). Thus
\(
h_{\Phi}(\mathbb T)=\mathbb T,
\)
and \(h_{\Phi}\) is a homeomorphism of \(\mathbb T\).

Let \(\kappa>0\) and \(\ell_0>0\) be the constants in
Proposition~\ref{prop:non-collapse-of-arcs}. Set
\(
\ell_1
:=
\min\left\{
\ell_0,\frac{1}{4L}
\right\}.
\)

We first prove a lower Lipschitz estimate for sufficiently close
points. Let \(\zeta,\eta\in\mathbb T\) satisfy
\(
0<
\alpha
:=
d_{\mathbb T}(\zeta,\eta)
\leq
\ell_1.
\)
Let \(I\subset\mathbb T\) be the shorter closed arc joining
\(\zeta\) to \(\eta\). Then
\(
|I|
=
\alpha
\leq
\ell_1
\leq
\ell_0.
\)
For every \(\xi,\omega\in I\), the Lipschitz property gives
\(
d_{\mathbb T}\bigl(h_{\Phi}(\xi),h_{\Phi}(\omega)\bigr)
\leq
L\,d_{\mathbb T}(\xi,\omega)
\leq
L|I|
\leq
\frac14.
\)
Consequently,
\(
\operatorname{diam}_{\mathbb T}h_{\Phi}(I)
\leq
\frac14.
\)

Since \(h_{\Phi}\) is a homeomorphism, \(h_{\Phi}(I)\) is a closed arc joining
\(h_{\Phi}(\zeta)\) to \(h_{\Phi}(\eta)\). As its diameter is strictly less than
\(\frac12\), it is the shorter arc joining its endpoints. Therefore,
\(
\operatorname{diam}_{\mathbb T}h_{\Phi}(I)
=
d_{\mathbb T}\bigl(h_{\Phi}(\zeta),h_{\Phi}(\eta)\bigr).
\)

By Proposition~\ref{prop:non-collapse-of-arcs},
\(
d_{\mathbb T}\bigl(h_{\Phi}(\zeta),h_{\Phi}(\eta)\bigr)
=
\operatorname{diam}_{\mathbb T}h_{\Phi}(I)
\geq
\kappa |I|
=
\kappa\,d_{\mathbb T}(\zeta,\eta).
\)
Thus
\[
d_{\mathbb T}\bigl(h_{\Phi}(\zeta),h_{\Phi}(\eta)\bigr)
\geq
\kappa\,d_{\mathbb T}(\zeta,\eta)
\]
whenever
\(
d_{\mathbb T}(\zeta,\eta)\leq\ell_1.
\)

It remains to consider pairs satisfying
\(
d_{\mathbb T}(\zeta,\eta)\geq\ell_1.
\)
Define
\[
K
:=
\left\{
(\zeta,\eta)\in\mathbb T\times\mathbb T:
d_{\mathbb T}(\zeta,\eta)\geq\ell_1
\right\}.
\]
The set \(K\) is compact.

Since \(h_{\Phi}\) is injective, the function
\(
(\zeta,\eta)
\mapsto
\frac{
d_{\mathbb T}\bigl(h_{\Phi}(\zeta),h_{\Phi}(\eta)\bigr)
}{
d_{\mathbb T}(\zeta,\eta)
}
\)
is continuous and strictly positive on \(K\). Hence it attains a
positive minimum
\(
m
:=
\min_{(\zeta,\eta)\in K}
\frac{
d_{\mathbb T}\bigl(h_{\Phi}(\zeta),h_{\Phi}(\eta)\bigr)
}{
d_{\mathbb T}(\zeta,\eta)
}
>0.
\)
Therefore,
\(
d_{\mathbb T}\bigl(h_{\Phi}(\zeta),h_{\Phi}(\eta)\bigr)
\geq
m\,d_{\mathbb T}(\zeta,\eta)
\)
whenever
\(
d_{\mathbb T}(\zeta,\eta)\geq\ell_1.
\)

Set
\(
c
:=
\min\{\kappa,m\}>0.
\)
Combining the preceding estimates, we obtain
\[
c\,d_{\mathbb T}(\zeta,\eta)
\leq
d_{\mathbb T}\bigl(h_{\Phi}(\zeta),h_{\Phi}(\eta)\bigr)
\leq
L\,d_{\mathbb T}(\zeta,\eta),
\qquad
\zeta,\eta\in\mathbb T.
\]
Thus \(h_{\Phi}\) is a bi-Lipschitz bijection of \(\mathbb T\). Therefore,
\(
h_{\Phi}\in\operatorname{BiLip}(\mathbb T).
\)
\end{proof}
\begin{remark}
\label{rem:boundary-shadow-composition-law}
By Theorem~\ref{thm:universal-orbit-frame-preservation},
\(\Phi\circ\Theta\in\mathcal P\), so
\(h_{\Phi\circ\Theta}\) is well defined.
Let
\(
\Phi,\Theta\in\mathcal P.
\)
Then
\(
h_{\Phi\circ\Theta}
=
h_\Phi\circ h_\Theta.
\)

Indeed, fix
\(
\zeta\in\mathbb T
\)
and set
\(
z_t:=\Theta\bigl((1-t)\zeta\bigr).
\)
For all sufficiently small \(t>0\), one has \(z_t\neq0\). Write
\(
z_t=(1-s_t)\eta_t,
s_t:=1-|z_t|,
\eta_t:=\frac{z_t}{|z_t|}\in\mathbb T.
\)

By
Theorem~\ref{thm:universal-orbit-frame-preservation},
there exist constants \(c,C>0\) such that
\[
c\bigl(1-(1-t)^2\bigr)
\leq
1-|z_t|^2
\leq
C\bigl(1-(1-t)^2\bigr).
\]
Since
\(
1-|z_t|^2
=
(1+|z_t|)s_t,
\)
it follows that
\(
s_t\rightarrow0\) as \(t\to0^+\).
Moreover, by the definition of the boundary shadow,
\(
\eta_t
=
\frac{\Theta((1-t)\zeta)}
{\left|\Theta((1-t)\zeta)\right|}
\rightarrow
h_\Theta(\zeta).
\)
Applying
Proposition~\ref{prop:existence-of-boundary-shadow}
to \(\Phi\), we obtain
\(
d_{\mathbb T}
\left(
\frac{\Phi(z_t)}{|\Phi(z_t)|},
h_\Phi(\eta_t)
\right)
\leq
A_\Phi s_t
\rightarrow0.
\)
Since \(h_\Phi\) is continuous,
\(
h_\Phi(\eta_t)
\rightarrow
h_\Phi\bigl(h_\Theta(\zeta)\bigr).
\)
Therefore,
\(
\frac{
(\Phi\circ\Theta)((1-t)\zeta)
}{
\left|
(\Phi\circ\Theta)((1-t)\zeta)
\right|
}
\rightarrow
h_\Phi\bigl(h_\Theta(\zeta)\bigr).
\)
By the uniqueness of the boundary shadow,
\(
h_{\Phi\circ\Theta}(\zeta)
=
h_\Phi\bigl(h_\Theta(\zeta)\bigr).
\)
Since \(\zeta\in\mathbb T\) was arbitrary, the result follows.
\end{remark}

For any mapping
\(
h:\mathbb T\rightarrow\mathbb T,
\)
define its radial extension by
\begin{equation}
E_h(0):=0,
\qquad
E_h(r\zeta):=r h(\zeta),
\qquad
0<r<1,\ \zeta\in\mathbb T.
\label{eq:radial-extension-arbitrary-boundary-map}
\end{equation}
No regularity assumption is imposed on \(h\).

The following theorem shows that the radial extension of a mapping on the unit circle belongs to \(\mathcal P\) precisely when the boundary mapping is bi-Lipschitz.

\begin{thm}
\label{thm:angular-classification-without-continuity}
Let
\(
h:\mathbb T\rightarrow\mathbb T
\)
be an arbitrary mapping, and let \(E_h\) be defined by
\eqref{eq:radial-extension-arbitrary-boundary-map}. Then
\(
E_h\in\mathcal P\) if and only if
\(
h\in\operatorname{BiLip}(\mathbb T).
\)
\end{thm}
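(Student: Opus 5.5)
The proof splits into the two directions; in both we use the criterion of Theorem~\ref{thm:universal-orbit-frame-preservation}.

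\emph{Necessity.} Assume \(E_h\in\mathcal P\). For \(0<r<1\) we have \(\widehat{E_h}(r\zeta)=h(\zeta)\), so the boundary shadow of \(E_h\) exists and equals \(h\), that is, \(h_{E_h}=h\). Theorem~\ref{thm:boundary-shadow-is-bilipschitz} then gives \(h\in\operatorname{BiLip}(\mathbb T)\). This direction is immediate.

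\emph{Sufficiency.} Assume \(h\) is bi-Lipschitz with constant \(L\). Condition (ii) of Theorem~\ref{thm:universal-orbit-frame-preservation} is trivial, since \(|E_h(z)|=|z|\). The inverse \(h^{-1}\) is also bi-Lipschitz, and \(E_{h^{-1}}=E_h^{-1}\) is a bijection of \(\mathbb D\). It therefore suffices to prove the one-directional statement
\[
\Lambda\in\mathcal C\ \Longrightarrow\ E_h(\Lambda)\in\mathcal C
\]
for every bi-Lipschitz \(h\). Applying it to \(h^{-1}\) gives the converse implication.

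We verify the two conditions of Lemma~\ref{thm:carleson-interpolation}, handling finitely many points near \(0\) via Lemma~\ref{lem:basic-properties-carleson-sequences}.

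\emph{Step 1: comparison of pseudohyperbolic distances.} Write \(z=r\zeta\), \(w=s\eta\), and \(\alpha=d_{\mathbb T}(\zeta,\eta)\). Then \(E_h(z)\) and \(E_h(w)\) have the same moduli \(r,s\), with angular separation \(\alpha'=d_{\mathbb T}(h(\zeta),h(\eta))\) satisfying \(L^{-1}\alpha\le\alpha'\le L\alpha\). Since \(\sin^2(\pi\alpha)\asymp\alpha^2\) on \([0,\tfrac12]\), formula \eqref{eq:pseudohyperbolic-distance-polar-form} shows that the angular term \(b\) changes by at most a factor \(\asymp L^2\). The map \(b\mapsto (a+b)/(c+b)\) is increasing when \(a\le c\), and \(a=(r-s)^2\le c=(1-rs)^2\) always holds. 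Hence
\[
\rho(E_hz,E_hw)^2\ \ge\ \frac{a+L^{-2}\kappa b}{c+L^{-2}\kappa b}\ \ge\ L^{-2}\kappa\,\rho(z,w)^2
\]
for a numerical constant \(\kappa\). This uses the elementary inequality \((a+\lambda b)/(c+\lambda b)\ge\lambda(a+b)/(c+b)\) for \(\lambda\le1\). Pairwise separation is therefore preserved. (Points with \(r=0\) are trivial.)

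\emph{Step 2: the Carleson measure condition.} The weights \(1-|\lambda_j|^2\) are unchanged. Let \(I\) be an arc. The preimage \(h^{-1}(I)\) is an arc with \(|h^{-1}(I)|\le L|I|\). Let \(I'\) be the concentric enlargement of \(h^{-1}(I)\) of length \(\min\{1,L^2|I|\}\ge |I|\). Then
\[
E_h^{-1}(S(I))\subset S(I'),
\]
because a point \(r\zeta\) with \(h(\zeta)\in I\) and \(1-r\le|I|\) satisfies \(\zeta\in h^{-1}(I)\subset I'\) and \(1-r\le|I'|\). Consequently
\[
\mu_{E_h(\Lambda)}(S(I))\le \mu_\Lambda(S(I'))\le C_\Lambda L^2|I|,
\]
where arcs of length close to \(1\) are handled by the total mass.

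The main obstacle is Step 1, namely the monotonicity manipulation of the polar formula. The point to check is that the angular rescaling of \(b\) by a bounded factor cannot destroy a lower bound on \(\rho\). The inequality \(a\le c\) is exactly what makes the quotient monotone in \(b\), and this is where I would concentrate the care.
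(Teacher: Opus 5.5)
Your proposal is correct and follows essentially the same route as the paper. Necessity comes from $h_{E_h}=h$ together with Theorem~\ref{thm:boundary-shadow-is-bilipschitz}. Sufficiency uses three ingredients: the polar formula with $a\le c$, which gives two-sided comparability of pseudohyperbolic distances; the Carleson-box inclusion $E_h^{-1}(S(I))\subset S(I')$ for an enlarged preimage arc; and symmetry under $h\mapsto h^{-1}$. Your explicit inequality $(a+\lambda b)/(c+\lambda b)\ge\lambda(a+b)/(c+b)$ for $\lambda\le 1$ supplies the step that the paper only asserts with ``therefore''.
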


\begin{proof}
We first prove the sufficiency. Suppose that
\(
h\in\operatorname{BiLip}(\mathbb T),
\)
and let \(L\geq 1\) be a bi-Lipschitz constant for \(h\).

Let
\(
z=r\zeta,
\
w=s\eta,
\)
where \(0<r,s<1\) and \(\zeta,\eta\in\mathbb T\). Put
\(
\alpha
:=
d_{\mathbb T}(\zeta,\eta),
\
\alpha_h
:=
d_{\mathbb T}\bigl(h(\zeta),h(\eta)\bigr).
\)
Since \(h\) is bi-Lipschitz,
\(
L^{-1}\alpha
\leq
\alpha_h
\leq
L\alpha.
\)
For \(u\in[0,\pi]\), one has
\(
\frac{u}{\pi}
\leq
\sin\left(\frac{u}{2}\right)
\leq
\frac{u}{2}.
\)
Hence there exists a constant \(K_L\geq 1\), depending only on \(L\),
such that
\[
K_L^{-1}
\sin^2\left(\pi\alpha\right)
\leq
\sin^2\left(\pi{\alpha_h}\right)
\leq
K_L
\sin^2\left(\pi{\alpha}\right).
\]

Set
\(
A:=(r-s)^2,
\
C:=(1-rs)^2,
\)
and
\(
B
:=
4rs\sin^2\left(\pi{\alpha}\right),
\
B_h
:=
4rs\sin^2\left(\pi\alpha_h\right).
\)
Then
\(
A\leq C,
\)
because
\(
C-A
=
(1-r^2)(1-s^2)
\geq 0.
\)
Moreover,
\(
K_L^{-1}B
\leq
B_h
\leq
K_LB.
\)
A direct computation gives
\(
\rho(z,w)^2
=
\frac{A+B}{C+B},
\)
whereas
\(
\rho\bigl(E_h(z),E_h(w)\bigr)^2
=
\frac{A+B_h}{C+B_h}.
\)
Therefore,
\[
K_L^{-1}\rho(z,w)^2
\leq
\rho\bigl(E_h(z),E_h(w)\bigr)^2
\leq
K_L\rho(z,w)^2.
\]
The cases \(r=0\) or \(s=0\) are immediate, since \(E_h\) preserves
the modulus. It follows that \(E_h\) maps pairwise separated
sequences to pairwise separated sequences. Applying the same argument
to \(h^{-1}\), we also obtain the converse implication.

We next verify the Carleson measure condition.
Let \(\Lambda\in\mathcal C\), let \(I\subset\mathbb T\) be an
arbitrary arc, and set
\(
J:=h^{-1}(I).
\)
Since \(h\) is an \(L\)-bi-Lipschitz homeomorphism, \(J\) is an arc
and
\(
L^{-1}|J|
\leq
|I|
=
|h(J)|
\leq
L|J|.
\)
Let \(J^{(L)}:=\mathbb T\) if \(L|J|\geq1\); otherwise, let
\(J^{(L)}\) be the arc with the same midpoint as \(J\). In either case,
its normalized arc length is
\(
|J^{(L)}|=\min\{1,L|J|\}.
\)
If
\(
\lambda=r\zeta\in\Lambda
\)
satisfies
\(
E_h(\lambda)\in S(I),
\)
then
\(
h(\zeta)\in I,
\
1-r\leq |I|.
\)
Hence
\(
\zeta\in J
\)
and
\(
1-r
\leq
|I|
\leq
L|J|.
\)
Consequently,
\(
\lambda\in S\bigl(J^{(L)}\bigr).
\)

Since \(E_h\) preserves moduli, we obtain
\[
\mu_{E_h(\Lambda)}\bigl(S(I)\bigr)
=
\sum_{\substack{\lambda\in\Lambda\\
E_h(\lambda)\in S(I)}}
\bigl(1-|\lambda|^2\bigr)
\leq
\mu_\Lambda\bigl(S(J^{(L)})\bigr).
\]
Since \(\Lambda\in\mathcal C\), there exists a constant
\(C_\Lambda>0\) such that
\(
\mu_\Lambda\bigl(S(J^{(L)})\bigr)
\leq
C_\Lambda |J^{(L)}|.
\)
Furthermore,
\(
|J^{(L)}|
\leq
L|J|
\leq
L^2|I|.
\)
Therefore,
\(
\mu_{E_h(\Lambda)}\bigl(S(I)\bigr)
\leq
C_\Lambda L^2|I|.
\)
Thus, \(\mu_{E_h(\Lambda)}\) is a Carleson measure. Together with the
pairwise separation already proved, this yields
\(
E_h(\Lambda)\in\mathcal C.
\)
Applying the same argument to \(h^{-1}\), we obtain
\(
\Lambda\in\mathcal C
\Longleftrightarrow
E_h(\Lambda)\in\mathcal C
\)
for every sequence \(\Lambda\subset\mathbb D\).

Finally, for $z\in\mathbb D$,
\(
|E_h(z)|=|z|.
\)
Hence
\(
1-|E_h(z)|^2
=
1-|z|^2.
\)
Thus, both conditions of
Theorem~\ref{thm:universal-orbit-frame-preservation} are satisfied.
Therefore,
\(
E_h\in\mathcal P.
\)
Conversely, suppose that \( E_h\in\mathcal P. \) For every \(\zeta\in\mathbb T\) and \(0<t<1\),
\(
E_h\bigl((1-t)\zeta\bigr) = (1-t)h(\zeta),\)
and hence
\(
\widehat{E_h}\bigl((1-t)\zeta\bigr) = h(\zeta).
\) Thus the canonical radial boundary trace of \(E_h\) is precisely \(h\).
By Theorem~\ref{thm:boundary-shadow-is-bilipschitz},
\(
h=h_{E_h}\in\operatorname{BiLip}(\mathbb T).
\)
\end{proof}

\section{The Boundary-Shadow Kernel and the Structural
Kernel--Angular Classification}

Consider the boundary-shadow mapping
\(
\mathfrak{S}\colon
\mathcal P
\rightarrow
\operatorname{BiLip}(\mathbb T),
\mathfrak{S}(\Phi):=h_{\Phi}.
\)
By Remark~\ref{rem:boundary-shadow-composition-law}, for every
\(\Phi,\Theta\in\mathcal P\), one has
\(
h_{\Phi\circ\Theta}
=
h_{\Phi}\circ h_{\Theta}.
\)
Consequently,
\(
\mathfrak{S}(\Phi\circ\Theta)
=
\mathfrak{S}(\Phi)\circ\mathfrak{S}(\Theta).
\)
Thus, \(\mathfrak{S}\) is a monoid homomorphism. Its kernel is
\(
\mathcal K
:=
\ker\mathfrak{S}
=
\left\{
\Psi\in\mathcal P:
h_{\Psi}=\operatorname{id}_{\mathbb T}
\right\},
\)
which we call the \emph{boundary-shadow kernel}.

We first prove that every \(\Phi\in\mathcal P\) admits a unique
factorization
\(
\Phi=\Psi\circ E_h,
\Psi\in\mathcal K,
h\in\operatorname{BiLip}(\mathbb T),
\)
yielding the structural kernel--angular classification of
\(\mathcal P\). We then give geometric characterizations of \(\mathcal K\), both
within \(\mathcal P\) and for arbitrary mappings of \(\mathbb D\).

\begin{lem}
\label{lem:split-monoid-decomposition}
Let \(M\) be a monoid, let \(G\) be a group, and let
\(
\pi:M\longrightarrow G
\)
be a unital monoid homomorphism. Suppose that there exists a unital
monoid homomorphism
\(
s:G\longrightarrow M
\)
such that
\(
\pi\circ s=\operatorname{id}_{G}.
\)
Set
\(
K:=\ker\pi
=
\left\{
x\in M:\pi(x)=e_G
\right\}.
\)
Then the following assertions hold.

\begin{enumerate}
\item[\textup{(i)}]
For \(g\in G\) and \(k\in K\), define
\(
\alpha_g(k)
:=
s(g)\,k\,s(g^{-1}).
\)
Then
\(
\alpha:G\longrightarrow\operatorname{Aut}(K),
\
g\mapsto\alpha_g,
\)
is an action of \(G\) on \(K\) by monoid automorphisms.

\item[\textup{(ii)}]
Every \(x\in M\) admits a unique factorization
\(
x=k\,s(g),
\
k\in K,\ g\in G.
\)
More precisely,
\(
g=\pi(x),
\
k=x\,s\bigl(\pi(x)^{-1}\bigr).
\)

\item[\textup{(iii)}]
Equip \(K\times G\) with the multiplication
\(
(k_1,g_1)(k_2,g_2)
:=
\left(
k_1\alpha_{g_1}(k_2),
g_1g_2
\right).
\)
Then
\(
\Xi:
K\rtimes_{\alpha}G
\longrightarrow M,
\
\Xi(k,g):=k\,s(g),
\)
is an isomorphism of monoids. In particular,
\(
M\cong K\rtimes_{\alpha}G,
\)
and
\(
1\longrightarrow K
\longrightarrow M
\overset{\pi}{\longrightarrow}G
\longrightarrow1
\)
is a split exact sequence of monoids.
\end{enumerate}
\end{lem}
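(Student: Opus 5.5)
The proof is a direct algebraic verification of the three assertions in order, using only that \(\pi\) and \(s\) are unital monoid homomorphisms, that \(G\) is a group, and that \(\pi\circ s=\operatorname{id}_G\).

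The first step is to record two facts. Since \(s\) is unital and multiplicative, \(s(g)s(g^{-1})=s(e_G)=1_M\) for every \(g\). Hence each \(s(g)\) is invertible in \(M\) with inverse \(s(g^{-1})\), and \(s(g^{-1})s(g)=1_M\) as well. For \(k\in K\) we also have \(\pi\bigl(s(g)ks(g^{-1})\bigr)=g\,e_G\,g^{-1}=e_G\), so \(\alpha_g(k)\in K\). It is immediate that \(K\) is a submonoid.

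For (i) we check the following identities.
\begin{itemize}
\item Multiplicativity: \(\alpha_g(k_1k_2)=s(g)k_1s(g^{-1})s(g)k_2s(g^{-1})=\alpha_g(k_1)\alpha_g(k_2)\).
\item Unitality: \(\alpha_g(1)=1\).
\item Action law: \(\alpha_{g_1}\circ\alpha_{g_2}=\alpha_{g_1g_2}\), because \(s(g_1)s(g_2)=s(g_1g_2)\) and \(s(g_2^{-1})s(g_1^{-1})=s((g_1g_2)^{-1})\).
\item Identity: \(\alpha_{e_G}=\operatorname{id}_K\).
\end{itemize}
In particular \(\alpha_{g^{-1}}\) inverts \(\alpha_g\), so each \(\alpha_g\) is a monoid automorphism of \(K\).

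For (ii), existence goes as follows. Given \(x\), set \(g=\pi(x)\) and \(k=x\,s(g^{-1})\). Then \(\pi(k)=g g^{-1}=e_G\), and \(k\,s(g)=x\,s(g^{-1})s(g)=x\). For uniqueness, suppose \(x=k\,s(g)\). Applying \(\pi\) forces \(g=\pi(x)\). Right-multiplying by \(s(g^{-1})\) then forces \(k=x\,s(g^{-1})\).

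For (iii), (ii) already shows that \(\Xi\) is bijective, and \(\Xi(1,e_G)=1_M\). It remains to show that \(\Xi\) is multiplicative:
\[
\Xi(k_1,g_1)\Xi(k_2,g_2)=k_1s(g_1)k_2s(g_2)=k_1\bigl(s(g_1)k_2s(g_1^{-1})\bigr)s(g_1)s(g_2)=\Xi\bigl(k_1\alpha_{g_1}(k_2),g_1g_2\bigr),
\]
where we insert \(s(g_1^{-1})s(g_1)=1_M\) in the middle. Because \(\Xi\) is a bijective homomorphism, the product on \(K\rtimes_\alpha G\) is automatically associative with identity \((1,e_G)\); it is transported from \(M\).

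Exactness of the sequence means three things:
\begin{itemize}
\item \(K\) is precisely \(\pi^{-1}(e_G)\), which holds by definition;
\item the inclusion \(K\to M\) is injective;
\item \(\pi\) is surjective, because \(\pi\circ s=\operatorname{id}_G\).
\end{itemize}
The map \(s\) provides the splitting. None of these steps is a genuine obstacle. The only point needing care is that \(s(g)\) is two-sided invertible in \(M\) even though \(M\) is merely a monoid; that is exactly what makes \(\alpha_g\) well defined and lets \(\Xi\) intertwine the products.
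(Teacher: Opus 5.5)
Your proposal is correct and follows the paper's proof essentially step by step. The shared steps are: the two-sided invertibility of \(s(g)\) with inverse \(s(g^{-1})\); direct verification of the action identities; existence and uniqueness of \(x=k\,s(g)\), obtained by applying \(\pi\) and right-multiplying by \(s(g^{-1})\); and the same insertion of \(s(g_1^{-1})s(g_1)=1_M\) to show that \(\Xi\) is multiplicative. Your additional remarks (unitality of \(\alpha_g\), associativity of the semidirect product transported through the bijection \(\Xi\), and the explicit exactness check) only spell out what the paper leaves implicit.
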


\begin{proof}
Since \(s\) is a unital monoid homomorphism, \(s(g)\) is invertible in
\(M\), with
\(
s(g)^{-1}=s(g^{-1}).
\)
If \(k\in K\), then
\(
\pi\bigl(\alpha_g(k)\bigr)
=
\pi(s(g))\,\pi(k)\,\pi(s(g^{-1}))
=
g e_G g^{-1}
=
e_G.
\)
Thus \(\alpha_g(k)\in K\). Moreover,
\(
\alpha_g(k_1k_2)
=
\alpha_g(k_1)\alpha_g(k_2)\) and \(\alpha_g^{-1}=\alpha_{g^{-1}},\)
and
\(
\alpha_{g_1g_2}
=
\alpha_{g_1}\circ\alpha_{g_2},
\
\alpha_{e_G}
=
\operatorname{id}_{K}.
\)
This proves \textup{(i)}.

Let \(x\in M\), and set
\(
g:=\pi(x),
\
k:=x\,s(g^{-1}).
\)
Then
\(
\pi(k)
=
\pi(x)\pi(s(g^{-1}))
=
gg^{-1}
=
e_G,
\)
so \(k\in K\). Furthermore,
\(
k\,s(g)
=
x\,s(g^{-1})s(g)
=
x.
\)
Hence the asserted factorization exists.

Suppose also that
\(
x=k' s(g'),
\
k'\in K,\quad g'\in G.
\)
Applying \(\pi\) gives
\(
\pi(x)=g'.
\)
Therefore \(g'=g\), and multiplication on the right by \(s(g^{-1})\)
gives
\(
k'=x\,s(g^{-1})=k.
\)
Thus the factorization is unique, proving \textup{(ii)}.

Finally, for \(k_1,k_2\in K\) and \(g_1,g_2\in G\),
\[
\begin{aligned}
\Xi(k_1,g_1)\Xi(k_2,g_2)
&=
k_1s(g_1)k_2s(g_2)
=
k_1
\bigl(
s(g_1)k_2s(g_1^{-1})
\bigr)
s(g_1g_2)\\
&=
\Xi\left(
k_1\alpha_{g_1}(k_2),
g_1g_2
\right).
\end{aligned}
\]
Hence \(\Xi\) is a monoid homomorphism. Its bijectivity follows from
the existence and uniqueness in \textup{(ii)}. This proves
\textup{(iii)}.
\end{proof}

\begin{thm}
\label{thm:kernel-angular-classification-of-universal-preservers}
One has
\(
\mathcal P
=
\left\{
\Psi\circ E_h:
\Psi\in\mathcal K,\
h\in\operatorname{BiLip}(\mathbb T)
\right\}.
\) Moreover, for every \(\Phi\in\mathcal P\), there exists a unique pair
\(
(\Psi,h)
\in
\mathcal K\times\operatorname{BiLip}(\mathbb T)
\)
such that
\(
\Phi=\Psi\circ E_h.
\)
More precisely, \(h=h_{\Phi}\), and
\(
\Psi=\Phi\circ E_{h_{\Phi}^{-1}}
\).
\end{thm}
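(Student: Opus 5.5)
The plan is to apply Lemma~\ref{lem:split-monoid-decomposition} with \(M=\mathcal P\) under composition, \(G=\operatorname{BiLip}(\mathbb T)\), \(\pi=\mathfrak S\), and section \(s(h):=E_h\). The algebraic part of the statement then follows at once from Lemma~\ref{lem:split-monoid-decomposition}\textup{(ii)}, once a few facts are checked.

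First, \(\mathcal P\) is a monoid: \(\operatorname{id}_{\mathbb D}\in\mathcal P\) trivially, and \(\mathcal P\) is closed under composition because both conditions of Theorem~\ref{thm:universal-orbit-frame-preservation} are stable under composition. The equivalence \(\Lambda\in\mathcal C\Leftrightarrow\Phi(\Lambda)\in\mathcal C\) chains, and the boundary-defect comparabilities multiply. Second, \(\mathfrak S\) is a unital monoid homomorphism. Remark~\ref{rem:boundary-shadow-composition-law} gives multiplicativity, \(h_{\operatorname{id}}=\operatorname{id}_{\mathbb T}\) is immediate, and Theorem~\ref{thm:boundary-shadow-is-bilipschitz} shows that \(\mathfrak S\) indeed lands in \(\operatorname{BiLip}(\mathbb T)\), which is a group under composition.

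Third, \(s\) is well defined and is a unital homomorphism with \(\mathfrak S\circ s=\operatorname{id}\). Theorem~\ref{thm:angular-classification-without-continuity} gives \(E_h\in\mathcal P\) for \(h\in\operatorname{BiLip}(\mathbb T)\). Directly from \eqref{eq:radial-extension-arbitrary-boundary-map} we get \(E_{h}\circ E_{g}=E_{h\circ g}\), since \(E_g(r\zeta)=rg(\zeta)\) has modulus \(r\) and argument \(g(\zeta)\), and also \(E_{\operatorname{id}}=\operatorname{id}_{\mathbb D}\). Finally, \(h_{E_h}=h\) holds because \(\widehat{E_h}((1-t)\zeta)=h(\zeta)\) for all \(t\), as already observed in the proof of Theorem~\ref{thm:angular-classification-without-continuity}.

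With these in place, Lemma~\ref{lem:split-monoid-decomposition}\textup{(ii)} gives, for each \(\Phi\in\mathcal P\), a unique \(\Psi\in\mathcal K\) and \(h\in\operatorname{BiLip}(\mathbb T)\) with \(\Phi=\Psi\circ E_h\). These are \(h=h_\Phi\) and \(\Psi=\Phi\circ E_{h_\Phi^{-1}}\), the latter using \(E_{h_\Phi}^{-1}=E_{h_\Phi^{-1}}\). For the set equality, the inclusion \(\subseteq\) is this factorization. The inclusion \(\supseteq\) follows from closure of \(\mathcal P\) under composition, since \(\Psi\in\mathcal K\subset\mathcal P\) and \(E_h\in\mathcal P\).

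There is no real obstacle. The only points needing care are the composition closure of \(\mathcal P\), which is a routine verification of the two conditions of Theorem~\ref{thm:universal-orbit-frame-preservation}, and checking that the well-definedness of \(h_{\Phi\circ\Theta}\) in the Remark relies on that closure.
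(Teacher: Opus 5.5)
Your proposal is correct and follows essentially the same route as the paper: apply Lemma~\ref{lem:split-monoid-decomposition} with \(\pi=\mathfrak S\) and section \(s(h)=E_h\). Both rely on Theorem~\ref{thm:angular-classification-without-continuity} for \(E_h\in\mathcal P\) with \(h_{E_h}=h\), and on \(E_{h_1}\circ E_{h_2}=E_{h_1\circ h_2}\). Your explicit checks that \(\mathcal P\) is closed under composition and contains \(\operatorname{id}_{\mathbb D}\), and your treatment of the inclusion \(\supseteq\), only spell out details the paper asserts in Remark~\ref{rem:boundary-shadow-composition-law} or leaves implicit in part~(iii) of the lemma.
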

\begin{proof}
Define
\(
\mathfrak E:
\operatorname{BiLip}(\mathbb T)
\longrightarrow
\mathcal P,
\
\mathfrak E(h):=E_h.
\)
By Theorem~\ref{thm:angular-classification-without-continuity},
\(E_h\in\mathcal P\) and
\(
\mathfrak S(E_h)=h.
\)
Moreover,
\(
E_{h_1}\circ E_{h_2}
=
E_{h_1\circ h_2},
\)
so \(\mathfrak E\) is a monoid homomorphism and
\(
\mathfrak S\circ\mathfrak E
=
\operatorname{id}_{\operatorname{BiLip}(\mathbb T)}.
\)
Applying Lemma~\ref{lem:split-monoid-decomposition} with
\(
M=\mathcal P,
\
G=\operatorname{BiLip}(\mathbb T),
\
\pi=\mathfrak S,
\
s=\mathfrak E,
\)
shows that every \(\Phi\in\mathcal P\) has the unique factorization
\(
\Phi=\Psi\circ E_h,
\
\Psi\in\mathcal K,
\
h\in\operatorname{BiLip}(\mathbb T).
\)
The formulas in the lemma give
\(
h=\mathfrak S(\Phi)=h_\Phi,
\
\Psi
=
\Phi\circ E_{h_\Phi^{-1}}.
\)
\end{proof}

\begin{cor}
\label{cor:semidirect-product-structure-of-P}
For
\(
h\in\operatorname{BiLip}(\mathbb T)
\)
and
\(
\Psi\in\mathcal K,
\)
define
\(
\alpha_h(\Psi)
:=
E_h\circ\Psi\circ E_{h^{-1}}.
\)
Then
\(
\alpha:
\operatorname{BiLip}(\mathbb T)
\rightarrow
\operatorname{Aut}(\mathcal K),
\
h\mapsto\alpha_h,
\)
is an action of the group
\(
\operatorname{BiLip}(\mathbb T)
\)
on the monoid \(\mathcal K\) by monoid automorphisms.

Equip
\(
\mathcal K\times\operatorname{BiLip}(\mathbb T)
\)
with the multiplication
\[
(\Psi_1,h_1)(\Psi_2,h_2)
:=
\left(
\Psi_1\circ\alpha_{h_1}(\Psi_2),
h_1\circ h_2
\right).
\]
Then the mapping
\(
\Xi:
\mathcal K\rtimes_{\alpha}\operatorname{BiLip}(\mathbb T)
\rightarrow
\mathcal P,
\
\Xi(\Psi,h):=\Psi\circ E_h,
\)
is an isomorphism of monoids.
Moreover, the boundary-shadow homomorphism
\(
\mathfrak S:
\mathcal P
\rightarrow
\operatorname{BiLip}(\mathbb T),
\
\mathfrak S(\Phi)=h_\Phi,
\)
fits into the following split exact sequence of monoids:
\[
1
\rightarrow
\mathcal K
\rightarrow
\mathcal P
\overset{\mathfrak S}{\rightarrow}
\operatorname{BiLip}(\mathbb T)
\rightarrow
1.
\]
\end{cor}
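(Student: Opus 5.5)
This corollary is a direct application of Lemma~\ref{lem:split-monoid-decomposition} with the same data as in the proof of Theorem~\ref{thm:kernel-angular-classification-of-universal-preservers}:
\[
M=\mathcal P,\qquad G=\operatorname{BiLip}(\mathbb T),\qquad \pi=\mathfrak S,\qquad s=\mathfrak E .
\]
So the plan is to check the hypotheses of that lemma carefully and then read off the three assertions.

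First, I will record the structural facts. Since \(\mathcal P\) consists of mappings \(\mathbb D\to\mathbb D\), it is a monoid under composition: closure under composition follows from Theorem~\ref{thm:universal-orbit-frame-preservation}, because both conditions (i) and (ii) are stable under composition, and \(\operatorname{id}_{\mathbb D}\in\mathcal P\) is the unit. The class \(\operatorname{BiLip}(\mathbb T)\) is a group under composition, since inverses and compositions of bi-Lipschitz bijections are bi-Lipschitz.

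Next, I will check the two homomorphisms.
\begin{itemize}
\item \(\mathfrak S\) is unital, since \(h_{\operatorname{id}}=\operatorname{id}_{\mathbb T}\). It is multiplicative by Remark~\ref{rem:boundary-shadow-composition-law}.
\item \(\mathfrak E\) is unital, since \(E_{\operatorname{id}_{\mathbb T}}=\operatorname{id}_{\mathbb D}\). It satisfies \(E_{h_1}\circ E_{h_2}=E_{h_1\circ h_2}\), which follows directly from \(E_h(r\zeta)=rh(\zeta)\) and \(E_h(0)=0\).
\item \(\mathfrak E\) takes values in \(\mathcal P\) by Theorem~\ref{thm:angular-classification-without-continuity}.
\item \(\mathfrak S\circ\mathfrak E=\operatorname{id}\), because the radial trace of \(E_h\) is \(h\), as computed in the converse part of that theorem.
\end{itemize}

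With the hypotheses verified, Lemma~\ref{lem:split-monoid-decomposition}(i) gives that
\[
\alpha_h(\Psi)=s(h)\,\Psi\,s(h^{-1})=E_h\circ\Psi\circ E_{h^{-1}}
\]
defines an action of \(\operatorname{BiLip}(\mathbb T)\) on \(\mathcal K\) by monoid automorphisms. This requires \(\alpha_h(\Psi)\in\mathcal K\). That holds because \(\mathcal P\) is closed under composition, and because \(\mathfrak S\) is multiplicative, so the boundary shadow of \(\alpha_h(\Psi)\) is
\[
h\circ\operatorname{id}_{\mathbb T}\circ h^{-1}=\operatorname{id}_{\mathbb T}.
\]
Part (iii) of the lemma then gives the monoid isomorphism \(\Xi(\Psi,h)=\Psi\circ E_h\) from \(\mathcal K\rtimes_\alpha\operatorname{BiLip}(\mathbb T)\) onto \(\mathcal P\), together with the split exact sequence.

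The only point I expect to need care is the composition closure of \(\mathcal P\), which the rest depends on. Condition (ii) composes with constants \(c_\Phi c_\Theta\) and \(C_\Phi C_\Theta\). Condition (i) composes as a chain of equivalences:
\[
\Lambda\in\mathcal C\iff\Theta(\Lambda)\in\mathcal C\iff\Phi(\Theta(\Lambda))\in\mathcal C .
\]
Everything else is formal bookkeeping.
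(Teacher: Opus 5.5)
Your proposal is correct and matches the paper's proof, which is simply an application of Lemma~\ref{lem:split-monoid-decomposition} with \(\pi=\mathfrak S\) and \(s(h)=E_h\). The hypothesis checks you spell out are the ones the paper already carries out in Remark~\ref{rem:boundary-shadow-composition-law} and in the proof of Theorem~\ref{thm:kernel-angular-classification-of-universal-preservers}: closure of \(\mathcal P\) under composition, \(E_{h_1}\circ E_{h_2}=E_{h_1\circ h_2}\), and \(h_{E_h}=h\).
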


\begin{proof}
This is precisely Lemma~\ref{lem:split-monoid-decomposition}
applied to
\(
\pi=\mathfrak S
\)
and
\(
s(h)=E_h.
\)
\end{proof}

The following proposition gives geometric characterizations of the
mappings in \(\mathcal P\) that belong to \(\mathcal K\).
\begin{prop}
\label{prop:geometric-characterization-of-K}
Let \(\Psi\in\mathcal P\). Then the following statements are
equivalent:
\begin{enumerate}
\item[\textup{(i)}]
\(
\Psi\in\mathcal K.
\)

\item[\textup{(ii)}]
There exists a constant \(M<1\) such that
\(
\rho\bigl(\Psi(z),z\bigr)\leq M\) for \(z\in\mathbb D\).

\item[\textup{(iii)}]
There exist constants \(M<1\) and \(0\leq r_0<1\) such that
\(
\rho\bigl(\Psi(z),z\bigr)\leq M\) for \(|z|\geq r_0.\)
\end{enumerate}
Equivalently,
\(
\mathcal K
=
\left\{
\Psi\in\mathcal P:
\sup_{z\in\mathbb D}
\rho\bigl(\Psi(z),z\bigr)<1
\right\}.
\)
\end{prop}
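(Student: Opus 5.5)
We prove the cycle (i)\(\Rightarrow\)(iii)\(\Rightarrow\)(ii)\(\Rightarrow\)(i). Throughout we use the defect comparability \eqref{eq:boundary-defect-comparability-for-P}, \(A\delta(z)\le\delta(\Psi(z))\le B\delta(z)\), and the polar formula \eqref{eq:pseudohyperbolic-distance-polar-form}.

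\emph{(i)\(\Rightarrow\)(iii).} Assume \(h_\Psi=\operatorname{id}_{\mathbb T}\). Write \(z=(1-t)\zeta\) with \(t<t_0\). Proposition~\ref{prop:existence-of-boundary-shadow} gives
\[
d_{\mathbb T}\bigl(\widehat\Psi(z),\zeta\bigr)\le At .
\]
Set \(r=|z|=1-t\), \(s=|\Psi(z)|=1-\tau\), and let \(\alpha\) be the angle between \(\widehat\Psi(z)\) and \(\zeta\). Then \(At^{-1}\le\tau/t\le Bt^{-1}\) up to constants, more precisely \(A\le\tau/t\le B\), and \(\alpha\le At\). Insert these into \eqref{eq:pseudohyperbolic-distance-polar-form}. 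The numerator is \((t-\tau)^2+4rs\sin^2(\pi\alpha)\lesssim t^2\). The denominator is \((t+\tau-t\tau)^2+\dots\gtrsim t^2\). To get a bound strictly below \(1\), compare numerator and denominator directly: the difference (denominator minus numerator) equals \((1-r^2)(1-s^2)\asymp t\tau\gtrsim t^2\), while the denominator is \(\lesssim t^2\). Hence \(1-\rho^2\ge c>0\), i.e.\ \(1-\rho(z,\Psi(z))^2=(1-|z|^2)(1-|\Psi z|^2)/|1-\bar z\Psi(z)|^2\) is bounded below. This gives (iii) with \(r_0=1-t_0\).

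\emph{(iii)\(\Rightarrow\)(ii).} On the compact disk \(|z|\le r_0\), the lower bound \(\delta(\Psi(z))\ge A\delta(z)\ge A(1-r_0)\) gives \(|\Psi(z)|\le 1-A(1-r_0)\). So \(z\) and \(\Psi(z)\) both lie in a fixed compact subdisk \(\overline{D(0,R)}\). On that subdisk \(\rho\) is bounded by some \(M'<1\). Take \(\max\{M,M'\}\).

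\emph{(ii)\(\Rightarrow\)(i).} This is the main step. Fix \(\zeta\) and let \(z_t=(1-t)\zeta\). Since \(\rho(\Psi(z_t),z_t)\le M\), Lemma~\ref{lem:pseudohyperbolic-polar-estimates}(ii) gives
\[
d_{\mathbb T}\bigl(\widehat\Psi(z_t),\zeta\bigr)\le A_Mt\to0 .
\]
Hence \(h_\Psi(\zeta)=\lim_{t\to0^+}\widehat\Psi(z_t)=\zeta\). The only point to watch is that Lemma~\ref{lem:pseudohyperbolic-polar-estimates}(ii) requires \(|z_t|,|\Psi(z_t)|\ge\frac12\); this holds for small \(t\) by the defect bound.

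The final displayed equality is simply the equivalence (i)\(\Leftrightarrow\)(ii) restated. I expect no real obstacle. The only delicate point is (i)\(\Rightarrow\)(iii), where one must use the identity for \(1-\rho^2\) rather than crude upper bounds, so that the resulting constant is strictly less than \(1\).
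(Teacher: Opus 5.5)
Your proof is correct and follows essentially the paper's route. The paper also bounds $1-\rho(\Psi(z),z)^2$ from below through the identity $|1-\overline{\Psi(z)}z|^2=|\Psi(z)-z|^2+(1-|\Psi(z)|^2)(1-|z|^2)$, feeding in the Euclidean trace estimate of Corollary~\ref{cor:radial-boundary-trace} where you use the angular estimate and the polar formula. It uses the same compact-core argument for (iii)$\Rightarrow$(ii) and gets $h_\Psi=\operatorname{id}_{\mathbb T}$ from Lemma~\ref{lem:pseudohyperbolic-polar-estimates}(ii). Two cosmetic fixes: the shadow estimate is only stated for $0<t<t_0$, so take $r_0=1-t_0/2$ (as the paper does) rather than $1-t_0$, and delete the garbled phrase involving $At^{-1}$ and $Bt^{-1}$.
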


\begin{proof}
Assume first that \(\Psi\in\mathcal K\). Then
\(
h_\Psi=\operatorname{id}_{\mathbb T}.
\)
By Corollary~\ref{cor:radial-boundary-trace}, there exists \(C>0\)
such that
\(
\left|
\Psi\bigl((1-t)\zeta\bigr)-\zeta
\right|
\leq Ct
\)
for all \(\zeta\in\mathbb T\) and all sufficiently small \(t>0\).
Hence, for \(z=(1-t)\zeta\),
\[
|\Psi(z)-z|
\leq
|\Psi(z)-\zeta|+|\zeta-z|
\leq
(C+1)t.
\]
Since \(\Psi\in\mathcal P\), there exists \(c>0\) such that
\(
1-|\Psi(z)|^2
\geq
c\bigl(1-|z|^2\bigr).
\)
Using
\[
|1-\overline{\Psi(z)}z|^2
=
|\Psi(z)-z|^2
+
\bigl(1-|\Psi(z)|^2\bigr)\bigl(1-|z|^2\bigr),
\]
we obtain
\(
\rho\bigl(\Psi(z),z\bigr)^2
\leq
\frac{(C+1)^2t^2}
{(C+1)^2t^2+c(2t-t^2)^2}.
\)
Since
\[
\lim_{t\to0^+}
\frac{(C+1)^2t^2}
{(C+1)^2t^2+c(2t-t^2)^2}
=
\frac{(C+1)^2}{(C+1)^2+4c}
<1,
\]
there exist \(t_0>0\) and \(M<1\) such that
\(
\rho\bigl(\Psi((1-t)\zeta),(1-t)\zeta\bigr)
\le M\) for \(0<t<t_0\).
After decreasing \(t_0\), if necessary, so that \(t_0<1\), set
\(
r_0:=1-\frac{t_0}{2}.
\)
We obtain
\(
\rho(\Psi(z),z)\le M\) for \(|z|\ge r_0.\)
Hence condition~\textup{(iii)} holds.

Conversely, suppose that \textup{(iii)} holds. By Lemma~\ref{lem:pseudohyperbolic-polar-estimates}, for
\(z=(1-t)\zeta\),
\(
d_{\mathbb T}
\left(
\frac{\Psi((1-t)\zeta)}
{|\Psi((1-t)\zeta)|},
\zeta
\right)
\rightarrow0
\)
uniformly in \(\zeta\) as \(t\to0^+\). Therefore
\(
h_\Psi(\zeta)=\zeta\) for \(\zeta\in\mathbb T\). Hence \(\Psi\in\mathcal K\).

It remains to show that \textup{(iii)} implies \textup{(ii)}.
Since \(\Psi\in\mathcal P\), there exists \(c>0\) such that
\(
1-|\Psi(z)|^2
\geq
c\bigl(1-|z|^2\bigr).
\)
Consequently, if \(|z|\leq r_0\), then
\(
|\Psi(z)|
\leq
\sqrt{1-c(1-r_0^2)}
=:s_0<1.
\)
It follows that
\(
\rho\bigl(\Psi(z),z\bigr)
\leq
\frac{r_0+s_0}{1+r_0s_0}
<1
\) for \(|z|\leq r_0\).
Combining this estimate with \textup{(iii)} proves
\textup{(ii)}.
\end{proof}
Before characterizing arbitrary mappings in \(\mathcal K\), we present the following lemma.

For an arc \(I\subset\mathbb T\) and a constant \(A\geq1\), define
\(I^{(A)}:=\mathbb T\) if \(A|I|\geq1\); otherwise, let \(I^{(A)}\)
be the arc with the same midpoint as \(I\). In either case, its
normalized arc length is
\(
|I^{(A)}|:=\min\{1,A|I|\}.
\)
\begin{lem}
\label{lem:bounded-displacement-enlarges-carleson-squares}
For each \(M\in(0,1)\), there exists \(A_M\geq1\) such that, for every
arc \(I\subset\mathbb T\) and all \(z,w\in\mathbb D\), if
\(
w\in S(I)\)
and
\(\rho(z,w)\leq M,
\)
then
\(
z\in S\bigl(I^{(A_M)}\bigr).
\)
\end{lem}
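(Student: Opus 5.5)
The plan is to reduce the statement to the two quantitative estimates of Lemma~\ref{lem:pseudohyperbolic-polar-estimates}(ii): comparable boundary defects and angular displacement controlled by the defect. We write \(w=s\eta\) with \(\eta\in I\) and \(1-s\leq |I|\), and \(z=r\zeta\). If \(A_M|I|\geq1\) for the final constant, then \(I^{(A_M)}=\mathbb T\) and \(S(\mathbb T)\) is the whole disk (the condition \(1-r\leq1\) is automatic), so nothing needs proof. We therefore assume \(|I|\) is small, say \(A_M|I|<1\).

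We split according to the size of \(|w|\). If \(|z|,|w|\geq\frac12\), Lemma~\ref{lem:pseudohyperbolic-polar-estimates}(ii) with constant \(A'_M\) gives
\[
\delta(z)\leq A'_M\delta(w)\leq A'_M|I|
\]
and \(d_{\mathbb T}(\zeta,\eta)\leq A'_M\delta(w)\leq A'_M|I|\). Hence \(\zeta\) lies within angular distance \(\frac12|I|+A'_M|I|\) of the midpoint of \(I\). So \(\zeta\in I^{(A)}\) as soon as \(A\geq 1+2A'_M\). The radial condition \(1-|z|\leq A|I|\leq |I^{(A)}|\) also holds once \(A\geq A'_M\).

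The remaining cases, where \(|z|<\frac12\) or \(|w|<\frac12\), must be handled separately, and this is the only mildly delicate point. If \(|w|<\frac12\), then \(|I|\geq 1-|w|>\frac12\), so \(A|I|\geq1\) for any \(A\geq2\) and \(I^{(A)}=\mathbb T\). If \(|w|\geq\frac12\) but \(|z|<\frac12\), then \(\rho(z,w)\leq M\) forces \(|w|\leq\frac{1/2+M}{1+M/2}=:\tau_M<1\). This gives \(|I|\geq1-\tau_M\), and again \(I^{(A)}=\mathbb T\) provided \(A\geq(1-\tau_M)^{-1}\). The same bound covers \(z=0\) or \(w=0\), where the polar notation degenerates.

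We then take
\[
A_M:=\max\{2,\,1+2A'_M,\,(1-\tau_M)^{-1}\}.
\]
The only care needed is the bookkeeping between the normalized arc length and the midpoint-enlarged arc \(I^{(A)}\), so that the angular displacement \(A'_M|I|\) fits inside half the enlarged length \(\frac12 A_M|I|\).
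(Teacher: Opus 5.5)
Your proof is correct, but it is organized differently from the paper's. The paper never invokes Lemma~\ref{lem:pseudohyperbolic-polar-estimates}. Instead it rewrites $\rho(z,w)\le M$ as $|z-w|\le M|1-\overline{w}z|$ and uses $1-\overline{w}z=1-|w|^2+\overline{w}(w-z)$ to obtain one Euclidean estimate, $|z-w|\le\frac{2M}{1-M}(1-|w|)$. From this single estimate it reads off the radial bound $1-|z|\le\frac{1+M}{1-M}(1-|w|)$. Since $A_M|I|<1$ and this radial bound already force $|z|>\frac12$, it also gets the angular bound $d_{\mathbb T}(\zeta,\eta)\le\frac14|\zeta-\eta|\le\frac{2M}{1-M}(1-|w|)$. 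So the only case distinction is the trivial one $A_M|I|\ge1$, and the constant is explicit, $A_M=\frac{2(1+M)}{1-M}$.

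Your version instead reuses the already-proved polar estimates. Since that lemma assumes $|z|,|w|\ge\frac12$, you need the extra small-modulus cases, and you handle them correctly. Two points should be made explicit. First, the bound $|w|\le\tau_M$ rests on the strong triangle inequality $\rho(0,w)\le\frac{\rho(0,z)+\rho(z,w)}{1+\rho(0,z)\rho(z,w)}$ together with monotonicity of $(a,b)\mapsto\frac{a+b}{1+ab}$, and you should cite it. Second, you apply the angular estimate of Lemma~\ref{lem:pseudohyperbolic-polar-estimates}(ii) with $z$ and $w$ interchanged, to bound $d_{\mathbb T}(\zeta,\eta)$ by $\delta(w)$ rather than $\delta(z)$; this is legitimate because its hypotheses are symmetric.

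You could also dispense with $\tau_M$ altogether. The inequality $\rho(z,w)\ge\frac{\bigl||z|-|w|\bigr|}{1-|z||w|}$ from the proof of Lemma~\ref{lem:general-sparse-two-point-packets} needs no lower bound on the moduli and gives $1-|z|\le\frac{1+M}{1-M}(1-|w|)$. Hence small $|I|$ already forces $|z|,|w|>\frac12$, which is exactly the paper's mechanism.
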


\begin{proof}
Set
\(
K_M:=\frac{2M}{1-M},
\
C_M:=1+K_M=\frac{1+M}{1-M},
\
A_M:=\frac{2(1+M)}{1-M}.
\)

If \(A_M|I|\geq1\), then
\(
I^{(A_M)}=\mathbb T,
\)
and hence
\(
S\bigl(I^{(A_M)}\bigr)=\mathbb D.
\)
Thus the conclusion is immediate.

It remains to consider the case
\(
A_M|I|<1.
\)
Put
\(
r:=|w|,
\
R:=|z|.
\)
Since \(w\in S(I)\), one has
\(
1-r\leq |I|<\frac{1}{A_M}.
\)

Since \(\rho(z,w)\leq M\),
\(
|z-w|
\leq
M|1-\overline wz|.
\)
Moreover,
\(
1-\overline wz
=
1-r^2+\overline w(w-z).
\)
Therefore,
\(
|z-w|
\leq
M\bigl(1-r^2\bigr)
+
Mr|z-w|.
\)
Hence
\begin{equation}\label{eq:pseudohyperbolic-euclidean-control}
|z-w|
\leq
\frac{M(1-r^2)}{1-Mr}
\leq
\frac{M(1-r^2)}{1-M}
\leq
\frac{2M}{1-M}(1-r)
=
K_M(1-r).
\end{equation}

Since \(|R-r|\leq |z-w|\), we obtain
\begin{align}\label{eq:pseudohyperbolic-radial-control}
1-R
\leq
1-r+|R-r|
&\leq
1-r+|z-w|
\leq
C_M(1-r).
\end{align}
Since \(1-r<\frac{1}{A_M}\), it follows that
\(
1-R
<
\frac{C_M}{A_M}
=
\frac12.
\)
Thus
\(
R>\frac12.
\)

Write \(w=r\eta\) and \(z=R\zeta\), where
\(\zeta,\eta\in\mathbb T\).
Then
\[
R|\zeta-\eta|
\leq
|R\zeta-r\eta|
+
|r\eta-R\eta|
=
|z-w|+|r-R|
\leq
2|z-w|.
\]
Since \(R>1/2\), \eqref{eq:pseudohyperbolic-euclidean-control} gives
\(
|\zeta-\eta|
\leq
4|z-w|
\leq
4K_M(1-r).
\)
Consequently,
\begin{align}
d_{\mathbb T}(\zeta,\eta)
\leq
\frac14|\zeta-\eta|
\leq
K_M(1-r)
\leq
K_M|I|.
\label{eq:pseudohyperbolic-angular-control}
\end{align}

Let \(m_I\) denote the midpoint of \(I\). Since \(\eta\in I\), we have
\(
d_{\mathbb T}(\eta,m_I)\leq \frac{|I|}{2}.
\)
Therefore, by \eqref{eq:pseudohyperbolic-angular-control},
\begin{align*}
d_{\mathbb T}(\zeta,m_I)
&\leq
d_{\mathbb T}(\zeta,\eta)
+
d_{\mathbb T}(\eta,m_I)
\leq
\left(K_M+\frac12\right)|I| \\
&\leq
C_M|I|
=
\frac{A_M|I|}{2}.
\end{align*}
Since \(A_M|I|<1\), the arc \(I^{(A_M)}\) has normalized length
\(
|I^{(A_M)}|=A_M|I|.
\)
Hence
\(
\zeta\in I^{(A_M)}.
\)

Finally, by \eqref{eq:pseudohyperbolic-radial-control},
\[
1-|z|
=
1-R
\leq
C_M(1-r)
\leq
C_M|I|
\leq
A_M|I|
=
|I^{(A_M)}|.
\]
Thus
\(
z\in S\bigl(I^{(A_M)}\bigr).
\)
\end{proof}

For \(0\le r_0<1\), set
\(
E_{r_0}
:=
\left\{z\in\mathbb D:|z|\ge r_0\right\}.
\) The following theorem gives a geometric characterization of arbitrary mappings in \(\mathcal K\).

\begin{thm} \label{thm:equivalence-of-two-kernel-definitions}  Let
\(
\Psi:\mathbb D\to\mathbb D
\)
be a mapping. Then \(\Psi\in\mathcal K\)  if and only if the following conditions hold:
\begin{enumerate}
\item[\textup{(K0)}]
\(\Psi\) is injective.

\item[\textup{(K1)}]
For every sequence
\(\{z_n\}_{n\geq 1}\subset\mathbb D\),
\(
|z_n|\rightarrow 1
\Longleftrightarrow
|\Psi(z_n)|\rightarrow 1.\)

\item[\textup{(K2)}]
There exist constants \(0<M<r_0<1\) such that
\(
\rho\bigl(\Psi(z),z\bigr)\leq M\) for \(z\in E_{r_0}\).
\item[\textup{(K3)}]
For any two sequences
\(
\{z_n\}_{n\geq1},
\{w_n\}_{n\geq1}
\subset\mathbb D
\)
such that
\(
|z_n|\rightarrow 1
\) and
\(|w_n|\rightarrow 1,
\)
one has
\(
\rho(z_n,w_n)\rightarrow0
\Longleftrightarrow
\rho\bigl(\Psi(z_n),\Psi(w_n)\bigr)\rightarrow0.
\)
\end{enumerate}
\end{thm}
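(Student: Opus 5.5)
The plan is to prove necessity using the results already established for $\mathcal P$, and to prove sufficiency by checking the two conditions of Theorem~\ref{thm:universal-orbit-frame-preservation}; membership in $\mathcal K$ then follows from Proposition~\ref{prop:geometric-characterization-of-K}.

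\emph{Necessity.} Let $\Psi\in\mathcal K$. Conditions (K0) and (K3) are exactly Proposition~\ref{prop:basic-properties-of-universal-preservers}. Condition (K1) follows from the comparability \eqref{eq:boundary-defect-comparability-for-P}. For (K2), Proposition~\ref{prop:geometric-characterization-of-K}(ii) gives $\rho(\Psi(z),z)\le M'<1$ on all of $\mathbb D$. The requirement $M<r_0$ is harmless: replace $M$ by $\max\{M',1/2\}$ and choose $r_0\in(M,1)$.

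\emph{Sufficiency, step 1 (radial comparability).} I will derive condition (ii) of Theorem~\ref{thm:universal-orbit-frame-preservation}. On $E_{r_0}$, Lemma~\ref{lem:pseudohyperbolic-polar-estimates}(ii) gives $\delta(\Psi(z))\asymp\delta(z)$; this needs $|z|,|\Psi(z)|\ge1/2$. Here $r_0>M$ is used: it forces $|\Psi(z)|\ge (r_0-M)/(1-r_0M)>0$. Near the threshold one may instead use the Euclidean bound \eqref{eq:pseudohyperbolic-euclidean-control} directly. On the compact part $|z|\le r_0$ the ratio $q_\Psi$ is trivially bounded above. For a lower bound, suppose $|\Psi(z_n)|\to1$ with $|z_n|\le r_0$. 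Then (K1) forces $|z_n|\to1$, which is impossible. Hence $\sup_{|z|\le r_0}|\Psi(z)|<1$, and $1-|\Psi|^2\asymp1-|z|^2$ globally.

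\emph{Sufficiency, step 2 (Carleson preservation in both directions).} Let $\Lambda\in\mathcal C$. Only finitely many points lie in $|z|<r_0$, and after Lemma~\ref{lem:basic-properties-carleson-sequences}(iii) they are irrelevant, because $\Psi$ is injective and finite sets of distinct points can be added or removed.

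For the Carleson measure of $\Psi(\Lambda)$, suppose $\Psi(\lambda)\in S(I)$. Since $\rho(\lambda,\Psi(\lambda))\le M$, Lemma~\ref{lem:bounded-displacement-enlarges-carleson-squares} gives $\lambda\in S(I^{(A_M)})$. Combined with the weight comparability of step 1, this yields $\mu_{\Psi(\Lambda)}(S(I))\lesssim\mu_\Lambda(S(I^{(A_M)}))\lesssim|I|$.

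For pairwise separation, suppose $\rho(\Psi(\lambda_{j_n}),\Psi(\lambda_{k_n}))\to0$ with $j_n\ne k_n$. By (K0), the images are distinct points, and since the sequence is infinite, passing to a subsequence gives $|\lambda_{j_n}|,|\lambda_{k_n}|\to1$. Condition (K3) then gives $\rho(\lambda_{j_n},\lambda_{k_n})\to0$, contradicting the separation of $\Lambda$.

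The converse direction, from $\Psi(\Lambda)\in\mathcal C$ to $\Lambda\in\mathcal C$, is symmetric. Lemma~\ref{lem:bounded-displacement-enlarges-carleson-squares} is applied with the roles of $z=\lambda$ and $w=\Psi(\lambda)$ exchanged; the lemma is symmetric in its use of $\rho\le M$. Separation follows from the other implication in (K3). Here one must check that $\Psi(\Lambda)\in\mathcal C$ forces $|\lambda_j|\to1$; this follows from (K1) together with Lemma~\ref{lem:basic-properties-carleson-sequences}(ii), and finitely many interior points are again handled by finite perturbation.

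\emph{Conclusion and main obstacle.} Thus $\Psi\in\mathcal P$, and (K2) is condition (iii) of Proposition~\ref{prop:geometric-characterization-of-K}, so $\Psi\in\mathcal K$. I expect the main obstacle to be the bookkeeping in step 2 with repetitions and interior points. Interior points need not be finite in number a priori; this is resolved by (K1) combined with separation. Repeated values must be excluded via (K0), and a repeated value in $\Lambda$ itself already means $\Lambda\notin\mathcal C$ and maps to a repetition.
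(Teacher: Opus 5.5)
Your proposal is correct and follows essentially the same route as the paper: necessity from Proposition~\ref{prop:basic-properties-of-universal-preservers}, the defect comparability and Proposition~\ref{prop:geometric-characterization-of-K}; sufficiency by checking the two conditions of Theorem~\ref{thm:universal-orbit-frame-preservation} via the Euclidean displacement bound, Lemma~\ref{lem:bounded-displacement-enlarges-carleson-squares} in both directions and (K3); and the conclusion from condition (iii) of Proposition~\ref{prop:geometric-characterization-of-K}. The one step you compress too far is forward separation: (K0) and the sequence being infinite do not by themselves give $|\lambda_{j_n}|,|\lambda_{k_n}|\to1$, because you must also rule out the mixed case where one index is eventually constant, say equal to $j$, and the other escapes to the boundary; as in the paper, (K1) or your Step~1 comparability shows that $\Psi(\lambda_{k_n})$ cannot then converge to the interior point $\Psi(\lambda_j)$.
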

\begin{proof}
Assume first that
\(
\Psi\in\mathcal K.
\)
By the definition of \(\mathcal K\), one has
\(
\Psi\in\mathcal P
\)
and
\(
h_\Psi=\operatorname{id}_{\mathbb T}.
\)
Conditions~\textup{(K0)} and~\textup{(K3)} follow from
Proposition~\ref{prop:basic-properties-of-universal-preservers}
\textup{(i)} and~\textup{(ii)}, respectively.

By
Theorem~\ref{thm:universal-orbit-frame-preservation}\textup{(ii)},
there exist constants \(c,C>0\) such that
\(
c\bigl(1-|z|^2\bigr)
\leq
1-|\Psi(z)|^2
\leq
C\bigl(1-|z|^2\bigr)\) for \(z\in\mathbb D\).
Consequently, for every sequence
\(\{z_n\}_{n\geq1}\subset\mathbb D\),
\(
|z_n|\rightarrow1
\Longleftrightarrow
|\Psi(z_n)|\rightarrow1.
\)
Thus \textup{(K1)} holds.

Finally, by
Proposition~\ref{prop:geometric-characterization-of-K}, there exist
\(M_0<1\) and \(s_0<1\) such that
\(
\rho\bigl(\Psi(z),z\bigr)\leq M_0\) for \(|z|\geq s_0\).
Choose
\(
r_0\in\bigl(\max\{M_0,s_0\},1\bigr)
\) and
\(M\in(M_0,r_0)\). For \(z\in E_{r_0}\) we have
\(
\rho\bigl(\Psi(z),z\bigr)\leq M\). Hence \textup{(K2)} holds.

We now prove the converse. Let \(r_0\) and \(M\) be as in condition \textup{(K2)}, and set
\[
K_M:=\frac{2M}{1-M},
\qquad
D_M:=1+K_M,
\qquad
a_M:=\frac{1}{2D_M},
\qquad
b_M:=2D_M.
\]

We first record two elementary consequences of \textup{(K1)} and
\textup{(K2)}.

By condition~\textup{(K1)}, for every \(0\leq r<1\), there exists \(R_r<1\)
such that
\begin{equation}
|z|\leq r
\quad\Longrightarrow\quad
|\Psi(z)|\leq R_r.
\label{eq:kernel-compact-core-control}
\end{equation}
Next, let \(u\in E_{r_0}\), and put
\(
v:=\Psi(u).
\)
By \textup{(K2)},
\(
\rho(u,v)\leq M.
\)
Thus,
\(
|u-v|
\leq
M|1-\overline{u}v|.
\)
Using
\(
1-\overline{u}v
=
1-|u|^2+\overline{u}(u-v),
\)
we obtain
\(
|u-v|
\leq
M\bigl(1-|u|^2\bigr)
+
M|u|\,|u-v|.
\)
Hence
\[
|u-v|
\leq
\frac{M(1-|u|^2)}{1-M|u|}
\leq
\frac{2M}{1-M}(1-|u|)
=
K_M(1-|u|).
\]
Similarly, using
\(
|1-\overline{u}v|
=
|1-\overline{v}u|
\)
and
\(
1-\overline{v}u
=
1-|v|^2+\overline{v}(v-u),
\)
we obtain
\(
|u-v|
\leq
K_M(1-|v|).
\)
Consequently,
\(
1-|v|
\leq
1-|u|+|u-v|
\leq
D_M(1-|u|),
\)
and, symmetrically,
\(
1-|u|
\leq
D_M(1-|v|).
\)
Since for $\xi\in\mathbb D$,
\(
1-|\xi|
\leq
1-|\xi|^2
\leq
2(1-|\xi|),
\)
it follows that
\begin{equation}
a_M\bigl(1-|u|^2\bigr)
\leq
1-|v|^2
\leq
b_M\bigl(1-|u|^2\bigr),
\qquad
u\in E_{r_0}.
\label{eq:kernel-tail-weight-comparability}
\end{equation}

We first prove the forward implication. Assume that
\(
\Lambda\in\mathcal C.
\)
Set
\(
\Lambda_*:=\Lambda\cap E_{r_0}.
\)
By Lemma~\ref{lem:basic-properties-carleson-sequences}\textup{(ii)},
the set \(\Lambda\setminus\Lambda_*\) is finite. Therefore,
Lemma~\ref{lem:basic-properties-carleson-sequences}\textup{(iii)}
gives
\(
\Lambda_*\in\mathcal C.
\)

We claim that \(\Psi(\Lambda_*)\) is pairwise separated. Otherwise,
there would exist distinct points
\(\lambda_n,\mu_n\in\Lambda_*\) such that
\(
\rho\bigl(\Psi(\lambda_n),\Psi(\mu_n)\bigr)\rightarrow0.
\)
We first show that
\(
|\lambda_n|\rightarrow1,
\
|\mu_n|\rightarrow1.
\)

Suppose that \(|\lambda_n|\not\to1\). Passing to a subsequence, we may
assume that
\(
|\lambda_n|\leq r
\)
for some \(r<1\) and all \(n\ge1\). By
Lemma~\ref{lem:basic-properties-carleson-sequences}\textup{(ii)},
the disk \(\overline{D(0,r)}\) contains only finitely many points of
\(\Lambda_*\). Hence, after passing to a further subsequence, we may
assume that
\(
\lambda_n=\lambda
\)
for all \(n\geq1\). Moreover,
\(
\left|\Psi(\mu_n)-\Psi(\lambda)\right|
\leq
2\rho\bigl(\Psi(\mu_n),\Psi(\lambda)\bigr)
\rightarrow0.
\)
Thus
\(
\Psi(\mu_n)\rightarrow\Psi(\lambda)\in\mathbb D,
\)
so the sequence \(\{\Psi(\mu_n)\}\) is contained in a compact subset
of \(\mathbb D\).

If \(\{\mu_n\}\) contained infinitely many distinct points, local
finiteness of \(\Lambda\) would yield a subsequence, still denoted by
\(\{\mu_n\}\), such that \(|\mu_n|\to1\). By \textup{(K1)}, this would
imply \(|\Psi(\mu_n)|\to1\), contradicting
\(\Psi(\mu_n)\to\Psi(\lambda)\in\mathbb D\). Hence, after passing to a further subsequence, we may assume
that
\(
\mu_n=\mu
\)
is independent of \(n\). Since \(\lambda\neq\mu\), injectivity of
\(\Psi\) gives
\(
\rho\bigl(\Psi(\lambda),\Psi(\mu)\bigr)>0,
\)
contradicting
\(
\rho\bigl(\Psi(\lambda_n),\Psi(\mu_n)\bigr)\rightarrow0.
\)
Therefore,
\(
|\lambda_n|\rightarrow1.
\)
The same argument, with \(\lambda_n\) and \(\mu_n\) interchanged, gives
\(
|\mu_n|\rightarrow1.
\)

Condition \textup{(K3)} now yields
\(
\rho(\lambda_n,\mu_n)\rightarrow0,
\)
which contradicts the uniform separation of \(\Lambda_*\). Hence
\(
\Psi(\Lambda_*)
\)
is pairwise separated.

Let \(I\subset\mathbb T\) be an arc. If
\(
\lambda\in\Lambda_*,
\
\Psi(\lambda)\in S(I),
\)
then
\(
\rho\bigl(\lambda,\Psi(\lambda)\bigr)\leq M.
\)
Applying
Lemma~\ref{lem:bounded-displacement-enlarges-carleson-squares}
we obtain
\(
\lambda\in S\bigl(I^{(A_M)}\bigr).
\)
Therefore, by \eqref{eq:kernel-tail-weight-comparability},
\[
\begin{aligned}
\mu_{\Psi(\Lambda_*)}\bigl(S(I)\bigr)
&=
\sum_{\substack{\lambda\in\Lambda_*\\
\Psi(\lambda)\in S(I)}}
\bigl(1-|\Psi(\lambda)|^2\bigr)
\leq
b_M
\sum_{\substack{\lambda\in\Lambda_*\\
\lambda\in S(I^{(A_M)})}}
\bigl(1-|\lambda|^2\bigr) \\
&\leq
b_M\mu_\Lambda\bigl(S(I^{(A_M)})\bigr).
\end{aligned}
\]
Since \(\Lambda\in\mathcal C\), the measure \(\mu_\Lambda\) is a
Carleson measure. Hence there exists \(C_{\mu_\Lambda}>0\) such that
\(
\mu_\Lambda\bigl(S(I^{(A_M)})\bigr)
\leq
C_{\mu_\Lambda} |I^{(A_M)}|
\leq
C_{\mu_\Lambda} A_M|I|.
\)
Thus
\(
\mu_{\Psi(\Lambda_*)}\bigl(S(I)\bigr)
\leq
b_MC_{\mu_\Lambda} A_M|I|.
\)
Hence \(\mu_{\Psi(\Lambda_*)}\) is a Carleson measure. Since
\(\Psi(\Lambda_*)\) is pairwise separated, we obtain
\(
\Psi(\Lambda_*)\in\mathcal C.
\)

Since \(\Psi\) is injective and \(\Lambda\setminus\Lambda_*\) is
finite, \(\Psi(\Lambda)\) is a finite perturbation of
\(\Psi(\Lambda_*)\). Hence \(\Psi(\Lambda)\in\mathcal C\).

We next prove the reverse implication. Assume that
\(
\Psi(\Lambda)\in\mathcal C.
\)
We claim that
\(
\Lambda\setminus\Lambda_*
\)
is finite. Otherwise, there would exist infinitely many distinct points
\(
\lambda\in\Lambda
\)
satisfying
\(
|\lambda|<r_0.
\)
By \eqref{eq:kernel-compact-core-control}, there exists \(R_{r_0}<1\)
such that
\(
|\Psi(\lambda)|\leq R_{r_0}.
\)
Since \(\Psi\) is injective, this would produce infinitely many distinct
points of \(\Psi(\Lambda)\) in the compact disk
\(
\left\{
z\in\mathbb D:
|z|\leq R_{r_0}
\right\},
\)
contradicting
Lemma~\ref{lem:basic-properties-carleson-sequences}\textup{(ii)}. Thus
\(
\Lambda\setminus\Lambda_*
\)
is finite. Consequently,
\(
\Psi(\Lambda_*)\in\mathcal C.
\)

We now prove that \(\Lambda_*\) is pairwise separated. Suppose, to the
contrary, that there exist distinct points
\(
\lambda_n,\mu_n\in\Lambda_*
\)
such that
\(
\rho(\lambda_n,\mu_n)\rightarrow0.
\)
We claim that
\(
|\lambda_n|\rightarrow1,
\
|\mu_n|\rightarrow1.
\)

Suppose that \(|\lambda_n|\not\to1\). Passing to a subsequence, there
exists \(r<1\) satisfying
\(
|\lambda_n|\leq r,
\) for $n\geq1$.
Since
\(
|\lambda_n-\mu_n|
\leq
2\rho(\lambda_n,\mu_n)
\rightarrow0,
\)
there exists \(r'<1\) such that
\(
|\mu_n|\leq r'
\)
for all \(n\geq1\).
By \eqref{eq:kernel-compact-core-control}, the two image sequences
\(
\{\Psi(\lambda_n)\}_{n\geq1}
\) and
\(
\{\Psi(\mu_n)\}_{n\geq1}
\)
remain in compact disks of \(\mathbb D\). Since
\(
\Psi(\Lambda_*)\in\mathcal C,
\)
it is locally finite. Hence each of the two image sequences takes only
finitely many values. Passing to a common
subsequence, we may therefore assume that
\(
\Psi(\lambda_n)=\xi,
\
\Psi(\mu_n)=\omega\) for $n\geq1.$
 By injectivity of \(\Psi\), it follows that
\(
\lambda_n=\lambda,
\
\mu_n=\mu
\)
are independent of \(n\). Since \(\lambda\neq\mu\), this contradicts
\(
\rho(\lambda_n,\mu_n)\rightarrow0.
\)
Therefore, \(|\lambda_n|\to1\). Since
\(
|\lambda_n-\mu_n|\rightarrow0,
\)
it follows that \(|\mu_n|\to1\).

Condition \textup{(K3)} now implies
\(
\rho\bigl(\Psi(\lambda_n),\Psi(\mu_n)\bigr)\rightarrow0,
\)
which contradicts the uniform separation of \(\Psi(\Lambda_*)\).
Hence
\(
\Lambda_*
\)
is pairwise separated.

Let \(I\subset\mathbb T\) be an arc. If
\(
\lambda\in\Lambda_*,
\
\lambda\in S(I),
\)
then
\(
\rho\bigl(\Psi(\lambda),\lambda\bigr)\leq M.
\)
Applying
Lemma~\ref{lem:bounded-displacement-enlarges-carleson-squares}
we obtain
\(
\Psi(\lambda)\in S\bigl(I^{(A_M)}\bigr).
\)
Using \eqref{eq:kernel-tail-weight-comparability}, we obtain
\[
\begin{aligned}
\mu_{\Lambda_*}\bigl(S(I)\bigr)
&=
\sum_{\substack{\lambda\in\Lambda_*\\
\lambda\in S(I)}}
\bigl(1-|\lambda|^2\bigr)
\leq
a_M^{-1}
\sum_{\substack{\lambda\in\Lambda_*\\
\Psi(\lambda)\in S(I^{(A_M)})}}
\bigl(1-|\Psi(\lambda)|^2\bigr) \\
&\leq
a_M^{-1}
\mu_{\Psi(\Lambda_*)}\bigl(S(I^{(A_M)})\bigr).
\end{aligned}
\]
Since \(\Psi(\Lambda_*)\in\mathcal C\), its associated discrete measure
is a Carleson measure. Therefore, there exists
\(C_{\Psi(\Lambda_*)}>0\) such that
\[
\mu_{\Psi(\Lambda_*)}\bigl(S(I^{(A_M)})\bigr)
\leq
C_{\Psi(\Lambda_*)}A_M|I|.
\]
Thus
\(
\mu_{\Lambda_*}\bigl(S(I)\bigr)
\leq
a_M^{-1}C_{\Psi(\Lambda_*)}A_M|I|.
\)
Hence \(\mu_{\Lambda_*}\) is a Carleson measure. Since
\(\Lambda_*\) is pairwise separated, it follows that
\(
\Lambda_*\in\mathcal C.
\)
Because
\(
\Lambda\setminus\Lambda_*
\)
is finite, we conclude that
\(
\Lambda\in\mathcal C.
\) Hence, \(
\Lambda\in\mathcal C
 \Longleftrightarrow
\Psi(\Lambda)\in\mathcal C.
\)

Thus condition~\textup{(i)} of
Theorem~\ref{thm:universal-orbit-frame-preservation} holds.
It remains to verify condition~\textup{(ii)}. Let \(r_0\) and \(M\)
be as in condition~\textup{(K2)}. By
\eqref{eq:kernel-tail-weight-comparability},
\(
a_M\bigl(1-|z|^2\bigr)
\leq
1-|\Psi(z)|^2
\leq
b_M\bigl(1-|z|^2\bigr)\) for \(z\in E_{r_0}\).

Moreover, by \eqref{eq:kernel-compact-core-control}, there exists
\(R_0<1\) such that
\(
|z|\leq r_0
\Longrightarrow
|\Psi(z)|\leq R_0.
\)
Hence, for \(|z|\leq r_0\),
\(
(1-R_0^2)\bigl(1-|z|^2\bigr)
\leq
1-|\Psi(z)|^2
\leq
\frac{1}{1-r_0^2}\bigl(1-|z|^2\bigr).
\)

Set
\(
c_\Psi
:=
\min\left\{
a_M,
1-R_0^2
\right\},
\
C_\Psi
:=
\max\left\{
b_M,
\frac{1}{1-r_0^2}
\right\}.
\)
Then
\[
c_\Psi\bigl(1-|z|^2\bigr)
\leq
1-|\Psi(z)|^2
\leq
C_\Psi\bigl(1-|z|^2\bigr),
\qquad
z\in\mathbb D.
\]
Condition~\textup{(ii)} of
Theorem~\ref{thm:universal-orbit-frame-preservation} therefore holds.
Hence
\(
\Psi\in\mathcal P.
\)

Finally, condition~\textup{(K2)} is precisely condition
\textup{(iii)} of
Proposition~\ref{prop:geometric-characterization-of-K}. Since
\(\Psi\in\mathcal P\), that theorem gives
\(
\Psi\in\mathcal K.
\)
This completes the proof.
\end{proof}

\begin{remark}
\label{rem:construction-and-membership-test-for-P}
By Theorem~\ref{thm:kernel-angular-classification-of-universal-preservers}
and the characterization of \(\mathcal K\) in
Theorem~\ref{thm:equivalence-of-two-kernel-definitions},
the preceding classification provides both a direct method for
constructing mappings in \(\mathcal P\) and a two-step criterion for
determining membership in \(\mathcal P\).

To construct an element of \(\mathcal P\), choose arbitrarily
\(
h\in\operatorname{BiLip}(\mathbb T)\) and
\(\Psi\in\mathcal K,
\)
and define
\(
\Phi:=\Psi\circ E_h.
\)
Then
\(
\Phi\in\mathcal P
\)
by
Theorem~\ref{thm:kernel-angular-classification-of-universal-preservers}.

Conversely, suppose that one wishes to determine whether a mapping
\(
\Phi:\mathbb D\rightarrow\mathbb D
\)
belongs to \(\mathcal P\). It suffices to proceed in two steps.
First verify that the boundary shadow exists and is bi-Lipschitz.
If this condition fails, then
\(
\Phi\notin\mathcal P.
\)
If
\(
h_{\Phi}\in\operatorname{BiLip}(\mathbb T),
\)
define the mapping
\(
\Psi:=\Phi\circ E_{h_{\Phi}^{-1}}.
\)
Then
\(
\Phi\in\mathcal P
\Longleftrightarrow
\Psi\in\mathcal K.
\)
Indeed, in this case
\(
\Phi=\Psi\circ E_{h_{\Phi}}.
\)
Thus, determining whether \(\Phi\) belongs to \(\mathcal P\) reduces
to checking whether its boundary shadow exists and is bi-Lipschitz and whether
its boundary-straightened part belongs to \(\mathcal K\).
\end{remark}

\begin{cor}
\label{cor:compact-core-surgery-invariance}
Let
\(
\Phi\in\mathcal P.
\) Suppose that
\(
\widetilde{\Phi}:\mathbb D\to\mathbb D
\)
is injective and that there exists \(0<r_0<1\) such that
\(
\widetilde{\Phi}(z)=\Phi(z)
\)
for \(|z|\geq r_0\), while
\(
\sup_{|z|<r_0}
|\widetilde{\Phi}(z)|<1.
\)
Then
\(
\widetilde{\Phi}\in\mathcal P
\)
and
\(
h_{\widetilde{\Phi}}
=
h_\Phi.
\)
\end{cor}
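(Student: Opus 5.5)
The plan is to reduce the statement to the kernel characterization, Theorem~\ref{thm:equivalence-of-two-kernel-definitions}, via the factorization of Theorem~\ref{thm:kernel-angular-classification-of-universal-preservers}. Put \(h:=h_\Phi\in\operatorname{BiLip}(\mathbb T)\), and define \(\Psi:=\Phi\circ E_{h^{-1}}\in\mathcal K\) and \(\widetilde\Psi:=\widetilde\Phi\circ E_{h^{-1}}\). Since \(E_{h^{-1}}\) preserves moduli, \(\widetilde\Psi(z)=\Psi(z)\) for \(|z|\ge r_0\). Moreover \(\sup_{|z|<r_0}|\widetilde\Psi(z)|<1\), and \(\widetilde\Psi\) is injective, being a composition of injective maps (\(E_{h^{-1}}\) is a bijection because \(h^{-1}\) is). Once \(\widetilde\Psi\in\mathcal K\) is known, \(\widetilde\Phi=\widetilde\Psi\circ E_h\) lies in \(\mathcal P\), since \(\mathcal P\) is closed under composition (Theorem~\ref{thm:universal-orbit-frame-preservation}, or directly Theorem~\ref{thm:kernel-angular-classification-of-universal-preservers}). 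Remark~\ref{rem:boundary-shadow-composition-law} then gives \(h_{\widetilde\Phi}=h_{\widetilde\Psi}\circ h_{E_h}=\operatorname{id}\circ h=h_\Phi\). The identity \(h_{\widetilde\Phi}=h_\Phi\) also follows directly, because the radial limits only involve \(|z|\ge r_0\).

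It therefore remains to check conditions (K0)--(K3) for \(\widetilde\Psi\). (K0) is injectivity, which was noted above. For (K2), take the constants \(M<r_0'<1\) provided by \(\Psi\in\mathcal K\) and enlarge \(r_0'\) so that \(r_0'\ge r_0\); on \(E_{r_0'}\) we have \(\widetilde\Psi=\Psi\), so the displacement bound transfers unchanged. For (K3), sequences with \(|z_n|,|w_n|\to1\) eventually lie in \(E_{r_0}\), where \(\widetilde\Psi=\Psi\), so (K3) for \(\Psi\) carries over verbatim.

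The only step needing care is (K1). If \(|z_n|\to1\), then eventually \(\widetilde\Psi(z_n)=\Psi(z_n)\), and \(|\Psi(z_n)|\to1\) by (K1) for \(\Psi\). Conversely, suppose \(|\widetilde\Psi(z_n)|\to1\) but \(|z_n|\not\to1\), and pass to a subsequence with \(|z_n|\le r<1\). Split this subsequence into points with \(|z_n|<r_0\) and points with \(r_0\le|z_n|\le r\). On the first part, \(|\widetilde\Psi|\) is bounded by the hypothesis \(\sup_{|z|<r_0}|\widetilde\Phi|<1\), transported through \(E_{h^{-1}}\), which preserves moduli. On the second part \(\widetilde\Psi=\Psi\), and \(|\Psi|\) is bounded away from \(1\) on \(\{|z|\le r\}\) by the defect comparability of \(\Psi\in\mathcal P\). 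Either case contradicts \(|\widetilde\Psi(z_n)|\to1\). With (K0)--(K3) established, Theorem~\ref{thm:equivalence-of-two-kernel-definitions} yields \(\widetilde\Psi\in\mathcal K\), which completes the argument.
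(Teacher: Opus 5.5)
Your argument is correct, but it takes a different route from the paper. The paper never straightens the boundary action. It verifies the two conditions of Theorem~\ref{thm:universal-orbit-frame-preservation} for \(\widetilde\Phi\) directly:
\begin{itemize}
\item A Carleson sequence has only finitely many points in \(\{|z|<r_0\}\), \(\widetilde\Phi=\Phi\) on \(E_{r_0}\), and finite perturbations preserve \(\mathcal C\) (Lemma~\ref{lem:basic-properties-carleson-sequences}). This gives the forward implication.
\item For the reverse implication, injectivity of \(\widetilde\Phi\) together with \(s_0:=\sup_{|z|<r_0}|\widetilde\Phi(z)|<1\) forces \(\Lambda\) to have only finitely many points in the core.
\item Defect comparability on the core is immediate from \(1-s_0^2\le 1-|\widetilde\Phi(z)|^2\le 1\) and \(1-r_0^2\le 1-|z|^2\le 1\).
\item Finally, \(h_{\widetilde\Phi}=h_\Phi\) because radial limits only involve \(|z|\ge r_0\).
\end{itemize}

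Your route conjugates by \(E_{h_\Phi^{-1}}\) and checks (K0)--(K3) for \(\widetilde\Psi\). Each check is sound, including your enlargement of the (K2) radius (which keeps \(M<r_0''<1\)) and the two-case argument for the reverse half of (K1).

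The cost is that you need \(h_\Phi\in\operatorname{BiLip}(\mathbb T)\) just to form \(E_{h_\Phi^{-1}}\). A purely compact-core statement thus ends up resting on the hardest part of the paper, Theorems~\ref{thm:boundary-shadow-is-injective} and~\ref{thm:boundary-shadow-is-bilipschitz} with their half-plane rescaling argument. It also rests on Theorem~\ref{thm:equivalence-of-two-kernel-definitions}, whose converse direction internally repeats the same finite-perturbation and compact-core reasoning that the paper applies directly.

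What your approach buys is structural transparency. It shows the surgery acts only on the kernel factor: \(\widetilde\Phi=\widetilde\Psi\circ E_{h_\Phi}\), where \(\widetilde\Psi\) is a compact-core surgery of \(\Psi=\Phi\circ E_{h_\Phi^{-1}}\). This makes the equality of the angular components immediate via Remark~\ref{rem:boundary-shadow-composition-law}. The paper's argument, by contrast, is elementary and independent of the boundary-rigidity machinery.
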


\begin{proof}
Let
\(
s_0:=\sup_{|z|<r_0}|\widetilde{\Phi}(z)|<1.
\)
Since every Carleson sequence has only finitely many terms in a compact
subdisk, and since \(\widetilde{\Phi}=\Phi\) on \(E_{r_0}\), finite
perturbation stability gives
\(
\Lambda\in\mathcal C
\Longleftrightarrow
\widetilde{\Phi}(\Lambda)\in\mathcal C.
\)
Indeed, in the reverse direction, if
\(\widetilde{\Phi}(\Lambda)\in\mathcal C\), then injectivity of
\(\widetilde{\Phi}\) and the bound \(s_0<1\) imply that \(\Lambda\) has
only finitely many terms in \(\{|z|<r_0\}\).

Moreover, the boundary-defect estimate for \(\Phi\) remains unchanged on
\(E_{r_0}\), while on \(\{|z|<r_0\}\) one has
\(
1-s_0^2
\leq
1-|\widetilde{\Phi}(z)|^2
\leq
1
\)
and
\(
1-r_0^2
\leq
1-|z|^2
\leq
1.
\)
Hence
\(
1-|\widetilde{\Phi}(z)|^2
\asymp
1-|z|^2\) for \(z\in\mathbb D.\)

Theorem~\ref{thm:universal-orbit-frame-preservation} therefore yields
\(\widetilde{\Phi}\in\mathcal P\). Since
\(\widetilde{\Phi}=\Phi\) near \(\mathbb T\), their boundary shadows
coincide:
\(
h_{\widetilde{\Phi}}=h_\Phi.
\)
\end{proof}
The next example gives a discontinuous, non-surjective element of
\(\mathcal P\). Its radial part has two jumps, while its angular part is bi-Lipschitz.

\begin{example}
Define
\[
R(r):=
\begin{cases}
\dfrac{r}{2},
& 0\leq r<\dfrac13,\\[2mm]
\dfrac{r}{2}+\dfrac16,
& \dfrac13\leq r<\dfrac23,\\[2mm]
r,
& \dfrac23\leq r<1.
\end{cases}
\]
Let
\[
\Psi(0):=0,
\qquad
\Psi\bigl(re^{i\theta}\bigr):=R(r)e^{i\theta},
\qquad
0<r<1.
\]
The mapping \(\Psi\) is injective, agrees with
\(\operatorname{id}_{\mathbb D}\) on
\(
\left\{
z\in\mathbb D:
|z|\geq\frac23
\right\},
\)
and satisfies
\(
|\Psi(z)|\leq\frac12,
\) for \(|z|<\frac23\).
Hence
Corollary~\ref{cor:compact-core-surgery-invariance}
gives
\(
\Psi\in\mathcal K.
\)

Let
\(
h(e^{i\theta})=e^{iH(\theta)},
\)
where
\[
H(\theta):=
\begin{cases}
\dfrac32\theta,
& 0\leq\theta\leq\pi,\\[2mm]
\dfrac12\theta+\pi,
& \pi<\theta<2\pi,
\end{cases}
\]
and extend \(H\) by
\(
H(\theta+2\pi)=H(\theta)+2\pi.
\)
Then
\(
h\in\operatorname{BiLip}(\mathbb T).
\)

Define
\(
\Phi:=\Psi\circ E_h.
\)
By Theorem~\ref{thm:kernel-angular-classification-of-universal-preservers},
\(
\Phi\in\mathcal P.
\)

However, \(\Phi\) is discontinuous on the circles
\(
|z|=\frac13
\)
and
\(
|z|=\frac23,
\)
and its image omits the two annuli
\(
\left\{w\in\mathbb D:\frac16<|w|<\frac13\right\}
\ \text{and}\
\left\{w\in\mathbb D:\frac12<|w|<\frac23\right\}.
\)
Thus, \(\Phi\) is neither continuous nor surjective. In particular, it
is neither holomorphic nor quasiconformal.
\end{example}
\section[Finite Multi-Orbit Preservation, Holomorphic Rigidity, and the
Countable-Orbit Problem]
{Finite Multi-Orbit Preservation, Holomorphic Rigidity, and the
Countable-Orbit Problem}

This section extends the universal preservation problem from single
orbits to finite and countably generated multi-orbits. Using
vector-valued Szeg\H{o} kernel synthesis operators and Hartmann's
decomposition, we prove that
\(
\mathcal P_m=\mathcal P,
\
m\in\mathbb N^+,
\)
and obtain the corresponding holomorphic rigidity theorem. For the
countable class, we prove that
\(
\operatorname{Aut}(\mathbb D)
\subseteq
\mathcal P_\omega
\subseteq
\mathcal P
\)
and that every member of \(\mathcal P_\omega\) is a
pseudohyperbolically uniform homeomorphism of \(\mathbb D\). These
results motivate the conjectural classification
\(
\mathcal P_\omega=\operatorname{Aut}(\mathbb D)
\).
\subsection{Finite Multi-Orbit Preservation}
Recall that
\(
A
=
\sum_{j\geq1}\lambda_jP_j,
\)
where
\(
E_j
:=
\ker\!\left(A-\lambda_jI_H\right)
\)
is the spectral subspace corresponding to \(\lambda_j\), and
\(
P_j:H\rightarrow E_j
\)
is the orthogonal projection onto \(E_j\). Thus,
\(
P_jP_k=0
\
(j\neq k),
\
\sum_{j\geq1}P_j=I_H
\)
in the strong operator topology.

We begin by recalling two necessary spectral conditions for finite
multi-orbit frames, which follow from
\cite[Theorem~2.3]{CabrelliMolterPaternostroPhilipp2020}.

\begin{lem}
\label{lem:necessary-spectral-conditions-for-finite-multi-orbits}
Let
\(
A
=
\sum_{j\geq1}\lambda_jP_j
\in\mathcal B(H),
\
F=(f_1,\ldots,f_m)\in H^m,
\)
where \(m\in\mathbb N^{+}\), and suppose that
\(
\mathcal O_m(A,F)
\)
is a frame for \(H\). Then the following assertions hold:
\begin{enumerate}
\item[\textup{(i)}]
For every \(j\geq1\),
\(
\dim E_j\leq m.
\)
\item[\textup{(ii)}]
The spectral set
\(
\Lambda
:=\{\lambda_j:j\geq1\}
\)
can be written as the union of at most \(m\) Carleson sequences.
\end{enumerate}
\end{lem}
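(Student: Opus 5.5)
The plan is to reduce both assertions to the structure of $\mathcal O_m(A,F)$ restricted to individual spectral subspaces and to finite unions of them. Since $A$ is normal and diagonal, each $P_j$ commutes with $A$. Frame bounds are therefore inherited by compressions: if $\mathcal O_m(A,F)$ is a frame for $H$ with bounds $a\le b$, then for every closed subspace $V$ spanned by a subfamily of spectral subspaces, the family $\{A^nP_Vf_i\}$ is a frame for $V$ with the same bounds. This holds because $\langle x,A^nf_i\rangle=\langle x,A^nP_Vf_i\rangle$ for $x\in V$.

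For (i), I will fix $j$ and look at $V=E_j$. There $A^nP_jf_i=\lambda_j^nP_jf_i$, so every vector of the orbit lies in $\operatorname{span}\{P_jf_1,\ldots,P_jf_m\}$. The lower frame bound forces this span to be all of $E_j$. Hence $\dim E_j\le m$.

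For (ii), the approach is to deduce from the upper and lower frame bounds the two geometric conditions that characterize finite unions of Carleson sequences. These are: (a) $\mu:=\sum_j(1-|\lambda_j|^2)\delta_{\lambda_j}$ is a Carleson measure; and (b) every pseudohyperbolic disk of some fixed radius contains at most $m$ points of $\Lambda$. By the standard characterization (e.g.\ Hartmann's decomposition, or McKenna--Vinogradov type results), (a) and (b) together imply that $\Lambda$ is a union of at most $m$ uniformly separated sequences.

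For (a), I would use the lower bound applied to unit vectors $x\in E_j$. This gives $a\le\sum_i\sum_n|\lambda_j|^{2n}|\langle x,f_i\rangle|^2=(1-|\lambda_j|^2)^{-1}\sum_i|\langle x,f_i\rangle|^2$. Choosing $x$ in the direction of $P_jf_1$ (or the dominant $P_jf_i$) yields $\sum_i\|P_jf_i\|^2\ge a(1-|\lambda_j|^2)$. The upper bound $b$ means the synthesis operator $\{c_{n,i}\}\mapsto\sum c_{n,i}A^nf_i$ is bounded. Translating through the $H^2$ model $x\mapsto\sum_n\langle x,A^nf_i\rangle z^n=\sum_j\langle x,P_jf_i\rangle/(1-\lambda_jz)$, I then get $\|\sum_j c_j\|P_jf_i\|\,k_{\lambda_j}\|_{H^2}^2\lesssim b\sum|c_j|^2$. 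Combined with the weight lower bound, this says the normalized Szeg\H{o} kernels $\{k_{\lambda_j}/\|k_{\lambda_j}\|\}$ form a Bessel sequence. That is equivalent to $\mu$ being Carleson.

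For (b), I would argue by contradiction. Suppose there are $m+1$ distinct eigenvalues $\lambda_{j_0},\ldots,\lambda_{j_m}$ lying within pseudohyperbolic distance $\varepsilon$ of each other, with $\varepsilon\to0$. Set $V=E_{j_0}\oplus\cdots\oplus E_{j_m}$. The compressed orbits span at most an $m$-dimensional space of ``Szeg\H{o} synthesis directions''. After normalizing by a disk automorphism that moves the cluster to the origin, the normalized kernels become nearly parallel. The Gram matrix of the compressed orbit frame on the scalar part, a rank-$(m+1)$ object built from $m$ generators, then develops an eigenvalue tending to $0$, which contradicts the uniform lower bound $a$.

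The main obstacle I expect is making this clustering argument quantitative and uniform: one must control the weights $\|P_jf_i\|$ together with the multiplicities, while the points tend to the boundary. I would first try the explicit Cauchy/Pick-matrix estimate for $m+1$ nearby kernels. If that becomes unwieldy, I would instead cite \cite[Theorem~2.3]{CabrelliMolterPaternostroPhilipp2020} directly for (ii).
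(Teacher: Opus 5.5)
Your proposal is correct, but it takes a genuinely different route from the paper. The paper gives no argument for this lemma at all; it simply invokes Theorem~2.3 of Cabrelli--Molter--Paternostro--Philipp, which is your fallback.

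Your proof of (i) is complete as written. A vector $x\in E_j$ orthogonal to $P_jf_1,\dots,P_jf_m$ has all frame coefficients zero, so the lower bound forces $E_j=\operatorname{span}\{P_jf_i\}$. For (ii) you split the problem into two parts:
\begin{enumerate}
\item[(a)] a global Carleson-measure condition, derived from the upper bound together with $\sum_i\|P_jf_i\|^2\ge a(1-|\lambda_j|^2)$;
\item[(b)] a local count of at most $m$ points per small pseudohyperbolic disk, derived from the lower bound.
\end{enumerate}
You then glue these with classical decomposition results. This works, but three steps should be made explicit.

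\textbf{Step (a).} To pass from the Bessel property of each family $\{\|P_jf_i\|\,k_{\lambda_j}\}_j$ to that of $\{\widehat k_{\lambda_j}\}_j$, split the indices $j$ according to some $i$ with $\|P_jf_i\|^2\ge \frac am(1-|\lambda_j|^2)$. Then use that bounded rescalings and finite unions of Bessel families remain Bessel. (Minor slip: the kernel is $1/(1-\overline{\lambda_j}z)$, not $1/(1-\lambda_jz)$.)

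\textbf{Step (b).} The obstacle you anticipate disappears under a compactness argument, and no lower control of the weights is needed.
\begin{itemize}
\item Along $(m+1)$-point clusters with diameter tending to $0$, the Gram matrices of the normalized kernels converge to a rank-one matrix with unimodular entries.
\item The coefficient maps satisfy $\|V_j\|\le\sqrt b$, so after restricting to one unit vector in each $E_{j_k}$ you may pass to convergent limits $v_k\in\mathbb C^m$.
\item The limiting map $u\mapsto\sum_k c_ku_kv_k$, for suitable unimodular $c_k$, sends $\mathbb C^{m+1}$ to $\mathbb C^m$. It therefore has a unit kernel vector, which violates the lower bound $a$.
\end{itemize}
This is exactly the mechanism of the paper's Lemma~\ref{lem:bad-cluster-subsequence-transfer}.

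\textbf{Gluing step.} The classical fact that a Carleson $\mu_\Lambda$ makes $\Lambda$ a finite union of Carleson sequences (McKenna) only gives some number $N$ of pieces, possibly $N>m$. The sharp count comes from applying Lemma~\ref{lem:hartmann-cluster-decomposition} with $\eta$ no larger than the radius $\varepsilon_0$ produced in (b). Then every cluster has at most $m$ points. Finally take the $m$ partial selectors ``the $k$-th point of each cluster''; each is a subsequence of a Carleson selector, hence Carleson.

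Your route gives a derivation that relies only on classical facts (Carleson embedding, McKenna, Hartmann) and the same Szeg\H{o}-kernel machinery the paper develops later in Section~6. The paper's citation buys brevity.
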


To describe finite multi-orbit frames by means of vector-valued
Hardy-space kernels, we introduce the following notation. For
\(\lambda\in\mathbb D\), let
\(
k_{\lambda}(z)
:=
\frac{1}{1-\overline{\lambda}z},
\
z\in\mathbb D,
\)
be the Szeg\H{o} kernel of \(H^2(\mathbb D)\), and define the normalized
Szeg\H{o} kernel by
\(
\widehat{k}_{\lambda}
:=
\sqrt{1-|\lambda|^2}\,k_{\lambda}.
\)
Then
\(
\left\|k_{\lambda}\right\|_{H^2(\mathbb D)}^2
=
\frac{1}{1-|\lambda|^2},
\
\left\|\widehat{k}_{\lambda}\right\|_{H^2(\mathbb D)}
=
1.
\)

For \(v\in\mathbb C^m\), we identify
\(\widehat{k}_{\lambda}\otimes v\) with the function
\(
z
\longmapsto
\widehat{k}_{\lambda}(z)v
\)
in \(H^2(\mathbb D;\mathbb C^m)\).

Fix
\(
F=(f_1,\ldots,f_m)\in H^m.
\)
For each \(j\geq1\), define
\(
V_j:E_j\rightarrow\mathbb C^m
\)
by
\begin{equation}
\label{eq:local-coefficient-operator}
V_jx
:=
\frac{1}{\sqrt{1-|\lambda_j|^2}}
\left(
\left\langle x,P_jf_1\right\rangle,
\ldots,
\left\langle x,P_jf_m\right\rangle
\right),
\qquad
x\in E_j.
\end{equation}
Then
\(
V_j\in\mathcal B(E_j,\mathbb C^m)
\)
and
\begin{equation}
\label{eq:local-coefficient-operator-norm}
\left\|V_jx\right\|_{\mathbb C^m}^2
=
\frac{1}{1-|\lambda_j|^2}
\sum_{i=1}^{m}
\left|
\left\langle x,P_jf_i\right\rangle
\right|^2,
\qquad
x\in E_j.
\end{equation}
Then, for \(x\in E_j\),
\(
\sum_{n\geq0}\sum_{i=1}^{m}
\left|
\left\langle x,A^nf_i\right\rangle
\right|^2
=
\left\|V_jx\right\|_{\mathbb C^m}^2.
\)
Indeed,
\(
\left\langle x,A^nf_i\right\rangle
=
\overline{\lambda_j}^{\,n}
\left\langle x,P_jf_i\right\rangle,
\)
and therefore
\[
\begin{aligned}
\sum_{n\geq0}\sum_{i=1}^{m}
\left|
\left\langle x,A^nf_i\right\rangle
\right|^2
=
\sum_{i=1}^{m}
\left|
\left\langle x,P_jf_i\right\rangle
\right|^2
\sum_{n\geq0}|\lambda_j|^{2n}
=
\frac{1}{1-|\lambda_j|^2}
\sum_{i=1}^{m}
\left|
\left\langle x,P_jf_i\right\rangle
\right|^2.
\end{aligned}
\]
If \(\mathcal O_m(A,F)\) is a frame for \(H\) with frame bounds
\(0<a\leq b<\infty\), then
\begin{equation}
\label{eq:uniform-local-coefficient-bounds}
a\|x\|^2
\leq
\|V_jx\|_{\mathbb C^m}^2
\leq
b\|x\|^2,
\qquad
x\in E_j,\quad j\geq1.
\end{equation}
The following proposition reformulates the finite multi-orbit frame property
in terms of a vector-valued Szeg\H{o} kernel synthesis operator \(S_{\Lambda,V}\).

\begin{prop}
\label{prop:orbit-frame-kernel-synthesis-characterization}
The following assertions are equivalent:
\begin{enumerate}
\item[\textup{(i)}]
The family
\(
\mathcal O_m(A,F)
=
\left\{
A^nf_i:
n\in\mathbb N,\ 1\leq i\leq m
\right\}
\)
is a frame for \(H\).
\item[\textup{(ii)}]
The formula
\(
S_{\Lambda,V}(x_j)_{j\geq1}
:=
\sum_{j\geq1}
\widehat{k}_{\lambda_j}\otimes V_jx_j
\)
defines a well-defined linear operator
\(
S_{\Lambda,V}:
\bigoplus_{j\geq1}E_j
\rightarrow
H^2(\mathbb D;\mathbb C^m).
\)
Moreover, there exist constants \(0<c\leq C<\infty\) such that
\[
c\sum_{j\geq1}\|x_j\|^2
\leq
\left\|
S_{\Lambda,V}(x_j)_{j\geq1}
\right\|_{H^2(\mathbb D;\mathbb C^m)}^2
\leq
C\sum_{j\geq1}\|x_j\|^2
\]
for every
\(
(x_j)_{j\geq1}
\in
\bigoplus_{j\geq1}E_j.
\)
\end{enumerate}
\end{prop}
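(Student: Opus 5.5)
The plan is to identify the analysis operator of $\mathcal O_m(A,F)$ with the adjoint of $S_{\Lambda,V}$, after an isometric change of variables. Then (i) and (ii) become the statement that an operator is bounded and bounded below, which is equivalent to the same property for its adjoint.

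\textbf{Step 1: write down the analysis operator.} Let $T:H\to\ell^2(\mathbb N_0;\mathbb C^m)$ be given by
\[
Tx:=\bigl(\langle x,A^nf_i\rangle\bigr)_{n\ge0,\,1\le i\le m}.
\]
Then (i) says exactly that $T$ is well defined, bounded, and satisfies $a\|x\|^2\le\|Tx\|^2$. The identification $(c_{n,i})\mapsto\sum_n z^n(c_{n,1},\dots,c_{n,m})$ is a unitary $U:\ell^2(\mathbb N_0;\mathbb C^m)\to H^2(\mathbb D;\mathbb C^m)$.

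Decompose $x=\sum_j x_j$ with $x_j=P_jx\in E_j$. The computation preceding the proposition gives
\[
\langle x_j,A^nf_i\rangle=\overline{\lambda_j}^{\,n}\langle x_j,P_jf_i\rangle.
\]
Hence, formally,
\[
UTx_j=\sum_n(\overline{\lambda_j}z)^n\,\sqrt{1-|\lambda_j|^2}\,V_jx_j=\widehat k_{\lambda_j}\otimes V_jx_j.
\]
So on finitely supported vectors $UT=S_{\Lambda,V}\circ W$, where $W:H\to\bigoplus_jE_j$, $x\mapsto(P_jx)_j$, is unitary.

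\textbf{Step 2: handle convergence and extension.} For finitely supported $(x_j)$ the sum defining $S_{\Lambda,V}$ is finite and the identity above is exact. It remains to check it for all vectors.

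For (i)$\Rightarrow$(ii): $T$ is bounded, so $UTW^{-1}$ is a bounded operator that agrees with the finite sums. The partial sums of $S_{\Lambda,V}(x_j)$ therefore converge in norm, since they are images of convergent partial sums under a bounded map. The lower bound passes to the limit.

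For (ii)$\Rightarrow$(i): $S_{\Lambda,V}W$ is bounded and agrees with $UT$ on the dense subspace of finitely supported vectors. It remains to check that $Tx$ really equals the limit for general $x$. Each coordinate $\langle x,A^nf_i\rangle$ is continuous in $x$, and norm convergence in $H^2$ implies coefficientwise convergence. The frame bounds of $T$ are then the bounds $c,C$ via $\|Tx\|=\|S_{\Lambda,V}Wx\|$ and $\|Wx\|=\|x\|$.

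\textbf{Main obstacle.} The main delicacy is the well-definedness in (ii): the series $\sum_j\widehat k_{\lambda_j}\otimes V_jx_j$ must be interpreted as a norm limit of partial sums, independent of ordering. I would define it through the coefficient-wise identity, in which each Taylor coefficient is an absolutely convergent sum because $\sum_j|\langle x_j,P_jf_i\rangle|\,|\lambda_j|^n$ is controlled by Cauchy–Schwarz with $\sum_j\|P_jf_i\|^2<\infty$. That identification reduces the question to boundedness, so the equivalence is exactly that $T$ is bounded and bounded below if and only if $S_{\Lambda,V}$ is.
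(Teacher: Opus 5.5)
Your proposal is correct and is essentially the paper's argument: both use the Taylor-coefficient unitary $\ell^2(\mathbb N;\mathbb C^m)\to H^2(\mathbb D;\mathbb C^m)$ and the unitary $x\mapsto(P_jx)_{j\geq1}$ to show that the analysis operator of $\mathcal O_m(A,F)$ is unitarily equivalent to $S_{\Lambda,V}$ on natural domains, so that (i) and (ii) both say this operator is everywhere defined, bounded and bounded below. As your Step~1 actually shows, the identification is with $S_{\Lambda,V}$ itself, not with its adjoint as your opening sentence says, and your coefficientwise/density handling of convergence simply replaces the paper's natural-domain bookkeeping and its appeal to the closed graph theorem.
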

\begin{proof}
Let
\(
C_{A,F}:
\mathcal D(C_{A,F})
\subset H
\rightarrow
\ell^2(\mathbb N;\mathbb C^m)
\)
be the analysis operator of \(\mathcal O_m(A,F)\), where
\[
\mathcal D(C_{A,F})
:=
\left\{
h\in H:
\left(
\bigl(
\langle h,A^nf_1\rangle,
\ldots,
\langle h,A^nf_m\rangle
\bigr)
\right)_{n\geq0}
\in
\ell^2(\mathbb N;\mathbb C^m)
\right\}.
\]
Thus,
\(
C_{A,F}h
=
\left(
\bigl(
\langle h,A^nf_1\rangle,
\ldots,
\langle h,A^nf_m\rangle
\bigr)
\right)_{n\geq0}.
\)

Define the unitary operators
\(
U:
\bigoplus_{j\geq1}E_j
\rightarrow H,
\
U(x_j)_{j\geq1}
:=
\sum_{j\geq1}x_j,
\)
and
\(
\mathscr F:
\ell^2(\mathbb N;\mathbb C^m)
\rightarrow
H^2(\mathbb D;\mathbb C^m),
\
\mathscr F((c_n)_{n\geq0})(z)
:=
\sum_{n\geq0}c_nz^n.
\)
A direct computation gives
\(
S_{\Lambda,V}
=
\mathscr F C_{A,F}U
\)
on their natural domains. In particular,
\(
\mathcal D(S_{\Lambda,V})
=
U^{-1}\bigl(\mathcal D(C_{A,F})\bigr).
\)

Hence \(S_{\Lambda,V}\) is defined on
\(\bigoplus_{j\geq1}E_j\) if and only if \(C_{A,F}\) is defined on
all of \(H\). Moreover, since \(U\) and \(\mathscr F\) are unitary,
\(S_{\Lambda,V}\) is bounded below if and only if \(C_{A,F}\) is
bounded below.

The analysis operator \(C_{A,F}\) is closed. Therefore, if it is
defined on all of \(H\), then it is bounded by the closed graph
theorem. By the standard characterization of frames in terms of their
analysis operators,
\(
\mathcal O_m(A,F)\) is a frame for
\(H
\)
if and only if \(C_{A,F}\) is everywhere defined and bounded below.
The conclusion now follows from the unitary equivalence above.
\end{proof}
The following lemma shows that a finite union of Carleson sequences,
together with uniform boundedness of the local coefficient operators,
ensures the boundedness of the associated kernel synthesis operator.
We are now ready to prove that the universal preservation class is
independent of the finite number of generating vectors.

\begin{lem}
\label{lem:finite-union-carleson-bessel-bound}
Suppose that
\(
\Lambda
=
\bigcup_{\nu=1}^{N}\Lambda_{\nu},
\
\Lambda_{\nu}\in\mathcal C,
\)
and
\(
M_V
:=
\sup_{j\geq1}\|V_j\|
<
\infty.
\)
Then the operator
\[
S_{\Lambda,V}:
\bigoplus_{j\geq1}E_j
\rightarrow
H^2(\mathbb D;\mathbb C^m),\qquad
S_{\Lambda,V}(x_j)_{j\geq1}
=
\sum_{j\geq1}
\widehat{k}_{\lambda_j}\otimes V_jx_j
\] is bounded.

\end{lem}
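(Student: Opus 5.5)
We will bound the squared norm of a finite sum and pass to the limit. The core estimate is that normalized Szegő kernels indexed by a single Carleson sequence form a Bessel sequence in $H^2(\mathbb D)$. This is the standard Carleson embedding: for $g\in H^2(\mathbb D)$ one has $\langle g,\widehat k_\lambda\rangle=\sqrt{1-|\lambda|^2}\,g(\lambda)$, so
\[
\sum_{\lambda\in\Lambda_\nu}|\langle g,\widehat k_\lambda\rangle|^2=\int_{\mathbb D}|g|^2\,d\mu_{\Lambda_\nu}\le B_\nu\|g\|_{H^2}^2 ,
\]
because $\mu_{\Lambda_\nu}$ is a Carleson measure by Lemma~\ref{thm:carleson-interpolation}(ii) and Carleson's embedding theorem applies. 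By duality, the scalar synthesis map $(c_\lambda)\mapsto\sum_{\lambda\in\Lambda_\nu}c_\lambda\widehat k_\lambda$ is then bounded from $\ell^2$ into $H^2(\mathbb D)$ with norm at most $B_\nu^{1/2}$.

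Next we pass to the vector-valued setting. Fix an orthonormal basis $\{\epsilon_1,\dots,\epsilon_m\}$ of $\mathbb C^m$. For a finitely supported $(x_j)$, set $v_j:=V_jx_j\in\mathbb C^m$, so that $\|v_j\|\le M_V\|x_j\|$. Then
\[
\sum_j\widehat k_{\lambda_j}\otimes v_j=\sum_{i=1}^m\Bigl(\sum_j\langle v_j,\epsilon_i\rangle\widehat k_{\lambda_j}\Bigr)\epsilon_i ,
\]
and $\|\cdot\|_{H^2(\mathbb D;\mathbb C^m)}^2$ is the sum over $i$ of the scalar $H^2$ norms squared. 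It therefore suffices to bound each scalar sum by a constant times $\sum_j|\langle v_j,\epsilon_i\rangle|^2$; summing over $i$ then gives a bound by a constant times $\sum_j\|v_j\|^2\le M_V^2\sum_j\|x_j\|^2$.

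For the scalar sums, one subtlety is that the indices $j$ label distinct eigenvalues $\lambda_j$, since the $\lambda_j$ are pairwise distinct. Hence each $j$ corresponds to a point of $\Lambda$ that lies in at least one $\Lambda_\nu$. We assign each $j$ to one such $\nu(j)$, splitting the index set into $N$ disjoint pieces $J_1,\dots,J_N$; the points $\{\lambda_j:j\in J_\nu\}$ then form a subsequence of $\Lambda_\nu$ without repetition. A subsequence of a Carleson sequence has a Carleson measure dominated by $\mu_{\Lambda_\nu}$, so the Bessel bound $B_\nu$ still applies to it. By the triangle inequality and Cauchy--Schwarz over $\nu$,
\[
\Bigl\|\sum_j c_j\widehat k_{\lambda_j}\Bigr\|^2\le N\sum_{\nu=1}^N B_\nu\sum_{j\in J_\nu}|c_j|^2\le N\max_\nu B_\nu\sum_j|c_j|^2 .
\]
This gives $\|S_{\Lambda,V}(x_j)\|^2\le N\max_\nu B_\nu\,M_V^2\sum_j\|x_j\|^2$ on finitely supported sequences.

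The last step extends this to all of $\bigoplus_{j\ge1}E_j$. The uniform bound on finitely supported inputs shows that the partial sums of $\sum_j\widehat k_{\lambda_j}\otimes V_jx_j$ are Cauchy in $H^2(\mathbb D;\mathbb C^m)$: applying the bound to a tail of the sequence controls the tail of the series. Hence the series converges, $S_{\Lambda,V}$ is well defined, and it satisfies the same bound, so it is bounded. The only genuinely nontrivial ingredient is the Carleson embedding in the first step. Everything after it is bookkeeping, the one point needing care being the disjoint assignment of indices to the $\Lambda_\nu$ when these sequences overlap.
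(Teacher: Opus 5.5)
Your proposal is correct and follows essentially the same route as the paper: your disjoint assignment $j\mapsto\nu(j)$ plays the role of the paper's replacement of $\Lambda_\nu$ by $\Lambda_\nu\setminus\bigcup_{\mu<\nu}\Lambda_\mu$, and both arguments then apply the Bessel property of normalized Szeg\H{o} kernels coordinatewise, use Cauchy--Schwarz over $\nu$, and extend from finitely supported vectors. The only difference is that you justify the Bessel bound explicitly via $\langle g,\widehat k_\lambda\rangle=\sqrt{1-|\lambda|^2}\,g(\lambda)$ and the Carleson embedding theorem, whereas the paper simply takes it as standard for Carleson sequences.
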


\begin{proof}
By replacing each \(\Lambda_\nu\) with
\(
\Lambda_\nu
\setminus
\bigcup_{\mu<\nu}\Lambda_\mu,
\)
we may assume that the resulting sets are pairwise disjoint. Empty
sets are discarded. Each remaining set is either finite or an infinite
subsequence of the original Carleson sequence \(\Lambda_\nu\).

In either case, the normalized Szeg\H{o} kernel family
\(
\left\{
\widehat{k}_{\lambda}:
\lambda\in\Lambda_\nu
\right\}
\)
is a Bessel sequence in \(H^2(\mathbb D)\): this is immediate when
\(\Lambda_\nu\) is finite, while in the infinite case it follows
because every subsequence of a Carleson sequence is again a Carleson
sequence. Thus there exists \(B_\nu<\infty\) such that
\begin{equation}
\label{eq:component-kernel-bessel-bound}
\|
\sum_{\lambda_j\in\Lambda_\nu}
a_j\widehat{k}_{\lambda_j}
\|_{H^2(\mathbb D)}^2
\leq
B_\nu
\sum_{\lambda_j\in\Lambda_\nu}|a_j|^2
\end{equation}
for every finitely supported scalar family \((a_j)_j\).

Let
\(
x=(x_j)_{j\geq1}
\in
\bigoplus_{j\geq1}^{\mathrm{alg}}E_j,
\)
and set
\(
y_\nu
:=
\sum_{\lambda_j\in\Lambda_\nu}
\widehat{k}_{\lambda_j}\otimes V_jx_j,
\
1\leq\nu\leq N.
\)
If \(\{e_1,\ldots,e_m\}\) denotes the standard orthonormal basis of
\(\mathbb C^m\), then
\(
y_\nu
=
\sum_{\ell=1}^{m}
\left(
\sum_{\lambda_j\in\Lambda_\nu}
\left\langle V_jx_j,e_\ell\right\rangle
\widehat{k}_{\lambda_j}
\right)e_\ell.
\)
Applying \eqref{eq:component-kernel-bessel-bound} to each coordinate
gives
\[
\begin{aligned}
\|y_\nu\|_{H^2(\mathbb D;\mathbb C^m)}^2
&=
\sum_{\ell=1}^{m}
\|
\sum_{\lambda_j\in\Lambda_\nu}
\left\langle V_jx_j,e_\ell\right\rangle
\widehat{k}_{\lambda_j}
\|_{H^2(\mathbb D)}^2
\leq
B_\nu
\sum_{\lambda_j\in\Lambda_\nu}
\sum_{\ell=1}^{m}
\left|
\left\langle V_jx_j,e_\ell\right\rangle
\right|^2\\
&=
B_\nu
\sum_{\lambda_j\in\Lambda_\nu}
\|V_jx_j\|_{\mathbb C^m}^2
\leq
B_\nu M_V^2
\sum_{\lambda_j\in\Lambda_\nu}
\|x_j\|^2.
\end{aligned}
\]
Since
\(
S_{\Lambda,V}x
=
\sum_{\nu=1}^{N}y_\nu,
\)
the Cauchy--Schwarz inequality yields
\[
\|S_{\Lambda,V}x\|_{H^2(\mathbb D;\mathbb C^m)}^2
\leq
N\sum_{\nu=1}^{N}
\|y_\nu\|_{H^2(\mathbb D;\mathbb C^m)}^2
\leq
NM_V^2
\left(
\max_{1\leq\nu\leq N}B_\nu
\right)
\sum_{j\geq1}\|x_j\|^2.
\]
Thus \(S_{\Lambda,V}\) is bounded on the algebraic direct sum. Since
\(
\bigoplus_{j\geq1}^{\mathrm{alg}}E_j
\)
is dense in
\(
\bigoplus_{j\geq1}E_j,
\)
the operator extends uniquely to a bounded operator on the whole
Hilbert direct sum.
\end{proof}

The following decomposition result is
Proposition~1.1 of Hartmann~\cite{Hartmann}.

\begin{lem}
\label{lem:hartmann-cluster-decomposition}
Let
\(
\Lambda=\bigcup_{r=1}^{N}\Lambda_r,
\)
where \(\Lambda_r\in\mathcal C\) for \(1\leq r\leq N\).
Then, for every \(0<\eta<1\), there exists a partition
\(
\Lambda
=
\mathop{\dot\bigcup}_{n\in\mathbb N^+}\sigma_n
\)
with the following properties:
\begin{enumerate}[\rm (i)]
\item
Each \(\sigma_n\) is finite and
\(
\sup_{n\in\mathbb N^+}|\sigma_n|\le N,
\)
where \(|E|\) denotes the cardinality of a finite set \(E\).

\item
For every \(n\in\mathbb N^+\) and all distinct
\(\lambda,\mu\in\sigma_n\),
\(
\rho(\lambda,\mu)
<
\eta.
\)

\item
There exists a constant \(\delta>0\) such that, for every choice of
points
\(
\lambda_n^0\in\sigma_n,
\ n\in\mathbb N^+,
\)
the selector
\[
\Lambda_0:=\{\lambda_n^0:n\in\mathbb N^+\}
\]
belongs to \(\mathcal C\) and satisfies
\(\delta_{\Lambda_0}\geq\delta\).

\item
There exist a constant \(M_D<\infty\) and functions
\(
D_n\in H^\infty(\mathbb D),
\ n\in\mathbb N^+,
\)
such that
\[
D_n(\lambda)
=
\begin{cases}
1, & \lambda\in\sigma_n,\\
0, & \lambda\in\Lambda\setminus\sigma_n,
\end{cases}
\]
and
\(
\sum_{n\in\mathbb N^+}|D_n(z)|
\le M_D\) for \( z\in\mathbb D\).
\end{enumerate}
\end{lem}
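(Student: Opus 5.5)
Since this is Hartmann's Proposition~1.1, the paper cites it; but to sketch a proof I would proceed as follows. First reduce to the case where \(\Lambda\) has no repeated points and \(\eta\) is small; shrinking \(\eta\) only strengthens (ii), so we may assume \(\eta\) is less than half of every separation constant \(\delta_r\) of the sets \(\Lambda_r\). Then each pseudohyperbolic disk \(D_\rho(\lambda,\eta/2)\) contains at most one point of each \(\Lambda_r\), hence at most \(N\) points of \(\Lambda\). Define the clusters \(\sigma_n\) as the connected components of the graph on \(\Lambda\) joining \(\lambda,\mu\) when \(\rho(\lambda,\mu)<\eta'\), where \(\eta'\ll\eta\). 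Each chain in a component contains at most one point per \(\Lambda_r\), provided chains have length at most \(N\) and \(N\eta'\) (suitably measured in the hyperbolic metric) stays below \(\min_r\delta_r\). This gives (i) and (ii). To guarantee that the chains stay short, I would use a descending choice \(\eta_N<\dots<\eta_1=\eta\) and pick a scale \(\eta_k\) at which no pair has distance in \([\eta_{k+1},\eta_k)\). This is a pigeonhole argument over \(N+1\) scales, made locally; it is the delicate point.

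For (iii), distinct clusters satisfy \(\rho\ge\eta_{k+1}\), so any selector \(\Lambda_0\) is uniformly separated with a uniform constant. Moreover \(\mu_{\Lambda_0}\le\sum_r\mu_{\Lambda_r}\) is a Carleson measure with constants independent of the choice. By Lemma~\ref{thm:carleson-interpolation}, quantitatively (Carleson's theorem gives \(\delta_{\Lambda_0}\) bounded below in terms of the separation and the Carleson constant), \(\delta_{\Lambda_0}\ge\delta\) uniformly.

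For (iv), I would build \(D_n\) using Blaschke products and a Jones/Beurling-type linear interpolation. Fix a selector \(\Lambda_0\). Since \(\Lambda_0\) is uniformly Carleson, P.~Jones's explicit interpolating functions \(g_n\in H^\infty\) with \(g_n(\lambda^0_k)=\delta_{nk}\) and \(\sum_n|g_n|\le C\) exist. To achieve exact values on the whole cluster, multiply by a finite Blaschke factor: set \(D_n:=g_n\prod_{\lambda\in\Lambda\setminus\sigma_n,\text{ near}}(\cdots)\), or more cleanly write \(\Lambda=\bigcup_{r}\Lambda'_r\) with each \(\Lambda'_r\) a selector, so that there are at most \(N\) of them. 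Then interpolate the values \(1\) on \(\sigma_n\) and \(0\) elsewhere through a Newton-type divided-difference scheme across the \(N\) layers, using Blaschke products \(B_{\Lambda'_r}\) whose moduli are \(\gtrsim\eta\) on the other layers within a cluster. The summability \(\sum_n|D_n|\le M_D\) follows by combining Jones's summable families for each layer with the bounded product factors.

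The main obstacle is this last step: keeping the divided-difference corrections bounded requires a lower bound on \(|B_{\Lambda'_r}|\) at points of the same cluster. That lower bound needs \(\rho\) between cluster points to be at least a fixed multiple of the scale \(\eta_{k+1}\), which is where the multi-scale choice in (i)--(ii) is used.
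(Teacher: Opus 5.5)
The paper does not prove this lemma; it quotes it from Hartmann's Proposition~1.1, so your sketch has to stand on its own.

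\textbf{Parts (i)--(iii).} These are fine, and easier than you make them. Suppose $N\eta'\le\min_r\delta_r$ and $(N-1)\eta'<\eta$. Then a path of $N+1$ distinct points in your $\eta'$-graph would contain two points of the same $\Lambda_r$ at distance $<N\eta'$, which is impossible. So every component has $\rho$-diameter $<(N-1)\eta'$ and meets each $\Lambda_r$ at most once. Points of distinct components are automatically at distance $\ge\eta'$. Together with $\mu_{\Lambda_0}\le\sum_r\mu_{\Lambda_r}$ and the quantitative Carleson theorem, this gives (iii). The multi-scale pigeonhole you call the delicate point is not needed anywhere in (i)--(iii).

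\textbf{The gap is in (iv).} At a point $\lambda\in\Lambda'_r\cap\sigma_m$ your Newton scheme divides by $\prod_{s<r}B_{\Lambda'_s}(\lambda)$. You propose to control this by a lower bound for $|B_{\Lambda'_s}|$ at same-cluster points, obtained from intra-cluster separation. No admissible partition can give that. Let $\lambda^{(s)}_m$ be the layer-$s$ point of $\sigma_m$; then $|B_{\Lambda'_s}(\lambda)|\le\rho(\lambda,\lambda^{(s)}_m)<\eta$. Moreover, (iii) itself forces every pair with $\rho(\lambda,\mu)<\delta$ into a single cluster, since a selector containing both would have $\delta_{\Lambda_0}\le\rho(\lambda,\mu)$. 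So whenever two of the $\Lambda_r$ contain pairs with $\rho\to0$, these denominators tend to $0$. Your summability claim is unsupported for the same reason: feeding coefficients $c_n(\lambda)$ into the Jones families requires $\sup_\lambda\sum_n|c_n(\lambda)|<\infty$, and bounded product factors do not give this.

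\textbf{The missing idea.} The data are constant on clusters, so no lower bound is needed. Set $\chi_n(m)=1$ if $n=m$ and $0$ otherwise. Every function $\chi_n(m)-D^{(r-1)}_n$ vanishes at the points $\lambda^{(s)}_m$, $s<r$. Dividing by the corresponding Blaschke factors does not increase the sup norm. Apply this to the finite sums $\sum_{n\in F}\epsilon_n\bigl(\chi_n(m)-D^{(r-1)}_n\bigr)$, with unimodular $\epsilon_n$ chosen so the terms are nonnegative at $\lambda$. Writing $M_{r-1}:=\sup_z\sum_n|D^{(r-1)}_n(z)|$, this gives $\sum_n|\chi_n(m)-D^{(r-1)}_n(\lambda)|\le(1+M_{r-1})\prod_{s<r}\rho(\lambda,\lambda^{(s)}_m)$. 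The remaining far factors satisfy $\prod_{k\ne m}\rho(\lambda,\lambda^{(s)}_k)\ge\delta$ by (iii). Hence the Newton coefficients satisfy $\sum_n|c_n(\lambda)|\le(1+M_{r-1})\delta^{1-r}$ uniformly. Expanding them in the Jones family of $\Lambda'_r$ produces $D^{(r)}_n=D^{(r-1)}_n+\bigl(\prod_{s<r}B_{\Lambda'_s}\bigr)g^{(r)}_n$ with $\sup_z\sum_n|D^{(r)}_n(z)|<\infty$.

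One smaller point: the layers $\Lambda'_r$ have to be partial selectors rather than selectors, because clusters have different sizes. This is harmless, since subsets of selectors still satisfy $\delta_{\Lambda'_r}\ge\delta$.
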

The following lemma reduces the global lower-bound property of the
kernel synthesis operator to uniform lower bounds on the finite
clusters provided by Hartmann's decomposition.

\begin{lem}
\label{lem:finite-cluster-localization}
Let
\(
\Lambda
=
\mathop{\dot\bigcup}_{q\in\mathbb N^{+}}\sigma_q
\)
be the decomposition given by Lemma~\ref{lem:hartmann-cluster-decomposition}. For each
\(q\in\mathbb N^{+}\), set
\(
J_q
:=
\{j\geq1:\lambda_j\in\sigma_q\},
\
\mathcal E_q
:=
\bigoplus_{j\in J_q}E_j,
\
S_q
:=
S_{\Lambda,V}|_{\mathcal E_q}.
\)
Assume that
\(
S_{\Lambda,V}:
\bigoplus_{j\geq1}E_j
\rightarrow
H^2(\mathbb D;\mathbb C^m)
\)
is bounded. Then \(S_{\Lambda,V}\) is bounded below if and only if
\(
\inf_{q\in\mathbb N^{+}}s_{\min}(S_q)>0,
\)
where
\(
s_{\min}(T):=\inf_{\|x\|=1}\|Tx\|.
\)

More precisely, if
\(
c
:=
\inf_{q\in\mathbb N^{+}}
s_{\min}(S_q)>0,
\)
then \(S_{\Lambda,V}\) has lower bound at least \(c/M_D\), where
\(M_D\) is the constant in Lemma~\ref{lem:hartmann-cluster-decomposition}.
\end{lem}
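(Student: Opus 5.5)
The plan is to prove the two directions separately. The necessity of the cluster condition is immediate; the real content is the lower bound, obtained by testing an arbitrary image vector against the Hartmann multipliers \(D_q\) of Lemma~\ref{lem:hartmann-cluster-decomposition}(iv).

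\emph{Necessity.} If \(S_{\Lambda,V}\) has lower bound \(c>0\), then so does its restriction to each \(\mathcal E_q\), so \(s_{\min}(S_q)\geq c\) for every \(q\).

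\emph{Sufficiency.} Assume \(c:=\inf_q s_{\min}(S_q)>0\). The key identity is a reproducing-kernel computation. For \(g\in H^2(\mathbb D;\mathbb C^m)\), \(v\in\mathbb C^m\) and \(\lambda\in\mathbb D\),
\[
\langle g,\widehat k_\lambda\otimes v\rangle=\sqrt{1-|\lambda|^2}\,\langle g(\lambda),v\rangle_{\mathbb C^m}.
\]
If \(\phi\in H^\infty\), then \(T_{\bar\phi}^*=M_\phi\) gives \(\langle \phi g,\widehat k_\lambda\otimes v\rangle=\phi(\lambda)\langle g,\widehat k_\lambda\otimes v\rangle\). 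Equivalently, \(M_\phi^*(\widehat k_\lambda\otimes v)=\overline{\phi(\lambda)}\,\widehat k_\lambda\otimes v\).

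Let \(x=(x_j)\) be finitely supported, set \(y=S_{\Lambda,V}x\), and write \(x^{(q)}\) for the component of \(x\) in \(\mathcal E_q\). Apply \(M_{D_q}^*\) termwise to \(y\). Since \(D_q=1\) on \(\sigma_q\) and \(D_q=0\) on \(\Lambda\setminus\sigma_q\), we get
\[
M_{D_q}^*\,y=\sum_{j\in J_q}\widehat k_{\lambda_j}\otimes V_jx_j=S_qx^{(q)}.
\]
Hence
\[
c^2\|x\|^2\leq \sum_q\|S_qx^{(q)}\|^2=\sum_q\|M_{D_q}^*y\|^2.
\]

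It remains to bound \(\sum_q\|M_{D_q}^*y\|^2\) by \(M_D^2\|y\|^2\); this is the main step. Pair with test functions: for any finite family \((g_q)\) with \(\sum_q\|g_q\|^2\leq1\),
\[
\Bigl|\sum_q\langle M_{D_q}^*y,g_q\rangle\Bigr|=\Bigl|\Bigl\langle y,\sum_q D_qg_q\Bigr\rangle\Bigr|.
\]
So it suffices to show \(\|\sum_qD_qg_q\|_{H^2}\leq M_D\bigl(\sum_q\|g_q\|^2\bigr)^{1/2}\). Pointwise a.e. on \(\mathbb T\), Cauchy--Schwarz gives
\[
\Bigl|\sum_qD_qg_q\Bigr|^2\leq\Bigl(\sum_q|D_q|\Bigr)\Bigl(\sum_q|D_q||g_q|^2\Bigr)\leq M_D^2\sum_q|g_q|^2,
\]
using \(\sum_q|D_q|\leq M_D\) and hence \(|D_q|\leq M_D\) on the boundary. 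The bound \(\sum_q|D_q|\leq M_D\) is stated only inside \(\mathbb D\); it passes to the boundary through radial limits of the finitely many \(D_q\) involved. Integrating over \(\mathbb T\) gives the claimed estimate.

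Combining, \(c^2\|x\|^2\leq M_D^2\|y\|^2\), so \(\|S_{\Lambda,V}x\|\geq (c/M_D)\|x\|\). Finitely supported \(x\) are dense and \(S_{\Lambda,V}\) is bounded by assumption, so this lower bound extends to all of \(\bigoplus_jE_j\).

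The main obstacle is this duality bound: recognizing the operator \(y\mapsto(M_{D_q}^*y)_q\) as the adjoint of \((g_q)\mapsto\sum_qD_qg_q\), and controlling the latter with the \(\ell^1\) bound on \(\sum_q|D_q|\). The formula \(M_{D_q}^*y=S_qx^{(q)}\) also relies on the sum defining \(y\) being finite, which is why the argument is run on finitely supported \(x\) and then extended by density.
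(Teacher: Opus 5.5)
Your proof is correct and follows essentially the same route as the paper. You compute $M_{D_q}^*y=S_qx^{(q)}$ for finitely supported $x$ via the reproducing property, bound $\sum_q\|M_{D_q}^*y\|^2\le M_D^2\|y\|^2$ through a Cauchy--Schwarz estimate on $\sum_q D_qg_q$, and extend by density. The differences are cosmetic: you argue by duality on the boundary with a weighted Cauchy--Schwarz. The paper instead builds $\mathcal M_D$ on $H^2(\mathbb D;\ell^2(\mathbb N^+;\mathbb C^m))$, identifies its adjoint as $(M_{D_q}^*y)_q$, and estimates on circles $|z|=r$ using $\sum_q|D_q|^2\le M_D^2$.
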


\begin{proof}
Suppose first that \(S_{\Lambda,V}\) is bounded below. Then there
exists \(a>0\) such that
\(
\|S_{\Lambda,V}x\|
\geq
a\|x\|,
\
x\in\bigoplus_{j\geq1}E_j.
\)
Restricting this inequality to \(\mathcal E_q\), we obtain
\(
\|S_qx\|
\geq
a\|x\|,
\
x\in\mathcal E_q,
\)
for every \(q\in\mathbb N^{+}\). Hence
\(
s_{\min}(S_q)\geq a,
\
q\in\mathbb N^{+},
\)
and therefore
\(
\inf_{q\in\mathbb N^{+}}s_{\min}(S_q)
\geq
a>0.
\)

Conversely, suppose that
\(
c
:=
\inf_{q\in\mathbb N^{+}}s_{\min}(S_q)
>0.
\)
Let
\(
\{D_q\}_{q\in\mathbb N^{+}}
\subset H^\infty(\mathbb D)
\)
and \(M_D<\infty\) be the functions and the constant provided by
Lemma~\ref{lem:hartmann-cluster-decomposition}\textup{(iv)} for the partition
\(
\Lambda
=
\mathop{\dot\bigcup}_{q\in\mathbb N^{+}}\sigma_q.
\)
Thus,
\[
D_q(\lambda)
=
\begin{cases}
1,
& \lambda\in\sigma_q,\\
0,
& \lambda\in\Lambda\setminus\sigma_q,
\end{cases}
\]
and
\(
\sum_{q\in\mathbb N^{+}}|D_q(z)|
\leq
M_D,
\
z\in\mathbb D.
\)

For each \(q\in\mathbb N^{+}\), let
\(
M_{D_q}:
H^2(\mathbb D;\mathbb C^m)
\rightarrow
H^2(\mathbb D;\mathbb C^m)
\)
be the multiplication operator induced by \(D_q\), that is,
\(
(M_{D_q}g)(z)
:=
D_q(z)g(z).
\)

Set
\(
\mathscr X
:=
H^2\bigl(
\mathbb D;
\ell^2(\mathbb N^{+};\mathbb C^m)
\bigr).
\)
Under the canonical identification
\(
\mathscr X
\cong
\bigoplus_{q\in\mathbb N^{+}}
H^2(\mathbb D;\mathbb C^m),
\)
let \(\mathscr X_0\) denote the dense subspace consisting of all
finitely supported sequences
\(
g=(g_q)_{q\in\mathbb N^{+}},
\
g_q\in H^2(\mathbb D;\mathbb C^m).
\)

Define
\(
\mathcal M_Dg
:=
\sum_{q\in\mathbb N^{+}}M_{D_q}g_q,
\
g\in\mathscr X_0.
\)
Since \(g\) is finitely supported, the sum is finite and belongs to
\(H^2(\mathbb D;\mathbb C^m)\).

Let \(m_{\mathbb T}\) denote normalized arc-length measure on
\(\mathbb T\). For \(g\in\mathscr X_0\), the Cauchy--Schwarz
inequality and Lemma~\ref{lem:hartmann-cluster-decomposition}\textup{(iv)} give
\[
\begin{aligned}
\|\mathcal M_Dg\|_{H^2(\mathbb D;\mathbb C^m)}^2
&=
\sup_{0<r<1}
\int_{\mathbb T}
\left\|
\sum_{q\in\mathbb N^{+}}
D_q(r\zeta)g_q(r\zeta)
\right\|_{\mathbb C^m}^2
\,dm_{\mathbb T}(\zeta)\\
&\leq
\sup_{0<r<1}
\int_{\mathbb T}
\left(
\sum_{q\in\mathbb N^{+}}|D_q(r\zeta)|^2
\right)
\left(
\sum_{q\in\mathbb N^{+}}
\|g_q(r\zeta)\|_{\mathbb C^m}^2
\right)
\,dm_{\mathbb T}(\zeta)\\
&\leq
M_D^2
\sup_{0<r<1}
\int_{\mathbb T}
\sum_{q\in\mathbb N^{+}}
\|g_q(r\zeta)\|_{\mathbb C^m}^2
\,dm_{\mathbb T}(\zeta)\\
&\leq
M_D^2
\sum_{q\in\mathbb N^{+}}
\|g_q\|_{H^2(\mathbb D;\mathbb C^m)}^2\\
&=
M_D^2\|g\|_{\mathscr X}^2.
\end{aligned}
\]
Therefore, \(\mathcal M_D\) extends uniquely to a bounded operator
\(
\mathcal M_D:
\mathscr X
\rightarrow
H^2(\mathbb D;\mathbb C^m)
\)
satisfying
\(
\|\mathcal M_D\|
\leq
M_D.
\)

For \(g=(g_q)_{q\in\mathbb N^{+}}\in\mathscr X_0\) and
\(y\in H^2(\mathbb D;\mathbb C^m)\), we have
\[
\langle \mathcal M_Dg,y\rangle
=
\sum_{q\in\mathbb N^{+}}
\langle M_{D_q}g_q,y\rangle
=
\sum_{q\in\mathbb N^{+}}
\langle g_q,M_{D_q}^*y\rangle.
\]
By density of \(\mathscr X_0\), it follows that
\(
\mathcal M_D^*y
=
\bigl(M_{D_q}^*y\bigr)_{q\in\mathbb N^{+}},
\
y\in H^2(\mathbb D;\mathbb C^m).
\)

Let
\(
x
=
\bigoplus_{q\in\mathbb N^{+}}x_q,
\
x_q\in\mathcal E_q,
\)
and first assume that only finitely many \(x_q\) are nonzero. Put
\(
y
:=
S_{\Lambda,V}x.
\)
For \(\lambda_j\in\Lambda\) and \(v\in\mathbb C^m\), the reproducing
property gives
\(
M_{D_q}^*
\bigl(
\widehat{k}_{\lambda_j}\otimes v
\bigr)
=
\overline{D_q(\lambda_j)}
\widehat{k}_{\lambda_j}\otimes v.
\)
Since \(D_q\) equals \(1\) on \(\sigma_q\) and \(0\) on
\(\Lambda\setminus\sigma_q\), we obtain
\(
M_{D_q}^*y
=
S_qx_q.
\)
Consequently,
\[
\sum_{q\in\mathbb N^{+}}\|S_qx_q\|^2
=
\sum_{q\in\mathbb N^{+}}\|M_{D_q}^*y\|^2
=
\|\mathcal M_D^*y\|_{\mathscr X}^2
\leq
M_D^2\|y\|^2
=
M_D^2\|S_{\Lambda,V}x\|^2.
\]

On the other hand, since
\(
s_{\min}(S_q)\geq c,
\
q\in\mathbb N^{+},
\)
we have
\(
\|S_qx_q\|
\geq
c\|x_q\|.
\)
Therefore,
\[
c^2\|x\|^2
=
c^2
\sum_{q\in\mathbb N^{+}}\|x_q\|^2
\leq
\sum_{q\in\mathbb N^{+}}\|S_qx_q\|^2
\leq
M_D^2\|S_{\Lambda,V}x\|^2.
\]
Thus
\(
\|S_{\Lambda,V}x\|
\geq
\frac{c}{M_D}\|x\|.
\)

The vectors having only finitely many nonzero cluster components are
dense in
\(
\bigoplus_{j\geq1}E_j
=
\bigoplus_{q\in\mathbb N^{+}}\mathcal E_q.
\)
Since \(S_{\Lambda,V}\) is bounded, the preceding estimate extends to
every
\(
x\in\bigoplus_{j\geq1}E_j.
\)
Hence \(S_{\Lambda,V}\) is bounded below, with lower bound at least
\(c/M_D\).
\end{proof}

For the next lemma, fix \(r,m\in\mathbb{N}^{+}\) and integers
\(
1\leq d_k\leq m
\)
for \(1\leq k\leq r\), and set
\(
E
:=
\bigoplus_{k=1}^{r}\mathbb{C}^{d_k}.
\)
For every \(n\in\mathbb{N}^{+}\) and \(1\leq k\leq r\), let
\(
z_{n,k},w_{n,k}\in\mathbb{D},
\
\alpha_{n,k}>0,
\
V_{n,k}:\mathbb{C}^{d_k}\rightarrow\mathbb{C}^{m}.
\)
Assume that, for each fixed \(n\), the points
\(
z_{n,1},\ldots,z_{n,r}
\)
are pairwise distinct, and so are
\(
w_{n,1},\ldots,w_{n,r}.
\)
Define
\(
S_n^z:E\rightarrow H^2(\mathbb{D};\mathbb{C}^{m})
\)
by
\(
S_n^z(u_k)_{k=1}^{r}
:=
\sum_{k=1}^{r}
\widehat{k}_{z_{n,k}}\otimes V_{n,k}u_k,
\)
and define
\(
S_n^w:E\rightarrow H^2(\mathbb{D};\mathbb{C}^{m})
\)
by
\(
S_n^w(u_k)_{k=1}^{r}
:=
\sum_{k=1}^{r}
\widehat{k}_{w_{n,k}}
\otimes
\alpha_{n,k}V_{n,k}u_k.
\)

\begin{lem}
\label{lem:bad-cluster-subsequence-transfer}
Suppose that there exist constants
\(
b<\infty,
\
0<\alpha_{-}\leq\alpha_{+}<\infty
\)
such that
\(
\|V_{n,k}\|\leq b,
\
\alpha_{-}\leq\alpha_{n,k}\leq\alpha_{+},
\)
for all \(n\in\mathbb{N}^{+}\) and \(1\leq k\leq r\).

Assume further that, for every subsequence
\(
(n_j)_{j\geq1}
\)
and every \(1\leq k,\ell\leq r\),
\(
\rho(z_{n_j,k},z_{n_j,\ell})\rightarrow0\)
if and only if
\(\rho(w_{n_j,k},w_{n_j,\ell})\rightarrow0.
\)
Then
\(
s_{\min}(S_n^z)\rightarrow0\) if and only if
\(s_{\min}(S_n^w)\rightarrow0.
\)
\end{lem}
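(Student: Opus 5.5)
The plan is to reduce both statements to a purely finite-dimensional criterion. The criterion will depend only on three things: the limits of the coefficient operators \(V_{n,k}\), the limits of the weights \(\alpha_{n,k}\), and which pairs of points coalesce pseudohyperbolically. The hypothesis forces the coalescence pattern to be identical for the \(z\)- and \(w\)-configurations. By the subsequence principle, it suffices to show that every subsequence has a further subsequence along which \(\lim s_{\min}(S^z_n)\) and \(\lim s_{\min}(S^w_n)\) both exist and are simultaneously zero or nonzero. This yields both directions of the equivalence.

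\emph{Normalization along a subsequence.} Given a subsequence, we use compactness to pass to a further one along which several quantities converge. First, every \(\rho(z_{n,k},z_{n,\ell})\) and every \(\rho(w_{n,k},w_{n,\ell})\) converges. Second, \(V_{n,k}\to V_k\), using \(\|V_{n,k}\|\le b\). Third, \(\alpha_{n,k}\to\alpha_k\in[\alpha_-,\alpha_+]\). We then define an equivalence relation on \(\{1,\dots,r\}\) by \(k\sim\ell\) iff \(\rho(z_{n,k},z_{n,\ell})\to0\). By the hypothesis, this is also the relation \(\rho(w_{n,k},w_{n,\ell})\to0\). In each class \(\mathcal Q\) we fix a representative \(k_{\mathcal Q}\). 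We use the identity \(|\langle\widehat k_z,\widehat k_w\rangle|^2=1-\rho(z,w)^2\). It gives unimodular numbers \(\gamma_{n,k}\) with
\[
\|\widehat k_{z_{n,k}}-\gamma_{n,k}\widehat k_{z_{n,k_{\mathcal Q}}}\|^2\le 2\rho(z_{n,k},z_{n,k_{\mathcal Q}})^2\to0 .
\]
Passing to a further subsequence, we may assume \(\gamma_{n,k}\to\gamma_k\). The analogous phases \(\gamma'_{n,k}\to\gamma'_k\) are chosen for the \(w\)-points. For distinct classes, the representatives satisfy \(\rho\ge\delta>0\) for large \(n\), because the limits of \(\rho\) are positive. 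Consequently the product in Definition~\ref{def:carleson-sequence} is at least \(\delta^{r-1}\) for these finite families. By Carleson's theorem (Lemma~\ref{thm:carleson-interpolation} and the interpolation constants), the normalized kernels at the representatives are then uniformly Riesz: their lower bound \(c(\delta,r)>0\) is independent of \(n\).

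\emph{Finite-dimensional criterion.} The central claim is the following. Along such a subsequence, \(s_{\min}(S^z_n)\to0\) if and only if some class \(\mathcal Q\) has a non-injective concatenated operator
\[
T_{\mathcal Q}:=\bigl[\gamma_kV_k\bigr]_{k\in\mathcal Q}:\bigoplus_{k\in\mathcal Q}\mathbb C^{d_k}\to\mathbb C^m .
\]
Otherwise \(\liminf s_{\min}(S^z_n)>0\). To prove it, we write
\[
S^z_n u=\sum_{\mathcal Q}\widehat k_{z_{n,k_{\mathcal Q}}}\otimes\Bigl(\sum_{k\in\mathcal Q}\gamma_{n,k}V_{n,k}u_k\Bigr)+R_nu,
\qquad \|R_n\|\to0 .
\]
If every \(T_{\mathcal Q}\) is injective, the Riesz bound on the representatives gives
\[
\|S^z_nu\|\ge c(\delta,r)\,\min_{\mathcal Q}s_{\min}(T_{\mathcal Q})\,\|u\|-o(1)\|u\| ,
\]
so \(s_{\min}(S^z_n)\) stays away from \(0\). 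If some \(T_{\mathcal Q}\) has a unit kernel vector \(u\), we extend \(u\) by zero outside \(\mathcal Q\). Then \(\|S^z_nu\|\le\|T_{\mathcal Q,n}-T_{\mathcal Q}\|+\|R_n\|\to0\), where \(T_{\mathcal Q,n}:=[\gamma_{n,k}V_{n,k}]_{k\in\mathcal Q}\).

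\emph{Transfer and main obstacle.} For the \(w\)-configuration the same decomposition holds, with the concatenated operator \(\bigl[\gamma'_k\alpha_kV_k\bigr]_{k\in\mathcal Q}=T_{\mathcal Q}\,\mathrm{diag}(\gamma_k^{-1}\gamma'_k\alpha_k)\). This diagonal factor is invertible because \(\alpha_k\ge\alpha_->0\). Hence it is injective exactly when \(T_{\mathcal Q}\) is, and the class structure is the same. The criterion therefore gives the same answer for both sequences. The step I expect to be most delicate is the uniform lower bound across different clusters. Here I would rely on Carleson's interpolation theorem for finite sets with a controlled product of pseudohyperbolic distances. This gives a Riesz lower bound depending only on \(\delta\) and \(r\). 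Keeping the phases \(\gamma_{n,k}\) explicit is also essential, because without them the kernels need not converge in norm.
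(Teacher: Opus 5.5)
Your proof is correct, but it is organized differently from the paper's.

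The paper argues by contradiction in one direction only. It takes unit minimizers $u_n$ of $S_n^w$ and passes to limits of $u_n$, $V_{n,k}$, $\alpha_{n,k}$ and of the Gram matrices $G_n^z,G_n^w$. It then realizes the limiting Gram matrices by abstract unit vectors $e_k^z,e_k^w$. The within-class phases come from equality in Cauchy--Schwarz, and the independence of class representatives comes from the Cauchy determinant $\prod\rho^2$ passed to the limit. Finally it writes down an explicit test vector $v$ with $\|S_n^z v\|\to0$. The reverse implication follows from the swap $V'_{n,k}=\alpha_{n,k}V_{n,k}$, $\alpha'_{n,k}=\alpha_{n,k}^{-1}$.

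You instead keep $n$ finite and approximate kernels within a class by phased representatives, with explicit error $\sqrt2\,\rho$. You then prove a two-sided dichotomy along normalized subsequences: $s_{\min}(S_n^z)\to0$ exactly when some limiting block $T_{\mathcal Q}$ fails to be injective, and otherwise $\liminf s_{\min}(S_n^z)>0$. The $w$-blocks differ from the $T_{\mathcal Q}$ by the invertible factor $\operatorname{diag}(\gamma_k^{-1}\gamma_k'\alpha_k)$, and the class structure is shared, so both directions follow at once.

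What each route buys:
\begin{itemize}
\item Your route gives a more informative, intrinsic criterion for degeneration and makes the symmetry automatic.
\item The paper's route needs only compactness and positive definiteness of a single limiting Gram matrix. Yours needs a lower Riesz bound for the representatives that is uniform in $n$.
\end{itemize}

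For that uniform bound, invoking quantitative Carleson interpolation constants is heavier than necessary. Lemma~\ref{thm:carleson-interpolation} as stated is only qualitative, so it does not supply such constants. A direct argument suffices: for $s\le r$ representatives with pairwise $\rho\ge\delta$, the Cauchy determinant formula gives $\det\ge\delta^{s(s-1)}$. Every eigenvalue of the Gram matrix is at most $s$, so its smallest eigenvalue is at least $\delta^{s(s-1)}/s^{s-1}$, uniformly in $n$.
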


\begin{proof}
We first prove the implication from the \(w\)-family to the
\(z\)-family. Suppose that
\(
s_{\min}(S_n^w)\rightarrow0.
\)
Assume, to the contrary, that
\(
s_{\min}(S_n^z)
\)
does not converge to \(0\). Then there exist \(\varepsilon>0\) and a
subsequence \((n_j)_{j\geq1}\) such that
\(
s_{\min}(S_{n_j}^z)\geq\varepsilon,
\
j\geq1.
\)
Restricting all the data to this subsequence and relabelling, we may
assume that
\begin{equation}
\label{eq:z-uniformly-bounded-below}
s_{\min}(S_n^z)\geq\varepsilon,
\qquad
n\geq1,
\end{equation}
while
\(
s_{\min}(S_n^w)\to0.
\)

Since \(E\) is finite-dimensional, for every \(n\in\mathbb N^+\)
there exists
\[
u_n=(u_{n,1},\ldots,u_{n,r})\in E,
\qquad \|u_n\|=1,
\]
such that
\(
\|S_n^w u_n\|
=
s_{\min}(S_n^w)
\rightarrow0.
\)
The unit sphere of \(E\) is compact. Moreover, for each \(k\), the
closed ball of radius \(b\) in
\(
\mathcal L(\mathbb C^{d_k},\mathbb C^m)
\)
is compact because this operator space is finite-dimensional.
After passing to a subsequence, without changing notation, we may
therefore assume simultaneously that
\(
u_n\rightarrow u=(u_1,\ldots,u_r),
\)
\(
V_{n,k}\rightarrow V_k
\quad\text{in operator norm},
\)
and
\(
\alpha_{n,k}\rightarrow
\alpha_k\in[\alpha_{-},\alpha_{+}]
\)
for every \(1\leq k\leq r\). In particular,
\(
\|u\|=1.
\)

Passing to a further subsequence, we may also assume that the Gram
matrices
\(
G_n^z
:=
\left[
\left\langle
\widehat{k}_{z_{n,k}},
\widehat{k}_{z_{n,\ell}}
\right\rangle
\right]_{k,\ell=1}^{r}
\)
and
\(
G_n^w
:=
\left[
\left\langle
\widehat{k}_{w_{n,k}},
\widehat{k}_{w_{n,\ell}}
\right\rangle
\right]_{k,\ell=1}^{r}
\)
converge to matrices \(G^z\) and \(G^w\), respectively, and that all
the limits
\(
\delta_{k\ell}^z
:=
\lim_{n\to\infty}
\rho(z_{n,k},z_{n,\ell}),
\
\delta_{k\ell}^w
:=
\lim_{n\to\infty}
\rho(w_{n,k},w_{n,\ell})
\)
exist. The subsequence-stable collision hypothesis, applied to the
subsequence currently under consideration, gives
\begin{equation}
\label{eq:common-zero-collision-relation}
\delta_{k\ell}^z=0
\quad\Longleftrightarrow\quad
\delta_{k\ell}^w=0.
\end{equation}

Define a relation on \(\{1,\ldots,r\}\) by
\(
k\sim\ell
\Longleftrightarrow
\delta_{k\ell}^z=0.
\)
Because \(\rho\) is a metric, its triangle inequality shows that
\(\sim\) is an equivalence relation. By
\eqref{eq:common-zero-collision-relation}, the same relation is
obtained from the \(w\)-family. Denote its equivalence classes by
\(
C_1,\ldots,C_s.
\)

The matrices \(G^z\) and \(G^w\) are positive semidefinite and have
diagonal entries equal to \(1\). Hence there exist finite-dimensional
Hilbert spaces \(\mathcal H_z,\mathcal H_w\) and unit vectors
\(
e_1^z,\ldots,e_r^z\in\mathcal H_z,
\
e_1^w,\ldots,e_r^w\in\mathcal H_w
\)
whose Gram matrices are \(G^z\) and \(G^w\), respectively.

Recall that
\(
1-
\left|
\left\langle
\widehat{k}_{\lambda},
\widehat{k}_{\mu}
\right\rangle
\right|^2
=
\rho(\lambda,\mu)^2.
\)
Consequently, if \(k,\ell\in C_a\), then
\(
|\langle e_k^z,e_\ell^z\rangle|
=
|\langle e_k^w,e_\ell^w\rangle|
=
1.
\)
Fix a representative \(k_a\in C_a\), and set
\(
f_a^z:=e_{k_a}^z,
\
f_a^w:=e_{k_a}^w.
\)
Equality in the Cauchy--Schwarz inequality shows that, for every
\(k\in C_a\), there exist constants
\(
c_k^z,c_k^w\in\mathbb T
\)
such that
\begin{equation}
\label{eq:within-cluster-phase-representation}
e_k^z=c_k^zf_a^z,
\qquad
e_k^w=c_k^wf_a^w.
\end{equation}

We next show that
\(
f_1^z,\ldots,f_s^z
\ \text{and}\
f_1^w,\ldots,f_s^w
\)
are linearly independent. For every \(n\), the Cauchy determinant
formula for normalized Szeg\H{o} kernels, applied to the pairwise
distinct points
\(
z_{n,k_1},\ldots,z_{n,k_s},
\)
gives
\[
\det
\left[
\left\langle
\widehat{k}_{z_{n,k_a}},
\widehat{k}_{z_{n,k_b}}
\right\rangle
\right]_{a,b=1}^{s}
=
\prod_{1\leq a<b\leq s}
\rho(z_{n,k_a},z_{n,k_b})^2.
\]
Taking the limit as \(n\to\infty\), we obtain
\(
\det
\left[
\langle f_a^z,f_b^z\rangle
\right]_{a,b=1}^{s}
=
\prod_{1\leq a<b\leq s}
(\delta_{k_ak_b}^z)^2
>0,
\)
because representatives of distinct equivalence classes have
strictly positive limiting distance. The same argument, applied to
\(
w_{n,k_1},\ldots,w_{n,k_s},
\)
gives
\(
\det
\left[
\langle f_a^w,f_b^w\rangle
\right]_{a,b=1}^{s}
=
\prod_{1\leq a<b\leq s}
(\delta_{k_ak_b}^w)^2
>0.
\)
Thus both representative families are linearly independent.

Set
\(
y_{n,k}
:=
\alpha_{n,k}V_{n,k}u_{n,k},
\
y_k
:=
\alpha_kV_ku_k.
\)
Then
\(
y_{n,k}\rightarrow y_k\) for \(1\leq k\leq r.\)
Using the convergence of \(G_n^w\), we obtain
\[
0
=
\lim_{n\to\infty}\|S_n^wu_n\|^2
=
\lim_{n\to\infty}
\sum_{k,\ell=1}^{r}
\left\langle
\widehat{k}_{w_{n,k}},
\widehat{k}_{w_{n,\ell}}
\right\rangle
\left\langle
y_{n,k},y_{n,\ell}
\right\rangle
=
\left\|
\sum_{k=1}^{r}
e_k^w\otimes y_k
\right\|^2.
\]
Therefore,
\(
0
=
\sum_{k=1}^{r}
e_k^w\otimes\alpha_kV_ku_k.
\)
By \eqref{eq:within-cluster-phase-representation}, this identity can
be written as
\(
0
=
\sum_{a=1}^{s}
f_a^w\otimes
\left(
\sum_{k\in C_a}
c_k^w\alpha_kV_ku_k
\right).
\)
Since \(f_1^w,\ldots,f_s^w\) are linearly independent, it follows
that
\begin{equation}
\label{eq:clusterwise-cancellation-on-w-side}
\sum_{k\in C_a}
c_k^w\alpha_kV_ku_k
=
0,
\qquad
1\leq a\leq s.
\end{equation}

For \(k\in C_a\), define
\(
v_k
:=
(c_k^z)^{-1}c_k^w\alpha_ku_k,
\
v:=(v_1,\ldots,v_r)\in E.
\)
Since
\(
|c_k^z|=|c_k^w|=1
\)
and
\(
\alpha_k\geq\alpha_{-},
\)
we have
\(
\|v\|^2
=
\sum_{k=1}^{r}
\alpha_k^2\|u_k\|^2
\geq
\alpha_{-}^2\|u\|^2
=
\alpha_{-}^2.
\)
In particular, \(v\neq0\).

Moreover,
\eqref{eq:clusterwise-cancellation-on-w-side} gives
\(
\sum_{k\in C_a}
c_k^zV_kv_k
=
\sum_{k\in C_a}
c_k^w\alpha_kV_ku_k
=
0\) for \(1\leq a\leq s.\)
Using \eqref{eq:within-cluster-phase-representation}, we conclude
that
\[
\sum_{k=1}^{r}
e_k^z\otimes V_kv_k
=
\sum_{a=1}^{s}
f_a^z\otimes
\left(
\sum_{k\in C_a}
c_k^zV_kv_k
\right)
=
0.
\]

The convergence of \(G_n^z\) and the operator-norm convergence
\(
V_{n,k}\to V_k
\)
now yield
\[
\lim_{n\to\infty}\|S_n^zv\|^2
=
\lim_{n\to\infty}
\sum_{k,\ell=1}^{r}
\left\langle
\widehat{k}_{z_{n,k}},
\widehat{k}_{z_{n,\ell}}
\right\rangle
\left\langle
V_{n,k}v_k,V_{n,\ell}v_\ell
\right\rangle
=
\left\|
\sum_{k=1}^{r}
e_k^z\otimes V_kv_k
\right\|^2
=
0.
\]
Consequently,
\(
\varepsilon
\leq
s_{\min}(S_n^z)
\leq
\frac{\|S_n^zv\|}{\|v\|}
\rightarrow0,
\)
contradicting \eqref{eq:z-uniformly-bounded-below}. This proves that
\(
s_{\min}(S_n^w)\rightarrow0
\Longrightarrow
s_{\min}(S_n^z)\rightarrow0.
\)

For the converse, define
\(
z_{n,k}'=w_{n,k},
\
w_{n,k}'=z_{n,k},
\
V_{n,k}'=\alpha_{n,k}V_{n,k},
\
\alpha_{n,k}'=\alpha_{n,k}^{-1}.
\)
Then
\(
\|V_{n,k}'u\|
\leq
\alpha_{+}b\|u\|,
\
\alpha_{+}^{-1}
\leq
\alpha_{n,k}'
\leq
\alpha_{-}^{-1}.
\)
Thus the primed data satisfy all the hypotheses of the implication
just proved. The corresponding operators satisfy
\(
S_n^{w,\prime}=S_n^z,
\
S_n^{z,\prime}=S_n^w.
\)
It follows that
\(
s_{\min}(S_n^z)\rightarrow0
\Longrightarrow
s_{\min}(S_n^w)\rightarrow0,
\)
which completes the proof.
\end{proof}

\begin{thm}
\label{thm:finite-multi-orbit-universal-preservers}
For every \(m\in\mathbb N^{+}\), one has
\(
\mathcal P_m=\mathcal P.
\)
\end{thm}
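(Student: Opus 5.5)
We prove the two inclusions \(\mathcal P_m\subseteq\mathcal P\) and \(\mathcal P\subseteq\mathcal P_m\) separately.

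\emph{First inclusion, \(\mathcal P_m\subseteq\mathcal P\).} We reduce to the single-orbit criterion of Theorem~\ref{thm:universal-orbit-frame-preservation}, using padding by orthogonal copies. Given \(\Phi\in\mathcal P_m\) and a sequence \(\Lambda\), let \(A_\Lambda\) act on \(H\), and consider \(A:=A_\Lambda\oplus B\), where \(B\) acts on a complementary space and its spectrum is a Carleson sequence \(\Gamma\) far from \(\Lambda\) (and from \(\Phi(\Lambda)\)). Then an \(m\)-tuple \((f,g_2,\dots,g_m)\), supported appropriately, generates a frame iff \(f\) generates a frame for \(A_\Lambda\) and the remaining vectors handle \(B\). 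A cleaner route is to take \(m\)-tuples whose extra entries are zero. One checks from Proposition~\ref{prop:orbit-frame-kernel-synthesis-characterization} that \((f,0,\dots,0)\in\mathcal G_m(A)\) iff \(f\in\mathcal G(A)\). Hence \(\mathcal G_m(A)=\mathcal G_m(\Phi(A))\) restricted to such tuples yields \(\mathcal G(A)=\mathcal G(\Phi(A))\) whenever \(\mathcal G(A)\) or \(\mathcal G(\Phi(A))\) is nonempty. When both are empty, the equality is trivial. So \(\Phi\in\mathcal P\).

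\emph{Second inclusion, \(\mathcal P\subseteq\mathcal P_m\).} This is the substantive direction. Fix \(\Phi\in\mathcal P\), a normal diagonal \(A=\sum\lambda_jP_j\), and \(F\in\mathcal G_m(A)\). The cases \(\Phi\) and \(\Phi^{-1}\) are symmetric, since the properties of Proposition~\ref{prop:basic-properties-of-universal-preservers} and Theorem~\ref{thm:universal-orbit-frame-preservation} are bidirectional. By injectivity (Proposition~\ref{prop:basic-properties-of-universal-preservers}(i)) the spectral subspaces of \(\Phi(A)\) are the same \(E_j\), with eigenvalues \(\Phi(\lambda_j)\).

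The local operators for \(\Phi(A)\) are \(V_j^\Phi=\alpha_jV_j\) with \(\alpha_j=\bigl((1-|\lambda_j|^2)/(1-|\Phi(\lambda_j)|^2)\bigr)^{1/2}\). These \(\alpha_j\) lie in \([\alpha_-,\alpha_+]\) by Theorem~\ref{thm:universal-orbit-frame-preservation}(ii).

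By Lemma~\ref{lem:necessary-spectral-conditions-for-finite-multi-orbits}, \(\Lambda\) is a union of at most \(m\) Carleson sequences, and \(\Phi\) maps each of these to a Carleson sequence. Together with \eqref{eq:uniform-local-coefficient-bounds}, Lemma~\ref{lem:finite-union-carleson-bessel-bound} then gives boundedness of \(S_{\Phi(\Lambda),V^\Phi}\).

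For the lower bound, apply Lemma~\ref{lem:hartmann-cluster-decomposition} to \(\Phi(\Lambda)\) with some \(\eta\), obtaining clusters \(\Phi(\sigma_q)\). By Lemma~\ref{lem:finite-cluster-localization} it suffices to show \(\inf_q s_{\min}(S_q^\Phi)>0\). Suppose not, and take \(q_n\) with \(s_{\min}(S^\Phi_{q_n})\to0\). Pass to a subsequence with constant cluster size \(r\le m\) and constant multiplicities \(d_k\le m\).

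Only finitely many clusters meet a compact set, since \(\Phi(\Lambda)\) is locally finite. Hence all points tend to \(\mathbb T\), and Proposition~\ref{prop:basic-properties-of-universal-preservers}(ii) supplies the subsequence-stable collision hypothesis of Lemma~\ref{lem:bad-cluster-subsequence-transfer}, with \(z=\lambda\) and \(w=\Phi(\lambda)\). That lemma yields \(s_{\min}\to0\) for the corresponding restrictions of \(S_{\Lambda,V}\) to the preimage index sets. This contradicts the lower bound of \(S_{\Lambda,V}\), because restriction of a bounded-below operator stays bounded below, as in the easy half of Lemma~\ref{lem:finite-cluster-localization}.

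\emph{Main obstacle.} The delicate point is using Hartmann's decomposition on the image side while the contradiction is derived on the source side. The restriction argument is chosen to avoid needing a matching decomposition of \(\Lambda\), but one must check that each \(\sigma_q\) is finite and that points within a cluster stay distinct. The first holds by Lemma~\ref{lem:hartmann-cluster-decomposition}(i), and the second by injectivity of \(\Phi\).
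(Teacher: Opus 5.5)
Your proposal is correct and follows the paper's proof essentially step for step: zero-padding $(f,0,\dots,0)$ for $\mathcal P_m\subseteq\mathcal P$, and for $\mathcal P\subseteq\mathcal P_m$ boundedness from Lemma~\ref{lem:finite-union-carleson-bessel-bound}, Hartmann's decomposition on the image side, localization via Lemma~\ref{lem:finite-cluster-localization}, and Lemma~\ref{lem:bad-cluster-subsequence-transfer} to push degeneration back to restrictions of the bounded-below source operator, with the converse obtained by the same argument with the roles of $\lambda_j$ and $\mu_j=\Phi(\lambda_j)$ exchanged. Your ``symmetry'' remark should be read at the level of these spectral points, since $\Phi$ need not be surjective and $\Phi^{-1}$ is only defined on $\Phi(\mathbb D)$. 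The one step you leave implicit is that the degenerating clusters $q_n$ must be pairwise distinct before you may conclude that their points tend to $\mathbb T$; the paper secures this by noting that each fixed restriction $S^\Phi_q$ is injective (finitely many distinct normalized Szeg\H{o} kernels are linearly independent, and each $V_j^\Phi=\alpha_jV_j$ is injective because $V_j$ is bounded below), so $s_{\min}(S^\Phi_q)>0$ for every $q$.
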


\begin{proof}
Fix \(m\in\mathbb N^{+}\).
We first prove that \(\mathcal P_m\subseteq\mathcal P\). Let
\(\Phi\in\mathcal P_m\), let \(A\) be an arbitrary normal diagonal
operator of the form considered above, and let \(f\in H\). Set
\(
F_f:=(f,0,\ldots,0)\in H^m.
\)
Adding zero vectors does not affect the frame inequalities, and hence
\(
f\in\mathcal G_1(A)
\Longleftrightarrow
F_f\in\mathcal G_m(A).
\)
Since \(\Phi\in\mathcal P_m\), it follows that
\[
f\in\mathcal G_1(A)
\Longleftrightarrow
F_f\in\mathcal G_m(A)
 \Longleftrightarrow
F_f\in\mathcal G_m\bigl(\Phi(A)\bigr)
 \Longleftrightarrow
f\in\mathcal G_1\bigl(\Phi(A)\bigr).
\]
Thus
\(
\mathcal G_1(A)=\mathcal G_1(\Phi(A))
\)
for every \(A\), so \(\Phi\in\mathcal P\).

We now prove the reverse inclusion. Let \(\Phi\in\mathcal P\). By
Theorem~\ref{thm:universal-orbit-frame-preservation}, \(\Phi\)
preserves Carleson sequences in both directions and there exist
constants \(0<c_\Phi\leq C_\Phi<\infty\) such that
\begin{equation}
\label{eq:finite-multi-boundary-defect-comparability}
c_\Phi(1-|z|^2)
\leq
1-|\Phi(z)|^2
\leq
C_\Phi(1-|z|^2),
\qquad z\in\mathbb D.
\end{equation}
Moreover, \(\Phi\) is injective by
Proposition~\ref{prop:basic-properties-of-universal-preservers}. The
same proposition shows that if
\(
|z_n|\rightarrow1,
\
|w_n|\rightarrow1,
\)
then
\begin{equation}
\label{eq:finite-multi-asymptotic-collision-preservation}
\rho(z_n,w_n)\rightarrow0
\quad\Longleftrightarrow\quad
\rho\bigl(\Phi(z_n),\Phi(w_n)\bigr)\rightarrow0.
\end{equation}

Fix a normal diagonal operator and an \(m\)-tuple
\(
A=\sum_{j\geq1}\lambda_jP_j,
\
F=(f_1,\ldots,f_m)\in H^m.
\)
Write
\(
E_j:=P_jH,
\
\mu_j:=\Phi(\lambda_j),
\
\Lambda:=\{\lambda_j:j\geq1\},
\
M:=\{\mu_j:j\geq1\}.
\)
Since \(\Phi\) is injective, the points \(\mu_j\) are pairwise
distinct, and the spectral subspace of \(\Phi(A)\) associated with
\(\mu_j\) is again \(E_j\).

Let \(V_j,W_j:E_j\to\mathbb C^m\) be the local coefficient operators
associated with \((A,F)\) and \((\Phi(A),F)\), respectively.
Thus
\[
V_jx
=
\frac{1}{\sqrt{1-|\lambda_j|^2}}
\bigl(
\langle x,P_jf_1\rangle,
\ldots,
\langle x,P_jf_m\rangle
\bigr)\]
and
\[W_jx
=
\frac{1}{\sqrt{1-|\mu_j|^2}}
\bigl(
\langle x,P_jf_1\rangle,
\ldots,
\langle x,P_jf_m\rangle
\bigr).
\]
Consequently,
\begin{equation}
\label{eq:finite-multi-local-coefficient-scaling}
W_j=\gamma_jV_j,
\qquad
\gamma_j
:=
\left(
\frac{1-|\lambda_j|^2}{1-|\mu_j|^2}
\right)^{1/2}.
\end{equation}
By \eqref{eq:finite-multi-boundary-defect-comparability},
\begin{equation}
\label{eq:finite-multi-scaling-bounds}
C_\Phi^{-1/2}
\leq
\gamma_j
\leq
c_\Phi^{-1/2},
\qquad j\geq1.
\end{equation}

Suppose first that \(\mathcal O_m(A,F)\) is a frame. By
Lemma~\ref{lem:necessary-spectral-conditions-for-finite-multi-orbits},
\(\dim E_j\leq m\) for every \(j\), and \(\Lambda\) is the union of
finitely many Carleson sequences. Moreover,
\eqref{eq:uniform-local-coefficient-bounds} yields constants
\(0<a\leq b<\infty\) such that
\begin{equation}
\label{eq:finite-multi-source-local-bounds}
\sqrt a\|x\|
\leq
\|V_jx\|
\leq
\sqrt b\|x\|,
\qquad x\in E_j,\quad j\geq1.
\end{equation}
The bidirectional preservation of Carleson sequences implies that
\(M\) is also a finite union of Carleson sequences. Equations
\eqref{eq:finite-multi-local-coefficient-scaling}--
\eqref{eq:finite-multi-scaling-bounds} show that
\(\sup_j\|W_j\|<\infty\). Hence
Lemma~\ref{lem:finite-union-carleson-bessel-bound} implies that the
synthesis operator
\(
S_{M,W}(x_j)_{j\geq1}
:=
\sum_{j\geq1}
\widehat{k}_{\mu_j}\otimes W_jx_j
\)
is bounded. On the other hand,
Proposition~\ref{prop:orbit-frame-kernel-synthesis-characterization}
shows that
\(
S_{\Lambda,V}(x_j)_{j\geq1}
:=
\sum_{j\geq1}
\widehat{k}_{\lambda_j}\otimes V_jx_j
\)
is bounded below. Fix \(c_0>0\) such that
\begin{equation}
\label{eq:finite-multi-source-global-lower-bound}
\|S_{\Lambda,V}x\|
\geq
c_0\|x\|,
\qquad
x\in\bigoplus_{j\geq1}E_j.
\end{equation}

We claim that \(S_{M,W}\) is also bounded below. Suppose otherwise.
Apply Lemma~\ref{lem:hartmann-cluster-decomposition} to \(M\), and write the resulting Hartmann
partition as
\(
M
=
\mathop{\dot\bigcup}_{q\in\mathbb N^{+}}\tau_q.
\)
For each \(q\), set
\(
\mathcal E_q
:=
\bigoplus_{\{j:\,\mu_j\in\tau_q\}}E_j,
\
T_q
:=
S_{M,W}|_{\mathcal E_q}.
\)
Lemma~\ref{lem:finite-cluster-localization} gives
\begin{equation}
\label{eq:finite-multi-image-local-degeneration}
\inf_{q\in\mathbb N^{+}}s_{\min}(T_q)=0.
\end{equation}

Every \(T_q\) is injective. Indeed, the normalized Szeg\H{o} kernels
indexed by the finitely many distinct points of \(\tau_q\) are
linearly independent, while
\eqref{eq:finite-multi-source-local-bounds}--
\eqref{eq:finite-multi-scaling-bounds} show that every \(W_j\) is
injective. Therefore, \(T_qx=0\) implies that \(W_jx_j=0\), and hence
\(x_j=0\), for every \(j\) with \(\mu_j\in\tau_q\). Since
\(\mathcal E_q\) is finite-dimensional, it follows that
\begin{equation}
\label{eq:finite-multi-fixed-local-positivity}
s_{\min}(T_q)>0,
\qquad q\in\mathbb N^{+}.
\end{equation}
Combining \eqref{eq:finite-multi-image-local-degeneration} and
\eqref{eq:finite-multi-fixed-local-positivity}, we may choose pairwise
distinct indices \(q_n\) such that
\begin{equation}
\label{eq:finite-multi-bad-image-clusters}
s_{\min}(T_{q_n})\rightarrow0.
\end{equation}

By Lemma~\ref{lem:hartmann-cluster-decomposition}, the cardinalities \(|\tau_q|\) are uniformly
bounded. After passing to a subsequence, reordering within each
cluster, and passing to a further subsequence, there exist
\(r\in\mathbb N^{+}\) and integers \(1\leq d_k\leq m\) such that
\(
\tau_{q_n}
=
\{\mu_{j_{n,1}},\ldots,\mu_{j_{n,r}}\},
\
\dim E_{j_{n,k}}=d_k
\)
for every \(n\) and \(1\leq k\leq r\). Put
\(
\mu_{n,k}:=\mu_{j_{n,k}},
\
\lambda_{n,k}:=\lambda_{j_{n,k}},
\
\alpha_{n,k}:=\gamma_{j_{n,k}}.
\)
Choose a unitary operator
\(
U_{n,k}:\mathbb C^{d_k}\to E_{j_{n,k}}
\)
and define
\(
\widetilde V_{n,k}
:=
V_{j_{n,k}}U_{n,k}:
\mathbb C^{d_k}\rightarrow\mathbb C^m.
\)
On
\(
E_0:=\bigoplus_{k=1}^{r}\mathbb C^{d_k},
\)
define
\(
\widetilde S_n^{\lambda}(u_k)_{k=1}^{r}
:=
\sum_{k=1}^{r}
\widehat{k}_{\lambda_{n,k}}
\otimes
\widetilde V_{n,k}u_k
\)
and
\(
\widetilde S_n^{\mu}(u_k)_{k=1}^{r}
:=
\sum_{k=1}^{r}
\widehat{k}_{\mu_{n,k}}
\otimes
\alpha_{n,k}\widetilde V_{n,k}u_k.
\)
These are unitarily equivalent to the corresponding restrictions of
\(S_{\Lambda,V}\) and \(S_{M,W}\), respectively. Hence
\eqref{eq:finite-multi-source-global-lower-bound} and
\eqref{eq:finite-multi-bad-image-clusters} give
\begin{equation}
\label{eq:finite-multi-opposite-local-behavior}
s_{\min}(\widetilde S_n^{\lambda})
\geq
c_0,
\qquad
s_{\min}(\widetilde S_n^{\mu})
\rightarrow0.
\end{equation}

For every fixed \(k\), extend the choices
\(
\mu_{n,k}\in\tau_{q_n}
\)
to a selector of the entire Hartmann partition. By
Lemma~\ref{lem:hartmann-cluster-decomposition}\textup{(iii)}, this selector is a Carleson sequence.
Since a Carleson sequence has only finitely many points in each compact
subset of \(\mathbb D\), and since the indices \(q_n\) are pairwise
distinct, Lemma~\ref{lem:basic-properties-carleson-sequences}\textup{(ii)}
yields
\(
|\mu_{n,k}|\rightarrow1.
\)
Equation \eqref{eq:finite-multi-boundary-defect-comparability} then
implies that
\begin{equation}
\label{eq:finite-multi-cluster-points-approach-boundary}
|\lambda_{n,k}|\rightarrow1,
\qquad
|\mu_{n,k}|\rightarrow1,
\qquad
1\leq k\leq r.
\end{equation}
Furthermore, for every subsequence \((n_h)_{h\geq1}\) and every
\(1\leq k,\ell\leq r\),
\eqref{eq:finite-multi-asymptotic-collision-preservation} gives
\begin{equation}
\label{eq:finite-multi-cluster-collision-equivalence}
\rho(\lambda_{n_h,k},\lambda_{n_h,\ell})\rightarrow0
\quad\Longleftrightarrow\quad
\rho(\mu_{n_h,k},\mu_{n_h,\ell})\rightarrow0.
\end{equation}

The operators \(\widetilde V_{n,k}\) are uniformly bounded by
\eqref{eq:finite-multi-source-local-bounds}, and the scalars
\(\alpha_{n,k}\) satisfy uniform positive upper and lower bounds by
\eqref{eq:finite-multi-scaling-bounds}. Thus all the hypotheses of
Lemma~\ref{lem:bad-cluster-subsequence-transfer} are satisfied with
\(
z_{n,k}=\lambda_{n,k},
\
w_{n,k}=\mu_{n,k},
\
V_{n,k}=\widetilde V_{n,k}.
\)
That lemma and \eqref{eq:finite-multi-opposite-local-behavior} imply
\(
s_{\min}(\widetilde S_n^{\lambda})\rightarrow0,
\)
contradicting the first inequality in
\eqref{eq:finite-multi-opposite-local-behavior}. Therefore
\(S_{M,W}\) is bounded below. Since it is also bounded,
Proposition~\ref{prop:orbit-frame-kernel-synthesis-characterization}
shows that \(\mathcal O_m(\Phi(A),F)\) is a frame.

It remains to prove the converse implication. Suppose that
\(\mathcal O_m(\Phi(A),F)\) is a frame. Then \(\dim E_j\leq m\) for
every \(j\), \(M\) is a finite union of Carleson sequences, the
operators \(W_j\) have uniform positive
lower and upper bounds, and \(S_{M,W}\) is bounded below. If
\(
M=\bigcup_{\nu=1}^{N}\Gamma_\nu\) for \(\Gamma_\nu\in\mathcal C,\)
set
\(
\Lambda_\nu
:=
\{\lambda_j:\mu_j\in\Gamma_\nu\}.
\)
The injectivity of \(\Phi\) gives
\(
\Phi(\Lambda_\nu)=\Gamma_\nu,
\)
and the reverse preservation of Carleson sequences gives
\(
\Lambda_\nu\in\mathcal C.
\)
Hence
\(
\Lambda=\bigcup_{\nu=1}^{N}\Lambda_\nu
\)
is a finite union of Carleson sequences. By
\eqref{eq:finite-multi-local-coefficient-scaling}--
\eqref{eq:finite-multi-scaling-bounds}, the operators
\(
V_j=\gamma_j^{-1}W_j
\)
have uniform positive lower and upper bounds. Consequently,
Lemma~\ref{lem:finite-union-carleson-bessel-bound} shows that
\(S_{\Lambda,V}\) is bounded.

If \(S_{\Lambda,V}\) were not bounded below, apply the preceding
Hartmann-cluster argument to a Hartmann partition of \(\Lambda\). As
before, every fixed local restriction is injective and hence has
positive minimum modulus. We can therefore select pairwise distinct
clusters whose minimum moduli tend to zero and, after passing to a
subsequence, obtain fixed-dimensional local data. For these data use
in Lemma~\ref{lem:bad-cluster-subsequence-transfer}
\(
z_{n,k}=\mu_{j_{n,k}},
\
w_{n,k}=\lambda_{j_{n,k}},
\
V_{n,k}=W_{j_{n,k}}U_{n,k},
\
\alpha_{n,k}=\gamma_{j_{n,k}}^{-1}.
\)
A Hartmann selector on the \(\lambda\)-side tends to the boundary;
\eqref{eq:finite-multi-boundary-defect-comparability} forces the
corresponding \(\mu\)-selector to tend to the boundary, and
\eqref{eq:finite-multi-asymptotic-collision-preservation} gives the
required subsequence-stable collision equivalence. The local operators
on the \(w\)-side are precisely the degenerating restrictions of
\(S_{\Lambda,V}\), whereas those on the \(z\)-side are restrictions
of \(S_{M,W}\). Lemma~\ref{lem:bad-cluster-subsequence-transfer}
would therefore produce restrictions of \(S_{M,W}\) whose minimum
moduli tend to zero, contradicting the global lower bound of
\(S_{M,W}\). Thus \(S_{\Lambda,V}\) is bounded below, and
Proposition~\ref{prop:orbit-frame-kernel-synthesis-characterization}
yields that \(\mathcal O_m(A,F)\) is a frame.

We have proved, for every normal diagonal \(A\) and every \(F\in H^m\),
\(
F\in\mathcal G_m(A)
\Longleftrightarrow
F\in\mathcal G_m\bigl(\Phi(A)\bigr).
\)
Hence \(\Phi\in\mathcal P_m\), so
\(\mathcal P\subseteq\mathcal P_m\). Together with the first
inclusion, this proves \(\mathcal P_m=\mathcal P\).
\end{proof}

Define
\(
\mathfrak{s}(A)
:=
\inf\left\{
m\in\mathbb N^{+}:
\mathcal G_m(A)\neq\varnothing
\right\},
\)
with the convention that \(\mathfrak{s}(A)=\infty\) if
\(\mathcal G_m(A)=\varnothing\) for every \(m\in\mathbb N^{+}\).

\begin{cor}
\label{cor:minimal-sampling-number}
Let \(A\) be a normal diagonal operator and let \(\Phi\in\mathcal P\). Then
\(
\mathfrak{s}\bigl(\Phi(A)\bigr)=\mathfrak{s}(A).
\)
\end{cor}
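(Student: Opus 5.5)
The plan is to deduce the corollary directly from Theorem~\ref{thm:finite-multi-orbit-universal-preservers}, which asserts \(\mathcal P_m=\mathcal P\) for every \(m\in\mathbb N^{+}\). Fix a normal diagonal operator \(A\) of the form considered above and \(\Phi\in\mathcal P\).

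First, for each finite \(m\), Theorem~\ref{thm:finite-multi-orbit-universal-preservers} gives \(\Phi\in\mathcal P_m\). By the definition of \(\mathcal P_m\), this means
\[
\mathcal G_m(A)=\mathcal G_m\bigl(\Phi(A)\bigr),\qquad m\in\mathbb N^{+}.
\]
One should check that \(\Phi(A)\) is again an admissible normal diagonal operator, so that \(\mathfrak s(\Phi(A))\) is defined in the same framework. Since \(\Phi\) is injective by Proposition~\ref{prop:basic-properties-of-universal-preservers}\textup{(i)}, the eigenvalues \(\Phi(\lambda_j)\) are pairwise distinct elements of \(\mathbb D\). The spectral projections of \(\Phi(A)\) are therefore exactly the \(P_j\), and no grouping convention is needed. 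In any case, the definition of \(\mathcal P_m\) already quantifies over all such \(A\), so the displayed equality holds as stated.

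Second, the two index sets \(\{m\in\mathbb N^{+}:\mathcal G_m(A)\neq\varnothing\}\) and \(\{m\in\mathbb N^{+}:\mathcal G_m(\Phi(A))\neq\varnothing\}\) coincide, since the generator sets agree for each \(m\). Their infima therefore coincide, with the convention \(\inf\varnothing=\infty\) applied simultaneously on both sides. This gives \(\mathfrak s(\Phi(A))=\mathfrak s(A)\), including the case where both values are infinite.

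There is no genuine obstacle here. All the analytic work lies in Theorem~\ref{thm:finite-multi-orbit-universal-preservers}, and the corollary is a formal consequence of it together with the definition of \(\mathfrak s\).
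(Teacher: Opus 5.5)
Your proposal is correct and follows the paper's own proof: apply Theorem~\ref{thm:finite-multi-orbit-universal-preservers} to get \(\mathcal G_m(A)=\mathcal G_m(\Phi(A))\) for every \(m\in\mathbb N^{+}\), conclude that the nonemptiness conditions agree for each \(m\), and take the infimum, with the convention \(\inf\varnothing=\infty\) applied on both sides. Your extra check that \(\Phi(A)\) has pairwise distinct eigenvalues, by the injectivity of \(\Phi\), is harmless but not needed.
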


\begin{proof}
Fix \(m\in\mathbb N^{+}\). By
Theorem~\ref{thm:finite-multi-orbit-universal-preservers},
\(
\mathcal P_m=\mathcal P.
\)
Hence \(\Phi\in\mathcal P_m\), and therefore
\(
\mathcal G_m\bigl(\Phi(A)\bigr)=\mathcal G_m(A).
\)
In particular,
\(
\mathcal G_m(A)\neq\varnothing
\Longleftrightarrow
\mathcal G_m\bigl(\Phi(A)\bigr)\neq\varnothing.
\)
Taking the infimum over \(m\in\mathbb N^{+}\) proves the result.
\end{proof}

Motivated by the finite-decomposition condition in \cite{CabrelliMolterPaternostroPhilipp2020}, let
\(
\Lambda=(\lambda_j)_{j\geq1}\subset\mathbb D.
\)
For the purposes of this definition and the following corollary, we
identify a sequence with its underlying set; in particular,
repetitions are disregarded. Every finite set of pairwise distinct
points is regarded as a Carleson sequence. Define its Carleson index by
\[
\operatorname{ind}_{\mathcal C}(\Lambda)
:=
\inf\left\{
m\in\mathbb N^{+}:
\Lambda=\bigcup_{i=1}^{m}\Lambda_i,
\quad
\Lambda_i\in\mathcal C
\ \text{for }1\leq i\leq m
\right\}.
\]
If no such finite decomposition exists, we set
\(
\operatorname{ind}_{\mathcal C}(\Lambda)=\infty.
\)

\begin{cor}
\label{cor:carleson-index-invariance}
Let \(\Phi\in\mathcal P\). Then, for every sequence
\(\Lambda\subset\mathbb D\),
\(
\operatorname{ind}_{\mathcal C}\bigl(\Phi(\Lambda)\bigr)
=
\operatorname{ind}_{\mathcal C}(\Lambda).
\)
\end{cor}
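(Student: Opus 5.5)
The argument is a direct transfer of decompositions in both directions, using only that \(\Phi\) is injective (Proposition~\ref{prop:basic-properties-of-universal-preservers}(i)) and that it preserves Carleson sequences bidirectionally (Theorem~\ref{thm:universal-orbit-frame-preservation}(i)). Since sequences are identified with their underlying sets here, injectivity gives a bijection between the underlying set of \(\Lambda\) and that of \(\Phi(\Lambda)\). Under this bijection, subsets correspond to subsets and finite unions correspond to finite unions. It therefore suffices to show that a subset \(\Gamma\) of the underlying set of \(\Lambda\) is a Carleson sequence (in the convention of the definition, where finite sets of distinct points count) if and only if \(\Phi(\Gamma)\) is.

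For infinite \(\Gamma\), we enumerate it without repetition and apply Theorem~\ref{thm:universal-orbit-frame-preservation}(i). This gives \(\Gamma\in\mathcal C\Leftrightarrow\Phi(\Gamma)\in\mathcal C\); since \(\Phi\) is injective, \(\Phi(\Gamma)\) is again listed without repetition. For finite \(\Gamma\), both \(\Gamma\) and \(\Phi(\Gamma)\) are finite sets of pairwise distinct points (again by injectivity), so both count as Carleson under the stated convention.

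Now suppose \(\operatorname{ind}_{\mathcal C}(\Lambda)=m<\infty\) with \(\Lambda=\bigcup_{i=1}^m\Lambda_i\). Then \(\Phi(\Lambda)=\bigcup_{i=1}^m\Phi(\Lambda_i)\), and each \(\Phi(\Lambda_i)\) is Carleson, so \(\operatorname{ind}_{\mathcal C}(\Phi(\Lambda))\le m\). Conversely, suppose \(\Phi(\Lambda)=\bigcup_{i=1}^{m'}\Gamma_i\) with each \(\Gamma_i\) Carleson. Set \(\Lambda_i:=\{\lambda\in\Lambda:\Phi(\lambda)\in\Gamma_i\}\). By injectivity \(\Phi(\Lambda_i)=\Gamma_i\), and the reverse preservation (or the finite case) makes each \(\Lambda_i\) Carleson. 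Their union is \(\Lambda\), so \(\operatorname{ind}_{\mathcal C}(\Lambda)\le m'\). The two inequalities give equality when either index is finite. If one index is infinite, the other must be too, since otherwise the finite one would force the first to be finite. This settles every case.

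The only delicate point is making sure the set-versus-sequence convention agrees with Theorem~\ref{thm:universal-orbit-frame-preservation}(i), which is stated for indexed sequences. This is handled by always enumerating without repetition and noting that injectivity of \(\Phi\) prevents repetitions from appearing in the image. Beyond that the argument is routine; in particular, Theorem~\ref{thm:finite-multi-orbit-universal-preservers} is not needed, although one could alternatively obtain the result from it via Lemma~\ref{lem:necessary-spectral-conditions-for-finite-multi-orbits}.
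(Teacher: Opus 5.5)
Your proof is correct and is essentially the paper's argument: injectivity of \(\Phi\) together with bidirectional Carleson preservation (extended to finite sets by the stated convention) lets you push a decomposition of \(\Lambda\) forward and pull a decomposition of \(\Phi(\Lambda)\) back via \(\Lambda_i=\{\lambda\in\Lambda:\Phi(\lambda)\in\Gamma_i\}\). Your remarks about enumerating without repetition and about the finite pieces spell out what the paper compresses into a single sentence; the only loose point is your closing aside, since deriving the result from \(\mathcal P_m=\mathcal P\) would also need the unstated converse of the cited lemma (that a union of \(m\) Carleson sequences admits an \(m\)-orbit frame), but your main argument does not use it.
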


\begin{proof} By Proposition~\ref{prop:basic-properties-of-universal-preservers},
\(\Phi\) is injective. Therefore, under the convention above, the
bidirectional preservation of Carleson sequences from
Theorem~\ref{thm:universal-orbit-frame-preservation} also extends to
finite sets.
Suppose first that
\(
\operatorname{ind}_{\mathcal C}(\Lambda)=m<\infty.
\)
Then
\(
\Lambda=\bigcup_{i=1}^{m}\Lambda_i
\)
for some \(\Lambda_i\in\mathcal C\). By
Theorem~\ref{thm:universal-orbit-frame-preservation},
\(
\Phi(\Lambda_i)\in\mathcal C,
\ 1\leq i\leq m.
\)
Since
\(
\Phi(\Lambda)=\bigcup_{i=1}^{m}\Phi(\Lambda_i),
\)
we obtain
\(
\operatorname{ind}_{\mathcal C}\bigl(\Phi(\Lambda)\bigr)
\leq
\operatorname{ind}_{\mathcal C}(\Lambda).
\)

Conversely, suppose that
\(
\operatorname{ind}_{\mathcal C}\bigl(\Phi(\Lambda)\bigr)
=
m<\infty.
\)
Then
\(
\Phi(\Lambda)=\bigcup_{i=1}^{m}\Gamma_i
\)
for some \(\Gamma_i\in\mathcal C\).
Hence, for
\(
\Lambda_i
:=
\left\{
\lambda\in\Lambda:
\Phi(\lambda)\in\Gamma_i
\right\},
\)
one has
\(
\Lambda=\bigcup_{i=1}^{m}\Lambda_i\) and
\(\Phi(\Lambda_i)=\Gamma_i.
\)
Another application of
Theorem~\ref{thm:universal-orbit-frame-preservation} gives
\(
\Lambda_i\in\mathcal C,
\ 1\leq i\leq m.
\)
Therefore,
\(
\operatorname{ind}_{\mathcal C}(\Lambda)
\leq
\operatorname{ind}_{\mathcal C}\bigl(\Phi(\Lambda)\bigr).
\)
Combining the two inequalities proves the result, including the case
in which either index is infinite.
\end{proof}

\subsection{Holomorphic Rigidity}
The preceding results show that the class \(\mathcal P\) contains
many mappings which are non-holomorphic, non-surjective, or even
discontinuous at finite hyperbolic depth. In the holomorphic category,
however, the universal preservation property is completely rigid: no
non-automorphic holomorphic self-map of \(\mathbb D\) belongs to
\(\mathcal P\).

We write
\(
\operatorname{Hol}(\mathbb D,\mathbb D)
:=
\left\{
\varphi:\mathbb D\rightarrow\mathbb D:
\varphi\ \text{is holomorphic}
\right\}.
\)
\begin{thm}
\label{thm:holomorphic-rigidity}
 Let
\(
\varphi:\mathbb D\rightarrow\mathbb D
\)
be holomorphic, and let \(m\in\mathbb N^+\). Then the following assertions are equivalent:
\begin{enumerate}
\item[\textup{(i)}]
\(
\varphi\in\mathcal P_m.
\)

\item[\textup{(ii)}]
\(
\varphi\in\operatorname{Aut}(\mathbb D).
\)
\end{enumerate}
\end{thm}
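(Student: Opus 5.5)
By Theorem~\ref{thm:finite-multi-orbit-universal-preservers} we have \(\mathcal P_m=\mathcal P\), so it suffices to treat \(m=1\). The plan is to show that a holomorphic \(\varphi\in\mathcal P\) is an automorphism, and conversely that every automorphism lies in \(\mathcal P\).

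For (ii)\(\Rightarrow\)(i), let \(\varphi\in\operatorname{Aut}(\mathbb D)\). Then \(\varphi\) is a \(\rho\)-isometry, and the products \(\prod_{i\neq j}\rho(\lambda_i,\lambda_j)\) are unchanged under \(\varphi\). Hence \(\Lambda\in\mathcal C\Longleftrightarrow\varphi(\Lambda)\in\mathcal C\) directly from Definition~\ref{def:carleson-sequence}. For condition~(ii) of Theorem~\ref{thm:universal-orbit-frame-preservation}, write \(\varphi(z)=\xi\frac{a-z}{1-\overline a z}\). The identity
\[
1-|\varphi(z)|^2=\frac{(1-|a|^2)(1-|z|^2)}{|1-\overline a z|^2}
\]
together with \((1-|a|)^2\le|1-\overline az|^2\le(1+|a|)^2\) gives the two-sided comparability with constants \(\frac{1-|a|}{1+|a|}\) and \(\frac{1+|a|}{1-|a|}\). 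Theorem~\ref{thm:universal-orbit-frame-preservation} then yields \(\varphi\in\mathcal P\).

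For (i)\(\Rightarrow\)(ii), take \(\varphi\) holomorphic in \(\mathcal P\). Composing with the automorphism \(\tau(w)=\frac{\varphi(0)-w}{1-\overline{\varphi(0)}w}\), which lies in \(\mathcal P\) by the first part, we may assume \(\varphi(0)=0\). The class \(\mathcal P\) is closed under composition, since conditions (i) and (ii) of Theorem~\ref{thm:universal-orbit-frame-preservation} compose. By Schwarz's lemma, \(\varphi(z)=z\,g(z)\) with \(g\in\operatorname{Hol}(\mathbb D,\overline{\mathbb D})\). If \(|g|\equiv1\) somewhere in the interior, then \(g\) is a unimodular constant and \(\varphi\) is a rotation, so we are done. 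Otherwise \(|g|<1\) throughout \(\mathbb D\).

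It remains to exclude this case using boundary-defect comparability. From \(1-|\varphi(z)|^2\le C_\Phi(1-|z|^2)\) we get \(1-|g(z)|\lesssim 1-|z|\) near \(\mathbb T\), so \(|g|\to1\) uniformly as \(|z|\to1\). The function \(g\) is therefore a nonconstant bounded holomorphic function with \(|g(z)|\to1\) as \(|z|\to1\), and such a \(g\) is a finite Blaschke product. This follows from the standard argument: \(g\) has finitely many zeros, and after dividing by the Blaschke product \(B\) of those zeros, both \(g/B\) and \(B/g\) have modulus tending to \(1\) at \(\mathbb T\), so the maximum principle gives \(g/B\) a unimodular constant. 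Consequently \(\varphi=zg\) is a finite Blaschke product of degree \(d\ge1\). Finally, Proposition~\ref{prop:basic-properties-of-universal-preservers}(i) says \(\varphi\) is injective, and an injective finite Blaschke product has degree \(1\), because a degree-\(d\) Blaschke product is a \(d\)-to-\(1\) covering of \(\mathbb D\). Hence \(\varphi\in\operatorname{Aut}(\mathbb D)\).

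An alternative finish avoids the Blaschke step: an injective holomorphic self-map whose modulus tends to \(1\) at the boundary is proper, and a proper injective holomorphic map \(\mathbb D\to\mathbb D\) is surjective (its image is both open and closed), hence biholomorphic. The main point needing care is deducing properness, that is \(|\varphi(z)|\to1\) as \(|z|\to1\), from condition~(ii) of Theorem~\ref{thm:universal-orbit-frame-preservation}. This is immediate, because \(1-|\varphi(z)|^2\le C_\Phi(1-|z|^2)\). So I would take this properness route as the primary argument, with the Blaschke reasoning serving only as an alternative.
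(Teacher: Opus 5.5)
Your proof is correct, and the route you designate as primary is essentially the paper's: the paper also reduces to \(\mathcal P\) via Theorem~\ref{thm:finite-multi-orbit-universal-preservers}, handles automorphisms by \(\rho\)-invariance plus the defect identity, and for the converse uses injectivity together with an open-and-closed argument for \(\varphi(\mathbb D)\). There, closedness comes from pulling back a sequence \(w_n\to w_0\in\partial\varphi(\mathbb D)\cap\mathbb D\), showing the preimages tend to \(\mathbb T\), and contradicting \(1-|\varphi(z)|^2\le C_\varphi(1-|z|^2)\), which is your properness argument written out, while your Schwarz/finite-Blaschke detour is valid but not needed.
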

\begin{proof}
By Theorem~\ref{thm:finite-multi-orbit-universal-preservers},
it suffices to prove that \(\varphi\in\mathcal P\) if and only if \(\varphi\in\operatorname{Aut}(\mathbb D).\) First, suppose that
\(\varphi\in\operatorname{Aut}(\mathbb D)\).
Then there exist
\(
a\in\mathbb D,
\
\vartheta\in\mathbb R,
\)
satisfying \(
\varphi(z)
=
e^{i\vartheta}
\frac{z-a}{1-\overline{a}z}
\) for $z\in\mathbb D.$
Since every disk automorphism preserves the pseudohyperbolic distance,
\[
\rho\bigl(\varphi(z),\varphi(w)\bigr)
=
\rho(z,w),
\qquad
z,w\in\mathbb D.
\]
Therefore, for every sequence
\(
\Lambda=\{\lambda_j\}_{j\geq1}\subset\mathbb D,
\)
one has
\(
\Lambda\in\mathcal C
 \Longleftrightarrow
\varphi(\Lambda)\in\mathcal C.
\)

Moreover, for $z\in\mathbb D,$
\(
1-|\varphi(z)|^2
=
\frac{1-|a|^2}{|1-\overline{a}z|^2}
\bigl(1-|z|^2\bigr).
\)
Since
\(
1-|a|
\leq
|1-\overline{a}z|
\leq
1+|a|,
\)
we obtain
\[
\frac{1-|a|}{1+|a|}
\bigl(1-|z|^2\bigr)
\leq
1-|\varphi(z)|^2
\leq
\frac{1+|a|}{1-|a|}
\bigl(1-|z|^2\bigr),
\qquad
z\in\mathbb D.
\]
By Theorem~\ref{thm:universal-orbit-frame-preservation},
\(
\varphi\in\mathcal P.
\)

Conversely, suppose that \(\varphi\in\mathcal P\). By
Proposition~\ref{prop:basic-properties-of-universal-preservers},
\(\varphi\) is injective.
Set
\(
\Omega
:=
\varphi(\mathbb D).
\)
Since \(\varphi\) is injective and holomorphic,
\(
\varphi:\mathbb D\rightarrow\Omega
\)
is a conformal bijection. We claim that
\(
\Omega=\mathbb D.
\)

Suppose, to the contrary, that
\(
\Omega\neq\mathbb D.
\)
Since \(\Omega\) is a nonempty proper open subset of the connected
domain \(\mathbb D\), its relative boundary in \(\mathbb D\) is
nonempty. Hence there exists
\(
w_0\in\partial\Omega\cap\mathbb D.
\)
Choose a sequence
\(
\{w_n\}_{n\geq1}\subset\Omega
\)
such that
\(
w_n\rightarrow w_0.
\)
For every \(n\geq1\), let
\(
z_n:=\varphi^{-1}(w_n)\in\mathbb D.
\)
We claim that
\(
|z_n|\rightarrow1.
\)

Indeed, suppose otherwise. Then, after passing to a subsequence, there
exists \(0<r<1\) such that for $n\geq1,$
\(
|z_n|\leq r.
\)
By compactness of
\(
\overline{D(0,r)},
\)
after passing to a further subsequence, we may assume that
\(
z_n\rightarrow z_0\in\mathbb D.
\)
By continuity of \(\varphi\),
\(
w_n
=
\varphi(z_n)
\rightarrow
\varphi(z_0).
\)
Since also
\(
w_n\rightarrow w_0,
\)
we obtain
\(
w_0=\varphi(z_0)\in\Omega,
\)
which contradicts
\(
w_0\in\partial\Omega.
\)
Thus,
\(
|z_n|\rightarrow1.
\)

On the other hand, since
\(
\varphi\in\mathcal P,
\)
condition \textup{(ii)} of
Theorem~\ref{thm:universal-orbit-frame-preservation} yields a constant
\(
C_\varphi>0
\)
such that for $z\in\mathbb D,$
\(
1-|\varphi(z)|^2
\leq
C_\varphi\bigl(1-|z|^2\bigr).
\)
Applying this estimate to \(z=z_n\), we obtain
\(
1-|w_n|^2
=
1-|\varphi(z_n)|^2
\leq
C_\varphi\bigl(1-|z_n|^2\bigr)
\rightarrow0.
\)
Therefore,
\(
|w_n|\rightarrow1.
\)
But
\(
w_n\rightarrow w_0\in\mathbb D,
\)
and hence
\(
|w_n|\rightarrow|w_0|<1,
\)
which is a contradiction.

Therefore,
\(
\Omega=\mathbb D.
\)
Thus \(\varphi\) is a bijective holomorphic self-map of \(\mathbb D\),
and hence
\(
\varphi\in\operatorname{Aut}(\mathbb D).
\)
This proves the theorem.
\end{proof}
\subsection{The Countable-Orbit Problem}
We finally formulate the natural countable analogue of the finite
multi-orbit preservation problem. For
\(
F=(f_i)_{i\geq1}\in H^{\mathbb N^{+}},
\)
set
\(
\mathcal O_\omega(A,F)
:=
\{A^n f_i\}_{n\geq0,\ i\geq1}
\)
and
\[
\mathcal G_\omega(A)
:=
\left\{
F\in H^{\mathbb N^{+}}:
\mathcal O_\omega(A,F)
\text{ is a frame for }H
\right\}.
\]
Define
\[
\mathcal P_\omega
:=
\left\{
\Phi:\mathbb D\rightarrow\mathbb D:
\mathcal G_\omega(A)
=
\mathcal G_\omega\bigl(\Phi(A)\bigr)
\text{ for every normal diagonal operator \(A\) as above}
\right\}.
\]

The class \(\mathcal G_\omega(A)\) is nonempty for every normal
diagonal operator considered in this paper. Indeed, such an operator
is a normal contraction with no unitary part, and
\cite[Theorem~5.2(b)]{IterativeNormalOperators} provides a countable
generating family whose iterates form a Parseval frame.
\begin{prop}
\label{prop:countable-preservers-are-uniform-homeomorphisms}
Let \(\Phi\in\mathcal P_\omega\). Then \(\Phi\in\mathcal P\). Moreover, \(\Phi\) is a bijection of \(\mathbb D\) and both
\(\Phi\) and \(\Phi^{-1}\) are uniformly continuous with respect to
the pseudohyperbolic metric.
\end{prop}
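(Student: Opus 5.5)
The plan has three steps. First I show $\mathcal P_\omega\subseteq\mathcal P$ by padding with zero vectors. Next I obtain uniform continuity of $\Phi$ and $\Phi^{-1}$ from a block-diagonal two-point test operator. Finally I get bijectivity from a topological argument (invariance of domain plus properness), as in the proof of Theorem~\ref{thm:boundary-shadow-is-injective}.

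\textbf{Step 1: $\mathcal P_\omega\subseteq\mathcal P$.} I argue exactly as in the first half of the proof of Theorem~\ref{thm:finite-multi-orbit-universal-preservers}. For $f\in H$ put $F_f:=(f,0,0,\ldots)\in H^{\mathbb N^+}$. Zero generators do not change the frame inequalities, so $f\in\mathcal G_1(A)\Leftrightarrow F_f\in\mathcal G_\omega(A)$. Applying this to both $A$ and $\Phi(A)$ gives $\mathcal G_1(A)=\mathcal G_1(\Phi(A))$. In particular, Theorem~\ref{thm:universal-orbit-frame-preservation} and Proposition~\ref{prop:basic-properties-of-universal-preservers} apply: $\Phi$ is injective and satisfies $1-|\Phi(z)|^2\asymp1-|z|^2$.

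\textbf{Step 2: the two-point block test (main step).} Suppose $\Phi$ is not uniformly continuous for $\rho$. Then there are $z_n,w_n$ with $\rho(z_n,w_n)\to0$ but $\rho(\Phi(z_n),\Phi(w_n))\ge\varepsilon$. Because of $\varepsilon$ and injectivity, $z_n\ne w_n$. After passing to a subsequence I make the $2n$ points pairwise distinct; this is possible since injectivity of $\Phi$ and the $\varepsilon$-gap prevent repeated pairs. I then fix an orthonormal basis $\{e_{z_n},e_{w_n}\}_n$, let $A$ be the diagonal operator with these eigenvalues, and put
\[
f_n:=\sqrt{1-|z_n|^2}\,e_{z_n}+\sqrt{1-|w_n|^2}\,e_{w_n},\qquad F=(f_n)_{n\ge1}.
\]
The orbit of $f_n$ lives in the two-dimensional block $B_n=\operatorname{span}\{e_{z_n},e_{w_n}\}$. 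Hence $\mathcal O_\omega(A,F)$ is a frame iff the block analysis operators have uniform upper and lower bounds. By the computation preceding Proposition~\ref{prop:orbit-frame-kernel-synthesis-characterization}, on $B_n$ the squared analysis norm is the quadratic form of the Gram matrix of $\widehat k_{z_n},\widehat k_{w_n}$. This matrix has eigenvalues $1\pm|\langle\widehat k_{z_n},\widehat k_{w_n}\rangle|$, and $|\langle\widehat k_{z_n},\widehat k_{w_n}\rangle|^2=1-\rho(z_n,w_n)^2$. So the upper bound is always at most $2$, and the lower bound is $\asymp\rho(z_n,w_n)^2$. For $\Phi(A)$ the same vectors give the coefficients $\sqrt{1-|z_n|^2}$, which by defect comparability are uniformly comparable to $\sqrt{1-|\Phi(z_n)|^2}$. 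The block forms are therefore comparable to the Gram forms at $\Phi(z_n),\Phi(w_n)$, whose lower bounds are $\gtrsim\varepsilon^2$. Thus $F\in\mathcal G_\omega(\Phi(A))\setminus\mathcal G_\omega(A)$, which is a contradiction. Exchanging the roles (pairs close on the image side but $\varepsilon$-separated on the source side) gives in the same way: $\rho(\Phi(z),\Phi(w))\to0\Rightarrow\rho(z,w)\to0$, uniformly over all of $\mathbb D$.

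\textbf{Step 3: bijectivity.} By Step 2, $\Phi$ is continuous and injective, so by invariance of domain $\Phi(\mathbb D)$ is open. To see that it is closed in $\mathbb D$, let $\Phi(u_n)\to q\in\mathbb D$. Defect comparability keeps $|u_n|$ bounded away from $1$, so a subsequence $u_n\to u\in\mathbb D$, and continuity gives $\Phi(u)=q$. Connectedness of $\mathbb D$ then gives $\Phi(\mathbb D)=\mathbb D$. Finally, the reverse implication from Step 2, applied to $z=\Phi^{-1}(a)$, $w=\Phi^{-1}(b)$, is precisely uniform continuity of $\Phi^{-1}$.

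The main obstacle is Step 2. It needs the exact identification of the block frame bounds with the $2\times2$ Szeg\H{o} Gram eigenvalues, uniformly in $n$, and careful handling of the distinctness requirements on the diagonal sequence. Passing to a subsequence and using injectivity is what I would try first.
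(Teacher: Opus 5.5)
Your route is the paper's: you reduce to $\mathcal P$ with a degenerate generator family and detect separation with orthogonal two-point blocks. On each block the analysis form is the normalized Szeg\H{o} Gram matrix, whose smallest eigenvalue is $1-\sqrt{1-\rho^2}\asymp\rho^2$, and the image side differs by a uniformly bounded diagonal congruence. Surjectivity then comes from invariance of domain plus properness.

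The step that fails is the distinctness claim in Step~2. The facts $z_n\neq w_n$ and $\rho(z_n,w_n)\to0$ only exclude infinitely repeated \emph{pairs}. They do not exclude a single \emph{point} recurring in infinitely many pairs, and then no subsequence helps. That is exactly the situation you must rule out: if $\Phi$ jumps at one interior point $z$, the witnesses are $z_n\equiv z$, $w_n\to z$.

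Concretely, take the map that swaps two interior points $y\neq z$ and is the identity elsewhere. It belongs to $\mathcal P$ by Corollary~\ref{cor:compact-core-surgery-invariance}, so Step~1 cannot exclude it. Yet every pair witnessing its non-uniform continuity eventually contains $y$ or $z$. With simple spectrum the eigenvalue $z$ has a one-dimensional eigenspace, so only one of your mutually orthogonal blocks can contain it. Your construction, as set up, therefore breaks down precisely in the interior, where membership in $\mathcal P$ gives no continuity (the paper's example is discontinuous on two circles). The same problem affects the reverse direction you use for $\Phi^{-1}$.

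The missing observation is that distinctness is not needed: the operator class allows spectral projections of arbitrary, even infinite, rank.
\begin{itemize}
\item Give every occurrence of a point its own orthonormal vector, $Ae_n^0=z_ne_n^0$ and $Ae_n^1=w_ne_n^1$, grouping equal values into one (possibly infinite-dimensional) eigenspace.
\item Likewise set $\Phi(A)e_n^0=\Phi(z_n)e_n^0$ and $\Phi(A)e_n^1=\Phi(w_n)e_n^1$.
\end{itemize}
Then $H=\bigoplus_n\operatorname{span}\{e_n^0,e_n^1\}$ is still an orthogonal decomposition reducing both $A$ and $\Phi(A)$. The orbit of $f_n$ stays in the $n$-th block, and your Gram computations go through unchanged. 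This is how the paper handles repetitions. With this change, the rest of your argument, including Step~3, is correct.
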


\begin{proof}
We first show that \(\Phi\in\mathcal P\). Fix a nonzero sequence
\(c=(c_i)_{i\geq1}\in\ell^2\). For \(f\in H\), set
\(
F_f:=(c_if)_{i\geq1}.
\)
Then
\[
\sum_{i\geq1}\sum_{k\geq0}
\left|
\left\langle x,A^k(c_if)\right\rangle
\right|^2
=
\|c\|_{\ell^2}^2
\sum_{k\geq0}
\left|
\left\langle x,A^kf\right\rangle
\right|^2.
\]
Hence
\(
F_f\in\mathcal G_\omega(A)
\Longleftrightarrow
f\in\mathcal G_1(A).
\)
Applying the defining property of \(\mathcal P_\omega\) gives
\(
f\in\mathcal G_1(A)
\Longleftrightarrow
f\in\mathcal G_1\bigl(\Phi(A)\bigr).
\)
Thus \(\Phi\in\mathcal P\).

In particular, \(\Phi\) is injective and there exist constants
\(0<c_\Phi\leq C_\Phi<\infty\) such that
\begin{equation}
\label{eq:countable-defect-comparability}
c_\Phi(1-|z|^2)
\leq
1-|\Phi(z)|^2
\leq
C_\Phi(1-|z|^2),
\qquad z\in\mathbb D.
\end{equation}

We prove that, for every pair of sequences
\(
(z_n)_{n\geq1},(w_n)_{n\geq1}\subset\mathbb D,
\)
one has
\begin{equation}
\label{eq:uniform-separation-preservation}
\inf_{n\geq1}\rho(z_n,w_n)>0
\quad\Longleftrightarrow\quad
\inf_{n\geq1}
\rho\bigl(\Phi(z_n),\Phi(w_n)\bigr)>0.
\end{equation}
Let
\(
S:=\{z_n:n\geq1\}\cup\{w_n:n\geq1\}.
\)
If \(S\) is finite, then the desired equivalence follows immediately
from the injectivity of \(\Phi\), since only finitely many ordered
pairs occur. Hence it remains to consider the case in which \(S\) is
infinite.
For each occurrence of a point in the pairs \((z_n,w_n)\), choose a
separate orthonormal vector in the corresponding spectral subspace.
Thus we may write
\(
H=\bigoplus_{n\geq1}H_n,
\
H_n=\operatorname{span}\{e_n^0,e_n^1\},
\)
where vectors corresponding to equal spectral values are understood
as belonging to the same, possibly infinite-dimensional, spectral
subspace. Define
\(
Ae_n^0=z_ne_n^0,
\
Ae_n^1=w_ne_n^1,
\)
and
\(
f_n
:=
\sqrt{1-|z_n|^2}\,e_n^0
+
\sqrt{1-|w_n|^2}\,e_n^1.
\)
Let \(F=(f_n)_{n\geq1}\).

The orbit generated by \(f_n\) remains in \(H_n\), and the subspaces
\(H_n\) are mutually orthogonal. Relative to
\(\{e_n^0,e_n^1\}\), the frame operator of
\(\{A^kf_n\}_{k\geq0}\) has matrix
\(
G_n
=
\begin{pmatrix}
1 & \xi_n\\
\overline{\xi_n} & 1
\end{pmatrix},
\)
where
\(
|\xi_n|
=
\frac{
\sqrt{(1-|z_n|^2)(1-|w_n|^2)}
}{
|1-\overline{z_n}w_n|
}
=
\sqrt{1-\rho(z_n,w_n)^2}.
\)
Therefore,
\[
\lambda_{\min}(G_n)
=
1-\sqrt{1-\rho(z_n,w_n)^2},
\qquad
\lambda_{\max}(G_n)
\leq2.
\]
It follows that
\begin{equation}
\label{eq:source-two-point-frame}
F\in\mathcal G_\omega(A)
\quad\Longleftrightarrow\quad
\inf_{n\geq1}\rho(z_n,w_n)>0.
\end{equation}

For \(\Phi(A)\), the corresponding block matrix is
\(
\widetilde G_n
=
D_n
\begin{pmatrix}
1 & \eta_n\\
\overline{\eta_n} & 1
\end{pmatrix}
D_n,
\)
where
\[
|\eta_n|
=
\sqrt{
1-\rho\bigl(\Phi(z_n),\Phi(w_n)\bigr)^2
}
\]
and
\[
D_n
=
\begin{pmatrix}
\displaystyle
\sqrt{
\frac{1-|z_n|^2}
     {1-|\Phi(z_n)|^2}
}
&0\\[2mm]
0&
\displaystyle
\sqrt{
\frac{1-|w_n|^2}
     {1-|\Phi(w_n)|^2}
}
\end{pmatrix}.
\]
By \eqref{eq:countable-defect-comparability}, the matrices \(D_n\)
and \(D_n^{-1}\) are uniformly bounded. Consequently,
\begin{equation}
\label{eq:image-two-point-frame}
F\in\mathcal G_\omega\bigl(\Phi(A)\bigr)
\quad\Longleftrightarrow\quad
\inf_{n\geq1}
\rho\bigl(\Phi(z_n),\Phi(w_n)\bigr)>0.
\end{equation}
Since \(\Phi\in\mathcal P_\omega\), equations
\eqref{eq:source-two-point-frame} and
\eqref{eq:image-two-point-frame} prove
\eqref{eq:uniform-separation-preservation}. Repetitions among the
points cause no difficulty, since separate occurrences are placed in
orthogonal directions of the same spectral subspace.

We next prove uniform continuity. If \(\Phi\) were not uniformly
continuous, there would exist sequences \(z_n,w_n\) and
\(\varepsilon>0\) such that
\(
\rho(z_n,w_n)\rightarrow0,
\
\rho\bigl(\Phi(z_n),\Phi(w_n)\bigr)\geq\varepsilon.
\)
This contradicts \eqref{eq:uniform-separation-preservation}. The same
argument, with the roles of the two sides reversed, shows that
\(\Phi^{-1}\) is uniformly continuous on \(\Phi(\mathbb D)\).

It remains to prove surjectivity. Since \(\Phi\) is continuous and
injective, invariance of domain shows that \(\Phi(\mathbb D)\) is open
in \(\mathbb D\). On the other hand,
\eqref{eq:countable-defect-comparability} implies that \(\Phi\) is
proper. Indeed, if \(K\subset\mathbb D\) is compact and
\(\Phi(z)\in K\), then, for some \(R<1\),
\(
|\Phi(z)|\leq R.
\)
Hence
\(
1-|z|^2
\geq
\frac{1-R^2}{C_\Phi},
\)
so \(\Phi^{-1}(K)\) is contained in a compact subdisk. It is closed by
continuity and is therefore compact.

A proper continuous mapping has closed image. Thus
\(\Phi(\mathbb D)\) is both open and closed in the connected set
\(\mathbb D\). Therefore
\(
\Phi(\mathbb D)=\mathbb D.
\)
This completes the proof.
\end{proof}
\begin{prop}
\label{prop:automorphisms-belong-to-countable-preservers}
\(
\operatorname{Aut}(\mathbb D)
\subseteq
\mathcal P_\omega.
\)
\end{prop}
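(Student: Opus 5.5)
The plan is to reduce $\mathcal G_\omega(A)=\mathcal G_\omega(\varphi(A))$ to a unitary equivalence of Szeg\H{o} kernel synthesis operators, composed with a bounded invertible diagonal operator. We will not try to understand the structure of countable-orbit frames directly.

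First, I would extend Proposition~\ref{prop:orbit-frame-kernel-synthesis-characterization} to countably many generators, replacing $\mathbb C^m$ by $\ell^2=\ell^2(\mathbb N^+)$. For $F=(f_i)_{i\ge1}$ and each $j$, let
\(
V_jx:=(1-|\lambda_j|^2)^{-1/2}\bigl(\langle x,P_jf_i\rangle\bigr)_{i\ge1}
\)
on $E_j$; a priori this takes values in $\mathbb C^{\mathbb N^+}$. The computation preceding \eqref{eq:uniform-local-coefficient-bounds} goes through verbatim, and so does the identification $S_{\Lambda,V}=\mathscr F C_{A,F}U$, now with $\mathscr F:\ell^2(\mathbb N;\ell^2)\to H^2(\mathbb D;\ell^2)$. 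The closed graph argument is unchanged. Hence $F\in\mathcal G_\omega(A)$ if and only if the map
\(
(x_j)\mapsto\sum_j\widehat k_{\lambda_j}\otimes V_jx_j
\)
is everywhere defined, bounded, and bounded below from $\bigoplus_jE_j$ into $H^2(\mathbb D;\ell^2)$. Since $\varphi$ is injective, $\varphi(A)$ has the same spectral subspaces $E_j$ with eigenvalues $\mu_j=\varphi(\lambda_j)$. Its local operators are $W_j=\gamma_jV_j$, where $\gamma_j=\bigl((1-|\lambda_j|^2)/(1-|\mu_j|^2)\bigr)^{1/2}$. The estimate in the proof of Theorem~\ref{thm:holomorphic-rigidity} shows that $\gamma_j$ lies between two positive constants depending only on $|a|$.

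The key step is the standard unitary $\mathcal U$ on $H^2(\mathbb D)$ attached to an automorphism. Set $\psi:=\varphi^{-1}$, which is again an automorphism, and define
\[
(\mathcal U g)(z):=\sqrt{\psi'(z)}\,g(\psi(z))
\]
for a fixed holomorphic branch of the square root. This is a weighted composition operator, and it is unitary on $H^2$ by the change-of-variables formula on $\mathbb T$. I would verify directly, using the reproducing property and the identity
\[
1-\overline{\varphi(\lambda)}\varphi(z)=\varphi'(\lambda)^{1/2}\,\overline{\varphi'(z)}^{\,1/2}\,\overline{\cdots}\,(1-\bar\lambda z),
\]
that $\mathcal U\widehat k_{\lambda}=c_\lambda\widehat k_{\varphi(\lambda)}$ for some unimodular constant $c_\lambda$. (If the adjoint form is more convenient, I would use $\mathcal U^*$ instead.) Then $\mathcal U\otimes I_{\ell^2}$ is unitary on $H^2(\mathbb D;\ell^2)$.

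With this in hand, let $D$ be the diagonal operator $D(x_j):=(\overline{c_{\lambda_j}}\,\gamma_jx_j)$ on $\bigoplus_jE_j$. It is bounded and invertible because the $\gamma_j$ are bounded above and below and $|c_{\lambda_j}|=1$. On finitely supported vectors,
\[
S_{M,W}=(\mathcal U\otimes I)\,S_{\Lambda,V}\,D .
\]
It follows that $S_{M,W}$ is everywhere defined and bounded if and only if $S_{\Lambda,V}$ is. Here I would argue through the domains and closed graph as in the Proposition, or via the Bessel property applied to finite sums. Likewise, $S_{M,W}$ is bounded below if and only if $S_{\Lambda,V}$ is. Therefore $\mathcal G_\omega(A)=\mathcal G_\omega(\varphi(A))$ for every normal diagonal $A$.

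The main obstacle I expect is bookkeeping rather than ideas. One part is checking the unimodular-constant identity $\mathcal U\widehat k_\lambda=c_\lambda\widehat k_{\varphi(\lambda)}$ with the correct branch and normalization. The other is handling the case where $S_{\Lambda,V}$ is not a priori defined everywhere. For the latter I would phrase everything through the analysis operators, on which the identity above holds coordinatewise.
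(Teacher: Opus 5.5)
Your proposal is correct and follows the same route as the paper. Both arguments extend the synthesis-operator characterization to $\ell^2$-valued local operators, write $W_j=\gamma_jV_j$ with $\gamma_j$ bounded above and below, and use the unitary action of $\operatorname{Aut}(\mathbb D)$ on $H^2(\mathbb D)$ to get $S_{M,W}=(\mathcal U\otimes I_{\ell^2})S_{\Lambda,V}D$ with $D$ boundedly invertible; the paper only cites that unitary, which you realize explicitly as $g\mapsto\sqrt{\psi'}\,(g\circ\psi)$ with $\psi=\varphi^{-1}$. Two small points: your displayed identity is garbled, and the one you actually need is $1-\overline{\psi(\mu)}\psi(z)=\overline{\psi'(\mu)^{1/2}}\,\psi'(z)^{1/2}(1-\bar\mu z)$ at $\mu=\varphi(\lambda)$, with the conjugate on the $\mu$-factor and the same branch on both sides; and your remark that $V_j$ need not a priori take values in $\ell^2$ is a point the paper passes over silently.
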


\begin{proof}
Let
\(
\varphi\in\operatorname{Aut}(\mathbb D)
\).
Write
\(
\varphi(z)
=
e^{i\vartheta}
\frac{z-a}{1-\overline{a}z},
\
a\in\mathbb D,\quad \vartheta\in\mathbb R.
\)
Fix a normal diagonal operator of the above form
\(
A
=
\sum_{j\geq1}\lambda_jP_j
\)
and a countable family
\(
F=(f_i)_{i\geq1}\in H^{\mathbb N^+}
\).
Set
\(
\Lambda:=(\lambda_j)_{j\geq1},
\
\mu_j:=\varphi(\lambda_j),
\
M:=(\mu_j)_{j\geq1}.
\)
Thus
\(
\varphi(A)
=
\sum_{j\geq1}\mu_jP_j.
\)

Suppose first that
\(
F\in\mathcal G_\omega(A)
\).
For \(x\in E_j:=P_jH\), define
\(
V_jx
:=
\frac{1}{\sqrt{1-|\lambda_j|^2}}
\bigl(
\langle x,P_jf_i\rangle
\bigr)_{i\geq1}
\in\ell^2.
\)
The countable analogue of
Proposition~\ref{prop:orbit-frame-kernel-synthesis-characterization}
is obtained by replacing \(\mathbb C^m\) with \(\ell^2\); its proof is
unchanged. Hence
\(
F\in\mathcal G_\omega(A)
\)
if and only if the synthesis operator
\[
S_{\Lambda,V}:
\bigoplus_{j\geq1}E_j
\longrightarrow
H^2(\mathbb D;\ell^2),
\qquad
S_{\Lambda,V}(x_j)_{j\geq1}
=
\sum_{j\geq1}
\widehat{k}_{\lambda_j}\otimes V_jx_j,
\]
is bounded and bounded below.

For the operator \(\varphi(A)\), the corresponding local coefficient
operators are
\[
W_jx
:=
\frac{1}{\sqrt{1-|\mu_j|^2}}
\bigl(
\langle x,P_jf_i\rangle
\bigr)_{i\geq1}
=
d_jV_jx,
\]
where
\(
d_j
:=
\sqrt{
\frac{1-|\lambda_j|^2}
     {1-|\varphi(\lambda_j)|^2}
}.
\)
The standard defect identity for disk automorphisms gives
\(
1-|\varphi(z)|^2
=
\frac{1-|a|^2}{|1-\overline{a}z|^2}
\bigl(1-|z|^2\bigr).
\)
Consequently,
\(
d_j
=
\frac{|1-\overline{a}\lambda_j|}
     {\sqrt{1-|a|^2}},
\)
and therefore
\begin{equation}
\label{eq:automorphism-countable-defect-ratio}
\frac{1-|a|}{\sqrt{1-|a|^2}}
\leq
d_j
\leq
\frac{1+|a|}{\sqrt{1-|a|^2}},
\qquad j\geq1.
\end{equation}

The standard unitary action of
\(\operatorname{Aut}(\mathbb D)\) on \(H^2(\mathbb D)\) provides a
unitary operator
\(
U_\varphi:H^2(\mathbb D)\to H^2(\mathbb D)
\)
and unimodular constants
\(
\varepsilon_j\in\mathbb T
\)
such that
\(
U_\varphi\widehat{k}_{\lambda_j}
=
\varepsilon_j\widehat{k}_{\mu_j},
\ j\geq1.
\)
Define
\[
Q_\varphi:
\bigoplus_{j\geq1}E_j
\longrightarrow
\bigoplus_{j\geq1}E_j,
\qquad
Q_\varphi(x_j)_{j\geq1}
:=
\bigl(
d_j\overline{\varepsilon_j}x_j
\bigr)_{j\geq1}.
\]
By \eqref{eq:automorphism-countable-defect-ratio},
\(Q_\varphi\) is boundedly invertible. Moreover, the synthesis
operator associated with \(\varphi(A)\) satisfies
\(
S_{M,W}
=
\bigl(U_\varphi\otimes I_{\ell^2}\bigr)
S_{\Lambda,V}Q_\varphi.
\)
Indeed, for every finitely supported \((x_j)_{j\geq1}\),
\[
\bigl(U_\varphi\otimes I_{\ell^2}\bigr)
S_{\Lambda,V}Q_\varphi(x_j)_{j\geq1}
=
\sum_{j\geq1}
\varepsilon_j\widehat{k}_{\mu_j}
 \otimes
d_j\overline{\varepsilon_j}V_jx_j
=
\sum_{j\geq1}
\widehat{k}_{\mu_j}\otimes W_jx_j
=
S_{M,W}(x_j)_{j\geq1}.
\]
The identity then extends to the whole Hilbert direct sum.

Since \(U_\varphi\otimes I_{\ell^2}\) is unitary and
\(Q_\varphi\) is boundedly invertible,
\(S_{M,W}\) is bounded and bounded below if and only if
\(S_{\Lambda,V}\) is bounded and bounded below. Thus
\(
F\in\mathcal G_\omega(A)
\ \Longrightarrow\
F\in\mathcal G_\omega\bigl(\varphi(A)\bigr).
\)
Applying the same argument to \(\varphi^{-1}\) and \(\varphi(A)\)
gives the reverse implication. Hence
\(
\mathcal G_\omega(A)
=
\mathcal G_\omega\bigl(\varphi(A)\bigr)
\)
for every such \(A\), and therefore
\(
\varphi\in\mathcal P_\omega
\).
\end{proof}

The preceding proposition proves that
\(
\operatorname{Aut}(\mathbb D)\subseteq\mathcal P_\omega
\),
whereas
Proposition~\ref{prop:countable-preservers-are-uniform-homeomorphisms}
shows that every member of \(\mathcal P_\omega\) is a
pseudohyperbolically uniform homeomorphism belonging to \(\mathcal P\).

For every fixed \(m<\infty\), the equality
\(
\mathcal P_m=\mathcal P
\)
shows that finitely many generators yield no additional rigidity.
Countably many generators, however, can simultaneously detect
spectral multiplicities and kernel clusters of arbitrarily large
size; the two-point construction in
Proposition~\ref{prop:countable-preservers-are-uniform-homeomorphisms}
is the simplest instance of this phenomenon. Since disk automorphisms
preserve all such frame tests by the unitary covariance of normalized
Szeg\H{o} kernels, it is natural to expect that they are the only
members of \(\mathcal P_\omega\). By
Theorem~\ref{thm:holomorphic-rigidity}, it would suffice to prove that
every mapping in \(\mathcal P_\omega\) is holomorphic.

\begin{conj}
\label{conj:countable-multi-orbit-rigidity}
Every universal countable-orbit preserver is an automorphism of the
unit disk. Equivalently,
\(
\mathcal P_\omega
=
\operatorname{Aut}(\mathbb D).
\)
\end{conj}

\section*{Declaration of competing interest}
The author declares that there are no competing interests
that could have influenced the work reported in this paper.

\section*{Funding}
This research did not receive any specific grant from funding
agencies in the public, commercial, or not-for-profit sectors.

\section*{Data availability}
No data were used for the research described in this article.

\end{document}